\numberwithin{equation}{section}
\renewcommand{\vec}[1]{\boldsymbol{#1}}
\renewcommand{\subset}{\subseteq}
\newcommand\disteq{\,\sim\,}
\newcommand\vC{\vec C}
\newcommand\vX{\vec X}
\newcommand\vw{\vec w}
\newcommand\vk{\vec k}
\newcommand\vd{\vec d}
\newcommand\vy{\vec y}
\newcommand\vx{\vec x}
\newcommand\va{\vec a}
\newcommand\THETA{\vec\theta}
\newcommand\CPC{Combinatorics, Probability and Computing}
\newcommand{\GG}{\G}
\newcommand\ALPHA{\vec\alpha}
\newcommand\MU{\vec\mu}
\newcommand\vm{{\vec m}}
\newcommand\CHI{{\vec\chi}}
\newcommand\DELTA{{\vec\Delta}}
\newcommand\GAMMA{{\vec\gamma}}
\newcommand\vA{\vec A}
\newcommand\G{\vec G}
\newcommand\SIGMA{\vec\sigma}
\newcommand\aco[1]{#1}
\newcommand\fF{\mathfrak{F}}
\newcommand\cA{\mathcal{A}}
\newcommand\cB{\mathcal{B}}
\newcommand\cC{\mathcal{C}}
\newcommand\cD{\mathcal{D}}
\newcommand\cF{\mathcal{F}}
\newcommand\cE{\mathcal{E}}
\newcommand\cU{\mathcal{U}}
\newcommand\cN{\mathcal{N}}
\newcommand\cQ{\mathcal{Q}}
\newcommand\cH{\mathcal{H}}
\newcommand\cS{\mathcal{S}}
\newcommand\cK{\mathcal{K}}
\newcommand\cL{\mathcal{L}}
\newcommand\cM{\mathcal{M}}
\newcommand\cP{\mathcal{P}}
\newcommand\cX{\mathcal{X}}
\newcommand\cY{\mathcal{Y}}
\newcommand\cV{\mathcal{V}}
\newcommand\cW{\mathcal{W}}
\newcommand\cZ{\mathcal{Z}}
\def\cC{{\mathcal C}}
\def\cE{{\mathcal E}}
\newcommand\eps{\varepsilon}
\newcommand\ZZ{\mathbb{Z}}
\newcommand\FF{\mathbb{F}}
\newcommand\NN{\mathbb{N}}
\newcommand\Var{\mathrm{Var}}
\newcommand\Erw{\mathbb{E}}
\newcommand{\vecone}{\vec{1}}
\newcommand{\Po}{{\rm Po}}
\newcommand{\Bin}{{\rm Bin}}
\newcommand\TV[1]{\left\|{#1}\right\|_{\mathrm{TV}}}
\newcommand\dTV{d_{\mathrm{TV}}}
\newcommand{\bink}[2] {{\binom{#1}{#2}}}
\newcommand\bc[1]{\left({#1}\right)}
\newcommand\cbc[1]{\left\{{#1}\right\}}
\newcommand\bcfr[2]{\bc{\frac{#1}{#2}}}
\newcommand\brk[1]{\left\lbrack{#1}\right\rbrack}
\newcommand\abs[1]{\left|{#1}\right|}
\newcommand\RR{\mathbb{R}}
\newcommand{\Whp}{W.h.p.}
\newcommand{\whp}{w.h.p.}
\newcommand{\tensor}{\otimes}
\newcommand{\Komlos}{Koml\'os}
\newcommand\pr{\mathbb{P}} 
\newcommand\Lem{Lemma}
\newcommand\Prop{Proposition}
\newcommand\Thm{Theorem}
\newcommand\Cor{Corollary}
\newcommand\Sec{Section}
\newcommand\Chap{Chapter}
\newtheorem{definition}{Definition}[section]
\newtheorem{claim}[definition]{Claim}
\newtheorem{example}[definition]{Example}
\newtheorem{remark}[definition]{Remark}
\newtheorem{theorem}[definition]{Theorem}
\newtheorem{lemma}[definition]{Lemma}
\newtheorem{proposition}[definition]{Proposition}
\newtheorem{corollary}[definition]{Corollary}
\newtheorem{fact}[definition]{Fact}
\DeclareMathOperator{\nul}{nul}
\DeclareMathOperator{\rank}{rk}
\DeclareMathOperator{\rate}{rate}
\newcommand{\rk}[1]{\rank(#1)}
\newcommand{\supp}[1]{{\text{supp}\left(#1\right)}}
\newcommand\A{\vA}
\def\D{{\mathcal D}}
\def\E{{\mathcal E}}
\def\N{{\mathcal N}}
\def\P{{\mathcal P}}
\def\po{{\bf Po}}
\def\ex{{\mathbb E}}
\def\pr{{\mathbb P}}
\def\bfd{{\vec d}}
\def\bfk{\vk}
\def\bfm{{\vec m}}
\def\bfn{{\vec n}}
\def\bfG{{\bf G}}
\def\bftheta{{\pmb{\theta}}}
\def\cH{{\mathcal H}}
\newcommand{\remove}[1]{}
\newcommand\eqn[1]{(\ref{#1})}
\newcommand{\be}{\begin{equation}}
\newcommand{\bel}[1]{\begin{equation}\lab{#1}\ }
\newcommand{\ee}{\end{equation}}
\newcommand{\bea}{\begin{eqnarray}}
\newcommand{\eea}{\end{eqnarray}}
\newcommand{\bean}{\begin{eqnarray*}}
\newcommand{\eean}{\end{eqnarray*}}
\newcommand{\vn}{{\vec n}}
\newcommand{\vM}{\vec M}
\newcommand{\vN}{\vec N}
\newcommand{\fD}{\mathfrak D}
\begin{document}

\title{The rank of random matrices over finite fields}

\author{Amin Coja-Oghlan, Pu Gao}
\thanks{Gao's research is supported by ARC DE170100716 and ARC DP160100835.}

\address{Amin Coja-Oghlan, {\tt acoghlan@math.uni-frankfurt.de}, Goethe University, Mathematics Institute, 10 Robert Mayer St, Frankfurt 60325, Germany.}

\address{Pu Gao, {\tt jane.gao@monash.edu},   School of Mathematical Sciences,    Monash University,  Australia}

\begin{abstract}
We determine the rank of a random matrix $\vA$ over a finite field with prescribed numbers of non-zero entries in each row and column.
As an application we obtain a formula for the rate of low-density parity check codes.
This formula verifies a conjecture of Lelarge [Proc.\ IEEE Information Theory Workshop 2013].
The proofs are based on coupling arguments and the interpolation method from mathematical physics.
\hfill
{\em MSC:} 05C80, 	60B20, 	94B05 
\end{abstract}

\maketitle

\section{Introduction}\label{Sec_intro}

\subsection{Background and motivation}
Random matrices over finite fields count among the most basic objects of probabilistic combinatorics.
Their study goes back to the early days of the discipline~\cite{ERmatrices}.
More recently they have been at the centre of an exciting development in coding theory.
The subject of a tremendous amount of research over the past 20 years, {\em low density parity check} (`ldpc') codes have become a mainstay of modern communications standards; you probably carry some around in your pocket~\cite{RichardsonUrbanke}.
The codebook of an ldpc code comprises the kernel of a sparse random matrix over a finite field.
Celebrated recent results establish that ldpc codes meet the Shannon bound, i.e., that they are information-theoretically optimal~\cite{GMU}.
The practical relevance of these results derives from the fact that ldpc codes based on matrices drawn from carefully tailored distributions even admit efficient decoding algorithms~\cite{KYMP,RichardsonUrbanke}.
In addition, sparse random matrices over finite fields have been studied extensively in the theory of random constraint satisfaction problems, e.g.,~\cite{AchlioptasMolloy,Dietzfelbinger,DuboisMandler,Ibrahimi,PittelSorkin}.

Despite the great interest in the subject certain fundamental questions remained open.
The most obvious one concerns the rank.
Although this parameter was already studied in early contributions~\cite{Balakin1,Balakin2,Kovalenko}, there has been no general formula for the rank of sparse random matrices, where the number of non-zero entries grows linearly with the number of rows.
The present paper delivers such a formula.
To be precise, we will determine the rank of a sparse random matrix with prescribed row and column degrees (viz.\ number of non-zero entries).
Ldpc codes are based on precisely such random matrices as a diligent choice of the degrees greatly boosts the code's performance~\cite{RichardsonUrbanke}.
The rank of the random matrix is directly related to the {\em rate} of the ldpc code, defined as the nullity of the matrix divided by the number of columns, arguably the most basic parameter of any linear code.
Lelarge~\cite{Lelarge} noticed that an upper bound on the rank of the random matrix, and thus a lower bound on the rate of ldpc codes, 
follows from the result on the matching number of random bipartite graphs from~\cite{BLS2}.
He conjectured the bound to be tight.
We prove this conjecture.

In fact, there is an interesting twist.
Lelarge observed that a sophisticated but mathematically non-rigorous approach from statistical physics called the cavity method renders a wrong `prediction' as to the rank for certain degree distributions.%
	\footnote{The derivation of this erroneous prediction was posed as an exercise in~\cite[\Chap~19]{MM}.}
This discrepancy merits attention because the cavity method has by now been brought to bear on a very wide range of practically relevant problems, ranging from signal processing to machine learning~\cite{LF}.
The proof of the rank formula that we develop sheds light on the issue.
Specifically, the `replica symmetric' version of the cavity method predicts that the rank can be expressed analytically as the optimal solution to a variational problem.
A priori, this variational problem asks to optimise a functional called the Bethe free entropy over an infinite-dimensional space of probability measures.
Such problems have been tackled in the physics literature numerically by means of a heuristic called population dynamics.
For the rank problem this was carried out by Alamino and Saad~\cite{AlaminoSaad}.
But thanks to the algebraic nature of the problem we will be able to dramatically simplify the variational problem, arriving at a humble one-dimensional optimisation task.
The main result of the paper shows that the optimal solution to this one-dimensional problem does indeed yield the rank.
The formula matches Lelarge's conjecture.
Furthermore, the solution can be lifted to a solution to the original infinite-dimensional problem.
For certain degree distributions the solution thus obtained is of a new type, different from the solutions that surfaced in the experiments from~\cite{AlaminoSaad} or the heuristic derivations from~\cite{MM}.
The fact that the heuristic approaches missed the actual optimiser explains the discrepancy between the physics predictions and mathematical reality.

In the following paragraphs we will introduce the model and state the main results.
A discussion of related work and a detailed comparison with the physics predictions follow in \Sec~\ref{Sec_related}.

\subsection{The rank formula}
Let $q\geq2$ be a prime power and let $\CHI$ be an $\FF_q^*=\FF_q\setminus\cbc 0$-valued random variable.
Moreover, let $\vd\geq1,\vk\geq3$ be integer-valued random variables such that $\Erw[\vd^r]+\Erw[\vk^r]<\infty$ for a real $r>2$ and set $d=\Erw[\vd]$, $k=\Erw[\vk]$.
Let $n>0$ be an integer divisible by the greatest common divisor of the support of $\vk$ and let $\vm\disteq\Po(dn/k)$.
Further, let $(\vd_i,\vk_i,\CHI_{i,j})_{i,j\geq1}$ be copies of $\vd,\vk$, $\CHI$, respectively, mutually independent and independent of $\vm$.
Given
\begin{equation}\label{eqWellDef1}
\sum_{i=1}^n\vd_i=\sum_{i=1}^{\vm}\vk_i,
\end{equation}
draw a simple bipartite graph $\G$ comprising a set $\{a_1,\ldots,a_{\vm}\}$ of {\em check nodes} and a set $\{x_1,\ldots,x_n\}$ of 
{\em variable nodes} such that the degree of $a_i$ equals $\vk_i$ and the degree of $x_j$ equals $\vd_j$ for all $i,j$ uniformly at random.
Then let $\A$ be the $\vm\times n$-matrix with entries
\begin{align*}
\A_{ij}&=\vecone\{a_ix_j\in E(\G)\}\cdot\CHI_{i,j}. 
\end{align*}	
Thus, the $i$'th row of $\A$ contains precisely $\vk_i$ non-zero entries and the $j$'th column contains precisely $\vd_j$ non-zero entries.
Standard arguments show that $\A$ is well-defined for large enough $n$, i.e., \eqref{eqWellDef1} is satisfied and there exists a simple $\G$ with the desired degrees with positive probability; see \Prop~\ref{Lemma_welldef}.
We call $\G$ the {\em Tanner graph} of $\A$.

Since $\vd,\vk$ have finite means the matrix $\A$ is sparse, i.e., the expected number of non-zero entries is $O(n)$.
Yet because the degree distributions are subject  only to the modest condition $\Erw[\vd^r]+\Erw[\vk^r]<\infty$, the typical maximum number of non-zero entries per row or column may be as large as $n^{1/2-\Omega(1)}$.
Natural choices of $\vd$ and $\vk$ include one-point distributions, truncated Poisson distributions as well as power laws.
Additionally, clever choices of $\vd$ and $\vk$ that facilitate the construction of error-correcting codes have been proposed~\cite{RichardsonUrbanke}.

The following theorem, the main result of the paper, provides a formula for the asymptotic rank of $\A$.
Let $D(x)$ and $K(x)$ denote the probability generating functions of $\vd$ and $\vk$, respectively.
Since $\Erw[\vd^2]+\Erw[\vk^2]<\infty$, the functions $D(x),K(x)$ are continuously differentiable on $[0,1]$.

\begin{theorem}\label{thm:rank}
Let
\begin{align*}
\Phi(\alpha)&=D\left(1-K'(\alpha)/k\right)+\frac{d}{k}\bc{K(\alpha)+(1-\alpha)K'(\alpha)-1}.
\end{align*}
Then
	$$\lim_{n\to\infty}\frac{\rank(\A)}n=1-\max_{\alpha\in[0,1]}\Phi(\alpha)\qquad\mbox{in probability.}$$
\end{theorem}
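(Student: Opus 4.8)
The plan is to reduce the assertion to the computation of a free entropy, evaluate that free entropy by the interpolation method together with an Aizenman--Sims--Starr cavity argument, and use coupling arguments throughout to pass to a well-behaved degree ensemble. Since $\rank(\A)=n-\dim_{\FF_q}\ker\A$ and $|\ker\A|=q^{\dim\ker\A}$, the statement is equivalent to $\frac1n\log_q|\ker\A|\to\max_{\alpha\in[0,1]}\Phi(\alpha)$ in probability. Swapping a single pair of the half-edge pairing underlying $\G$, or a single coefficient $\CHI_{i,j}$, changes $\rank(\A)$ by $O(1)$; hence a Doob-martingale/Azuma estimate, combined with a crude bound on the influence of the random degrees, yields $|\rank(\A)-\Erw\rank(\A)|=o(n)$ \whp, and it remains to compute $\lim_n\frac1n\Erw\log_q|\ker\A|$. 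At this point I would deploy coupling arguments to simplify the ensemble: truncate $\vd,\vk$ at a large constant (legitimate because $\Erw[\vd^r]+\Erw[\vk^r]<\infty$ for some $r>2$ makes the rank contribution of the truncated half-edges $o(n)$), replace the $(d,k)$-configuration model by a Poisson/independent-clause surrogate via switchings, and invoke \Prop~\ref{Lemma_welldef} for well-definedness --- each step perturbing $\rank(\A)$ by $o(n)$ in probability.

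The structural heart is that $|\ker\A|$ is the partition function of a parity-check factor-graph model on the Tanner graph whose Gibbs measure, the uniform distribution on $\ker\A$, is simultaneously the posterior of the teacher--student model $\vec b=\A\vec\sigma^{*}$ with $\vec\sigma^{*}$ uniform, the solution set being a coset of $\ker\A$. The algebra collapses the a priori infinite-dimensional space of Bethe order parameters onto one dimension: every Belief Propagation message of a homogeneous linear system is either the uniform distribution on $\FF_q$ or a point mass, and by the $\FF_q^*$-symmetry of $\CHI$ the only surviving statistic is the probability $\alpha$ that a variable-to-check message is a point mass. A check-to-variable message is a point mass iff all its incoming messages are, so (after size-biasing) it is uniform with probability $1-K'(\alpha)/k$; substituting this into the Bethe free-entropy functional and collecting the variable-node, check-node and edge contributions gives exactly $\mathcal B(\rho_\alpha)=\Phi(\alpha)$. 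In particular the Belief Propagation fixed points of the restricted model are precisely the stationary points of $\Phi$ on $[0,1]$.

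It remains to pin $\frac1n\Erw\log_q|\ker\A|$ to $\max_\alpha\Phi(\alpha)$ from both sides. For the lower bound one may either invoke Lelarge's observation that $\rank(\A)\le\nu(\G)$ together with the matching-number formula of~\cite{BLS2} (which equals $1-\max_\alpha\Phi(\alpha)$ after simplification), or run a Guerra--Toninelli/Franz--Leone interpolation between $\A$ and the decoupled model attached to a parameter $\alpha$: since each clause weight is the indicator of a linear equation, the derivative of the interpolating free entropy keeps a fixed sign and delivers $\frac1n\Erw\log_q|\ker\A|\ge\mathcal B(\rho_\alpha)=\Phi(\alpha)$ for every $\alpha$. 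The matching upper bound --- the genuinely new direction --- I would obtain by the Aizenman--Sims--Starr scheme: couple the $n$- and $(n-1)$-variable instances and estimate $\Erw\log_q|\ker\A_n|-\Erw\log_q|\ker\A_{n-1}|$ by a cavity computation whose value is, up to $o(1)$, $\mathcal B$ evaluated at the message distribution that the Gibbs measure induces on a random cavity edge; provided this distribution concentrates on a single $\rho_\alpha$, the increment equals $\Phi(\alpha)+o(1)\le\max_{\alpha'}\Phi(\alpha')+o(1)$, and a Ces\`aro summation yields $\frac1n\Erw\log_q|\ker\A|\le\max_\alpha\Phi(\alpha)$. That concentration --- ``replica symmetry'' --- would be forced by the Nishimori property of the teacher--student coupling: the posterior is uniform on a coset of $\ker\A$, the Nishimori identities rule out replica-symmetry breaking, and linearity turns the relevant two-replica overlap into (essentially) the rank of a random submatrix, which self-averages.

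I expect the crux to be precisely this replica-symmetry step in the upper bound, i.e.\ showing that the uniform measure on $\ker\A$ has a one-dimensional order parameter and no replica-symmetry breaking; both the algebraic structure (the canonical point $0\in\ker\A$, and overlaps of linear-code words being governed by ranks and hence self-averaging) and the coupling to the Nishimori model look indispensable. A secondary obstacle is that the degrees are assumed only to have $r>2$ moments, so the cavity and interpolation computations must be run with a careful truncation of high-degree check and variable nodes, together with a quantitative accounting of the coupling errors --- including the boundary/inconsistency terms that the Nishimori conditioning handles.
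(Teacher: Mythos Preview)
Your high-level architecture matches the paper exactly: the upper bound on the nullity comes from an Aizenman--Sims--Starr coupling, the lower bound from a one-parameter interpolation, and the collapse of the order parameter to the one-dimensional family $\pi_\alpha$ follows from the linear-algebraic fact that every message is either uniform on $\FF_q$ or the atom $\delta_0$ (this is \Lem~\ref{Lemma_Spartition} in the paper). You have also correctly located the crux: the ASS increment only evaluates to $\Phi(\alpha)$ for some $\alpha$ if the joint law of the values that a random kernel vector assigns to a bounded number of random cavities is close to a product measure.

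The genuine gap is your proposed mechanism for that factorisation. You appeal to a Nishimori/teacher--student argument, but for the homogeneous system the planted vector in $\vec b=\A\SIGMA^{*}$ gives a posterior that is merely a translate of the uniform measure on $\ker\A$; the Nishimori identities therefore do not carry any information that is not already present in the kernel itself, and in particular they do not by themselves yield the approximate product structure you need. The paper does \emph{not} use Nishimori at all. Instead it introduces a \emph{pinning} perturbation: one appends $\THETA$ unary checks freezing $\THETA$ uniformly random coordinates, where $\THETA\leq\Theta(\eps)$ is bounded. \Lem~\ref{Lemma_Spartition} shows that the coordinates partition into blocks that are pairwise independent under $\mu_A$ and linearly dependent within a block; freezing one coordinate of a block freezes the whole block, so a bounded random number of pins drives the second moment of the block sizes to $o(n^2)$ and forces $\eps$-symmetry (\Lem~\ref{Lemma_0pinning}). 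This is the idea your proposal is missing, and it is what makes both the ASS step (\Lem~\ref{Lemma_theta}) and the interpolation derivative (\Lem~\ref{Lemma_pinint}) go through. Operationally, the pinning is implemented by working not with $\A$ but with an auxiliary matrix $\vA_\eps$ that has $\Theta(\eps n)$ cavities and $\THETA$ extra unit rows; \Prop~\ref{Cor_lower} then shows $\nul(\vA_\eps)=\nul(\A)+o_\eps(n)$.

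A secondary point: the paper does not truncate $\vd,\vk$ to bounded support. It keeps the full degree distributions (only $\Erw[\vd^r]+\Erw[\vk^r]<\infty$ for some $r>2$) and absorbs the rare high-degree nodes directly into the error terms of the coupling; see e.g.\ the careful case analysis in \Sec~\ref{Sec_Lemma_valid} and the use of \Lem~\ref{Lemma_sums}. Your proposed truncation is a legitimate alternative simplification, but you would then need an additional continuity argument to pass back to the original $\vd,\vk$, which is not entirely free since $\Phi$ depends on the full generating functions $D,K$.
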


\noindent
The upper bound $\rank(\A)/n\leq1-\max_{\alpha\in[0,1]}\Phi(\alpha)+o(1)$ \whp\ was  previously derived by Lelarge~\cite{Lelarge} from the Leibniz determinant formula and the formula for the matching number of a random bipartite graphs from~\cite{BLS2}.
Thus, the lower bound on the rank constitutes the main contribution of this paper.
Nonetheless, we will also give an independent proof of the upper bound, which is more direct and
signifincantly shorter than~\cite{BLS2,Lelarge}.%
\footnote{Lelarge deals with degrees s.t.\ $\Erw[\vd^2],\Erw[\vk^2]<\infty$, whereas
we assume $\Erw[\vd^r],\Erw[\vk^r]<\infty$ for an $r$ arbitrarily close to but greater than 2.}

\subsection{The rate of ldpc codes}
\Thm~\ref{thm:rank} implies a universal formula for the rate of ldpc codes.
To be precise, the standard construction of ldpc codes assumes that $\vd,\vk$ are bounded random variables~\cite{RichardsonUrbanke}.
Then it makes sense to prescribe the variable and check degrees exactly rather than just in distribution.
Indeed, assuming that 
$n$ such that $n\pr\brk{\vd=j}$ is an integer for every $j\in\supp\vd$ and that
$m=dn/k$ is an integer such that $m\pr\brk{\vk=j}$ is integral for each $j\in\supp\vk$,
let $\fD$ be the event that $\vm=m$ and that
\begin{align*}
\sum_{i=1}^n\vecone\{\vd_i=\ell\}&=n\pr\brk{\vd=\ell}\qquad\mbox{and}\qquad
\sum_{i=1}^{m}\vecone\{\vk_i=\ell\}=m\pr\brk{\vk=\ell}\qquad\mbox{for all integers $\ell$}.
\end{align*}
The {\em rate of the $(\vd,\vk)$-ldpc code of block length $n$} is defined as 
$$
\rate_n(\vd,\vk)=n^{-1}\Erw\brk{\nul(\A)\mid\fD}.
$$

\begin{theorem}\label{Thm_LDPC}
We have $\lim_{n\to\infty}\rate_n(\vd,\vk)= \max_{\alpha\in[0,1]}\Phi(\alpha).$
\end{theorem}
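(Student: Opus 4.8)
The plan is to deduce \Thm~\ref{Thm_LDPC} from \Thm~\ref{thm:rank} by (i) promoting the convergence in probability there to an exponential tail bound and (ii) observing that the conditioning event $\fD$ is only polynomially unlikely. Since $\rank(\A)+\nul(\A)=n$ for the $\vm\times n$ matrix $\A$, we have
\[
\rate_n(\vd,\vk)=n^{-1}\Erw\brk{\nul(\A)\mid\fD}=1-n^{-1}\Erw\brk{\rank(\A)\mid\fD},
\]
so it suffices to prove that $n^{-1}\Erw\brk{\rank(\A)\mid\fD}\to c:=1-\max_{\alpha\in[0,1]}\Phi(\alpha)$, where the maximum is attained because $\Phi$ is continuous on the compact interval $[0,1]$ (as noted before \Thm~\ref{thm:rank}).

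The first step is an exponential concentration bound for $\rank(\A)$ in the unconditioned model of \Thm~\ref{thm:rank}. By that theorem $n^{-1}\rank(\A)\to c$ in probability, and since $0\le n^{-1}\rank(\A)\le1$ deterministically, bounded convergence gives $n^{-1}\Erw\rank(\A)\to c$. Next I would exploit that, with $\vd,\vk$ bounded (a standing hypothesis for ldpc codes), $\A$ is a function of $O(n)$ coordinates of bounded individual influence on $\rank$: the value of $\vm$, the degree sequences $(\vd_i)_i,(\vk_i)_i$, the configuration-model matching underlying $\G$, and the weights $(\CHI_{i,j})$. Altering one weight changes a single entry of $\A$ and hence $\rank(\A)$ by at most $1$, while transposing two edges of $\G$ changes at most four entries and hence $\rank(\A)$ by at most $4$; moreover for bounded degrees the simple configuration model is contiguous to the pairing model, and $\vm\disteq\Po(dn/k)$ as well as $\sum_i\vd_i$ concentrate exponentially. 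A routine combination of the Azuma--Hoeffding inequality for the matching (revealed through $O(n)$ bounded-influence steps) and McDiarmid's inequality for the weights therefore yields
\[
\Pr\brk{\abs{n^{-1}\rank(\A)-n^{-1}\Erw\rank(\A)}>\eps}\le\exp(-\Omega_\eps(n)),
\]
and combining with $n^{-1}\Erw\rank(\A)\to c$ we get $\Pr\brk{\abs{n^{-1}\rank(\A)-c}>\eps}\le\exp(-\Omega_\eps(n))$ for every fixed $\eps>0$ and all large $n$.

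The second step bounds $\Pr\brk\fD$ from below. The event $\fD$ asks that $\vm=m=dn/k$, which for $\vm\disteq\Po(dn/k)$ has probability $\Theta(n^{-1/2})$, and that the empirical profiles of the i.i.d.\ bounded lattice variables $(\vd_i)_{i\le n}$ and $(\vk_i)_{i\le m}$ equal the prescribed ones exactly, which by the multidimensional local central limit theorem each has probability $n^{-\Theta(1)}$; as these events are independent, $\Pr\brk\fD\ge n^{-C}$ for a constant $C=C(\vd,\vk)$. Hence
\[
\Pr\brk{\abs{n^{-1}\rank(\A)-c}>\eps\mid\fD}\le n^{C}\exp(-\Omega_\eps(n))\to0\qquad(n\to\infty),
\]
so $n^{-1}\rank(\A)\to c$ in probability conditionally on $\fD$ as well. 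Since $0\le n^{-1}\rank(\A)\le1$, bounded convergence once more gives $n^{-1}\Erw\brk{\rank(\A)\mid\fD}\to c$, and therefore $\rate_n(\vd,\vk)\to1-c=\max_{\alpha\in[0,1]}\Phi(\alpha)$.

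I expect the main obstacle to be making the concentration quantitatively strong enough: it must be superpolynomially small in $n$ in order to survive division by $\Pr\brk\fD=n^{-\Theta(1)}$, and this is precisely where the boundedness of $\vd,\vk$ is indispensable, since it keeps the number of bounded-influence coordinates linear in $n$ and preserves the contiguity of the simple and the pairing configuration models. An alternative would be a direct coupling showing that $\Erw\brk{\rank(\A)\mid\text{degree sequence}}$ varies by $o(n)$ over all degree sequences whose empirical profiles lie within $O(\sqrt n)$ of those of $\vd,\vk$, but the concentration route above seems the most economical.
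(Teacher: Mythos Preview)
Your overall strategy—upgrade the convergence in \Thm~\ref{thm:rank} to an exponential tail bound and observe that $\pr[\fD]$ is only polynomially small—is sound in principle and genuinely different from the paper's route, but the concentration step has a real gap.

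You treat $\A$ as a function of independent coordinates $(\vm,(\vd_i),(\vk_i),\text{matching},\text{weights})$, yet in the model of \Thm~\ref{thm:rank} the matrix is defined \emph{conditionally} on $\cD=\{\sum_i\vd_i=\sum_j\vk_j\}$ and on simplicity; under this conditioning the degree variables are no longer independent, so McDiarmid does not apply directly. Moreover, you only explicitly argue Azuma for the matching and McDiarmid for the weights; that yields, for any \emph{fixed} degree sequence, exponential concentration of $\rank(\A)$ around its conditional mean given that sequence—but not around the unconditional mean $\Erw[\rank(\A)]$. To bridge this you would need that the conditional mean is $O(1)$-Lipschitz in each $\vd_i,\vk_i$, which in turn requires coupling two random maximal matchings whose clone sets differ in one vertex so that they agree on all but $O(1)$ edges. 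That is doable for bounded degrees, but it is a genuine step, not a routine remark; and you would still have to work in an unconditioned maximal-matching model first (so that the coordinates really are independent) and only afterwards condition on $\cD$, using $\pr[\cD]=\Theta(n^{-1/2})\gg\exp(-\Omega(n))$. The sentence ``$\vm$ and $\sum_i\vd_i$ concentrate exponentially'' does not substitute for this: concentration of the degree profile does not by itself imply concentration of the rank.

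The paper instead bypasses concentration altogether via a direct coupling: it first shows (\Lem~\ref{Lemma_LDPC_degs}, a Chernoff bound surviving the $\Theta(n^{-1/2})$ conditioning on $\cD$) that given $\cD$ the empirical degree profile is within $O(\sqrt n\ln n)$ of the target with high probability, and then couples $\A_0$ given such a near-target profile with $\A_0$ given the exact profile $\fD$ so that the two matrices differ in only $O(\sqrt n\ln n)$ entries, hence $|\rank-\rank|=o(n)$. The simplicity conditioning is then absorbed using $\pr[\cS\mid\fD]=\Omega(1)$. Your route would ultimately need a matching-coupling lemma of the same flavour to establish the Lipschitz property; the paper simply uses such a coupling once, directly, without the detour through exponential concentration.
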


\subsection{The 2-core}\label{Sec_intro_2core}
In several examples the solution to $\max_{\alpha\in[0,1]}\Phi(\alpha)$ has a natural combinatorial interpretation
in terms of the Tanner graph $\G$.
Indeed, define the {\em 2-core} of $\G$ as the subgraph $\GG_*$ obtained by repeating the following operation.
\begin{quote}
While there is a variable node $x_i$ of degree one or less, remove that variable node along with the adjacent check node (if any).
\end{quote}
Of course, the 2-core might be empty.
We will see momentarily how the 2-core is related to the rank.

Extending prior results that dealt with the case that the degrees of all check nodes coincide~\cite{Cooper04},
we compute the likely number of variable and check nodes in the 2-core.
Since $\vd,\vk$ have finite second moments,
\begin{equation}\label{def:rho}
\rho=\max\{x\in[0,1]: \Phi'(x)=0\}
\end{equation}
is well-defined.
So are $D''(x),K''(x)$ for $x\in[0,1]$.
Let
\begin{equation}\label{def:f}
\phi(\alpha)=1-\alpha-\frac{1}{d}D'\left(1-K'(\alpha)/{k}\right)
\end{equation}
so that $\Phi'(\alpha)=\frac{d}{k}K''(\alpha)\phi(\alpha)$.

\begin{theorem} \label{thm:2core}
Assume that $\vd,\vk$ are such that $\phi'(\rho)<0$.
Let
 $\bfn^*$ and $\bfm^*$ be the number of  variable and constraint nodes in the 2-core, respectively.
Then
 \begin{align}\label{eqthm:2core}
\lim_{n\to\infty}\frac{\bfn^*}{n}&=1-D\left(1-\frac{K'(\rho)}{k}\right)-\frac{K'(\rho)}{k} D'\left(1-\frac{K'(\rho)}{k}\right),&
\lim_{n\to\infty}\frac{\bfm^*}{n}&=\frac dk K( \rho)\quad\mbox{in probability.}
 \end{align} 
 \end{theorem}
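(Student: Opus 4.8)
The plan is to analyse the peeling process that produces $\GG_*$ directly on the configuration (pairing) model underlying $\G$, and to recognise the two limits in \eqref{eqthm:2core} as the coordinates of the point at which the associated fluid limit hits the halting boundary. Reveal $\G$ through a uniformly random pairing of the variable- and check-side half-edges and run the peeling as a Markov chain: at each step pick a variable node of current degree at most one, delete it together with its unique check neighbour and all remaining edges of that check (if the degree equals one), whereupon the residual object is again a uniformly random pairing on the surviving half-edges. Tracking the vector of numbers of variable and check nodes of each residual degree (equivalently the residual edge-degree profiles) yields a chain whose one-step drift is a smooth function of the rescaled state, so Wormald's differential equation method shows that the trajectory concentrates, \whp\ and uniformly, around the solution of the corresponding system of ODEs; the process halts precisely when no degree-$\le1$ variable node is left, i.e.\ at $\GG_*$. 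This extends the analysis of~\cite{Cooper04} to non-constant check degrees.

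The second step is to solve the limiting ODE and read off the stationary values. A short computation identifies the terminal state with the largest fixed point $\beta=\rho$ of the monotone map $g(\beta)=1-D'(1-K'(\beta)/k)/d$ (and $\gamma=K'(\rho)/k$ for the check-to-variable messages): the equation $g(\beta)=\beta$ is exactly $\phi(\beta)=0$ from \eqref{def:f}, and since the iteration is started from ``everything alive'' it converges decreasingly to the \emph{largest} solution, which is $\rho$ by \eqref{def:rho}. Substituting back, a variable node lies in the $2$-core iff at least two of its check neighbours do, an event of probability $1-D(1-\gamma)-\gamma D'(1-\gamma)=1-D(1-K'(\rho)/k)-\tfrac{K'(\rho)}{k}D'(1-K'(\rho)/k)$; a check node lies in the $2$-core iff all of its variable neighbours do, of probability $K(\rho)$; multiplying the latter by $\vm/n\to d/k$ gives the second limit. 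As an independent check, the same bookkeeping can be done on the two-type Galton--Watson tree that is the local weak limit of $\G$, where survival of the root is computed by the warning-propagation recursion in which a variable-to-check edge survives iff at least one of the variable's other edges survives, and a check-to-variable edge survives iff all of the check's other edges do.

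\textbf{Main obstacles.} Two points need extra care. First, $\vd,\vk$ are unbounded, so Wormald's theorem does not apply verbatim; I would truncate the degrees at a slowly growing level $L$, carry out the analysis for the truncated instance, and bound the discarded mass using $\Erw[\vd^r]+\Erw[\vk^r]<\infty$ with $r>2$ (this forces $\Erw[\vd\vecone\{\vd>L\}]+\Erw[\vk\vecone\{\vk>L\}]\to0$, so high-degree nodes carry $o(n)$ half-edges and perturb $\bfn^*,\bfm^*$ by $o(n)$), then let $L\to\infty$. Second, and this is the genuine difficulty, the behaviour of the process near termination: the fluid limit must meet the halting boundary transversally, for otherwise neither the stopping time nor the terminal sizes are pinned down. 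This is exactly what the hypothesis $\phi'(\rho)<0$ buys — it makes $\rho$ a stable fixed point, so the density of degree-$\le1$ variable nodes vanishes with nonzero slope and the peeling removes only a geometrically decaying number of nodes per round once it is near $\GG_*$; the truncation and interchange-of-limits errors can then be absorbed. A robust way to package this is to combine the easy one-sided bound $\bfn^*\le(\text{variables surviving }t\text{ synchronous peeling rounds})$, whose $n\to\infty$ limit is the $t$-th iterate of the recursion and hence tends to the claimed value as $t\to\infty$, with the transversality estimate to obtain the matching lower bound; the check-node count follows in the same manner.
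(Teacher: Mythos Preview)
Your proposal is correct and lands on essentially the same argument as the paper, though the paper organises it along the lines of your ``robust packaging'' rather than your primary route. Concretely, the paper follows Molloy's synchronous-stripping analysis (not Wormald's differential-equation method): it runs $t$ parallel rounds, uses the local tree structure of $\G$ to derive the recursion $\rho_{t+1}=1-D'(1-K'(\rho_t)/k)/d$ and shows $\rho_t\downarrow\rho$ by monotonicity and the attractiveness coming from $\phi'(\rho)<0$, and obtains concentration of $|H_t|$ via Azuma on an edge-exposure martingale. The endgame is handled exactly as you outline under ``transversality'': after $I$ rounds the degree profile is within $O(\hat\eps)$ of its limit, and a one-at-a-time SLOWSTRIP has strictly negative drift (the computed drift is $-1+D''(1-K'(\rho)/k)K''(\rho)/(kd)$, which is $\phi'(\rho)/$positive factor and hence $<0$), so only $O(\hat\eps n)$ further vertices are removed.

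The one technical point where the paper differs from your plan is the treatment of unbounded degrees: rather than truncating at a level $L$ and letting $L\to\infty$, the paper proves directly (Claims in \Sec~\ref{sec:2core}) that \whp\ the maximum degree and hyperedge size are at most $(n\log n)^{1/(2+r)}$ and that $t$-neighbourhoods have size $O(n^{1/(2+r)}\log^2 n)$, which supplies the Lipschitz constant for Azuma and also verifies the local tree hypothesis for the recursion. Your truncation scheme would work too, but the paper's moment-based neighbourhood bounds are a bit cleaner since they avoid the interchange-of-limits step.
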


\noindent
We observe that the expressions on the r.h.s.\ of~\eqref{eqthm:2core} evaluate to zero if $\rho=0$.

\Thm~\ref{thm:2core} yields an upper bound on the rank of $\A$.
Indeed, upper bounding the rank is equivalent to lower bounding the nullity.
To this end, we count solutions to $\A x=0$ where $x_i=0$ for all variables in the 2-core.
Since the number of check nodes that have a neighbour outside the 2-core is $m-\vm^*$ and the number of variable nodes outside  is $n-\vn^*$, we obtain $\nul(\A)\geq n-\vn^*-(\vm-\vm^*)$.
Invoking \Thm~\ref{thm:2core} and using $\Phi'(\rho)=0$, we find that 
$\rank(\A)/n\leq 1-\Phi(\rho)+o(1)$ \whp\ 
Since the rank is also trivially upper-bounded by $\vm$ and $1-\Phi(0)=d/k\sim\vm/n$ \whp, these purely combinatorial deliberations show that \whp\
\begin{align}\label{eq2corebound}
\rank(\A)/n\leq 1-\max\{\Phi(0),\Phi(\rho)\}+o(1).
\end{align}
This graph-theoretic upper bound is tight if all vectors in $\ker(\A)$ are constant zero on the 2-core.
The following theorem shows that \eqref{eq2corebound}  is indeed tight in several interesting cases.

\begin{theorem}\label{Thm_tight}
Assume that
\begin{enumerate}[(i)]
\item either $\Var(\vd)=0$ or $\vd\disteq\Po_{\geq\ell}(\lambda)$ for an integer $\ell\geq1$ and $\lambda>0$, and
\item either $\Var(\vk)=0$ or $\vk\disteq\Po_{\geq\ell'}(\lambda')$ for an integer $\ell'\geq3$ and $\lambda'>0$.
\end{enumerate}
Then 
\begin{align*}
\lim_{n\to\infty}\rank(\A)/n&=1-\max\{\Phi(0),\Phi(\rho)\}&\mbox{in probability}.
\end{align*}
Moreover, assuming (i) and (ii) we have  $\phi'(\rho)<0$ unless $\pr(\bfd=1)=0$ and $2(k-1)\pr(\bfd=2)>d$.
\end{theorem}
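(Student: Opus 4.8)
\emph{Proof strategy.}
By \Thm~\ref{thm:rank} the rank of $\A$ satisfies $\rank(\A)/n\to1-\max_{\alpha\in[0,1]}\Phi(\alpha)$ in probability, while the combinatorial estimate \eqref{eq2corebound} gives $\rank(\A)/n\le1-\max\{\Phi(0),\Phi(\rho)\}+o(1)$ \whp\ So the first assertion reduces to the analytic fact that, under (i)--(ii),
$$\max_{\alpha\in[0,1]}\Phi(\alpha)=\max\{\Phi(0),\Phi(\rho)\};$$
i.e.\ $\Phi$ is maximised either at the endpoint $0$ or at the largest stationary point $\rho$. The plan is to pin down the sign changes of $\phi$ on $(0,1)$: since $\Phi'(\alpha)=\tfrac dk K''(\alpha)\phi(\alpha)$ and $K''>0$ on $(0,1]$, they govern the shape of $\Phi$.

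First I would change variables. As $K'$ increases from $K'(0)=0$ to $K'(1)=k$, the map $\alpha\mapsto z:=K'(\alpha)/k$ is an increasing bijection of $[0,1]$; writing $f(z)=1-\tfrac1d D'(1-z)$ and $G(z)=K'(f(z))/k$ one has $\phi(\alpha)=f(z)-\alpha$, so $\phi(\alpha)$ has the same sign as $h(z):=G(z)-z$, and the zeros of $\Phi'$ in $(0,1]$ correspond to those of $h$. Straight from the definitions $f$ is increasing and concave with $f(0)=0$, and $K'/k$ is increasing and convex with value and first derivative $0$ at $0$ (the latter since $\vk\ge3$); hence $G$ is increasing, $G(0)=G'(0)=0$, so $h'(0)=-1<0$, and $h(1)=K'(1-\pr[\vd=1]/d)/k-1\le0$ with equality iff $\pr[\vd=1]=0$. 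Under (i)--(ii) the functions $f$ and $K'/k$ admit explicit forms (powers, or incomplete exponential sums $E_m(t)=\sum_{j\ge\max(m,0)}t^j/j!$) that enter below.

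The crux is the claim that under (i)--(ii) the derivative $G'$ is log-concave on $(0,1)$, hence unimodal, so that $G'$ attains the value $1$ at most twice and $h$ has at most three monotone pieces on $[0,1]$. Granting this, since the first piece is decreasing (as $h'(0)<0$) and $h(0)=0$, $h(1)\le0$, the sign pattern of $h$ on $(0,1)$ can only be $(-)$, or $(-,+)$ and then $h(1)=0$, or $(-,+,-)$ and then $h(1)<0$. In the first case $\Phi$ is strictly decreasing on $[0,1]$ and $\rho\in\{0,1\}$, so $\max\Phi=\Phi(0)$; in the second $\rho=1$ and $\Phi$ first decreases then increases, so $\max\Phi=\max\{\Phi(0),\Phi(1)\}$; in the third $\rho$ is the parameter of the $(+,-)$-crossing, $\Phi$ decreases then increases on $[0,\rho]$ (no interior maximum) and decreases on $[\rho,1]$, so again $\max\Phi=\max\{\Phi(0),\Phi(\rho)\}$. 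This proves the first assertion; it also disposes of the claim $\phi'(\rho)<0$ when $\pr[\vd=1]>0$, for then $h(1)<0$, so either $\rho=0$ and $\phi'(\rho)=\phi'(0)=-1$, or we are in the pattern $(-,+,-)$ with $z_\rho$ strictly inside the last decreasing piece of $h$, whence $\phi'(\rho)=h'(z_\rho)<0$ (using $\phi'(\alpha)=h'(z)$ at a common zero).

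It remains to establish the log-concavity of $G'$ and to handle $\phi'(\rho)$ when $\pr[\vd=1]=0$. For the former I would write $(\log G')'(z)=\frac{(K'/k)''}{(K'/k)'}(f(z))\,f'(z)+\frac{f''}{f'}(z)$ and show both summands are non-increasing in $z$, where (i)--(ii) enter through the explicit formulas: for $\vd\sim\Po_{\ge\ell}(\lambda)$ one gets $f''/f'=-\lambda E_{\ell-3}(\lambda(1-z))/E_{\ell-2}(\lambda(1-z))$ (the constant $-\lambda$ for $\ell\le2$) and $\frac{(K'/k)''}{(K'/k)'}(x)=\lambda' E_{\ell'-3}(\lambda'x)/E_{\ell'-2}(\lambda'x)$ for $\vk\sim\Po_{\ge\ell'}(\lambda')$, with $-(\vd-2)/(1-z)$ and $(\vk-2)/x$ in the deterministic cases, and one then composes the decreasing $x\mapsto\frac{(K'/k)''}{(K'/k)'}(x)$ with the increasing $f$. \textbf{I expect the main obstacle to be precisely the monotonicity of these ratios of incomplete exponential series --- equivalently, that $G'$ meets the value $1$ at most twice on $(0,1)$}; the deterministic and small-truncation cases reduce to power- or exponential-function inequalities, while the general truncation needs a log-concavity estimate of the form $E_{m-1}E_{m+1}\le E_m^2$. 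Finally, when $\pr[\vd=1]=0$ we have $\rho=1$ by the above, and a direct computation gives $\phi'(1)=-1+2\pr[\vd=2]K''(1)/(dk)$; since $K''(1)=\Erw[\vk(\vk-1)]$, which equals $k(k-1)$ when $\Var(\vk)=0$, and $\pr[\vd=2]=0$ unless $\vd$ is constant or $\vd\sim\Po_{\ge2}$, evaluating this yields the stated threshold.
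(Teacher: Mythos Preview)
Your strategy is sound and lands on the same analytic core as the paper: both arguments reduce the first assertion to showing that $\phi$ has at most three zeros on $[0,1]$ (including the trivial one at $0$), after which the boundary data $\phi(0)=0$, $\phi'(0)=-1$, $\phi(1)\le 0$ pin down the sign pattern and hence the location of $\max\Phi$. The paper obtains the three-zero bound by computing $\phi''$ explicitly and showing it has at most one root, splitting into the four sub-cases (each combination of constant/truncated-Poisson for $\vd$ and $\vk$) and in each writing $\phi''=0$ in the form ``increasing $=$ decreasing''; the decreasing side in the Poisson cases is exactly the ratio $h_{m-1}/h_m$ you flag. Your change of variables $z=K'(\alpha)/k$ together with the log-concavity of $G'$ is a unified repackaging of the same computation: since $(\log G')'(z)=[K'''/K''](f(z))\cdot f'(z)+f''(z)/f'(z)$, with $f$ concave and the ratio $h_{m-1}/h_m$ (respectively $(k-2)/x$) decreasing in every case, all four sub-cases collapse into one line. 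This buys a cleaner presentation; the paper's case split buys explicit formulas that make the two-root boundary case easy to read off. Two points to tighten: for the first summand you also need that the factor $f'(z)=D''(1-z)/d$ is non-increasing (immediate from $D'''\ge0$) and that both factors are non-negative, not just the composition you sketch; and your final computation $\phi'(1)=-1+2\pr[\vd=2]K''(1)/(dk)$ yields the stated threshold $2(k-1)\pr[\vd=2]>d$ only when $\Var(\vk)=0$ so that $K''(1)=k(k-1)$ --- the paper's proof derives that translation only in its Case~A as well.
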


\begin{remark}
If $\pr(\bfd=1)=0$ and $2(k-1)\pr(\bfd=2)>d$, then the 2-core comprises the entire graph $\bfG$.
But the 2-core may be instable, i.e., a subgraph obtained by deleting just a few nodes may have a much smaller or empty 2-core.
\end{remark}

\begin{figure}
\includegraphics[height=5cm]{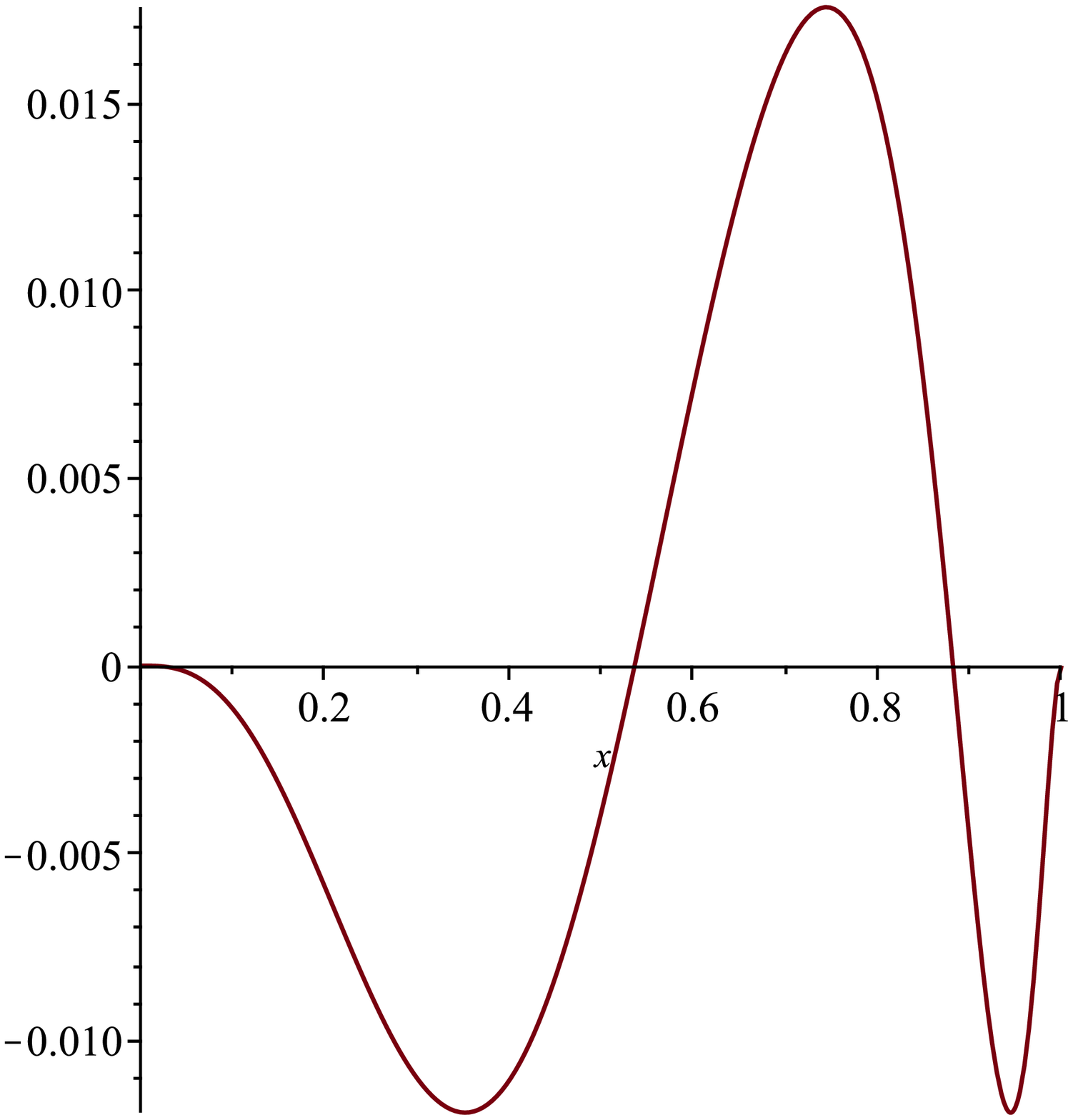}$\qquad$
\includegraphics[height=5cm]{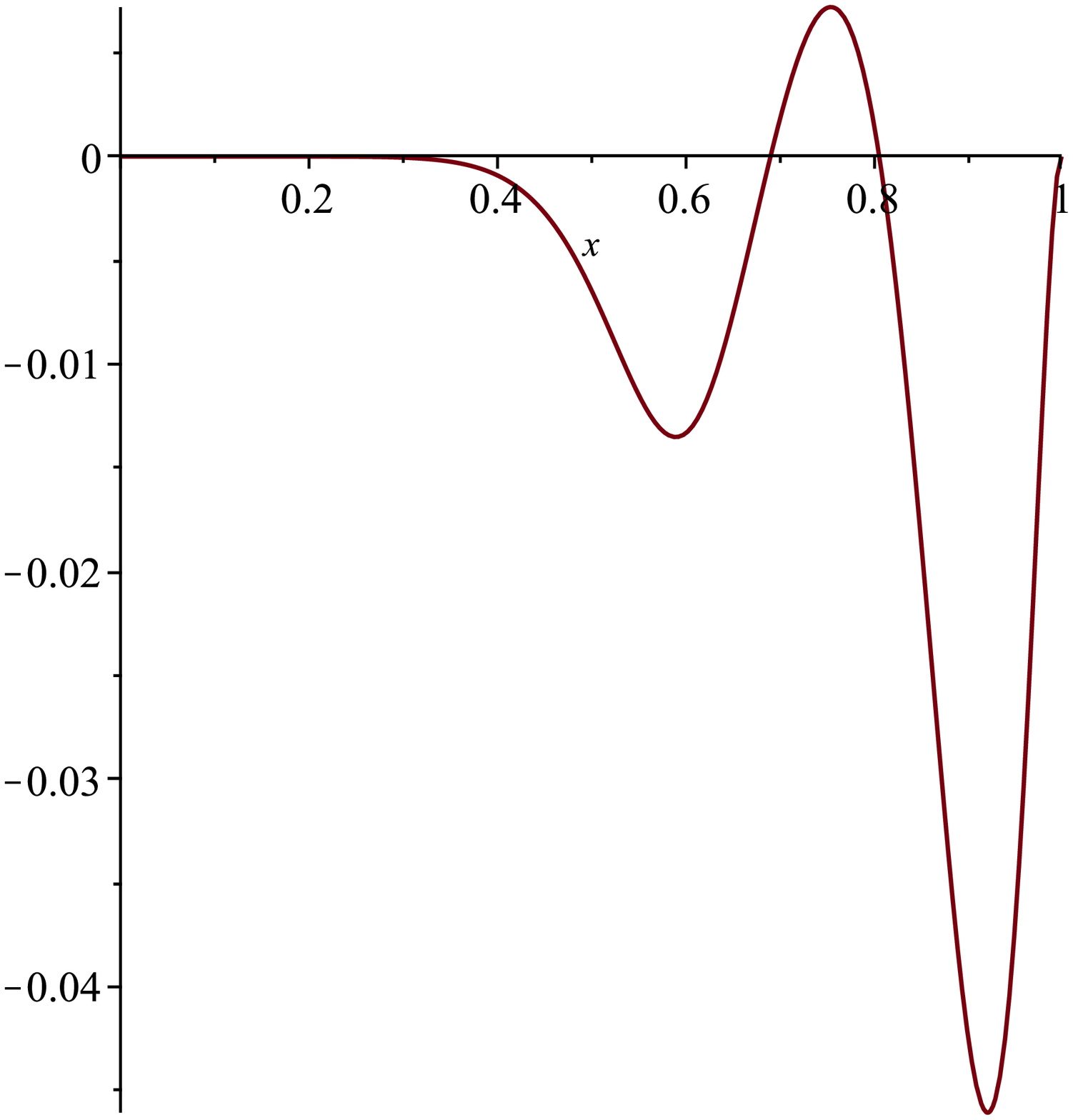}
\caption{The function $\Phi$ for Examples~\ref{Ex_Lelarge} (left) and~\ref{Ex_Amin} (right)}\label{Fig_plot}
\end{figure}

\noindent
\Thm~\ref{Thm_tight} verifies experimental findings of Alamino and Saad~\cite{AlaminoSaad} rigorously. 
Yet the following example of Lelarge~\cite{Lelarge} shows that \eqref{eq2corebound} is not universally tight.

\begin{example}\label{Ex_Lelarge}
Let $\vd,\vk$ be the variables whose probability generating functions read
$D(x)=K(x)=4x^3/5+x^{15}/5$.
Then $\rho=1$ and $\Phi(0)=\Phi(\rho)=0$ while $\max_{x\in[0,1]}\Phi(x)>0$.
Hence, the kernel of $\A$ contains vectors that are not constant zero on the 2-core \whp
\end{example}

\noindent
Here is an example where \eqref{eq2corebound} fails to hold even though $\Var(\vk)=0$.

\begin{example}\label{Ex_Amin}
Let $\vk=10$ deterministically and let $\vd$ be the variable with
$D(x)=(190x^3+7x^{200})/197$.
Then $\rho=1$, $\Phi(0)=\Phi(\rho)=0$ but $\max_{x\in[0,1]}\Phi(x)>0$.
\end{example}

\noindent
Figure~\ref{Fig_plot} shows the function $\Phi$ for the two examples.

\subsection{Related work}\label{Sec_related}
The rank problem has been studied both via rigorous methods in the combinatorics and coding theory communities and via physics-inspired non-rigorous approaches.

\subsubsection{Rigorous work}
Kovalenko~\cite{Kovalenko} studied the rank of dense random matrices over $\FF_2$, where each entry takes the value $0$ or $1$ with equal probability.
The result was subsequently extended to dense matrices over arbitrary finite fields $\FF_q$ with entries drawn from non-uniform distributions~\cite{Kovalenko2}.
The rank problem for sparser random matrices with an average number of $\Omega(\log n)$ non-zero entries per row 
was tackled by Balakin~\cite{Balakin1,Balakin2}, Bl\"omer, Karp and Welzl~\cite{BKW}  and Cooper~\cite{Cooper}.
Random matrices with precisely two non-zero entries per row have also been studied extensively~\cite{Kolchin1,Kolchin2}.
In this very particular situation the value of one variable implies that of the other, 
and thus the rank problem is intimately related to the component and cycle structure of the Tanner graph.
Kolchin's monograph~\cite{Kolchin} presents much of this classical work.

More recently sparse matrices where the number of non-zero entries scales linearly with the number of columns have received a great deal of attention.
Miller and Cohen~\cite{MillerCohen} investigated the biregular case, i.e., $\Var(\vd)=\Var(\vk)=0$ and $q=2$.
Furthermore, M\'easson,  Montanari and Urbanke derived a sufficient condition for $\vd,\vk$ to yield a random matrix $\A$ of full rank \whp~\cite{MMU}.
In the case $q=2$ and $\Var(\vk)=0$ this problem is equivalent to the $k$-XORSAT problem, a well-studied constraint satisfaction problem.
The most prominent case is that $\vd\disteq\Po(d)$.
In this case the precise threshold $d_k$ for the mean $d$ up to which the random matrix typically has full rank was determined by
Dubois and Mandler~\cite{DuboisMandler} for $q=2$, $k=3$ and by 
Dietzfelbinger, Goerdt, Mitzenmacher, Montanari, Pagh and Rink~\cite{Dietzfelbinger} and
Pittel and Sorkin~\cite{PittelSorkin} for $q=2$ and general  $k$.
Falke and Goerdt~\cite{GoerdtFalke} extended the result to $q\leq4$ and
Ayre, Coja-Oghlan, Gao and M\"uller~\cite{Ayre} to general $k$ and $q$.
They also provided a formula for the rank for $d$ beyond $d_k$.
For $q=2$ the same result was obtained independently by Cooper, Frieze and Pegden~\cite{CFP}.

The real rank of discrete random matrices has, of course, been studied as well.
For instance, \Komlos~\cite{Komlos} showed that a random $\pm1$ matrix is likely regular, a result that sparked a series of important papers on the asymptotic order of the singularity probability~\cite{BVM,KKS,TaoVu}.
Moreover, Bordenave, Lelarge and Salez~\cite{BLS} studied the real rank of the adjacency matrix of random graphs with given degrees.

A wealth of references on low-density parity check codes can be found in Richardson and Urbanke's monograph~\cite{RichardsonUrbanke}.
Chapter~3 contains a detailed discussion of the impact of the degree sequence.
The objective in coding theory is to recover the original codeword sent from the information received through a noisy channel.
This problem can be studied from both an algorithmic and an information-theoretic viewpoint.
Of course, the results depend on the channel model.
Ldpc codes are optimal in either respect on the binary erasure channel, where bits may be deleted but not altered~\cite{RichardsonUrbanke}.
Moreover, they are information-theoretically optimal on the binary symmetric channel, where bits may be flipped~\cite{GMU}.
In addition, a variant called spatially coupled ldpc codes even admit efficient algorithms for optimal decoding on binary memoryless symmetric channels~\cite{KYMP}.

The $k$-core of a random graph (the maximum subgraph with minimum degree at least $k$) was first investigated by Bollob\'{a}s~\cite{Bollobas} in the study of  $k$-connected subgraphs of random graphs. Since then it has received great attention. There had been continuing progress~\cite{Chvatal,Luczak91,Luczak92}  in estimating the threshold of the appearance of a non-empty $k$-core. A breakthrough was made by Pittel, Spencer and Wormald~\cite{Pittel}, who determined the sharp threshold of the $k$-core emergence. Their work inspired extensive research on the $k$-core of other random graph models such as random graphs with specified degrees~\cite{FR,FR2} and random hypergraphs~\cite{Cooper04,molloy2005cores}, as well as the development of new proof techniques for analysing the $k$-core~\cite{Cain,Janson,Kim,Riordan}. Particularly relevant to this paper are work by Molloy~\cite{molloy2005cores} and Cooper~\cite{Cooper04}. Cooper studied the $k$-core of random uniform hypergraphs with specified degrees. Our result in Theorem~\ref{thm:2core} deals with a more general random hypergraph model where the sizes of hyperedges do not need to be uniform, whereas Cooper's work deals with the general $k$-core (rather than 2-core). Molloy studied the $k$-core of a random uniform hypergraph without degree constraints. While Molloy's work is a special case of Cooper's, his proof is simple and easy to be adapted for more complicated random graph models. Our proof of Theorem~\ref{thm:2core} is a modification of Molloy's approach.

\subsubsection{The cavity method (and its caveats)}\label{Sec_cavity}
The cavity method, an analytic but non-rigorous technique inspired by the statistical mechanics of disordered systems, comes in two instalments, the simpler {\em replica symmetric ansatz} and the more elaborate {\em one-step replica symmetry breaking ansatz} (`1RSB').
The replica symmetric ansatz predicts that the rank of $\A$ converges in probability to the solution of an optimisation problem on an  infinite-dimensional space of probability measures.
To be precise, let $\cP(\FF_q)$ be the space of probability measures on $\FF_q$.
Identify this space with the standard simplex in $\RR^q$.
Further, let $\cP^2(\FF_q)$ be the (infinite-dimensional) space of probability measures on $\cP(\FF_q)$.
Given $\pi\in\cP^2(\FF_q)$ let $(\MU_{i,j})_{i,j\geq1}$ be a sequence of samples from $\pi$.
In addition, define $\hat\vk$ by
\begin{align}\label{eqhatvk}
\pr\brk{\hat\vk=\ell}&=\ell\cdot\pr\brk{\vk=\ell}/k&
(\ell\geq0),
\end{align}
and let $(\hat\vk_i)_{i\geq1}$ be copies of  $\hat\vk$ .
The random variables $(\MU_{h,i})_{h,i\geq1}$, $(\hat\vk_i)_{i\geq1}$ are mutually independent
and independent of $\vd,\vk$ and the $(\CHI_{h,i})_{h,i\geq1}$.
The {\em Bethe free entropy} is the functional $\cB:\cP^2(\FF_q)\to\RR$ defined by
\begin{align*}
\cB(\pi)=\Erw&\brk{\log_q\sum_{\sigma_1\in\FF_q}\prod_{i=1}^{\vd}\sum_{\sigma_2,\ldots,\sigma_{\hat\vk_i}\in\FF_q}
		\vecone\cbc{\sum_{j=1}^{\hat\vk_i}\sigma_j\CHI_{i,j}=0}\prod_{j=2}^{\hat\vk_i}\MU_{i,j}(\sigma_j)}\\
	&-\frac{d}k\Erw\brk{(\vk-1)\log_q\sum_{\sigma_1,\ldots,\sigma_{\vk}\in\FF_q}
			\vecone\cbc{\sum_{i=1}^{\vk}\sigma_i\CHI_{1,i}=0}
		\prod_{i=1}^{\vk}\MU_i(\sigma_i)}.
\end{align*}
The replica symmetric ansatz predicts that
\begin{align}\label{eqRS}
\lim_{n\to\infty}\frac1n\nul\A&=\sup_{\pi\in\cP^2(\FF_q)}\cB(\pi)&\mbox{in probability.}
\end{align}

For a detailed (heuristic) derivation of the Bethe free entropy and the prediction~\eqref{eqRS} we refer to~\cite[\Chap~14]{MM} and~\cite{AlaminoSaad}.
But let us briefly comment on the intended semantics of $\pi$.
Consider the Tanner graph $\GG$ representing the random matrix $\A$.
Suppose that variable node $x_i$ and check node $a_j$ are adjacent.
Then for $\sigma\in\FF_q$ we define the {\em message} $\mu_{\A,x_j\to a_i}(\sigma)$ from $x_j$ to $a_i$ as follows.
Obtain $\A_{x_j\to a_i}$ from $\A$ by changing the $ij$'th matrix entry to zero; this corresponds to deleting the $x_j$-$a_i$-edge from the Tanner graph.
Then $\mu_{\A,x_j\to a_i}(\sigma)$ is the probability that in a random vector $\SIGMA\in\ker\A_{x_j\to a_i}$ we have $\SIGMA_j=\sigma$.
In other words, $\mu_{\A,x_j\to a_i}\in\cP(\FF_q)$ is the marginal distribution of the value assigned to $x_j$ in a random vector from the kernel of $\A_{x_j\to a_i}$.
Further, define $\pi_{\A}$ as the empirical distribution of the messages $\mu_{\A,x_j\to a_i}$ over the edges of the Tanner graph;
in symbols,
\begin{align*}
\pi_{\A}&=\frac1{\sum_{i=1}^n\vd_i}\sum_{j=1}^n\sum_{i=1}^{\vm}\vecone\{\A_{ij}\neq0\}\delta_{\mu_{\A,x_j\to a_i}}\in\cP^2(\FF_q).
\end{align*}
Then the replica symmetric ansatz predicts that $\pi_{\A}$ is asymptotically a maximiser of the Bethe free energy, i.e., that
$\sup_{\pi\in\cP^2(\FF_q)}\cB(\pi)=\cB(\pi_{\A})+o(1)$ \whp\
Thus, the maximiser $\pi$ in \eqref{eqRS} is deemed to encode the messages whizzing along the edges of the Tanner graph in the limit $n\to\infty$.

A bit of linear algebra (that seems to have gone unnoticed in the physics literature) reveals that the messages actually have a very special form.
Namely, for any adjacent $x_j$ and $a_i$ either $\mu_{\A,x_j\to a_i}(\sigma)=1/q$ for all $\sigma\in\FF_q$ or $\mu_{\A,x_j\to a_i}(0)=1$.
In other words, $\mu_{\A,x_j\to a_i}$ is either the uniform distribution $q^{-1}\vecone\in\cP(\FF_q)$ or the atom $\delta_0$ on $0$.
In effect, the rank should come out as $\cB(\pi_\alpha)$ for a convex combination
\begin{align}\label{eqpialpha}
\pi_\alpha&=\alpha\delta_{\delta_0}+(1-\alpha)\delta_{q^{-1}\vecone}
\end{align}
of the atom on $\delta_0$ and the atom on the uniform distribution.
In fact, a simple calculation yields
$\Phi(\alpha)=\cB(\pi_\alpha).$
Thus, \Thm~\ref{thm:rank} shows that  $\rk\A/n$ converges to $1-\sup_{\alpha\in[0,1]}\cB(\pi_\alpha)$ in probability.
\Thm~\ref{thm:rank} vindicates the cavity method to that extent.
But a question that remains open is whether the Bethe free entropy admits other `spurious' maximisers
$\pi\in\cP^2(\FF_q)$ with $\cB(\pi)>\sup_{\alpha\in[0,1]}\cB(\pi_\alpha)$.

Alamino and Saad~\cite{AlaminoSaad} tackled the optimisation problem~\eqref{eqRS} directly (without noticing the restriction to $(\pi_\alpha)_{\alpha})$ by means of a numerical heuristic called population dynamics.
In all the examples that they studied they found that $\pi\in\{\pi_0,\pi_\rho\}$, with $\rho$ from \eqref{def:rho};
	in fact, it so happens that all their examples fall within the purview of \Thm~\ref{Thm_tight}.%
	\footnote{Strictly speaking, Alamino and Saad, who worked numerically with $n$ in the hundreds, reported $\pi\in\{\pi_0,\pi_1\}$. Indeed, $\rho\in\{0,1\}$ in the first class of examples that they studied, but not in the other two. For instance, in their example (3) the actual value of $\rho$ is either $0$ or a number strictly smaller than one, although $\rho>0.97$ whenever $\Phi(\rho)>\Phi(0)$.}
This led Alamino and Saad to conjecture that the maximiser $\pi$ is generally of this form, although they cautioned that further evidence seems necessary.
Example~\ref{Ex_Lelarge} provides a counterexample.

The 1RSB variant of the cavity method, which is conceptually more intricate than the replica symmetric version, is presented in \cite[\Chap~19]{MM}.
An exercise in that chapter asks the reader to verify that the rate of an ldpc code is generally equal to $1-\max\{\Phi(0),\Phi(\rho)\}+o(1)$.
This `prediction' rests on the hypothesis that either the random matrix has full rank, or all vectors in the kernel are constant zero on the 2-core.
\Thm~\ref{Thm_tight} gives sufficient conditions for this to be correct, while Example~\ref{Ex_Lelarge} provides a counterexample.

\subsection{Preliminaries and notation}\label{Sec_pre}
Throughout the paper we will be dealing with a double limit where $\eps\to0$ after $n\to\infty$ (`$\lim_{\eps\to0}\lim_{n\to\infty}$').
The standard $O$-notation refers to the limit $n\to\infty$ with $\eps$ fixed.
Hence, $O(1)$ hides a term that remains bounded as $n\to\infty$ but may be unbounded as $\eps\to 0$.
For instance, $1/\eps=O(1)$.
In addition, we will use the symbols $O_\eps$, $o_\eps$, etc.\ to refer to the joint limit $\eps\to0$, $n\to\infty$.
Thus, $O_\eps(1)$ denotes a term that remains bounded both in terms of $n$ and $\eps$ and $o_\eps(1)$ denotes a term that gets arbitrarily small in absolute value as $\eps$ gets small and $n$ large.
We will always assume tacitly that $n$ is sufficiently large for our various estimates to hold.

Frequently we will define random matrices indirectly by way of constructing their Tanner graphs.
Generally, suppose that $G=(V\cup F,E)$ is a bipartite multi-graph on a set $V$ of variable nodes and a set $F$ of check nodes. Thus, $E=(e_1,\ldots,e_\ell)$ is an ordered tuple of edges, each joining a node $v$ from $V$ with an $f$ from $F$, and $e_i=e_j$ is allowed for $i\neq j$.
With $\CHI_1,\CHI_2,\ldots$ mutually independent copies of $\CHI$,
we define a random matrix $\vA(G)$ with columns indexed by $V$ and rows indexed by $F$ by letting
\begin{align}\label{eqTanner}
\vA_{ax}(G)&=\sum_{i=1}^\ell \CHI_i\vecone\{e_i=ax\}&(x\in V,\ a\in F).
\end{align}	
Hence, each $a$-$x$-edge in $G$ contributes one summand to $\vA_{ax}(G)$.
In particular, the matrix $\vA(G)$ has {\em at most} $\ell$ non-zero entries, as cancellations may occur in~\eqref{eqTanner}.

We use standard notation for graphs and multi-graphs.
For instance, for a vertex $v$ of a multi-graph $G$ we denote by $\partial_G v$ the set of neighbours.
More generally, for an integer $\ell\geq1$ we let $\partial_G^\ell v$ be the set of vertices at distance precisely $\ell$ from $v$.
We omit the reference to $G$ where possible.
Further, if $G$ is the Tanner graph of a matrix $A$, $a$ is a check node and $\sigma\in\FF_q^{\partial a}$ assigns a value from $\FF_q$ to each variable node adjacent to $a$, then we write $\sigma\models_Aa$ if $\sum_{x\in\partial a}A_{ax}\sigma_x=0$.
We omit the reference to $A$ where it emerges from the context.

Let us denote the set of probability measures on a finite set $\cX$ by $\cP(\cX)$.
We will be working a fair bit with probability distributions on discrete cubes $\Omega^{I}$, with $\Omega,I$ finite sets.
For a subset $J\subset I$ and $\mu\in\cP(\Omega^I)$ we denote by $\mu_{J}\in\cP(\Omega^J)$ the distribution that $\mu$ induces on the coordinates $J$:
\begin{align*}
\mu_{J}(\sigma)&=\mu\bc{\cbc{\tau\in\Omega^{I}:\forall j\in J:\tau_j=\sigma_j}}&(\sigma\in\Omega^{J}).
\end{align*}
If $J=\{j_1,\ldots,j_\ell\}$ is given explicitly, we use the shorthand $\mu_{J}=\mu_{j_1,\ldots,j_\ell}$.

Asymptotic properties of discrete distributions $\mu\in\cP(\Omega^{I})$ with $\Omega$ fixed and $|I|$ large will play an important role.
Following~\cite{Victor}  we say that $\mu$ is {\em $(\eps,\ell)$-symmetric} if
\begin{align*}
\sum_{i_1,\ldots,i_\ell\in I}\dTV\bc{\mu_{i_1,\ldots,i_\ell},\mu_{i_1}\tensor\cdots\tensor\mu_{i_\ell}}&<\eps |I|^\ell.
\end{align*}
If $\ell=2$ we just say that $\mu$ is $\eps$-symmetric.
Thus, loosely speaking, a measure $\mu$ is $\eps$-symmetric if for `most' pairs $i_1,i_2$ the joint distribution $\mu_{i_1,i_2}$ is `close' to the product  $\mu_{i_1}\tensor\mu_{i_2}$ of the marginals.
The following lemma shows that $\delta$-symmetry implies $(\eps,\ell)$-symmetry for large enough $|I|$ and small enough $\delta$.

\begin{lemma}[\cite{Victor}]\label{lem:l-wise}
For any $\eps>0$, $\ell\ge 3$ there exists $\delta=\delta(\eps,\ell)>0$ such that for all sets $I$ of size $|I|>1/\delta$ and all $\mu\in \P(\Omega^I)$ the following is true: 
if $\mu$ is $\delta$-symmetric, then $\mu$ is $(\eps,\ell)$-symmetric.
\end{lemma}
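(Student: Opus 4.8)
The plan is to argue by induction on $\ell$, bootstrapping the hypothesis from pairs ($\ell=2$, where $\delta\le\eps$ suffices) to $\ell$-tuples. It is convenient to restate $(\eps,\ell)$-symmetry probabilistically: $\mu$ is $(\eps,\ell)$-symmetric iff $\Erw[\dTV(\mu_{i_1,\dots,i_\ell},\mu_{i_1}\tensor\cdots\tensor\mu_{i_\ell})]<\eps$ when $i_1,\dots,i_\ell$ are independent and uniform on $I$; coinciding indices contribute at most $O(\ell^2/|I|)<\ell^2\delta$, which is negligible once $|I|>1/\delta$, so we may treat the indices as distinct. For the inductive step the triangle inequality gives
\[
\dTV\bc{\mu_{i_1,\dots,i_\ell},\textstyle\bigotimes_{s=1}^{\ell}\mu_{i_s}}
\le \dTV\bc{\mu_{i_1,\dots,i_\ell},\mu_{i_1,\dots,i_{\ell-1}}\tensor\mu_{i_\ell}}
+\dTV\bc{\mu_{i_1,\dots,i_{\ell-1}},\textstyle\bigotimes_{s=1}^{\ell-1}\mu_{i_s}},
\]
where in the second term we used that tensoring both arguments with the common factor $\mu_{i_\ell}$ does not change the total variation distance. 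Taking expectations, the second summand is exactly the $(\,\cdot\,,\ell-1)$-symmetry defect, hence $<\eps/2$ by the inductive hypothesis, provided $\delta$ is chosen small enough in terms of $\ell$.

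The heart of the matter is the first summand. Here I would use the elementary identity $\dTV(\nu_{X,Y},\nu_X\tensor\nu_Y)=\Erw_{\hat x\sim\nu_X}\dTV(\nu_{Y\mid X=\hat x},\nu_Y)$ to rewrite $\Erw[\dTV(\mu_{i_1,\dots,i_\ell},\mu_{i_1,\dots,i_{\ell-1}}\tensor\mu_{i_\ell})]$ as
\[
\Erw_{i_1,\dots,i_\ell}\Erw_{\hat\sigma\sim\mu}\brk{\dTV\bc{\mu^{\{i_1,\dots,i_{\ell-1}\},\hat\sigma}_{i_\ell},\ \mu_{i_\ell}}},
\]
the expected displacement of the marginal at a fresh random coordinate $i_\ell$ caused by pinning the $\ell-1$ random coordinates $i_1,\dots,i_{\ell-1}$ to the values of a sample $\hat\sigma$. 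Revealing $\sigma_{i_1},\dots,\sigma_{i_{\ell-1}}$ one coordinate at a time and applying the triangle inequality together with the same conditioning identity to each increment bounds this by $\sum_{t=1}^{\ell-1}$ of the expected pair-defect $\dTV(\nu_{i_t,i_\ell},\nu_{i_t}\tensor\nu_{i_\ell})$ of the partially pinned measure $\nu=\mu^{\{i_1,\dots,i_{t-1}\},\hat\sigma}$, averaged over $i_1,\dots,i_t,i_\ell$ and $\hat\sigma$. For $t=1$ the measure $\nu$ is just $\mu$ itself, so that layer is $<\delta$ by hypothesis.

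For $t\ge2$ one needs the statement that pinning a \emph{bounded} number of random coordinates of a $\delta$-symmetric measure leaves it $\eta(\delta,\ell)$-symmetric on average, with $\eta(\delta,\ell)\to0$ as $\delta\to0$ for each fixed $\ell$; this is the content of the pinning lemma familiar from the theory of Gibbs measures, whose proof rests on the boundedness of the Shannon entropy $H(\mu)\le|I|\log q$ and a martingale/telescoping argument over the pinned coordinates, and is uniform in $|I|$. Combining the two layers, $\Erw[\dTV(\mu_{i_1,\dots,i_\ell},\bigotimes_s\mu_{i_s})]\le\eps/2+(\ell-1)\eta(\delta,\ell)$, and choosing $\delta=\delta(\eps,\ell)$ small makes the right-hand side $<\eps$. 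I expect the main obstacle to be precisely this pinning estimate: reconciling the usual pinning lemma, which is deployed to \emph{create} pair-symmetry by pinning many coordinates, with the present situation in which we pin only $O(\ell)$ coordinates and must instead show that the near-perfect pair-symmetry of $\mu$ is \emph{preserved}, while keeping every error term uniform in $|I|$ and vanishing in the iterated limit $\lim_{\delta\to0}\lim_{|I|\to\infty}$ for each fixed $\ell$. A secondary bookkeeping point is the dependence $\delta=\delta(\eps,\ell)$: the inductive hypothesis already requires $\delta$ to be small for $\ell-1$, so one sets up the induction with a sequence $\delta_2>\delta_3>\cdots$ and checks that $\delta_\ell$ can be taken to depend only on $\eps$ and $\ell$.
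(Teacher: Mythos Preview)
The paper does not prove this lemma; it is quoted from \cite{Victor} and used as a black box, so there is no in-paper argument to compare against.

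Your telescoping decomposition is sound: the triangle inequality together with the identity $\dTV(\nu_{X,Y},\nu_X\otimes\nu_Y)=\Erw_{\hat x\sim\nu_X}\dTV(\nu_{Y\mid X=\hat x},\nu_Y)$ does reduce the $\ell$-wise defect to a sum over $t=1,\dots,\ell-1$ of expected pair-defects of the measure $\mu$ pinned at $t-1$ random coordinates, and the $t=1$ layer is indeed just $D_2(\mu)<\delta$. The gap you flag, however, is not a technicality but the whole difficulty.

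The pinning lemma you invoke (cf.\ \Lem~\ref{Lemma_0pinning} in this paper) \emph{creates} $\eps$-symmetry by pinning $\Theta(\eps)$ random coordinates from an arbitrary measure; it says nothing about a $\delta$-symmetric measure \emph{remaining} pair-symmetric after a handful of pins. Worse, the statement you need is circular. Already for $\ell=3$, the $t=2$ layer is
\[
\Erw_{i,\hat\sigma,j,k}\dTV\bc{(\mu\mid\sigma_i{=}\hat\sigma_i)_{j,k},\ (\mu\mid\sigma_i{=}\hat\sigma_i)_j\otimes(\mu\mid\sigma_i{=}\hat\sigma_i)_k},
\]
and by your own identities (run in both directions with the triangle inequality) this conditional pair-defect, together with $D_2(\mu)<\delta$, is small \emph{if and only if} $D_3(\mu)$ is small. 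So bounding the $t=2$ term presupposes the conclusion for $\ell=3$; more generally the $t$-th layer encodes $(t{+}1)$-wise decorrelation, and the induction never gets off the ground. The argument in \cite{Victor} proceeds by a different mechanism that avoids this loop.
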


Throughout we keep the assumptions on the distributions $\vd,\vk$ listed in \Sec~\ref{Sec_intro}.
In particular, $\Erw[\vd^r]+\Erw[\vk^r]<\infty$ for some $r>2$.
We write $\gcd(\vk)$ and $\gcd(\vd)$ for the greatest common divisor of the support of $\vd$ and $\vk$, respectively.
When working with $\A$ we tacitly assume that $\gcd(\vk)$ divides $n$.
But let us make sure that $\A$ is well-defined in the first place.

\begin{proposition}\label{Lemma_welldef}
With probability $\Omega(n^{-1/2})$ over the choice of $\vm$, $(\vd_i)_{i\geq1}$, $(\vk_i)_{i\geq1}$ the condition \eqref{eqWellDef1} is satisfied and there exists a simple Tanner graph $\G$ with variable degrees $\vd_1,\ldots,\vd_n$ and check degrees $\vk_1,\ldots,\vk_{\vm}$.
\end{proposition}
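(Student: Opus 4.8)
The plan is to work with the random vector $(\vm,(\vd_i)_{i\ge 1},(\vk_i)_{i\ge 1})$ and analyse the event that $\eqref{eqWellDef1}$ holds together with the existence of a simple bipartite graph realising the degree sequences. I would split the argument into two independent parts: first, that the divisibility/parity condition $\sum_{i=1}^n\vd_i=\sum_{i=1}^{\vm}\vk_i$ holds with probability $\Omega(n^{-1/2})$, and second, that conditionally on this event a simple Tanner graph with the prescribed degrees exists with probability $\Omega(1)$. Since the two sums are independent once $\vm$ is fixed, and $\vm\disteq\Po(dn/k)$ concentrates at $m=dn/k$ up to fluctuations of order $\sqrt n$, the first part is essentially a local central limit theorem computation.

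For the first part, condition on $\vm=m$ for $m$ in a window of width $\Theta(\sqrt n)$ around $dn/k$; each such value occurs with probability $\Omega(n^{-1/2})$ by the local CLT for the Poisson distribution. Given $\vm=m$, the quantities $S_n=\sum_{i=1}^n\vd_i$ and $T_m=\sum_{i=1}^m\vk_i$ are independent sums of i.i.d.\ integer random variables with finite variance (indeed finite $r$-th moment, $r>2$), means $dn$ and $km$ respectively, and the means agree when $m=dn/k$. By the local CLT for lattice distributions — using that $\gcd(\vd)$ and $\gcd(\vk)$ divide the relevant sums and that $\gcd(\vk)\mid n$ by assumption, so the supports of $S_n$ and $T_m$ intersect appropriately — the probability that $S_n=T_m$ is $\Omega(n^{-1/2})$ for a suitable choice of $m$ in the window. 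Multiplying the $\Omega(n^{-1/2})$ from the choice of $m$ against a constant would naively give $\Omega(n^{-1})$, so the cleaner route is: sum over all $m$ in the window, using $\pr[\vm=m]=\Omega(n^{-1/2})$ uniformly and $\pr[S_n=T_m\mid\vm=m]=\Omega(n^{-1/2})$ for a constant fraction of these $m$, and observe that there are $\Theta(\sqrt n)$ such values of $m$, yielding $\Omega(\sqrt n)\cdot\Omega(n^{-1/2})\cdot\Omega(n^{-1/2})=\Omega(n^{-1/2})$ overall.

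For the second part, I would invoke the standard theory of simple graphs with given degree sequences: conditioned on $\eqref{eqWellDef1}$, form the configuration model (pairing $\sum\vd_i$ variable half-edges with $\sum\vk_i$ check half-edges uniformly), and show that the resulting multigraph is simple with probability bounded away from $0$. Because $\vd,\vk$ have finite $r$-th moments with $r>2$, the expected number of double edges (and self-loops do not arise in the bipartite setting) is $O(1)$, and a second-moment or switching argument shows the multigraph is simple with probability $\Omega(1)$; this is classical (Bollob\'as, Bender--Canfield, McKay--Wormald), and the finite-$(2+\delta)$-moment condition is exactly what is needed to control the count of multiple edges. Combining, the probability that $\eqref{eqWellDef1}$ holds \emph{and} a simple realisation exists is $\Omega(n^{-1/2})\cdot\Omega(1)=\Omega(n^{-1/2})$.

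The main obstacle is the local CLT step in the first part: one must be careful about the lattice structure, since $S_n$ lives on the lattice $\gcd(\vd)\ZZ$ (shifted) and $T_m$ on $\gcd(\vk)\ZZ$ (shifted), and their common value must be attainable. The hypothesis that $\gcd(\vk)\mid n$ is precisely what guarantees the shifted lattices are compatible for an appropriate $m$; verifying this compatibility and checking that the span of $S_n-T_m$ is the full expected lattice (so that the local CLT gives the sharp $\Omega(n^{-1/2})$ rather than $0$) is the delicate point. Everything else — the Poisson local CLT, the finite-variance CLT for the degree sums, and the simplicity of the configuration model — is routine given the stated moment assumptions.
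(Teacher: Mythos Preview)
Your proposal is correct and takes essentially the same approach as the paper. The paper likewise splits into (i) $\pr[\cD]=\Theta(n^{-1/2})$ via the local limit theorem (Corollary~\ref{Cor_llt}, with an explicit case analysis on whether $\Var(\vd),\Var(\vk)$ vanish to handle the lattice issue you correctly flag as the delicate point), and (ii) conditional simplicity of the configuration model with probability $\Omega(1)$ (Lemma~\ref{Lemma_simple}, via Poisson convergence of the multi-edge count).
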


\noindent
The proof of \Prop~\ref{Lemma_welldef} is based on technical but standard arguments; we defer it to \Sec~\ref{Sec_welldef}.

In addition to the size-biased random variable $\hat\vk$ from \eqref{eqhatvk} we also define $\hat\vd$ by
\begin{align*}
\pr\brk{\hat\vd=\ell}&=\ell\pr\brk{\vd=\ell}/{d}&(\ell\geq0).
\end{align*}
Throughout the paper we let $(\vk_i,\vd_i,\hat\vk_i,\hat\vd_i)_{i\geq1}$ denote mutually independent copies of $\vk,\vd,\hat\vk,\hat\vd$.
Unless specified otherwise, all these random variables are assumed to be independent of any other sources of randomness.
Finally, we need the following basic lemma on sums of independent random variables.

\begin{lemma}\label{Lemma_sums}
Let $r>2$, $\delta>0$ and suppose that $(\vec\lambda_i)_{i\geq1}$ are independent copies of a random variable
$\vec\lambda\geq0$ with $\Erw[\vec\lambda^r]=O(1)$.
Further, let $s=\Theta(n)$.
Then
$\pr\brk{\abs{\sum_{i=1}^{s}(\vec\lambda_i-\Erw[\vec\lambda])}>\delta n}=o(1/n).$
\end{lemma}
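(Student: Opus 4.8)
The plan is to establish the tail bound $\pr\brk{\abs{\sum_{i=1}^s(\vec\lambda_i-\Erw[\vec\lambda])}>\delta n}=o(1/n)$ by splitting each summand into a bounded part and an unbounded part and handling the two parts by different concentration tools. First I would fix a slowly growing truncation level, say $T=T(n)\to\infty$ with $T=n^{o(1)}$ (for concreteness $T=\log n$ will do), and write $\vec\lambda_i=\vec\lambda_i'+\vec\lambda_i''$ where $\vec\lambda_i'=\vec\lambda_i\vecone\{\vec\lambda_i\le T\}$ and $\vec\lambda_i''=\vec\lambda_i\vecone\{\vec\lambda_i>T\}$. Correspondingly I would decompose the centred sum into a contribution from the $\vec\lambda_i'$ and a contribution from the $\vec\lambda_i''$, and aim to bound each by $\delta n/3$ (absorbing the error in recentring the truncated variables into the third $\delta n/3$).

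For the truncated part, the variables $\vec\lambda_i'-\Erw[\vec\lambda_i']$ are bounded by $T$ in absolute value and their variance is at most $\Erw[\vec\lambda^2]=O(1)$ since $r>2$. Hence Bernstein's inequality (or Azuma--Hoeffding, since $s=\Theta(n)$) gives $\pr\brk{\abs{\sum_{i=1}^s(\vec\lambda_i'-\Erw[\vec\lambda_i'])}>\delta n/3}\le 2\exp(-c\,\delta^2 n^2/(sT^2+\delta n T))$, and with $s=\Theta(n)$ and $T=\log n$ this is $\exp(-\Omega(n/\log^2 n))$, which is $o(1/n)$ with room to spare. For the large part, I would use a crude union bound: $\Erw[\vec\lambda^r]=O(1)$ gives $\pr\brk{\vec\lambda>T}=O(T^{-r})$ by Markov, and more to the point $\Erw[\vec\lambda\vecone\{\vec\lambda>T\}]=O(T^{1-r})\to0$, so the expected contribution $\Erw[\sum_i\vec\lambda_i'']=s\cdot O(T^{1-r})=o(n)$ once $T$ grows. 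To turn this expectation bound into a probability bound of the required strength I would again truncate the tails: either observe that with probability $1-o(1/n)$ all $\vec\lambda_i$ are at most $n^{1/r}/\log n$ (again by Markov and a union bound over $s=O(n)$ indices, using $\Erw[\vec\lambda^r]=O(1)$, so that $\pr\brk{\exists i:\vec\lambda_i>n^{1/r}/\log n}\le s\cdot O(\log^r n/n)=o(1/n)$ provided $r<$ slightly more than $1$—this is too weak, so instead), I would apply Markov's inequality to $\sum_i\vec\lambda_i''$ at order $r$: $\pr\brk{\sum_i\vec\lambda_i''>\delta n/3}\le (3/(\delta n))^r\Erw[(\sum_i\vec\lambda_i'')^r]$, and bound $\Erw[(\sum_i\vec\lambda_i'')^r]$ using the Marcinkiewicz--Zygmund or Rosenthal inequality for sums of independent nonnegative variables. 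This yields $\Erw[(\sum_i\vec\lambda_i'')^r]=O(s\,\Erw[(\vec\lambda'')^r]+(s\,\Erw[\vec\lambda''])^r)$, and since $\Erw[(\vec\lambda'')^r]\le\Erw[\vec\lambda^r]=O(1)$ and $\Erw[\vec\lambda'']=O(T^{1-r})$, the bound becomes $O(n+ (nT^{1-r})^r)=O(n)$, so $\pr\brk{\sum_i\vec\lambda_i''>\delta n/3}=O(n\cdot n^{-r})=O(n^{1-r})=o(1/n)$ because $r>2$.

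The main obstacle is getting the power of $n$ in the tail right: a single application of Markov at order $r$ to the whole centred sum would only give $O(n^{-r/2})$ or so after using variance-type bounds, which is $o(1/n)$ when $r>2$ but requires care with the $r$-th moment of a sum; the cleanest route is the Rosenthal/Marcinkiewicz--Zygmund inequality applied directly to $\sum_i\vec\lambda_i''$, which is exactly where the hypothesis $r>2$ is used to convert $\Erw[\vec\lambda^r]<\infty$ into a rate faster than $1/n$. The truncated part is comparatively harmless: any sub-Gaussian-type inequality kills it at an exponential rate. Finally I would note that the recentring error $\abs{\sum_i(\Erw[\vec\lambda_i']-\Erw[\vec\lambda_i])}=s\,\Erw[\vec\lambda'']=o(n)$ is deterministically below $\delta n/3$ for $n$ large, closing the argument.
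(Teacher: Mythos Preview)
Your overall strategy (truncate, Bernstein on the bounded part, Rosenthal plus Markov on the tail) is sound, but the arithmetic in the tail step fails for your choice $T=\log n$. You assert $(nT^{1-r})^r=O(n)$; in fact $(nT^{1-r})^r=n^r/(\log n)^{r(r-1)}$, so Markov at order $r$ only gives $\pr\brk{\sum_i\vec\lambda_i''>\delta n/3}=O\bc{(\log n)^{-r(r-1)}}$, which is not $o(1/n)$. The cure is easy: take $T=n^{1/r}$. Then $s\,\Erw[\vec\lambda'']=O(nT^{1-r})=O(n^{1/r})$, hence $(s\,\Erw[\vec\lambda''])^r=O(n)$ and your Rosenthal/Markov bound becomes $O(n\cdot n^{-r})=O(n^{1-r})=o(1/n)$; on the bounded side Bernstein still yields an exponent of order $-n/T=-n^{1-1/r}$, which is ample since $r>2$. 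Alternatively, keep $T=\log n$ but apply Rosenthal to the \emph{centred} tail sum $\sum_i(\vec\lambda_i''-\Erw[\vec\lambda_i''])$: the offending $(s\,\Erw[\vec\lambda''])^r$ term is then replaced by $(s\,\Erw[(\vec\lambda'')^2])^{r/2}=O(n^{r/2})$, giving $O(n^{-r/2})=o(1/n)$.

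For comparison, the paper's proof avoids moment inequalities altogether. It partitions the range of $\vec\lambda$ into a fixed head $[1,L]$ together with $O(\log n)$ geometric shells $(L(1+\eta)^{h-1},L(1+\eta)^h]$ up to a cap of $n/\ln^9 n$, applies the Chernoff bound to the \emph{count} $\sum_i\vecone\{\vec\lambda_i\in\text{shell}\}$ in each shell (each count being binomial), and then reassembles the sum from the shell counts. That route is more elementary in that it uses only Chernoff, at the price of a union bound over logarithmically many shells; yours is the standard analytic template and, once $T$ is chosen correctly, is equally short.
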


\noindent
For the sake of completeness the proof of \Lem~\ref{Lemma_sums} is included in the appendix.

\section{Proof strategy}

\noindent
In this section we survey the techniques upon which the proofs of the theorems stated in \Sec~\ref{Sec_intro} are based.
We will also compare these techniques with those employed in prior work.
Indeed, in contrast to much of the previous work on the rank of random matrices, based on combinatorial and graph-theoretic considerations 
(e.g.,~\cite{Cooper,CFP}), we will take a more probabilistic viewpoint that harnesses ideas from mathematical physics.
The protagonist of this approach is the Boltzmann distribution.

\subsection{The Boltzmann distribution}\label{Sec_Boltzmann}
Suppose that $A$ is an $m\times n$-matrix over $\FF_q$.
Borrowing a term from statistical physics, we refer to the probability distribution $\mu_A$ on $\FF_q^n$ defined by
\begin{align*}
\mu_A(\sigma)&=\vecone\{\sigma\in\ker(A)\} q^{-\nul(A)}&(\sigma\in\FF_q^n)
\end{align*}
as the {\em Boltzmann distribution} of $A$.
Thus, $\mu_A$ induces the uniform distribution on the kernel of $A$.
We denote a random vector drawn from $\mu_A$  by $\SIGMA_A=(\SIGMA_{A,1},\ldots,\SIGMA_{A,n})\in\FF_q^n$, or just by $\SIGMA=(\SIGMA_1,\ldots,\SIGMA_n)$ where $A$ is apparent.

The Boltzmann distribution is important to us because the proofs of the main results hinge on coupling arguments.
Roughly speaking, we will be dealing with matrices obtained from $\A$ by adding a few rows and columns.
We will need to study the ensuing change in nullity.
Crucially, the total number of new rows and columns will typically be bounded (i.e., independent of $n$), and each  will merely contain a bounded number of non-zero entries.
Let us investigate how the Boltzmann distribution can then be harnessed to trace the nullity.

Suppose that the $m'\times n'$-matrix $A'$ is obtained from the $m\times n$-matrix $A$ by adding  rows and columns:
\begin{align*}
A'&=\begin{pmatrix}A&0\\A''&A'''	\end{pmatrix},
\end{align*}
with $A''$ of size $(m'-m)\times n$ and $A'''$ of size $(m'-m)\times(n'-n)$.

\begin{fact}\label{Fact_nullity}
Let $I\subset[n]$ be the set of column indices where $A''$ has a non-zero entry and let $J=\{n+1,\ldots,n'\}$. Then
\begin{align}\label{eqFact_nullity}
\nul(A')-\nul(A)&=\log_q\sum_{\sigma\in\FF_q^J,\tau\in\FF_q^I}\mu_{A,I}\bc{\tau}
	\prod_{h=1}^{m'-m}\vecone\cbc{\sum_{i\in I}A''_{hi}\tau_j+\sum_{j\in J}A'''_{h,j-n}\sigma_{j}=0}.
\end{align}
\end{fact}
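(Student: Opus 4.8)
The plan is to compute $\nul(A')$ directly as the $q$-logarithm of the number of vectors in $\ker(A')$, and then to recognise the count as a conditional expectation over the Boltzmann distribution $\mu_A$. First I would write a generic vector in $\FF_q^{n'}$ as $(x,y)$ with $x\in\FF_q^n$ indexing the old columns and $y\in\FF_q^J$ indexing the new columns $J=\{n+1,\dots,n'\}$. Because of the block-lower-triangular shape $A'=\begin{pmatrix}A&0\\A''&A'''\end{pmatrix}$, the condition $A'(x,y)^\top=0$ splits into the two requirements $Ax=0$ (the top block, since the upper-right block is zero) and $A''x+A'''y=0$ (the bottom block, a system of $m'-m$ scalar equations indexed by $h=1,\dots,m'-m$). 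Hence
\begin{align*}
q^{\nul(A')}=|\ker(A')|=\sum_{x\in\ker(A)}\sum_{y\in\FF_q^J}\prod_{h=1}^{m'-m}\vecone\cbc{\sum_{i=1}^n A''_{hi}x_i+\sum_{j\in J}A'''_{h,j-n}y_j=0}.
\end{align*}

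Next I would factor out $q^{\nul(A)}$ to turn the sum over $\ker(A)$ into an average against $\mu_A$: dividing and multiplying by $q^{\nul(A)}=|\ker(A)|$ and using $\mu_A(x)=\vecone\{x\in\ker(A)\}q^{-\nul(A)}$ gives $q^{\nul(A')-\nul(A)}=\sum_{x,y}\mu_A(x)\prod_h\vecone\{\cdots\}$. The final simplification is to observe that the indicator constraints only involve the coordinates $x_i$ with $i\in I$, where $I\subseteq[n]$ is the set of columns in which $A''$ has a nonzero entry; all other coordinates of $x$ are irrelevant. Summing $\mu_A(x)$ over the free coordinates collapses $\mu_A$ to its marginal $\mu_{A,I}$ on $I$, which yields precisely the right-hand side of~\eqref{eqFact_nullity} after renaming $x\restriction_I$ to $\tau$ and $y$ to $\sigma$, and taking $\log_q$ of both sides. (One should note the harmless typo-level point that the exponent on $\tau$ in the displayed formula is summed over $i\in I$; the index $\tau_j$ there is $\tau_i$.)

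There is essentially no obstacle here: the statement is a bookkeeping identity and the only things to be careful about are (i) that the upper-right block of $A'$ is genuinely zero, so that membership in $\ker(A')$ does not constrain $x$ beyond $x\in\ker(A)$ through the top block, and (ii) that $\mu_A$ is supported on $\ker(A)$ with uniform mass $q^{-\nul(A)}$, so that the passage from a sum over $\ker(A)$ to an expectation over $\mu_A$ introduces exactly the factor $q^{\nul(A)}$ that we want to cancel. Both are immediate from the definitions. I would present the argument in the three displays above — decompose the kernel membership conditions, normalise by $q^{\nul(A)}$, and marginalise onto $I$ — and conclude by taking $\log_q$.
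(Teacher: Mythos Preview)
Your argument is correct and is precisely the approach the paper takes, just spelled out in more detail: the paper's one-line proof (``inside the logarithm we sum for every $\tau$ the number of possible extensions $\sigma$ to a vector in the kernel of $A'$ divided by $q^{\nul(A)}$'') is exactly your decomposition, normalisation, and marginalisation. Your observation about the index typo $\tau_j\to\tau_i$ is also correct.
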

\begin{proof}
Inside the logarithm we sum for every $\tau$ the number of possible extensions $\sigma$ to a vector in the kernel of $A'$ divided by $q^{\nul(A)}$.
\end{proof}

To calculate the r.h.s.\ of~\eqref{eqFact_nullity} we need to get a handle on the joint distribution $\mu_{A,I}$ of a bounded number of coordinates $I$.
The following lemma, whose proof consists of a few lines of linear algebra, marks a first step.

\begin{lemma}[{\cite[\Lem~2.3]{Ayre}}]\label{Lemma_Spartition}
For any matrix $A\in\FF_q^{m\times n}$ there exists a decomposition $S_0,\ldots,S_\ell$ of $[n]$ into pairwise disjoint sets with the following properties.
\begin{enumerate}[(i)]
\item if $i\in S_0$, then $\sigma_i=0$ for all $\sigma\in\ker(A)$.
\item if $i,j\in S_h$ for  $h\in[\ell]$, then $\mu_{A,i}(\tau)=1/q$ for all $\tau\in\FF_q$ and there is $s\in\FF_q^*$ such that $\sigma_j=s\sigma_i\mbox{ for all }\sigma\in\ker(A).$
\item if $i\in S_g$ and $j\in S_h$ for $1\leq g<h\leq\ell$, then for all $\tau,\tau'\in\FF_q$ we have 
	$\mu_{A,i,j}(\tau,\tau')=1/q^2.$
\end{enumerate}
\end{lemma}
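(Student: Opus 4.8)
The plan is to argue entirely at the level of the linear subspace $V=\ker(A)\subseteq\FF_q^n$, since $\mu_A$ is by definition the uniform distribution on $V$. The one elementary fact I will use repeatedly is that a surjective $\FF_q$-linear map $f$ between finite-dimensional $\FF_q$-vector spaces has all fibres of the same cardinality (they are the cosets of $\ker f$); consequently, pushing the uniform distribution on the domain forward through $f$ yields the uniform distribution on the codomain.

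First I would handle single coordinates. For $i\in[n]$ the coordinate projection $\pi_i\colon V\to\FF_q$, $\sigma\mapsto\sigma_i$, is $\FF_q$-linear, so its image is a subspace of $\FF_q$ and hence is either $\{0\}$ or all of $\FF_q$. Set $S_0=\{i\in[n]:\pi_i(V)=\{0\}\}$; this is precisely the set of coordinates on which every kernel vector vanishes, which gives~(i). For $i\notin S_0$ the map $\pi_i$ is surjective, so by the fibre remark the marginal of $\mu_A$ at coordinate $i$ is uniform, i.e.\ $\mu_{A,i}(\tau)=1/q$ for all $\tau\in\FF_q$.

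Next I would handle pairs. Fix $i,j\in[n]\sm S_0$ and consider $\pi_{ij}\colon V\to\FF_q^2$, $\sigma\mapsto(\sigma_i,\sigma_j)$. Its image $W$ is a subspace of $\FF_q^2$ whose two coordinate projections are both onto (because $\pi_i$ and $\pi_j$ are). Going through the list of subspaces of $\FF_q^2$, the only ones with this property are $\FF_q^2$ itself and the ``slanted'' lines $L_s=\{(t,st):t\in\FF_q\}$ with $s\in\FF_q^*$; the zero subspace and the two coordinate axes are ruled out precisely because $i,j\notin S_0$. Define a relation on $[n]\sm S_0$ by declaring $i\sim j$ iff $W=L_s$ for some $s\in\FF_q^*$, equivalently iff there is $s\in\FF_q^*$ with $\sigma_j=s\sigma_i$ for all $\sigma\in V$. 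This relation is reflexive (take $s=1$), symmetric (replace $s$ by $s^{-1}$) and transitive (multiply the scalars), hence an equivalence relation; let $S_1,\dots,S_\ell$ be its classes. Property~(ii) is then immediate from the definition of $\sim$ together with the marginal computation of the previous step.

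Finally, for~(iii), suppose $i\in S_g$ and $j\in S_h$ with $1\le g<h\le\ell$, so $i\not\sim j$ and therefore $W=\FF_q^2$, i.e.\ $\pi_{ij}$ is surjective. Applying the fibre remark once more, the joint marginal of $\mu_A$ at coordinates $i,j$ is uniform on $\FF_q^2$, so $\mu_{A,i,j}(\tau,\tau')=1/q^2$ for all $\tau,\tau'\in\FF_q$. I do not anticipate a genuine obstacle: the only points requiring a little care are the classification of the subspaces of $\FF_q^2$ that surject onto both coordinates and the verification that $\sim$ is transitive, and everything else reduces to the single ``equal fibres'' observation applied to $\pi_i$ and $\pi_{ij}$.
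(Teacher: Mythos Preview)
Your proof is correct. The paper does not actually supply its own proof of this lemma; it is quoted from \cite[\Lem~2.3]{Ayre} with the remark that the argument ``consists of a few lines of linear algebra''. Your write-up is precisely such an argument: classify the image of each coordinate projection $\pi_i$ and each pairwise projection $\pi_{ij}$ on $V=\ker(A)$, use the fact that the subspaces of $\FF_q^2$ surjecting onto both factors are $\FF_q^2$ and the slanted lines $L_s$, and read off (i)--(iii) from the equal-fibres observation. There is nothing to compare against here beyond noting that your proof matches the description the paper gives of the cited one.
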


\noindent
\Lem~\ref{Lemma_Spartition} implies that each Boltzmann marginal $\mu_{A,i}$ is either the uniform distribution on $\FF_q$ or the atom $\delta_0$.%
\footnote{This readily implies that the Belief Propagation messages from \Sec~\ref{Sec_cavity} are either uniform or atoms.}
The latter occurs iff $i\in S_0$.
Let us therefore call the coordinates $i\in S_0$ {\em frozen}.
Moreover, \Lem~\ref{Lemma_Spartition} provides information about pairwise correlations.
Specifically, if $i,j$ belong to the same block $S_h$, then $\SIGMA_i$ and $\SIGMA_j$ are linearly related, but if $i,j$ belong to different blocks, then $\SIGMA_i$ and $\SIGMA_j$ are stochastically independent.

To seize upon this observation we introduce a perturbation of the matrix $A$.
Namely, for $i_1,\ldots,i_\ell\in[n]$ let $A[i_1,\ldots,i_\ell]$ be the matrix obtained from $A$ by
adding for each $j\in[\ell]$ a row that has an entry one in column $i_j$ and zeros in all other columns.
Thus, the kernel of $A[i_1,\ldots,i_\ell]\sigma$ comprises all $\sigma\in\ker A$ with $\sigma_{i_j}=0$ for all $j\in[\ell]$, i.e.,
we explicitly freeze $i_1,\ldots,i_\ell$.
The following lemma, which is a generalisation of \cite[\Cor~3.2]{Ayre}, shows that freezing a few random coordinates nearly eliminates pairwise correlations.

\begin{lemma}\label{Lemma_0pinning}
For any $0<\eps<1$ there exists $\Theta=\Theta(\eps)>0$ dependent on $\eps$ only such that for all $A\in\FF_q^{m\times n}$, $\emptyset\neq U\subset[n]$ the following is true.
Draw $\THETA\in[\Theta]$ and $(\vec i_1,\ldots,\vec i_{\THETA})\in U^{\THETA}$ uniformly at random and
let $\hat A=A[\vec i_1,\ldots,\vec i_{\THETA}]$.
Then
\begin{align}\label{eqLemma_0pinning}
\pr\brk{\mu_{\hat A,U}\mbox{ is $\eps$-symmetric}}>1-\eps.
\end{align}
\end{lemma}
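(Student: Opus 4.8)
The plan is to establish \eqref{eqLemma_0pinning} by a now-standard ``pinning lemma'' argument, phrased in terms of the mutual-information cost of revealing freshly frozen coordinates. Fix $\eps>0$ and set $\Theta=\lceil 2q/\eps^3\rceil$, say; the precise constant will fall out of the computation. The key object is the Boltzmann distribution $\mu_{\hat A}$ of the perturbed matrix, restricted to $U$. Since $\THETA\in[\Theta]$ and $\vec i_1,\dots,\vec i_\THETA\in U$ are drawn uniformly and independently, I would first observe that it suffices to bound the expectation, over $\THETA$ and the pins, of the ``non-symmetry'' functional
\begin{align*}
\Delta(\hat A)&=\frac1{|U|^2}\sum_{i,j\in U}\dTV\bc{\mu_{\hat A,i,j},\mu_{\hat A,i}\tensor\mu_{\hat A,j}},
\end{align*}
because then Markov's inequality turns an $O(\eps^2)$ bound on $\Erw[\Delta(\hat A)]$ into the desired $1-\eps$ probability bound.

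The heart of the argument is an entropy-compression estimate. Introduce the random index $\vec i\in U$, independent of everything, and consider the conditional entropy $H(\SIGMA_{\hat A,\vec i}\mid \SIGMA_{\hat A,\vec i_1},\dots,\SIGMA_{\hat A,\vec i_t})$ as a function of the number $t$ of pins already placed, where $\SIGMA_{\hat A}$ is a sample from $\mu_{\hat A}$ \emph{before} the pins are frozen. Actually the cleanest route is to work directly with the matrix $A$ and the coupled family $A[\vec i_1,\dots,\vec i_t]$: conditioning the Boltzmann distribution of $A$ on the event $\{\SIGMA_{i_1}=\cdots=\SIGMA_{i_t}=0\}$ exactly reproduces the Boltzmann distribution of $A[i_1,\dots,i_t]$, because by \Lem~\ref{Lemma_Spartition}(ii) each coordinate is either already frozen or uniformly distributed with value $0$ lying in its support, so the conditioning event has positive probability. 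Hence I would define $h_t=\Erw[H(\SIGMA_{\vec i}\mid \SIGMA_{\vec i_1},\dots,\SIGMA_{\vec i_t})]$ with all expectations over the uniform choice of $\vec i,\vec i_1,\dots,\vec i_t\in U$ and over $\SIGMA\sim\mu_A$; this is a bounded ($0\le h_t\le\log_2 q$), non-increasing sequence, so $\sum_{t=1}^{\Theta}(h_{t-1}-h_t)\le\log_2 q$, and therefore there exists $t^*<\Theta$ with $h_{t^*}-h_{t^*+1}\le (\log_2 q)/\Theta\le\eps^3$. Choosing $\Theta$ so that $(\log_2 q)/\Theta$ is below the relevant threshold forces a ``typical'' pin count at which adding one more pin barely changes the conditional entropy.

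The final step is to convert small entropy decrease into approximate pairwise independence. At the good value $t^*$, the quantity $h_{t^*}-h_{t^*+1}$ is, up to reindexing, the expected mutual information $\Erw[I(\SIGMA_{\vec i};\SIGMA_{\vec j}\mid\SIGMA_{\vec i_1},\dots,\SIGMA_{\vec i_{t^*}})]$ between two fresh uniform coordinates $\vec i,\vec j\in U$, conditioned on the already-frozen ones (here one uses that $\vec i_{t^*+1}$ is itself a uniform sample from $U$, independent of $\vec i$). By Pinsker's inequality, $\Erw\,\dTV(\mu_{\hat A,\vec i,\vec j},\mu_{\hat A,\vec i}\tensor\mu_{\hat A,\vec j})^2\le \tfrac12\Erw\, I(\cdots)$, and Jensen then bounds $\Erw\,\Delta(\hat A)$ by $O(\sqrt{\eps^3})=O(\eps^{3/2})=O(\eps^2)$ for small $\eps$; replacing $\THETA$'s uniform law on $[\Theta]$ by a point mass at $t^*$ is harmless since a uniformly random $\THETA\in[\Theta]$ hits every value with probability $1/\Theta$, and one runs the averaging argument over all $t\in[\Theta]$ simultaneously so that the \emph{average} decrease is $\le(\log_2 q)/\Theta$, making $\Erw_{\THETA}[h_{\THETA-1}-h_{\THETA}]$ itself small. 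The main obstacle, and the one deserving the most care, is precisely this bookkeeping: making sure that (a) the conditioning identity relating $\mu_{A[i_1,\dots,i_t]}$ to $\mu_A$ conditioned on a zero-pattern event is valid (which is where \Lem~\ref{Lemma_Spartition} is essential, guaranteeing the event has positive probability so no degeneracy arises), and (b) the telescoping is set up so that the uniformly random number $\THETA$ of pins, rather than a hand-picked $t^*$, yields the bound --- this is the standard trick of averaging the entropy drop over the \emph{length} of the pinning sequence. Everything else is routine information theory (chain rule, Pinsker, Jensen) and does not depend on $n$, $m$, or the sparsity of $A$ at all, which is why $\Theta$ depends on $\eps$ only.
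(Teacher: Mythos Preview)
Your approach is correct in spirit and takes a genuinely different route from the paper's. The paper exploits the algebraic partition of \Lem~\ref{Lemma_Spartition} directly: letting $R_h(t)=S_h(t)\cap U$ for the decomposition associated with $A[\vec i_1,\ldots,\vec i_t]$, it first shows the deterministic implication that $\sum_{h\ge1}|R_h(t)|^2<\eps|U|^2$ forces $\mu_{\hat A,U}$ to be $\eps$-symmetric (pairs with one coordinate in $R_0$ or in distinct classes are already independent by parts (i) and (iii)), and then uses part (ii) to observe that pinning a uniform $\vec i_t\in U$ freezes an \emph{entire} class, so that $\Erw[|R_0(t)|-|R_0(t-1)|\mid\vec i_1,\ldots,\vec i_{t-1}]\ge\sum_{h\ge1}|R_h(t-1)|^2/|U|$; telescoping against the trivial bound $\sum_t(|R_0(t)|-|R_0(t-1)|)\le|U|$ then yields $\Erw\sum_{h\ge1}|R_h(\THETA)|^2\le 2|U|^2/\Theta$. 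Your route replaces this combinatorial potential with the information-theoretic one $h_t$, and then appeals to the chain rule and Pinsker; this is the general ``pinning lemma'' valid for arbitrary Gibbs measures. The paper's argument is shorter and more elementary here precisely because the linear structure makes classes freeze atomically, whereas your argument is more portable and would survive even if $\mu_A$ were replaced by a non-algebraic distribution. Both give $\Theta$ polynomial in $1/\eps$.

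One genuine slip: you write ``$O(\sqrt{\eps^3})=O(\eps^{3/2})=O(\eps^2)$ for small $\eps$'', but the inequality goes the wrong way ($\eps^{3/2}\gg\eps^2$ as $\eps\to0$). With $\Theta\asymp\eps^{-3}$ the Pinsker--Jensen step gives only $\Erw[\Delta(\hat A)]=O(\eps^{3/2})$, and Markov then yields $\pr[\Delta(\hat A)>\eps]=O(\eps^{1/2})$ rather than $O(\eps)$. Taking $\Theta\asymp\eps^{-4}$ repairs this at once; since you already flagged that ``the precise constant will fall out of the computation'', this is a bookkeeping error rather than a conceptual gap.
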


\noindent
The proof, which is an easy extension of the argument from~\cite{Ayre}, is included in Appendix~\ref{Apx_0pinning}.
Crucially, the number $\THETA$ of rows that we add to $A$ is bounded by a number $\Theta$ that depends only on $\eps$ but not on $n$ or $m$.
Therefore, in our coupling arguments below the perturbation will shift the rank by a negligible amount.

The coupling arguments, which we are going to explain next, are based on \Lem s~\ref{lem:l-wise}, \ref{Lemma_Spartition} and~\ref{Lemma_0pinning} together with Fact~\ref{Fact_nullity}, which make for a powerful combo.
Indeed, Fact~\ref{Fact_nullity} reduces the problem of calculating the nullity to studying the joint distribution of a bounded number of coordinates.
Thanks to \Lem~\ref{Lemma_0pinning} we can make this distribution $\delta$-symmetric, which \Lem s~\ref{lem:l-wise} boosts to $(\eps,\ell)$-symmetry for any bounded $\ell$.
In effect, we will be able to replace the measure $\mu_{A,I}$ in \eqref{eqFact_nullity} by a product measure.
Further, \Lem~\ref{Lemma_Spartition} shows that the marginals of the product measure are either atom $\delta_0$ or uniform on $\FF_q$.

\subsection{The Aizenman--Sims--Starr scheme}\label{Sec_outline1}
We are going to tackle the rank problem by way of calculating the nullity of $\A$.
We will prove matching upper and lower bounds on the nullity via two separate arguments.
The proof of the upper bound on the nullity (which yields the lower bound on the rank) is based on a type of coupling argument that is colloquially referred to as the `Aizenman--Sims--Starr scheme'.
Originally developed to cope with models of a rather different look in mathematical physics, applied to the nullity the argument rests upon the observation that
\begin{align*}
\limsup_{n\to\infty}\frac1n\Erw[\nul(\vA)]&\leq\limsup_{n\to\infty}\Erw[\nul(\vA_{n+1})]-\Erw[\nul(\vA_{n})].
\end{align*}
(The inequality is easily verified by writing a telescoping sum.)
In order to estimate the right hand side, it seems natural to couple $\vA_{n+1}$ and $\vA_{n}$ so as to write
\begin{align}\label{eqASS1}
\Erw[\nul(\vA_{n+1})]-\Erw[\nul(\vA_{n})]=\Erw\brk{\nul(\vA_{n+1})-\nul(\vA_{n})}.
\end{align}
Indeed, if we were to find a coupling under which $\vA_{n+1}$ results from $\vA_n$ by (randomly) adding, say, one column along with a few rows, then we could bring the machinery from \Sec~\ref{Sec_Boltzmann} to bear.

The immediate issue is that the random matrices $\A_n$ and $\A_{n+1}$ do not lend themselves to an easy coupling.
In fact, $\A_{n+1}$ may not even be defined (due to divisibility issues).
But even if it is, the structure of $\A_n$ appears too rigid to allow for $\A_{n+1}$ to be easily described as `$\A_n$ plus one column and a few rows'.
Hence, to facilitate couplings we introduce an auxiliary model that resembles the configuration model from the theory of random graphs and that allows for a bit of wiggling room.

Specifically, we fix a parameter $\eps>0$ along with a large enough $\Theta=\Theta(\eps)>0$ dependent on $\eps$ only.
Then for any integer $n>0$ consider the random matrix  $\vA_\eps=\vA_{\eps,n}$ constructed as follows.
Let $\vm_\eps\disteq\Po((1-\eps)dn/k)$ and independently choose $\THETA\in[\Theta]$ uniformly at random.
Moreover, let 
	$$(\vd_i)_{i\geq1},\qquad(\vk_i)_{i\geq1},\qquad(\CHI_{i,j,s,t})_{i,j,s,t\geq1}$$
be copies of $\vd$, $\vk$ and $\CHI$, mutually independent and independent of $\vm_\eps$ and $\THETA$.
Further, let $\vec\Gamma_\eps=\vec\Gamma_{\eps,n}$ be a random maximal matching of the complete bipartite graph with vertex classes
\begin{align*}
\bigcup_{i=1}^{\vm_\eps}\cbc{a_i}\times[\vk_i],\quad\bigcup_{j=1}^{n}\cbc{x_j}\times[\vd_j].
\end{align*}
Think of $\cbc{a_i}\times[\vk_i]$ as a set of clones of $a_i$ and of $\{x_j\}\times[\vd_j]$ as a set of clones of $x_j$.
We obtain a random Tanner graph $\G_\eps=\G_{\eps,n}$ with variable nodes $x_1,\ldots,x_n$ and check nodes $a_1,\ldots,a_{\vm_\eps},p_1,\ldots,p_{\THETA}$ 
by inserting an edge between $a_i$ and $x_j$ for each matching edge that joins the sets $\cbc{a_i}\times[\vk_i]$ and $\cbc{x_j}\times[\vd_j]$.
Additionally, check node $p_i$ is adjacent to $x_i$ for each $i\in[\THETA]$.
Since there may be several edges joining clones of the same variable and check node, $\G_\eps$ may be a multigraph.
Let $\vA_{\eps,n}=\A(\G_{\eps,n})$ be the random matrix induced by $\G_\eps$.
We observe that working with $\vA_{\eps,n}$ does not shift the rank significantly.

\begin{proposition}\label{Cor_lower}
For any function $\Theta=\Theta(\eps)\geq0$ we have
	$\limsup_{\eps\to0}\limsup_{n\to\infty}n^{-1}\abs{\nul(\vA_\eps)-\nul(\A)}=0$ in probability.
\end{proposition}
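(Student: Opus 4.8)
The plan is to compare $\nul(\vA_\eps)$ and $\nul(\A)$ by building an intermediate matrix and exploiting the fact that all three differ only by $O(\eps n)$ rows and columns, each of bounded weight, plus the $\THETA = O(1)$ pinning rows. First I would record the trivial bounds: deleting a single row of a matrix changes the nullity by at most one, and deleting a column changes the nullity by at most one as well; hence for any two matrices $B, B'$ where $B'$ is obtained from $B$ by deleting (or adding) a set of rows $R$ and columns $C$, one has $|\nul(B) - \nul(B')| \le |R| + |C|$. In particular the $\THETA \le \Theta(\eps)$ pinning rows $p_1,\dots,p_\THETA$ contribute at most $\Theta(\eps)$ to $|\nul(\vA_\eps) - \nul(\A)|$, which is $O(1)$ and hence $o(n)$; this part is negligible and can be discarded immediately.

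Next I would control the bulk discrepancy, which comes from three sources: (a) the Poisson number of check nodes differs, $\vm_\eps \disteq \Po((1-\eps)dn/k)$ versus $\vm \disteq \Po(dn/k)$, so typically $|\vm_\eps - \vm|$ is of order $\eps n$ up to lower-order fluctuations; (b) in the configuration-model construction of $\G_\eps$ a \emph{maximal} matching is used rather than a perfect one, so some clones of variable and check nodes remain unmatched, reducing the number of realised edges; and (c) $\G_\eps$ may be a multigraph, so cancellations in~\eqref{eqTanner} may further reduce the number of non-zero entries. For (a), I would condition on the (likely) event that $|\vm_\eps - (1-\eps)dn/k| \le \eps n$ and $|\vm - dn/k| \le \eps n$, which holds with probability $1 - o(1)$ by standard Poisson concentration (or \Lem~\ref{Lemma_sums} with $\vec\lambda$ a Poisson step); then the number of excess check nodes is at most $2\eps n + \eps dn/k = O(\eps n)$, and by the row-deletion bound this contributes $O(\eps n)$ to the nullity gap. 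For (b) and (c), the key observation is that the number of unmatched clones, respectively the number of cancelling parallel edges, is $o_\eps(n)$: each degree sequence $(\vd_i)$, $(\vk_i)$ has sum concentrated around $dn$ (again \Lem~\ref{Lemma_sums}), the maximum degree is $n^{1/2 - \Omega(1)}$ whp by the moment assumption $\Erw[\vd^r]+\Erw[\vk^r] < \infty$, and a maximal matching of a bipartite multigraph leaves at most $|\,(\text{smaller side deficit})\,|$ clones unmatched, which here is bounded by the imbalance $|\sum_i \vd_i - \sum_i \vk_i|$ plus the number of repeated pairings; a first-moment/birthday estimate shows the expected number of such defects is $O(\Delta^2/n) \cdot O(n/\Delta) = o_\eps(n)$ where $\Delta$ is the typical maximum degree. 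Removing the affected rows and columns again costs only $o_\eps(n)$ in nullity.

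The remaining step is to certify that, after these $O(\eps n)$ deletions, the two matrices $\A$ and $\vA_\eps$ can be \emph{coupled} so that they literally agree, i.e.\ share a common submatrix of co-rank $O(\eps n)$ on both sides. I would do this by noting that both $\A$ and $\vA_\eps$, conditioned on their realised degree sequences, are uniformly random Tanner-graph matrices with those degrees (the $\CHI$'s being i.i.d.\ in both), so after matching up the degree sequences on a $(1 - O(\eps))$-fraction of the nodes — possible whp since the empirical degree distributions of both models converge to the law of $\vd$, $\vk$ — one can realise both on the same probability space with identical restriction to the common node set. Then $|\nul(\vA_\eps) - \nul(\A)| \le (\text{rows deleted}) + (\text{columns deleted}) = O(\eps n) + o_\eps(n)$ on an event of probability $1 - o(1)$, and letting first $n \to \infty$ and then $\eps \to 0$ gives the claim in probability. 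I expect the main obstacle to be step (b)--(c): making rigorous that the maximal-matching configuration model and the simple Tanner graph are contiguous up to an $o_\eps(n)$ corruption, uniformly in the heavy-tailed degree sequences allowed by only an $r > 2$ moment. This requires a careful count of unmatched clones and parallel edges using the $n^{1/2 - \Omega(1)}$ bound on the maximum degree together with \Lem~\ref{Lemma_sums} to control the degree sums, and is the only place where the precise moment hypothesis is used.
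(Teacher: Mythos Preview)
Your overall strategy---couple $\vA$ and $\vA_\eps$ so that they share a large common sub-Tanner-graph and bound the nullity difference by the number of discrepant rows---is the same as the paper's. But there is a genuine gap: you never address the conditioning built into the definition of $\A$. The matrix $\A$ is constructed by first conditioning on $\cD=\{\sum_{i=1}^n\vd_i=\sum_{i=1}^{\vm}\vk_i\}$ (and on the Tanner graph being simple). You assert that ``the empirical degree distributions of both models converge to the law of $\vd,\vk$'', but for $\A$ the check degrees $\vk_1,\ldots,\vk_{\vm}$ are i.i.d.\ copies of $\vk$ only \emph{before} conditioning on $\cD$, an event of probability $\Theta(n^{-1/2})$ by \Cor~\ref{Cor_llt}. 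The paper flags this explicitly as ``the catch'' at the start of \Sec~\ref{Sec_Cor_lower}: the conditioning could in principle distort the degree profile, and one must show it does not. The paper's fix (Claim~\ref{c:dominance}) combines \Cor~\ref{Cor_llt} with Chernoff bounds that hold with probability $1-o(n^{-1})$, so that conditioning on an event of probability $\Theta(n^{-1/2})$ leaves them intact; only then is the degree-by-degree dominance $\bfn_j\geq\sum_i\vecone\{\vk_i=j\}$ available for the coupling. Your argument needs this ingredient, and nothing in \Lem~\ref{Lemma_sums} or in the $r>2$ moment hypothesis supplies it.

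A smaller point: your treatment of cavities is off. The number of unmatched variable clones in $\vA_\eps$ is $\Theta(\eps n)$ by design (this is the whole purpose of the parameter $\eps$), not $o_\eps(n)$. Moreover, cavities are not ``deleted rows/columns''---$\vA_\eps$ still has $n$ columns and $\vm_\eps+\THETA$ rows; cavities just mean some realised variable degrees fall short of target. The correct accounting, as in the paper, is that once the common check nodes are coupled, the nullity difference is controlled by the $O(\eps n)$ checks present in $\A$ but not in $\vA_\eps$ (plus the $O(\eps n)$ high-degree checks set aside on both sides and the $\THETA=O(1)$ pinning checks), and the row-deletion bound applies to those directly.
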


By construction, the degrees of the checks $a_i$ and the variables $x_j$ in $\G_\eps$ are upper-bounded by $\vk_i$ and $\vd_j$, respectively.
We thus refer to $\vk_i$ and $\vd_j$ as the {\em target degrees} of $a_i$ and $x_j$.
Indeed, since $\G_\eps$ will turn out to feature only few multi-edges \whp\ and $\vm_\eps$ is significantly smaller than $dn/k$ and thus $\sum_{i=1}^{\vm_\eps}\vk_i\leq\sum_{i=1}^{n}\vd_i$ \whp, most 
check nodes $a_i$ have degree precisely $\vk_i$ \whp\
But we expect that about $\eps dn$ variable nodes $x_i$ will have degree less than $\vd_i$.
In fact, \whp\ $\vec\Gamma_\eps$ fails to cover about $\eps dn$ clones from the set $\bigcup_{i=1}^n\{x_i\}\times[\vd_i]$.
Let us call such unmatched clones {\em cavities}.
The proof of the following upper bound, which constitutes the main technical achievment of the paper, rests on a subtle coupling of  $\vA_{\eps,n+1}$ and $\vA_{\eps,n}$.

\begin{proposition}\label{Prop_coupling}
For any $\eps>0$ there exists $\Theta=\Theta(\eps)>0$ such that for all large enough $n$ we have
$$\Erw[\nul(\vA_{\eps,n+1})]-\Erw[\nul(\vA_{\eps,n})]\leq\max_{\alpha\in[0,1]}\Phi(\alpha)+o_\eps(1).$$
\end{proposition}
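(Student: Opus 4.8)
The plan is to realise $\vA_{\eps,n+1}$ as $\vA_{\eps,n}$ with one extra variable node $x_{n+1}$ and a Poisson number of extra check nodes inserted, so that the difference $\nul(\vA_{\eps,n+1})-\nul(\vA_{\eps,n})$ becomes a "local" change that can be evaluated via Fact~\ref{Fact_nullity}. Concretely, I would first couple the two configuration models: since $\vm_{\eps}\disteq\Po((1-\eps)dn/k)$, the number of new check clones introduced when passing from $n$ to $n+1$ is (asymptotically) Poisson, and the new variable node $x_{n+1}$ contributes $\vd_{n+1}$ fresh variable-clones. The extra check nodes attach to a bounded number of old variable clones chosen essentially uniformly among the cavities, plus possibly to clones of $x_{n+1}$ itself. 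Because $\Theta=\Theta(\eps)$ is bounded and the new checks each have bounded degree, the whole perturbation touches only $O_\eps(1)$ coordinates of $\vA_{\eps,n}$.

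**Reducing to the Boltzmann marginals.** With the coupling in hand, apply Fact~\ref{Fact_nullity} with $A=\vA_{\eps,n}$: the nullity difference equals $\log_q$ of an expectation over $\mu_{A,I}$ of an indicator product, where $I$ is the (bounded) set of old columns hit by the new rows. The key structural input is \Lem~\ref{Lemma_Spartition} together with \Lem~\ref{Lemma_0pinning}: after the pinning built into the $\vA_\eps$ model (the $p_i$ checks), the marginal law $\mu_{A,I}$ is $\eps$-symmetric \whp, hence by \Lem~\ref{lem:l-wise} it is $(\eps,\ell)$-symmetric for the relevant bounded $\ell$, so we may replace $\mu_{A,I}$ by the product $\bigotimes_{i\in I}\mu_{A,i}$ up to $o_\eps(1)$. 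By \Lem~\ref{Lemma_Spartition} each $\mu_{A,i}$ is either $\delta_0$ (if $i$ is frozen) or uniform on $\FF_q$. Thus the only surviving parameter is $\alpha_n$, the probability that a uniformly random cavity of $\vA_{\eps,n}$ is frozen; conditioning on $\alpha_n$, the Poisson number of new checks and their random attachments turn the right-hand side of \eqref{eqFact_nullity} into an explicit expectation over $\vd$, $\vk$, $\hat\vk$ and Poisson weights.

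**The explicit computation.** The next step is the bookkeeping: the new variable node $x_{n+1}$ has target degree $\vd_{n+1}\disteq\vd$, each of its clones is matched to a uniformly random check clone, each such check has size-biased degree $\hat\vk$, and each of the other legs of that check lands on a cavity which is frozen with probability $\alpha=\alpha_n$; symmetrically, the fresh check nodes created by the $\vm_\eps$ increment each have degree $\vk$ with all legs on cavities. Carrying out the Poissonisation and the sum over $\FF_q$-values of the indicator $\vecone\{\sigma\models a\}$ — using that a uniform marginal contributes a factor $1/q$ and a frozen one forces $0$ — collapses, after the algebra, to exactly $D(1-K'(\alpha)/k)+\frac dk(K(\alpha)+(1-\alpha)K'(\alpha)-1)=\Phi(\alpha)$, plus an $o_\eps(1)$ error from the symmetry replacement and the multigraph defects (few multi-edges \whp, controlled by \Lem~\ref{Lemma_sums} and the $r>2$ moment assumption). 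Taking expectations and bounding $\Erw[\Phi(\alpha_n)]\le\max_{\alpha\in[0,1]}\Phi(\alpha)$ gives the claimed inequality.

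**Main obstacle.** The delicate point is not the final algebra but justifying that $\mu_{\vA_{\eps,n},I}$ genuinely decorrelates on the coordinates $I$ hit by the added rows \emph{after conditioning on the coupling}. The attachment of the new checks is not independent of $\vA_{\eps,n}$ in a naive way — one must argue that the cavities are exchangeable and that revealing $\vA_{\eps,n}$ does not bias which cavities are frozen, so that the pinning estimate of \Lem~\ref{Lemma_0pinning} still applies to the relevant set $U$ (the cavities) with the bounded budget $\Theta(\eps)$. Managing this exchangeability, and simultaneously controlling the rare bad events (excess multi-edges, atypically large degrees among the $O_\eps(1)$ touched clones, failure of $\eps$-symmetry) so that each contributes only $o_\eps(1)$ to $n^{-1}\Erw[\nul]$, is where the real work lies; everything else is a Poisson-process calculation that recovers $\Phi(\alpha)$ by design of the ansatz \eqref{eqpialpha}.
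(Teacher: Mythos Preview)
Your high-level strategy --- Aizenman--Sims--Starr, then Fact~\ref{Fact_nullity}, pinning via \Lem~\ref{Lemma_0pinning} and \Lem~\ref{lem:l-wise} to reduce to a product measure, then read off the frozen fraction $\alpha$ --- matches the paper, and the final identification with $\Phi$ is what one wants. But the coupling you describe has a genuine gap. You propose to realise $\vA_{\eps,n+1}$ as ``$\vA_{\eps,n}$ plus $x_{n+1}$ plus a Poisson number of extra checks'', and then assert that the checks adjacent to $x_{n+1}$ have ``each of the other legs \ldots\ on a cavity'' of $\vA_{\eps,n}$. That is not true: in $\vA_{\eps,n+1}$ the clones of $x_{n+1}$ are matched to uniformly random check clones from the whole pool, and those checks are (w.h.p.) checks that are \emph{already fully wired} inside $\vA_{\eps,n}$, with every leg going to some $x_1,\ldots,x_n$, not to a cavity. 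So the passage $\vA_{\eps,n}\to\vA_{\eps,n+1}$ is not an addition of rows and a column to which Fact~\ref{Fact_nullity} applies; it also \emph{rewires} roughly $\vd_{n+1}$ existing checks, and you cannot read the effect of that rewiring off the cavity marginals of $\mu_{\vA_{\eps,n}}$.

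The paper fixes this by inserting a common base $\vA'=\vA_{n,\vM^-}$ \emph{below} both matrices. One first records, for each degree $i$, the number $\GAMMA_i$ of degree-$i$ checks adjacent to $x_{n+1}$ in $\G_{n+1,\vM^+}$, and \emph{removes} that many degree-$i$ checks from $\vA_{\eps,n}$ to obtain $\vA'$. Now both targets are genuinely obtained from $\vA'$ by attaching new checks at random cavities of $\vA'$: re-inserting the $\GAMMA$ checks with all legs on cavities yields $\vA''\stackrel{d}{=}\vA_{\eps,n}$, while adding $x_{n+1}$, the $\GAMMA$ checks each with one leg on $x_{n+1}$, and the Poisson $\DELTA$ extras yields $\vA'''\stackrel{d}{=}\vA_{\eps,n+1}$ (\Lem~\ref{Lemma_valid}). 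The Aizenman--Sims--Starr increment is then a \emph{difference of two differences},
\[
\Erw[\nul(\vA''')-\nul(\vA')]-\Erw[\nul(\vA'')-\nul(\vA')],
\]
both computed via Fact~\ref{Fact_nullity} with the frozen fraction $\ALPHA$ of $\vA'$ (\Lem s~\ref{Lemma_A'''} and~\ref{Lemma_A''}). The second term equals $d\Erw[\ALPHA K'(\ALPHA)]/k-d$, and it is precisely this subtraction that converts the raw first term $\Erw[D(1-K'(\ALPHA)/k)+\tfrac dk(K'(\ALPHA)+K(\ALPHA)-1)]-d$ into $\Erw[\Phi(\ALPHA)]$. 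Your single-difference setup never removes the $\GAMMA$ checks and so never produces this correction; the algebra does not close to $\Phi$. The decorrelation issue you flag as the main obstacle (independence of $(\GAMMA,\DELTA)$ from the base) is real --- the paper handles it in \Lem~\ref{Cor_gamma} via a local-limit argument --- but it only becomes the right question once the intermediate $\vA'$ is in place.
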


\noindent
The coupling upon which the proof of \Prop~\ref{Prop_coupling} is based exploits the flexibility afforded by the 
likely presence of a linear number of cavities.
Roughly speaking, under the coupling $\vA_{\eps,n+1}$ is obtained from $\vA_{\eps,n}$ by adding one column and a (typically bounded) random number of rows.
The check nodes corresponding to the new rows will have non-zero entries at random cavities of $\vA_{\eps,n}$.
We will estimate the difference of the nullities via Fact~\ref{Fact_nullity}.
Indeed, the purpose of the check nodes $p_1,\ldots,p_{\THETA}$ is to ensure that the Boltzmann distribution
$\mu_{\vA_{\eps,n}}$ is sufficiently symmetric \whp\ 
More specifically, while \Lem~\ref{Lemma_0pinning} requires that a {\em random} set of $\THETA$ variables be frozen, the checks $p_1,\ldots,p_{\THETA}$ just freeze the first $\THETA$ variables.
But since the distribution of $\G_{\eps}-\{p_1,\ldots,p_{\THETA}\}$ is invariant under permutations of the variable nodes, 
both constructions are equivalent.
Therefore, we will be able to combine Fact~\ref{Fact_nullity}, \Lem~\ref{Lemma_Spartition} and \Lem~\ref{Lemma_0pinning} to prove
\Prop~\ref{Prop_coupling}.

As an immediate consequence of \Prop~\ref{Prop_coupling} we obtain the desired upper bound on the nullity.

\begin{corollary}\label{Prop_lower}
We have
	$\limsup_{\eps\to0}\limsup_{n\to\infty}\frac1n\Erw[\nul(\vA_\eps)]\leq\max_{\alpha\in[0,1]}\Phi(\alpha).$
\end{corollary}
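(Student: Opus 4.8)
The corollary is supposed to be an immediate consequence of Proposition~\ref{Prop_coupling}, so the plan is simply to run the Aizenman--Sims--Starr telescoping argument already sketched in Section~\ref{Sec_outline1} and feed in the bound from Proposition~\ref{Prop_coupling}. Concretely, fix $\eps>0$ and the associated $\Theta=\Theta(\eps)$ from Proposition~\ref{Prop_coupling}. First I would observe that the map $n\mapsto\Erw[\nul(\vA_{\eps,n})]$ is nonnegative and, via the telescoping identity
\begin{align*}
\Erw[\nul(\vA_{\eps,N})]&=\Erw[\nul(\vA_{\eps,n_0})]+\sum_{n=n_0}^{N-1}\bc{\Erw[\nul(\vA_{\eps,n+1})]-\Erw[\nul(\vA_{\eps,n})]},
\end{align*}
the Cesàro-type bound
\begin{align*}
\limsup_{n\to\infty}\frac1n\Erw[\nul(\vA_{\eps,n})]&\leq\limsup_{n\to\infty}\bc{\Erw[\nul(\vA_{\eps,n+1})]-\Erw[\nul(\vA_{\eps,n})]}
\end{align*}
follows. (Here $n_0$ is a fixed threshold past which Proposition~\ref{Prop_coupling} applies; the contribution of the $\nul(\vA_{\eps,n_0})$ term is $O(1)$ and thus vanishes after dividing by $n$.) Plugging in Proposition~\ref{Prop_coupling} gives $\limsup_{n\to\infty}\frac1n\Erw[\nul(\vA_{\eps,n})]\leq\max_{\alpha\in[0,1]}\Phi(\alpha)+o_\eps(1)$, and then letting $\eps\to0$ kills the $o_\eps(1)$ error term and yields the claim.

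The one genuinely non-routine point — and the step I would expect to need the most care — is the passage from the \emph{expectation} bound to the \emph{in-probability} statement, and the handling of the double limit $\lim_{\eps\to0}\lim_{n\to\infty}$ in accordance with the conventions fixed in Section~\ref{Sec_pre}. Strictly speaking, Corollary~\ref{Prop_lower} as stated concerns $\Erw[\nul(\vA_\eps)]$, so no concentration is needed for the corollary itself; but I would want to make sure that the $o_\eps(1)$ notation is used consistently, i.e.\ that after taking $\limsup_{n\to\infty}$ the residual term is a genuine function of $\eps$ tending to $0$, so that the outer $\limsup_{\eps\to0}$ can legitimately be applied. This is a bookkeeping matter rather than a mathematical obstacle, but it is the place where a sloppy argument could go wrong, since the error terms produced inside Proposition~\ref{Prop_coupling} depend on $\Theta(\eps)$ and one must confirm that the dependence is benign in the joint limit.

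Finally, I would note that Corollary~\ref{Prop_lower} combined with Proposition~\ref{Cor_lower} immediately transfers the bound back to the original matrix $\A$: since $n^{-1}\abs{\nul(\vA_\eps)-\nul(\A)}\to0$ as first $n\to\infty$ and then $\eps\to0$, we get $\limsup_{n\to\infty}\frac1n\Erw[\nul(\A)]\leq\max_{\alpha\in[0,1]}\Phi(\alpha)$, which is the upper bound on the nullity (equivalently, the lower bound on the rank) that forms the main contribution of the paper. That transfer step is not part of the corollary's proof proper, but it is worth flagging as the reason the corollary is phrased in terms of $\vA_\eps$ rather than $\A$.
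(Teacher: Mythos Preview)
Your proposal is correct and follows exactly the paper's approach: telescope $\Erw[\nul(\vA_{\eps,n})]$ as a sum of successive differences, bound each difference by $\max_{\alpha}\Phi(\alpha)+o_\eps(1)$ via Proposition~\ref{Prop_coupling}, divide by $n$, and take the double limit. The paper's proof is two lines and does not bother with the intermediate $\limsup$ of differences or the threshold $n_0$, but these are harmless elaborations of the same argument.
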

\begin{proof}
 \Prop~\ref{Prop_coupling} yields
\begin{align*}
\frac1n\Erw[\nul(\vA_{\eps,n})]&=\frac1n\brk{\Erw[\nul(\vA_{\eps,1})]+\sum_{N=1}^{n-1}\Erw[\nul(\vA_{\eps,N+1})]-\Erw[\nul(\vA_{\eps,N})]}\leq\max_{\alpha\in[0,1]}\Phi(\alpha)+o_\eps(1).
\end{align*}
Taking the double limit $n\to\infty$ followed by $\eps\to0$ yields the assertion.
\end{proof}

\subsection{The interpolation method}
A lower bound on the nullity of $\A$  that matches the upper bound from \Prop~\ref{Cor_lower} and \Cor~\ref{Prop_lower} was deduced in~\cite{Lelarge} from the formula for the matching number of random bipartite graphs from~\cite{BLS2}.
But the proof of that formula, reliant on the contraction method in combination with local weak convergence,  is far from elementary.
Here we present a new direct proof of the lower bound.
We adapt another technique from mathematical physics, the interpolation method, to the rank problem.
The basic idea is to construct a family of random matrices $\vA_{\eps}(t)$ parametrised by $t\in[0,1]$.
At $t=1$ we obtain precisely the matrix $\vA_\eps$.
At the other extreme, $\vA_\eps(0)$ is a block diagonal matrix whose nullity can be read off easily.
To establish the lower bound we will control the derivative of the nullity with respect to $t$.
By comparison to applications of the interpolation method to other problems, the construction here is relatively elegant.
In particular, throughout the interpolation we will be dealing with an actual random matrix, rather than some other, more contrived object.

\begin{figure}
\includegraphics[height=4.5cm]{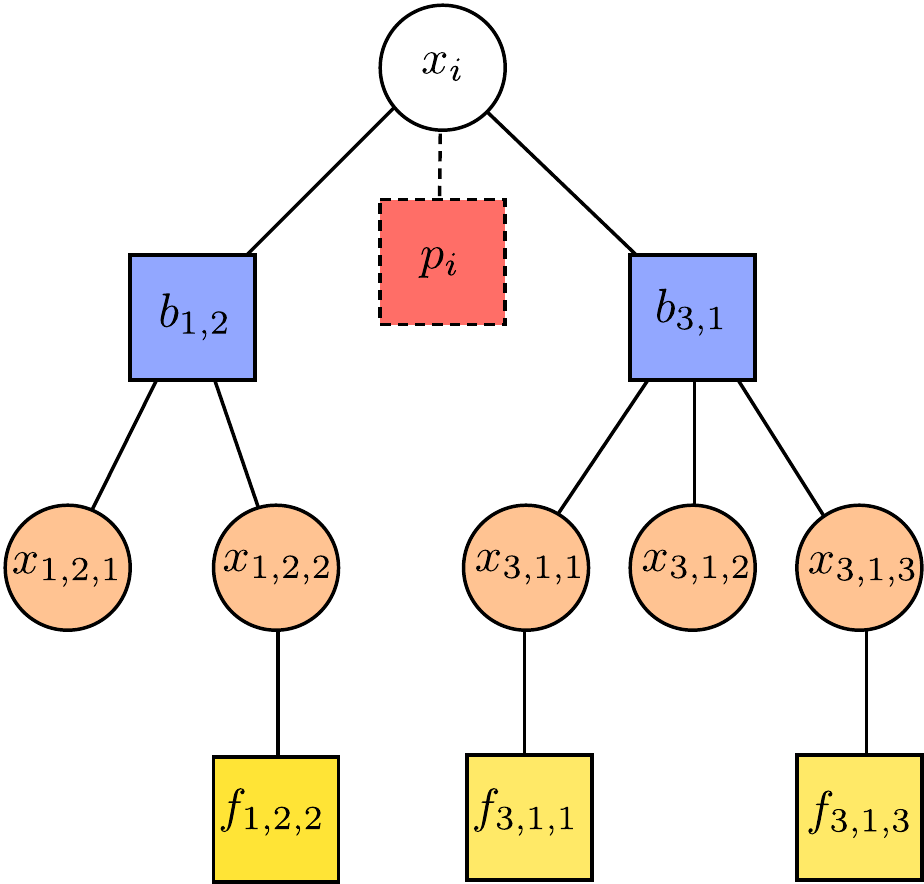}
\hfill\includegraphics[height=4.5cm]{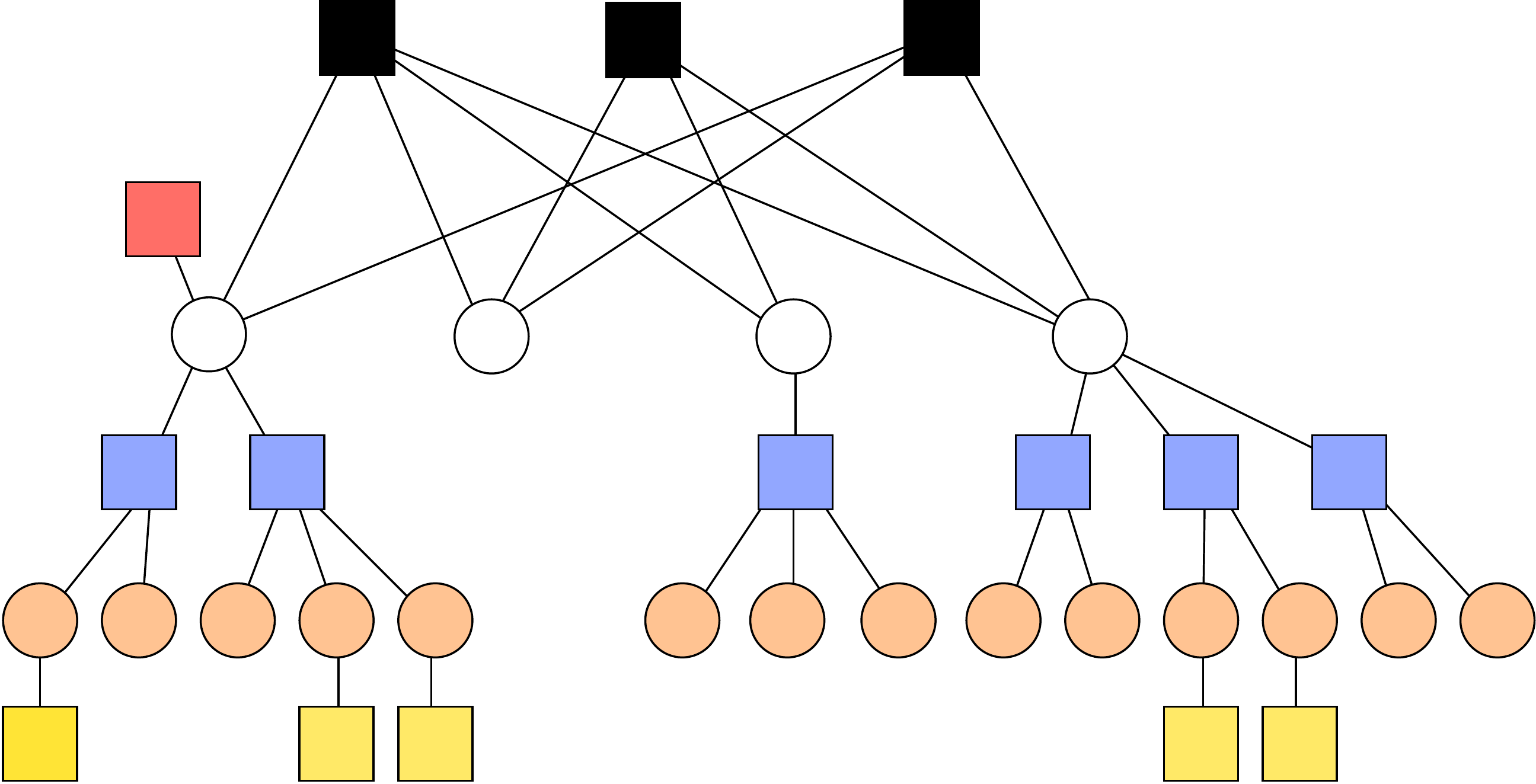}
\caption{Left: sketch of the component of $x_i$ at $t=0$; the check $p_i$ is present iff $i\leq\THETA$.
Right: sketch of the factor graph $\G_\eps(t)$ for $0<t<1$, with the $a_{i,j}$ coloured black and the other colours as in the left figure.}\label{Fig_interpolation}
\end{figure}

Let us inspect the construction in detail.
Apart from $t$ and $\eps$ we need two further parameters: an integer $\Theta=\Theta(\eps)\geq0$ and a real $\beta\in[0,1]$, chosen such that
\begin{equation}\label{eqbeta}
\Phi(\beta)=\max_{\alpha\in[0,1]}\Phi(\alpha).
\end{equation}	
Further, let
\begin{align}\label{eqinterpol}
\vm_\eps(t)&\disteq \Po((1-\eps)tdn/k),&
\vm_\eps'(t)&\disteq\Po((1-\eps)(1-t)dn/k)
\end{align}
be independent Poisson variables.
Also let $(\vk_i,\vk_i',\vk_i'')_{i\geq1}$ and $(\vd_i)_{i\geq1}$ be copies of $\vk$ and $\vd$, respectively, mutually independent and independent of $\vm_\eps(t)$, $\vm_\eps'(t)$.
Additionally, choose $\THETA\in[\Theta]$ uniformly and independently of everything else.

The Tanner graph $\G_\eps(t)$ has variable nodes 
\begin{align*}
x_1,\ldots,x_n&&\mbox{and}&&(x_{i,j,h})_{i\in[\vm_\eps'(t)],\,j\in[\vk_i],\,h\in[\vk_i'-1]}.
\end{align*}
Moreover, let $\cF_t$ be a random set that contains each of the variable nodes $x_{i,j,h}$ with probability $\beta$ independently.
Then the check nodes are 
\begin{align*}
a_1,\ldots,a_{\vm_\eps(t)},&&(b_{i,j})_{i\in[\vm_\eps'],\, j\in[\vk_i']},&&p_1,\ldots,p_{\THETA},&&
f_{i,j,h}\quad\mbox{ for each }x_{i,j,h}\in\cF_t.
\end{align*}
To define the edges of the Tanner graph let $\vec\Gamma_\eps(t)$ be a random maximal matching of the complete bipartite graph with vertex sets
\begin{align*}
\bigcup_{i=1}^n\cbc{x_i}\times[\vd_i],\qquad\bc{\bigcup_{i=1}^{\vm_\eps(t)}\cbc{a_i}\times[\vk_i]}
		\cup\cbc{b_{i,j}':i\in[\vm_\eps'(t)],\ j\in[\vk_i']}.
\end{align*}
For each matching edge $(x_i,s,a_j,t)\in\vec\Gamma_\eps(t)$ insert an edge between $x_i$ and $a_j$ into the Tanner graph and
for each $(x_i,s,b_{j,h})\in\vec\Gamma_\eps(t)$ insert an edge between $x_i$ and $b_{j,h}$.
Thus, $\G_\eps(t)$ may contain multi-edges.
Further, add an edge between $x_i$ and $p_i$ for $i=1,\ldots,\THETA$ and
add an edge between $x_{i,j,h}$ and $a_{i,j}$ as well as an edge between $x_{i,j,h}\in\cF_t$ and $f_{i,j,h}$.
Finally, let $\vA_{\eps}(t)$ be the random matrix induced by $\G_\eps(t)$.

The semantics is as follows.
The checks $a_i$ will play exactly the same role as before, i.e., each is adjacent to $\vk_i$ of the variable nodes $x_1,\ldots,x_n$ \whp\
By contrast, each $b_{i,j}$ is adjacent to precisely one of the variables $x_1,\ldots,x_n$.
In addition, $b_{i,j}$ is adjacent to the $\vk_i'-1$ variable nodes $x_{i,j,h}$, $h\in[\vk_i'-1]$.
These variable nodes, in turn, are adjacent only to $b_{i,j}$ and to $f_{i,j,h}$ if $x_{i,j,h}\in\cF$.
The checks $f_{i,j,h}$ are unary, i.e., $f_{i,j,h}$ simply forces $x_{i,j,h}$ to take the value zero.
Finally, each of the checks $p_i$ is adjacent to $x_i$ only, i.e., $p_1,\ldots,p_{\THETA}$ just freeze $x_1,\ldots,x_{\THETA}$.

For $t=1$ the Tanner graph contains $\vm_\eps(1)\disteq\Po((1-\eps)dn/k)$ `real' checks $a_i$ and none of the checks $b_{i,j}$ or $f_{i,j,h}$.
In effect, $\vA_\eps(1)$ is distributed precisely as $\vA_\eps$ from \Sec~\ref{Sec_outline1}.
By contrast, at $t=0$ we have $\vm_\eps(0)=0$, i.e., there are no checks $a_i$ involving several of the variables $x_1,\ldots,x_n$.
As a consequence, the Tanner graph decomposes into $n$ connected components, one for each of the $x_i$.
In fact, each component is a tree comprising $x_i$, some of the checks $b_{j,h}$ and their proprietary variables $x_{j,h,s}$ along with possibly a check $f_{j,h,s}$ that freezes $x_{j,h,s}$ to zero.
For $i\in[\THETA]$ there is a check $p_i$ freezing $x_i$ to zero as well.
Thus, $\vA_\eps(0)$ is a block diagonal matrix consisting of $n$ blocks, one for each component.
In effect, the rank of $\vA_\eps(0)$ will be easy to compute.
Finally, for $0<t<1$ we have a blend of the two extremal cases.
There will be some checks $a_i$ and some $b_{i,j}$ with their retainer variables and checks;  see Figure~\ref{Fig_interpolation}.

We are going to study the nullity of $\vA_\eps(t)$ for $t\in[0,1]$.
But since the newly introduced variables $x_{i,j,h}$ inflate the nullity, we subtract a correction term to retain the same scale throughout the process.
In addition, we need a correction term to make up for the greater total number of check nodes in $\vA_\eps(0)$ by comparison to $\vA_\eps(1)$.
Thus, let
\begin{align*}
\cN_t&=\nul\vA_{\eps}(t)+|\cF_t|-\sum_{i=1}^{\vm_\eps'(t)}\vk_i'(\vk_i'-1),&
\cY_t&=\sum_{i=1}^{\vm_\eps}(\vk_i-1)(\beta^{\vk_i}-1).
\end{align*}
The following two statements summarise the interpolation argument.
First, we compute $\Erw[\cN_0]$.

\begin{lemma}\label{Lemma_interpol2}
For any fixed $\theta\geq0$ we have
	$n^{-1}\Erw[\cN_0]=D(1-K'(\beta)/k)+dK'(\beta)/k-d+o_\eps(1).$
\end{lemma}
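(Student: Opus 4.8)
The plan is to exploit the block-diagonal structure of $\vA_\eps(0)$ described in the paragraph preceding the lemma. Since $\vm_\eps(0)=0$, the Tanner graph $\G_\eps(0)$ has exactly $n$ connected components, one per variable $x_i$, each component being a tree rooted at $x_i$ that branches into the checks $b_{j,h}$ retained by $x_i$ (there are $\vd_i$ of them when $i>\THETA$, but one slot is consumed by the pinning check $p_i$ when $i\le\THETA$), and each such $b_{j,h}$ carries its private variables $x_{j,h,s}$ for $s\in[\vk_j'-1]$, some of which are frozen to $0$ by a unary check $f_{j,h,s}$. Because the matrix is block diagonal, $\nul\vA_\eps(0)=\sum_{i=1}^n\nul(B_i)$ where $B_i$ is the block of component $i$. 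The first step is therefore to compute $\nul(B_i)$ for a single component as a function of its local data.

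**Nullity of one tree block.** Inside a single component there is no interaction between distinct $b$-checks, so the nullity of $B_i$ decomposes: fixing the value of $x_i$, each check $b_{j,h}$ imposes one linear equation $\CHI x_i+\sum_s\CHI_s x_{j,h,s}=0$ on the $\vk_j'-1$ private variables (of which the frozen ones are pinned to $0$). A short linear-algebra count gives that such a check contributes $\vk_j'-2$ to the nullity when at least one of its private variables is \emph{unfrozen} (so the equation can be satisfied for every value of $x_i$), and contributes $\vk_j'-1$ but simultaneously \emph{freezes} $x_i$ when all $\vk_j'-1$ private variables are frozen. Thus $x_i$ is itself forced to zero exactly when $i\le\THETA$ (the check $p_i$) or when some retained check $b_{j,h}$ has all its private variables in $\cF_0$; in that case the block loses the one degree of freedom coming from $x_i$. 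Writing $r_i$ for the number of $b$-checks retained by $x_i$ and noting each retained check independently has all private variables frozen with probability $\beta^{\vk_j'-1}$, and combining the $|\cF_0|$ and $\sum\vk_i'(\vk_i'-1)$ correction terms built into $\cN_0$, the net contribution of component $i$ to $\cN_0$ collapses to $\vecone\{x_i\text{ unfrozen}\}$, i.e.\ $\cN_0=\sum_{i=1}^n\vecone\{x_i\text{ unfrozen}\}$ up to a bounded correction. I would carry out this bookkeeping carefully, because getting the two correction terms to cancel the raw nullity down to an indicator is exactly the point of their definition.

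**Taking the expectation.** It remains to compute $\Pr[x_i\text{ unfrozen}]$ and sum. For a fixed $i>\THETA$ (the $i\le\THETA$ contribute $O(\Theta)=o_\eps(n)$ after division by $n$), $x_i$ is unfrozen iff none of its retained $b$-checks has all private variables frozen. The number of $b$-checks retained by $x_i$ is governed by the configuration-model matching: the variable contributes $\vd_i$ clones, and the relevant quantity is the probability that a given clone is matched to a $b$-clone carrying $\vk_j'$ neighbours. Since $\vm_\eps'(0)\disteq\Po((1-\eps)dn/k)$ and total $b$-slots number $\sum_i\vk_i'$ while variable slots number $\sum_j\vd_j\sim dn$, in the limit a given clone of $x_i$ lands on a $b_{j,h}$ with $\vk_j'$ distributed as the size-biased $\hat\vk$, and then that check freezes $x_i$ with probability $\beta^{\hat\vk-1}$. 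Hence conditionally on $\vd_i=\delta$, $$\Pr[x_i\text{ unfrozen}\mid\vd_i=\delta]\to\bc{1-\Erw[\beta^{\hat\vk-1}]}^{\delta}=\bc{1-K'(\beta)/k}^{\delta},$$ using $\Erw[\beta^{\hat\vk-1}]=\sum_\ell\tfrac{\ell\Pr[\vk=\ell]}{k}\beta^{\ell-1}=K'(\beta)/k$. Averaging over $\vd$ gives $\Pr[x_i\text{ unfrozen}]\to D(1-K'(\beta)/k)$. But $\cN_0$ as actually defined also absorbs, via the $-\sum_i\vk_i'(\vk_i'-1)$ term versus the raw contributions, an additional piece $dK'(\beta)/k-d$ coming from the expected nullity surplus $\Erw[\vk_j'-2]$ summed over all $\vm_\eps'(0)$ checks; tracking this gives $n^{-1}\Erw[\cN_0]=D(1-K'(\beta)/k)+dK'(\beta)/k-d+o_\eps(1)$ as claimed. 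The main obstacle is not any single estimate but the careful reconciliation of the two correction terms in $\cN_t$ with the per-component linear-algebra count, together with controlling the $o_\eps(1)$ error from multi-edges and from the $r>2$ moment truncation of $\vd,\vk$, for which \Lem~\ref{Lemma_sums} supplies the needed concentration.
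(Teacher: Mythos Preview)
Your overall strategy---block decomposition, per-component linear algebra, size-biased check degrees---is exactly what the paper does. But the central bookkeeping claim is wrong, and the error is not cosmetic.

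You assert that after folding in the two correction terms, the contribution of component $s$ to $\cN_0$ ``collapses to $\vecone\{x_s\text{ unfrozen}\}$.'' It does not. Write $r_s$ for the number of $b$-checks attached to $x_s$ and $Z_s$ for the number of those whose private variables are all in $\cF_0$. Your own per-check accounting (each non-all-frozen check contributes $\kappa-2$ to $\nul+|\cF_0|$, each all-frozen check contributes $\kappa-1$) is correct, but when you then subtract the per-block share $\sum_l(\kappa_l-1)$ of the $\sum_i\vk_i'(\vk_i'-1)$ term, each non-all-frozen check leaves a residue of $-1$, not $0$. The exact result is
\[
\vec N_s=\vecone\{Z_s=0\}+Z_s-r_s,
\]
which is precisely the paper's cases (Case~2 gives $1-r_s$, Case~1 gives $Z_s-r_s$). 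Averaging $Z_s-r_s$ over the degrees $(\hat\vk_1,\ldots,\hat\vk_{\vd_s})$ yields $\sum_l(\beta^{\hat\vk_l-1}-1)$, whose mean is $d(K'(\beta)/k-1)$; this is where the term $dK'(\beta)/k-d$ actually comes from.

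Your final paragraph tries to recover this missing piece, but the explanation is both internally inconsistent (the correction terms cannot simultaneously collapse $\vec N_s$ to an indicator \emph{and} produce a further additive contribution) and numerically wrong: ``$\Erw[\vk_j'-2]$ summed over all $\vm_\eps'(0)$ checks'' would give $d(k-2)/k$ per variable, not $dK'(\beta)/k-d$, and it does not depend on $\beta$ at all. The fix is simply to keep the $Z_s-r_s$ term in your per-block formula; then the computation of $\Erw[\vec N_s]$ goes through cleanly and matches the paper's display~(4.11).
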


\noindent
The next proposition provides monotonicity.

\begin{proposition}\label{Lemma_interpolation}
For any $\eps>0$ there exists $\Theta=\Theta(\eps)>0$ such that 
$\frac1n\frac\partial{\partial t}\Erw[\cN_t+\cY_t]\geq o_\eps(1)$
uniformly $t\in(0,1)$.
\end{proposition}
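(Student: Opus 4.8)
The plan is to compute $\frac{\partial}{\partial t}\Erw[\cN_t+\cY_t]$ by differentiating through the Poisson couplings $\vm_\eps(t)\disteq\Po((1-\eps)tdn/k)$ and $\vm_\eps'(t)\disteq\Po((1-\eps)(1-t)dn/k)$. The standard device is that for a Poisson-rate-$\lambda(t)$ family one has $\frac{\dd}{\dd t}\Erw[g(\Po(\lambda(t)))]=\lambda'(t)\,\Erw[g(\Po(\lambda(t))+1)-g(\Po(\lambda(t)))]$. Applying this separately to the two independent Poisson variables, the derivative splits into two contributions: a positive one coming from adding one extra `$a$'-type check $a_{\vm_\eps(t)+1}$ of target degree $\vk$ to the graph (rate $+(1-\eps)d/k$), and a negative one coming from removing one `$b$'-block (check $b_{i,j}$ together with its $\vk'-1$ pendant variables $x_{i,j,h}$, some frozen by $f_{i,j,h}$, $h\in[\vk'-1]$, contributing to $|\cF_t|$ and to $\sum\vk_i'(\vk_i'-1)$ — note the correction terms in $\cN_t$ are designed precisely so this bookkeeping is clean), at rate $-(1-\eps)d/k$. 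So the goal reduces to showing that the expected nullity change from adding an `$a$'-check is at least the expected nullity change from adding a `$b$'-block, up to $o_\eps(1)$.

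Each of these two changes I would evaluate with Fact~\ref{Fact_nullity}: adding a check with non-zero entries in a set $I$ of columns shifts the nullity by $\log_q$ of a sum against the Boltzmann marginal $\mu_{\vA_\eps(t),I}$. For the `$a$'-check, $I$ is a size-$\vk$ set of variables $x_j$ chosen by sampling $\vk$ fresh cavities (matched against clones via $\vec\Gamma_\eps(t)$); for the `$b$'-block, after integrating out the pendant variables $x_{i,j,h}$ (each either free or frozen to $0$, frozen with probability $\beta$), the single variable $x_j$ attached to $b_{i,j}$ is effectively hit by a check that is satisfied with a $\beta$-biased probability — this is exactly where the parameter $\beta$ and the definition of $\Phi$ enter. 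Here is where the machinery of \Sec~\ref{Sec_Boltzmann} does the heavy lifting: the checks $p_1,\ldots,p_\THETA$ together with \Lem~\ref{Lemma_0pinning} and \Lem~\ref{lem:l-wise} make $\mu_{\vA_\eps(t),I}$ $(\eps,\ell)$-symmetric \whp\ for $I$ a bounded random set, so $\mu_{\vA_\eps(t),I}$ factorises as a product of its marginals up to $o_\eps(1)$; and \Lem~\ref{Lemma_Spartition} forces each marginal $\mu_{\vA_\eps(t),x_j}$ to be either $\delta_0$ (frozen) or uniform on $\FF_q$. Let $\alpha=\alpha(t)$ denote the asymptotic fraction of frozen cavities. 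Then both nullity-change expectations become explicit functions of $\alpha$: evaluating the $\log_q$-sums with a product of $\delta_0$'s and uniforms, the `$a$'-contribution turns out to be $\frac{d}{k}\Erw[(\vk-1)\log_q(\cdots)]$-type term and the `$b$'-contribution the $D(\cdots)$-type term, and one recognises the combination as the discrete derivative of $\Phi$ evaluated at the appropriate argument. The algebra should be arranged so that the net derivative equals (a nonnegative multiple of) $\Phi(\beta)-\Phi(\alpha)\ge 0$ by the choice~\eqref{eqbeta} of $\beta$ as a global maximiser, plus $o_\eps(1)$ error from the symmetry approximation and from concentration (\Lem~\ref{Lemma_sums}) of the cavity count and the degree sums.

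The main obstacle I anticipate is the fact that the frozen fraction $\alpha$ is itself a random, $t$-dependent quantity determined self-consistently by the graph — it is \emph{not} simply $\beta$. The interpolation works precisely because $\Phi(\beta)\ge\Phi(\alpha)$ for \emph{every} value $\alpha\in[0,1]$, so one does not need to identify $\alpha$, only to bound the two $\log_q$-sums by the same function of whatever $\alpha$ happens to be. Making this rigorous requires: (i) controlling the conditional law of the Boltzmann marginals at the freshly sampled cavities — this is the content of the pinning argument, but one must check it applies uniformly as $n\to\infty$ and that the perturbation $p_1,\ldots,p_\THETA$ costs only $O(\Theta/n)=o_\eps(1)$ in nullity; (ii) handling the possibly unbounded degrees $\vk$ (only $\Erw[\vk^r]<\infty$, $r>2$) inside the $\log_q$-sums — truncate at a slowly growing threshold and absorb the tail using the $r$-th moment bound and Lemma~\ref{Lemma_sums}; and (iii) checking that multi-edges in $\G_\eps(t)$ and discrepancies between target and realised degrees are rare enough to be swept into $o_\eps(1)$. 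Once these technical points are in place, the inequality $\Phi(\beta)-\Phi(\alpha)\ge0$ gives the claimed monotonicity; combined with Lemma~\ref{Lemma_interpol2} and integration over $t\in[0,1]$ this yields the matching lower bound on the nullity.
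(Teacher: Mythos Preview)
Your high-level strategy is right: differentiate through the two Poisson rates, couple each increment to the common graph $\G_\eps(t)$, evaluate the nullity changes via Fact~\ref{Fact_nullity}, and use pinning (\Lem~\ref{Lemma_0pinning}) plus \Lem~\ref{Lemma_Spartition} to reduce the relevant Boltzmann marginals to a single frozen-cavity fraction $\ALPHA$. This is exactly what the paper does. But your expectation for what the algebra produces, and hence for which inequality closes the argument, is wrong.

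You assert that the net derivative should come out as a multiple of $\Phi(\beta)-\Phi(\ALPHA)$, nonnegative by the optimality~\eqref{eqbeta} of $\beta$. It does not. Adding one $a$-check contributes $K(\ALPHA)-1$ to the (corrected) nullity. Incrementing $\vm_\eps'(t)$ by one adds an entire block of $\vk'$ checks $b_{i,1},\ldots,b_{i,\vk'}$ (not a single $b_{i,j}$), each attached to one random cavity and to $\vk'-1$ fresh pendant variables frozen independently with probability $\beta$; after the correction terms built into $\cN_t$ this contributes $\Erw[\vk(\ALPHA\beta^{\vk-1}-1)]$. Finally $\partial_t\Erw[\cY_t]$ contributes $\Erw[(\vk-1)(\beta^{\vk}-1)]$. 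No $D(\cdot)$-term appears anywhere in the derivative; the generating function $D$ enters only at the endpoint $t=0$ through \Lem~\ref{Lemma_interpol2}. Summing, one gets
\[
\frac{\partial}{\partial t}\Erw[\cN_t+\cY_t]=\frac{(1-\eps)dn}{k}\,\Erw_{\vk}\!\brk{\ALPHA^{\vk}-\vk\ALPHA\beta^{\vk-1}+(\vk-1)\beta^{\vk}}+o_\eps(n),
\]
and nonnegativity follows from the elementary pointwise inequality $x^j-jxy^{j-1}+(j-1)y^j\ge0$ for all $j\ge2$, $x,y\in[0,1]$ --- convexity of monomials, not optimality of $\beta$. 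In particular \Prop~\ref{Lemma_interpolation} holds for \emph{every} $\beta\in[0,1]$; the choice~\eqref{eqbeta} is invoked only afterwards, in \Cor~\ref{Prop_upper}, to identify $n^{-1}(\Erw[\cN_0]-\Erw[\cY_1])$ with $\max_\alpha\Phi(\alpha)$. Your plan as written would stall at the algebraic step because the expression you anticipate is not what emerges.
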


\noindent
As an immediate consequence of \Lem~\ref{Lemma_interpol2} and \Prop~\ref{Lemma_interpolation} we obtain a lower bound on the nullity that matches the upper bound from \Prop~\ref{Prop_lower}.

\begin{corollary}\label{Prop_upper}
We have
	$\limsup_{\eps\to0}\limsup_{n\to\infty}\frac1n\Erw[\nul(\vA_\eps)]\geq\max_{\alpha\in[0,1]}\Phi(\alpha).$
\end{corollary}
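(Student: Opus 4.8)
The plan is to combine \Lem~\ref{Lemma_interpol2} with \Prop~\ref{Lemma_interpolation} via the fundamental theorem of calculus along the interpolation parameter $t$, exactly as the text announces. First I would fix $\eps>0$, let $\Theta=\Theta(\eps)$ be as supplied by \Prop~\ref{Lemma_interpolation}, and choose $\beta\in[0,1]$ attaining the maximum in~\eqref{eqbeta}. Since $\vA_\eps(1)$ is distributed exactly as $\vA_\eps$, we have $\Erw[\nul(\vA_\eps)]=\Erw[\nul(\vA_\eps(1))]$, and since at $t=1$ there are no variable nodes $x_{i,j,h}$ and no checks $b_{i,j}$ or $f_{i,j,h}$, the correction term $|\cF_1|-\sum_i\vk_i'(\vk_i'-1)$ vanishes (as $\vm_\eps'(1)=0$ a.s.), so $\cN_1=\nul(\vA_\eps)$. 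Hence
\begin{align*}
\frac1n\Erw[\nul(\vA_\eps)]=\frac1n\Erw[\cN_1]=\frac1n\Erw[\cN_0]+\frac1n\int_0^1\frac{\partial}{\partial t}\Erw[\cN_t]\,\dd t.
\end{align*}

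Next I would absorb the second correction term $\cY_t$. Observe that $\cY_t=\sum_{i=1}^{\vm_\eps}(\vk_i-1)(\beta^{\vk_i}-1)$ does not actually depend on $t$ as written, so $\frac{\partial}{\partial t}\Erw[\cY_t]=0$ and \Prop~\ref{Lemma_interpolation} is literally the statement $\frac1n\frac{\partial}{\partial t}\Erw[\cN_t]\geq o_\eps(1)$ uniformly in $t\in(0,1)$; thus the integral above is $\geq o_\eps(1)$. (If instead $\cY_t$ is intended to carry a $t$-dependence through $\vm_\eps=\vm_\eps(t)$, one evaluates $\Erw[\cY_0]$ and $\Erw[\cY_1]$ by Wald's identity, notes both are $O(n)$ with an explicit constant, and checks the difference is accounted for by the normalisation; either way the contribution is controlled.) Combining with \Lem~\ref{Lemma_interpol2},
\begin{align*}
\frac1n\Erw[\nul(\vA_\eps)]\geq D(1-K'(\beta)/k)+\frac{d}{k}K'(\beta)-d+o_\eps(1).
\end{align*}

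Finally I would identify the right-hand side with $\Phi(\beta)$. Recalling
\[
\Phi(\alpha)=D(1-K'(\alpha)/k)+\frac{d}{k}\bc{K(\alpha)+(1-\alpha)K'(\alpha)-1},
\]
and using that $\beta$ is a maximiser, one must check that at $t=0$ the extra tree gadgets $b_{i,j}$ contribute exactly the $\frac{d}{k}(K(\beta)-1)$ discrepancy between the bare expression $\frac{d}{k}K'(\beta)-d$ and $\frac{d}{k}(K(\beta)+(1-\beta)K'(\beta)-1)$; this is a short generating-function computation already packaged inside \Lem~\ref{Lemma_interpol2}, so in fact $D(1-K'(\beta)/k)+\frac{d}{k}K'(\beta)-d=\Phi(\beta)=\max_{\alpha\in[0,1]}\Phi(\alpha)$ after reconciling the $\cY$ correction. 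Taking $n\to\infty$ and then $\eps\to0$ yields $\limsup_{\eps\to0}\limsup_{n\to\infty}\frac1n\Erw[\nul(\vA_\eps)]\geq\max_{\alpha\in[0,1]}\Phi(\alpha)$, as claimed.

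The only genuinely substantive input is \Prop~\ref{Lemma_interpolation}, whose proof is the technical heart of the interpolation argument (differentiating the expected nullity along $t$, using Fact~\ref{Fact_nullity} together with the pinning and symmetry lemmas \ref{lem:l-wise}, \ref{Lemma_Spartition}, \ref{Lemma_0pinning} to replace Boltzmann marginals by the two-atom product measures $\pi_\alpha$, and showing the resulting derivative is nonnegative up to $o_\eps(1)$); granting that proposition, the corollary is the bookkeeping above, and the main obstacle in writing it cleanly is making sure the two correction terms $|\cF_t|-\sum\vk_i'(\vk_i'-1)$ and $\cY_t$ are matched precisely against the bare $\Erw[\nul\vA_\eps(0)]$ so that the telescoped bound lands exactly on $\Phi(\beta)$ rather than a shifted constant.
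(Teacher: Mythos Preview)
Your overall architecture is right, but the treatment of $\cY_t$ is where the argument goes wrong, and it is not merely a bookkeeping nuisance: it is the term that closes the $\Theta(n)$ gap between your bound and $\Phi(\beta)$.

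Despite the notation, the upper index in $\cY_t$ is $\vm_\eps(t)\disteq\Po((1-\eps)tdn/k)$, not the $t$-independent $\vm_\eps$; this is confirmed by \Lem~\ref{Claim_Lemma_interpolation_5}, which computes $\frac{\partial}{\partial t}\Erw[\cY_t]=(1-\eps)\frac{dn}{k}\Erw[(\vk-1)(\beta^{\vk}-1)]\neq0$. In particular $\cY_0=0$ (since $\vm_\eps(0)=0$ a.s.) while $\frac1n\Erw[\cY_1]=\frac{d}{k}\Erw[(\vk-1)(\beta^{\vk}-1)]+o_\eps(1)$. Your assertion that ``either way the contribution is controlled'' and that $D(1-K'(\beta)/k)+\frac{d}{k}K'(\beta)-d=\Phi(\beta)$ is therefore false: a direct computation gives
\[
D(1-K'(\beta)/k)+\frac{d}{k}K'(\beta)-d-\Phi(\beta)=\frac{d}{k}\Erw\brk{(\vk-1)(\beta^{\vk}-1)},
\]
which is strictly negative whenever $\beta<1$ (since $\vk\ge3$). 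So the bound you actually derive, $\frac1n\Erw[\nul\vA_\eps]\ge\frac1n\Erw[\cN_0]+o_\eps(1)$, is too weak. Nothing about $\beta$ being a maximiser repairs this; the discrepancy is not a stationarity condition but exactly $-\frac1n\Erw[\cY_1]$.

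The correct chain, as in the paper, is to integrate \Prop~\ref{Lemma_interpolation} to get $\Erw[\cN_1+\cY_1]\ge\Erw[\cN_0+\cY_0]-o_\eps(n)$, use $\cN_1=\nul\vA_\eps$ and $\cY_0=0$, and then combine \Lem~\ref{Lemma_interpol2} with the explicit value of $\frac1n\Erw[\cY_1]$ to obtain $\frac1n\Erw[\cN_0]-\frac1n\Erw[\cY_1]=\Phi(\beta)+o_\eps(1)$. The $\cY$ correction is not packaged inside \Lem~\ref{Lemma_interpol2}; it must be computed and subtracted separately.
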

\begin{proof}
\Lem~\ref{Lemma_interpolation} implies that
\begin{align}\label{eqProp_upper_1}
\Erw[\nul \vA_\eps]&=\Erw[\nul \vA_\eps(1)]=\Erw[\cN_1]=\Erw[\cN_1+\cY_1]-\Erw[\cY_1]
		\geq \Erw[\cN_0+\cY_0]-\Erw[\cY_1]-o_\eps(n)=\Erw[\cN_0]-\Erw[\cY_1]-o_\eps(n).
\end{align}
Further,
\begin{align*}
\frac1n\Erw[\cY_1]&=\frac dk\bc{\beta K'(\beta)-k+1-K(\beta)}+o_\eps(1),&
\frac1n\Erw[\cN_0]&=-d+dK'(\beta)/k+D(1-K'(\beta)/k)+o_\eps(1)&&\mbox{[by \Lem~\ref{Lemma_interpol2}]}.
\end{align*}
Hence, 
$n^{-1}(\Erw[\cN_0]-\Erw[\cY_1])=\Phi(\beta)+o_\eps(1)=\max_{\alpha\in[0,1]}\Phi(\alpha)+o_\eps(1)$
by~\eqref{eqbeta}.
Thus, the assertion follows from \eqref{eqProp_upper_1}.
\end{proof}

\noindent
Combining \Prop~\ref{Cor_lower}, \Prop~\ref{Prop_lower} and \Cor~\ref{Prop_upper}, and a standard concentration argument for $\nul{\vA_{\eps}}$ (see Lemma~\ref{Lemma_conc}), we complete the proof of \Thm~\ref{thm:rank}.

\subsection{Discussion}
Much of the prior work on the rank of random matrices over $\FF_q$ relies on relatively elementary techniques such as the second moment method~\cite{Dietzfelbinger,DuboisMandler,GoerdtFalke}, or the idea of bounding the number of linearly dependent row sets via the first moment method~\cite{Kolchin,PittelSorkin}.
Other proof strategies depend on graph-theoretic arguments such as close control of the 2-core and the `mantle'~\cite{CFP}.
By contrast, the present paper harnesses two ideas from mathematical physics, the Aizenman-Sims-Starr scheme and the interpolation method.
Both were originally invented to investigate the Sherrington-Kirkpatrick spin glass model~\cite{Aizenman,Guerra}.
Yet over the recent years these techniques have found several uses in `diluted' models defined on sparse random structures, e.g.,~\cite{bayati,FranzLeone,LelargeInterpolation,MontanariBounds}.
In some of these papers the idea of carving out `cavities' to facilitate coupling arguments is used as well.
Also the combintion of the Aizenman-Sims-Starr scheme and the interpolation method to prove matching upper and lower bounds has been applied  successfully to other problems such as the Viana--Bray spin glass model or the stochastic block model (e.g., \cite{CKPZ,BetheLattices,Panchenko}).

Yet the only prior application of the Aizenman-Sims-Starr scheme to the rank problem that we are aware of 
is our previous paper on random matrices with $\vk=k$ constant and $\vd\disteq\Po(d)$~\cite{Ayre}.
The Aizenman-Sims-Starr scheme was used there to derive an upper bound on the nullity, like in the present paper.
But in the present setting matters are complicated very significantly by the fact that we work with general degree sequences.
Indeed, while the Poisson distribution lends itself easily to coupling arguments due to its memorylessness, in the present paper the couplings require delicate manoeuvres, as we will see in \Sec~\ref{Sec_lower}.
The possible presence of nodes of very high degrees adds to the intricacy.
While coupling arguments have previously been developed for graphs with given degrees (e.g.,~\cite{Dembo,LelargeInterpolation,MontanariBounds}),
a new subtle construction is needed to carry out the very accurate calculations required for the Aizenman-Sims-Starr scheme.

The use of the interpolation method to lower bound the nullity is a further technical novelty
as, to our knowledge, this technique has not been applied to the rank problem previously.
Indeed, in most prior work there was no need for a sophisticated lower bound argument
because the graph-theoretic bound~\eqref{eq2corebound}  was tight~\cite{Ayre,CFP}.
The interpolation scheme for the rank problem is conceptually more elegant than prior applications of the method to other problems.
The reason is that normally the interpolation is set up in terms of the Bethe free energy functional that `lives' on an infinite dimensional space of probability measures; cf.~\Sec~\ref{Sec_cavity}.
Then the construction of the interpolation scheme has to incorporate a distributional parameter $\pi$.
In effect, the combinatorial interpretation of the structures at `times' $t\in(0,1)$ is not exactly straightforward.
By contrast, because in the rank problem the infinite dimensional variational problem collapses to a one-dimensional optimisation, the interpolation argument merely requires a real parameter $\beta\in[0,1]$ and the intermediate structures $\vA_\eps(t)$ are actual matrices.

Finally, it would be interesting to see if the present methods can be extended to other problems of an algebraic flavour.
Natural candidates would be systems of linear equations over rings rather than fields or systems of equations over non-Abelian groups.

\subsection{Overview}
We proceed to prove \Prop~\ref{Prop_lower} in \Sec~\ref{Sec_lower}.
Subsequently in \Sec~\ref{Sec_interpolation} we deal with the proofs of \Lem~\ref{Lemma_interpol2} and \Prop~\ref{Lemma_interpolation}.
Further, \Sec~\ref{Sec_conc} contains the proofs of \Prop s~\ref{Lemma_welldef}, \ref{Cor_lower} and~\ref{Lemma_nulconc} and \Thm~\ref{Thm_LDPC}.
Moreover,  the proof of \Thm~\ref{Thm_tight} can be found in \Sec~\ref{Sec_tight}.
Finally, \Sec~\ref{sec:2core} contains the proof of \Thm~\ref{thm:2core}.

\section{The Aizenman-Sims-Starr scheme}\label{Sec_lower}

\noindent
In this section we prove \Prop~\ref{Prop_lower}.
As set out in \Sec~\ref{Sec_outline1}, we are going to bound the difference of the nullities of $\vA_{\eps,n+1}$ and $\vA_{\eps,n}$ via Fact~\ref{Fact_nullity}.
We begin by coupling the random variables $\nul(\vA_{\eps,n+1})$ and $\nul(\vA_{\eps,n})$.

\subsection{The coupling}
Let $\vM=(\vM_j)_{j\geq3}$ and $\DELTA=(\DELTA_j)_{j\geq3}$ be sequences of Poisson variables with means
\begin{align}\label{eqPoissons}
\Erw[\vM_j]&=(1-\eps)\pr\brk{\vk=j}dn/k,&\Erw[\DELTA_j]&=(1-\eps)\pr\brk{\vk=j}d/k.
\end{align}
All of these random variables are mutually independent and independent of $\THETA$ and the $(\vd_i)_{i\geq1}$.
Further, let
\begin{align}\label{eqm}
\vM_j^+&=\vM_j+\DELTA_j,
&\vm_\eps&=\sum_{j\geq3}\vM_j,&\vm_\eps^+&=\sum_{j\geq3}\vM_j^+.
\end{align}
Since  $\sum_{j\geq3}\vM_j\disteq\Po((1-\eps)dn/k)$, (\ref{eqm}) is consistent with the earlier convention that $\vm_\eps\disteq\Po((1-\eps)dn/k)$.

The random vectors $\vd,\vM$ naturally define a random Tanner (multi-)graph $\G_{n,\vM}$ with variable nodes $x_1,\ldots,x_n$ 
and check nodes $p_1,\ldots,p_{\THETA}$ and $a_{i,j}$, $i\geq3$, $j\in[\vM_i]$.
Its edges are induced by a random maximal matching  $\vec\Gamma_{n,\vM}$ of the complete bipartite graph with vertex classes
\begin{align*}
\bigcup_{h=1}^n\{x_h\}\times[\vd_h]\quad\mbox{and}\quad\bigcup_{i\geq3}\bigcup_{j=1}^{\vM_i}\{a_{i,j}\}\times[i].
\end{align*}
Each matching edge $(x_h,s,a_{i,j},t)\in \vec\Gamma_{n,\vM}$ induces an edge between $x_h$ and $a_{i,j}$ in the Tanner graph.
In addition, there is an edge between $p_i$ and $x_i$ for every $i\in[\THETA]$.
Let $\vA_{n,\vM}=\A(\G_{n,\vM})$ be the corresponding random matrix.
The random matrix $\vA_{n+1,\vM^+}$ and its associated Tanner graph $\G_{n+1,\vM^+}$ are defined analogously.

\begin{lemma}\label{Lemma_ComplicatedModel}
For any $\theta>0$ we have
$\Erw[\nul(\vA_{\eps,n})]=\Erw[\nul(\vA_{n,\vM})]$, 
$\Erw[\nul(\vA_{\eps,n+1})]=\Erw[\nul(\vA_{n+1,\vM^+})]$.
\end{lemma}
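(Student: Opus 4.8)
The goal of \Lem~\ref{Lemma_ComplicatedModel} is to check that the two-stage way of generating the check degrees—first splitting the Poisson mass $\Po((1-\eps)dn/k)$ into independent coordinates $\vM_j$ indexed by the possible values $j\in\supp\vk$, then building the Tanner multigraph by a maximal matching of configuration-model clones—yields exactly the same distribution of the nullity as the model $\vA_{\eps,n}$ from \Sec~\ref{Sec_outline1}. So the plan is to exhibit a distributional identity $\G_{\eps,n}\disteq\G_{n,\vM}$ (as multigraphs with the labelled $p_i$-checks attached), from which $\Erw[\nul(\vA_{\eps,n})]=\Erw[\nul(\vA_{n,\vM})]$ follows immediately because $\vA(\cdot)$ and $\nul(\vA(\cdot))$ are measurable functions of the Tanner graph; the $\CHI$-entries are i.i.d.\ copies of $\CHI$ in both constructions, so nothing changes there. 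The same argument applied with $n+1$ variable nodes and the means $\Erw[\vM_j^+]=\Erw[\vM_j]+\Erw[\DELTA_j]=(1-\eps)\pr[\vk=j]d(n+1)/k$ gives the second identity, since $\vM_j^+=\vM_j+\DELTA_j\disteq\Po((1-\eps)\pr[\vk=j]d(n+1)/k)$ by the additivity of independent Poisson variables, and $\vm_\eps^+=\sum_j\vM_j^+\disteq\Po((1-\eps)d(n+1)/k)$.

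First I would recall the generation of $\G_{\eps,n}$: $\vm_\eps\disteq\Po((1-\eps)dn/k)$, then i.i.d.\ target degrees $\vk_1,\ldots,\vk_{\vm_\eps}$, then a random maximal matching of the clone sets $\bigcup_i\{a_i\}\times[\vk_i]$ against $\bigcup_j\{x_j\}\times[\vd_j]$, plus the unary checks $p_1,\ldots,p_\THETA$ on $x_1,\ldots,x_\THETA$. The key distributional fact I would invoke is the standard Poissonisation/thinning lemma: if $\vm\disteq\Po(\lambda)$ and, given $\vm$, we attach i.i.d.\ labels $\vk_1,\ldots,\vk_{\vm}$ with $\pr[\vk=j]=p_j$, then the counts $\vM_j=\#\{i\le\vm:\vk_i=j\}$ are independent Poisson with means $\lambda p_j$; conversely, given independent $\vM_j\disteq\Po(\lambda p_j)$, the multiset $\{\vk_i\}$ obtained by listing $j$ with multiplicity $\vM_j$ has the law of an i.i.d.\ $\vk$-sequence of $\Po(\lambda)$-distributed length. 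With $\lambda=(1-\eps)dn/k$ and $p_j=\pr[\vk=j]$ this shows that the \emph{unordered} collection of check target degrees is the same in $\G_{\eps,n}$ and $\G_{n,\vM}$. Then I would note that the Tanner graph is a function only of this unordered degree collection together with the matching of clones—the labels $a_i$ versus $a_{i,j}$ are immaterial because the matching distribution is exchangeable over the check clones and the nullity is invariant under permuting rows—so the two Tanner graphs have the same law, and likewise after attaching the identical $p_i$-structure.

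The few routine points to nail down are: (i) well-definedness, i.e.\ that a maximal matching is taken in both models so that a possible mismatch between $\sum_j\vd_j$ and $\sum_{i}\vk_i$ causes no problem (this is exactly why $\vA_{\eps,n}$ was designed with a \emph{maximal} rather than \emph{perfect} matching, so both constructions cover the same number of clones on the variable side and leave the same law of cavities); (ii) that $\THETA$ and the $(\vd_h)_{h\ge1}$ are generated identically and independently of the check side in both models, which is immediate from the stated independence assumptions; and (iii) that adding $\DELTA_j$ extra checks of degree $j$ is consistent with simply running the whole construction at $n+1$ with the same per-value Poisson rates, which is the content of $\Erw[\vM_j]+\Erw[\DELTA_j]=(1-\eps)\pr[\vk=j]d(n+1)/k$. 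I do not expect a genuine obstacle here; the only place requiring a little care is bookkeeping the exchangeability so that ``same multiset of target degrees'' plus ``exchangeable matching'' genuinely upgrades to ``same law of the labelled Tanner multigraph,'' and hence same $\Erw[\nul]$. This is the lemma's role in the paper: it repackages $\vA_{\eps,n}$ and $\vA_{\eps,n+1}$ into the coupled form $\G_{n,\vM}$, $\G_{n+1,\vM^+}$ on which the Aizenman--Sims--Starr difference $\Erw[\nul(\vA_{\eps,n+1})]-\Erw[\nul(\vA_{\eps,n})]$ can be analysed by adding one variable node and $\sum_j\DELTA_j$ extra checks via Fact~\ref{Fact_nullity}.
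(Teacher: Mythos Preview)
Your proposal is correct and follows essentially the same argument as the paper: both rely on the Poisson thinning identity, namely that if $\vm_\eps\disteq\Po((1-\eps)dn/k)$ and the $\vk_i$ are i.i.d., then the counts $\vM_j=\sum_{i\le\vm_\eps}\vecone\{\vk_i=j\}$ are independent $\Po((1-\eps)\pr[\vk=j]dn/k)$ variables, so $\vA_{\eps,n}$ and $\vA_{n,\vM}$ are identically distributed. Your treatment is somewhat more detailed than the paper's (which dispatches the exchangeability and labelling issues in a single sentence), but the substance is the same.
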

\begin{proof}
We defined $\vA_{\eps,n}$ as the $n\times\vm_\eps$-matrix with target column and row degrees drawn from $\vd$ and $\vk$ independently with a $\THETA\times\THETA$ identity matrix attached at the bottom.
In effect, because $\vm_\eps$ is a Poisson variable, the number of rows of with target degree $j$ is distributed as $\vM_j$, and these numbers are mutually independent.
Hence, $\vA_{\eps,n}$ and $\vA_{n,\vM}$ are identically distributed, and so are their nullities.
The same argument applies to $\vA_{\eps,n+1}$.
\end{proof}

Up to this point we merely introduced a new description of $\vA_{\eps,n}$ and $\vA_{\eps,n+1}$.
To actually construct a coupling we introduce a third random matrix whose nullity we can easily compare to 
$\nul(\vA_{n,\vM})$ and $\nul(\vA_{n+1,\vM^+})$.
Specifically, let $\GAMMA_i\ge0$ be the number of checks $a_{i,j}$, $j\in[\vM_i^+]$, adjacent to the last variable node $x_{n+1}$ in $\G_{n+1,\vM^+}$.
Also let $\GAMMA=(\GAMMA_i)_{i\geq3}$ and set
\begin{equation}\label{eqminus}
\vM_i^-=(\vM_i-\GAMMA_i)\vee 0.
\end{equation}
Consider the random Tanner graph $\G'=\G_{n,\vM^-}$ and the corresponding random matrix  $\vA'=\vA_{n,\vM^-}$ induced by a random matching $\vec\Gamma_{n,\vM^-}$ as above.
For each variable $x_i$, $i=1,\ldots,n$, let $\cC$ be the set of clones from $\bigcup_{i\in[n]}\{x_i\}\times[\vd_i]$ that $\vec\Gamma_{n,\vM^-}$ leaves unmatched.
We call the elements of $\cC$ {\em cavities}.

Now, obtain the Tanner graph $\G''$ from $\G'$ by adding 
new check nodes $a''_{i,j}$ with target degree $i$ for each $i\geq3$, $j\in[\vM_i-\vM_i^-]$.
The new checks are joined by a random maximal matching of the complete bipartite graph with vertex classes $\cC$ and
 $$\bigcup_{i\geq3}\bigcup_{j\in[\vM_i-\vM_i^-]}\{a_{i,j}''\}\times[i],$$
i.e., for each matching edge we insert a corresponding variable-check edge.
Then $\vA''$ is obtained from $\vA'$ by adding rows corresponding to the new checks and representing each new edge of $\G''$ by a matrix entry chosen independently according to $\CHI$.
Thus, the matrix entries of $\vA'$, $\vA''$ corresponding to the edges of $\G'$ coincide.

Analogously, obtain $\G'''$ by adding one variable node $x_{n+1}$ as well as check nodes $a_{i,j}'''$, $i\geq3$, $j\in[\GAMMA_i]$
and $b_{i,j}'''$, $i\geq3$, $j\in[\vM_i^+-\vM_i^--\GAMMA_i]$ to $\G'$.
The new checks are connected to $\G'$ via a random maximal matching of the complete bipartite graph with vertex classes $\cC$ and
 $$\bigcup_{i\geq3}\bc{\bigcup_{j\in[\vM_i-\vM_i^-]}\{a_{i,j}'''\}\times[i-1]
			\cup\bigcup_{j\in[\vM_i^+-\vM_i^--\GAMMA_i]}\{b_{i,j}'''\}\times[i]}.$$
For each matching edge we insert the corresponding variable-check edge and
in addition each of the check nodes $a_{i,j}'''$ gets connected to $x_{n+1}$ by exactly one edge.
Finally, $\vA'''$ is obtained by adding one row for each of the new checks as well as one column representing $x_{n+1}$.
As always, the entries representing the new edges of the Tanner graph are drawn independently according to $\CHI$.

\begin{lemma}\label{Lemma_valid}
We have  $\Erw[\nul(\vA'')]=\Erw[\nul(\vA_{n,\vM})]+o(1)$ and  $\Erw[\nul(\vA''')]=\Erw[\nul(\vA_{n+1,\vM^+})]+o(1).$
\end{lemma}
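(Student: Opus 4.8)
\emph{Proof plan.} The plan is to recognise $\vA''$ as a two-stage re-sampling of $\vA_{n,\vM}$, and $\vA'''$ as a two-stage re-sampling of $\vA_{n+1,\vM^+}$. In both cases one first exposes the checks carried by the reduced multiplicity vector $\vM^-$, records the half-edges of $x_1,\dots,x_n$ left unmatched (the cavities $\cC$), and then re-inserts the missing checks -- the $a''_{i,j}$ for $\G''$, and the $a'''_{i,j},b'''_{i,j}$ together with the new column $x_{n+1}$ for $\G'''$ -- by matching their half-edges to $\cC$. The engine is the elementary fact that a uniformly random injection $B\hookrightarrow C$ of finite sets with $|B|\le|C|$ may be sampled sequentially: for any partition $B=B_1\cup B_2$, draw a uniformly random injection $B_1\hookrightarrow C$ and then a uniformly random injection of $B_2$ into the unused part of $C$. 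Since a random maximal matching of a complete bipartite graph between clone-sets is exactly such a uniform injection of the smaller clone-set into the larger, this will identify -- up to a rare exceptional event -- the law of $\G''$ with that of $\G_{n,\vM}$ and the law of $\G'''$ with that of $\G_{n+1,\vM^+}$; the nullities then agree in expectation because in all cases the matrix entries are drawn i.i.d.\ from $\CHI$.

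For $\vA''$, let $\cE_0$ be the event $\sum_{i\ge3}i\vM_i\le\sum_{h=1}^n\vd_h$. Since $\vM_i^-+(\vM_i-\vM_i^-)=\vM_i$, the matrix $\vA''$ has exactly the target variable degrees $\vd_1,\dots,\vd_n$, the number $\vM_i$ of check nodes of each target degree $i$, the pins $p_1,\dots,p_\THETA$, and i.i.d.\ $\CHI$-entries of $\vA_{n,\vM}$. On $\cE_0$ the first matching $\vec\Gamma_{n,\vM^-}$ saturates all $\vM^-$-clones, leaving $|\cC|=\sum_h\vd_h-\sum_i i\vM_i^-$ cavities, and because $\sum_i i\vM_i^-+\sum_i i(\vM_i-\vM_i^-)=\sum_i i\vM_i\le\sum_h\vd_h$ the cavity matching then saturates all the $a''_{i,j}$-clones; relabelling $a''_{i,j}$ as the $(\vM_i^-+j)$-th check of target degree $i$ and applying the sequential-sampling fact, we conclude that, conditionally on $\cE_0$ (and on $\vM,(\vd_h)_{h\le n},\THETA$), $\G''\disteq\G_{n,\vM}$, hence $\vA''\disteq\vA_{n,\vM}$. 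Now $\Erw[\sum_i i\vM_i]=(1-\eps)dn<dn=\Erw[\sum_{h\le n}\vd_h]$, so $\overline{\cE_0}$ forces one of these $\Theta(n)$-sums to deviate from its mean by at least $\eps dn/2$; by \Lem~\ref{Lemma_sums} (applied to $(\vd_h)_h$, and -- after conditioning on the concentrating Poisson variable $\vm_\eps$ -- to $\sum_j j\vM_j$) this has probability $o(1/n)$. As $0\le\nul(\vA''),\nul(\vA_{n,\vM})\le n$,
$$\bigl|\Erw[\nul(\vA'')]-\Erw[\nul(\vA_{n,\vM})]\bigr|=\bigl|\Erw\bigl[(\nul(\vA'')-\nul(\vA_{n,\vM}))\ind{\overline{\cE_0}}\bigr]\bigr|\le 2n\,\pr[\overline{\cE_0}]=o(1).$$

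The statement for $\vA'''$ is analogous, with two additional points. Conditioning on the degree-profile $\GAMMA$ of $x_{n+1}$ in $\G_{n+1,\vM^+}$ and on the (likely) event that $x_{n+1}$ meets no check in a multi-edge, the standard decomposition of the configuration model around $x_{n+1}$ says that each of the $\sum_i\GAMMA_i$ checks touching $x_{n+1}$ sends one half-edge to $x_{n+1}$ and its remaining $i-1$ half-edges, jointly with all half-edges of the $\vM_i^+-\GAMMA_i$ other degree-$i$ checks, into a uniform matching to the clones of $x_1,\dots,x_n$; this is exactly what $\G'''$ produces via the pre-wired edges $a'''_{i,j}x_{n+1}$ together with the combined matching $\vec\Gamma_{n,\vM^-}$ and the cavity matching, the relevant counting inequalities being automatic because $\GAMMA$ is drawn from a genuine $\G_{n+1,\vM^+}$. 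Hence $\G'''\disteq\G_{n+1,\vM^+}$ on the event that $\cE_0$ holds, that $\sum_i i\vM_i^+\le\sum_{h\le n+1}\vd_h$, and that $x_{n+1}$ meets no multi-edge; the first two of these constituents fail with probability $o(1/n)$ exactly as above (using $\Erw[\sum_i i\vM_i^+]=(1-\eps)d(n+1)<d(n+1)$), while the multi-edge event, although of probability only $O(1/n)$ by $\Erw[\vd^2],\Erw[\vk^2]<\infty$, still contributes $o(1)$: coupling $\G'''$ and $\G_{n+1,\vM^+}$ so that they agree away from $x_{n+1}$'s multi-edges gives $|\nul(\vA''')-\nul(\vA_{n+1,\vM^+})|\le O(\vd_{n+1})$ on that event, and $\Erw[\vd_{n+1}\ind{x_{n+1}\text{ in a multi-edge}}]\le(\Erw[\vd^2]\,\pr[x_{n+1}\text{ in a multi-edge}])^{1/2}=O(n^{-1/2})$. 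Therefore $\Erw[\nul(\vA''')]=\Erw[\nul(\vA_{n+1,\vM^+})]+o(1)$.

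The main obstacle is making the two distributional identities rigorous: one must fix the half-edge correspondence so that the two-stage exposure really is a faithful re-sampling of the one-stage configuration model (in particular the conditional decomposition around $x_{n+1}$), and one must control \emph{every} low-probability discrepancy -- unsaturated matchings, $x_{n+1}$ failing to be saturated, multi-edges incident to $x_{n+1}$ -- either by an $o(1/n)$ tail bound so that the trivial estimate $\nul\le n$ suffices on the complement, or, for the $x_{n+1}$-multi-edges, by bounding the nullity gap by the (rare, small-in-expectation) number of offending edges. It is exactly here that the moment hypothesis $r>2$ enters, through \Lem~\ref{Lemma_sums} and the finiteness of $\Erw[\vd^2]$ and $\Erw[\vk^2]$.
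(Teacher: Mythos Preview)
Your approach is correct and, for $\vA''$, essentially identical to the paper's. For $\vA'''$ the overall scheme (distributional identity on a good event via deferred decisions, crude bounds elsewhere) is the same, but you handle the exceptional events differently. The paper couples $\vA'''$ and $\vA_{n+1,\vM^+}$ only through the common $\G'$ part, obtaining the cruder bound $|\nul(\vA''')-\nul(\vA_{n+1,\vM^+})|\le 2\sum_{i\ge3} i(\DELTA_i+\GAMMA_i)$, and then splits the bad set $\cE^+\setminus(\cU\cap\cW)$ into three pieces $\cQ_1$ (a high-degree check meets $x_{n+1}$), $\cQ_2$ ($\vd_{n+1}$ large), $\cQ_3$ ($\sum_i i\DELTA_i$ large), handling each by a separate moment computation. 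Your route instead couples so that \emph{all} checks not adjacent to $x_{n+1}$ agree, reducing the nullity gap to the $\sum_i\GAMMA_i\le\vd_{n+1}$ rows of checks that do meet $x_{n+1}$; a single Cauchy--Schwarz step then disposes of the multi-edge event. This is cleaner and avoids the case split; it also sidesteps the delicate issue that $\Erw[(\sum_i i\GAMMA_i)^2]$ involves $\Erw[\vk^3]$, which need not be finite under the standing hypothesis $\Erw[\vk^r]<\infty$ for some $r>2$.

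The one place your write-up should say more is the construction of the tight coupling. You need that, conditionally on $\GAMMA$, the part of the matching in $\G_{n+1,\vM^+}$ that sends the clones of the $\vM_i^+-\GAMMA_i$ non-adjacent checks into the clones of $x_1,\dots,x_n$ is marginally a uniform injection, so that it can be identified with the composite $\vec\Gamma_{n,\vM^-}$-plus-cavity matching that builds $\G'$ together with the $b'''$-checks in $\G'''$. This is true---first condition on exactly which check clones hit $x_{n+1}$-clones (a refinement of conditioning on $\GAMMA$), observe that the residual matching into $x_1,\dots,x_n$-clones is uniform, then apply your sequential-sampling fact to peel off the non-adjacent part---but since the partition into ``adjacent'' and ``non-adjacent'' is itself a function of the matching, it is not entirely automatic and deserves an explicit line. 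Once that is in place, the two graphs differ only in the rows of the checks adjacent to $x_{n+1}$, and your $O(\vd_{n+1})$ bound follows.
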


\noindent
The proof of \Lem~\ref{Lemma_valid} is tedious but straightforward.
We defer it to \Sec~\ref{Sec_Lemma_valid}.

As a next step we are going to calculate the  differences $\nul(\vA''')-\nul(\vA')$ and $\nul(\vA'')-\nul(\vA')$.
We obtain expressions of one parameter of $\vA'$, namely the fraction of cavities `frozen' to zero. 
Naturally, each vector $\sigma\in\FF_q^{\{x_1,\ldots,x_{n}\}}$ lifts to $\sigma\in\FF_q^{\cC}$ via $(x_i,h)\in\cC\mapsto\sigma(x_i)$; in words, the value of the clone $(x_i,h)$ is nothing but the value of the underlying variable $x_i$.
Accordingly, the Boltzmann distribution $\mu_{\vA'}$ induces a probability distribution $\mu_{\vA',\cC}$ on $\FF_q^{\cC}$:
\begin{align*}
\mu_{\vA',\cC}(\tau)&=
q^{-\nul(\vA')}\abs{\cbc{\sigma\in\ker(\vA'):\forall (x_i,h)\in\cC:\sigma_i=\tau_{x_i,h}}}
&(\tau\in\FF_q^{\cC}).
\end{align*}
Further, \Lem~\ref{Lemma_Spartition} shows that for each cavity $(x_i,h)$ there are two possibilities: either the Boltzmann marginal $\mu_{\vA',x_i}$ is the uniform distribution on $\FF_q$, or $\mu_{\vA',x_i}$ is the point mass on zero.
In the latter case we call the cavity $(x_i,h)$ {\em frozen} in $\vA'$.
Let $\cF\subset\cC$ be the set of all frozen cavities.
Finally, let $\vec\alpha=|\cF|/|\cC|$;
in the unlikely event that $\cC=\emptyset$, we agree that $\vec\alpha=0$.
In \Sec s~\ref{Sec_A'''} and~\ref{Sec_A''} we are going to establish the following two estimates.

\begin{lemma}\label{Lemma_A'''}
We have  $\Erw[\nul(\vA''')-\nul(\vA')]=\Erw[D(1- K'(\vec\alpha)/k)+d(K'(\vec\alpha)+K(\vec\alpha)-1)/k]-d+o_\eps(1).$
\end{lemma}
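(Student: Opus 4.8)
The plan is to compute $\nul(\vA''')-\nul(\vA')$ by applying Fact~\ref{Fact_nullity} to the decomposition $\vA'''=\begin{pmatrix}\vA'&0\\ *&*\end{pmatrix}$, in which $\vA'''$ is obtained from $\vA'$ by adding one new column (for $x_{n+1}$) and the new rows corresponding to the checks $a_{i,j}'''$ and $b_{i,j}'''$. Each such new check has all but at most one of its non-zero entries sitting at cavities of $\vA'$, so the index set $I$ in Fact~\ref{Fact_nullity} is a (random, but typically bounded-size) set of cavities. Hence
\begin{align*}
\nul(\vA''')-\nul(\vA')=\log_q\sum_{\sigma_{n+1},\,\tau}\mu_{\vA',\cC}(\tau)\prod_{\text{new checks}}\vecone\{\text{check satisfied by }(\tau,\sigma_{n+1})\},
\end{align*}
and the task is to take expectations. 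The first step is to control the joint law $\mu_{\vA',\cC}$ restricted to the $O(1)$ cavities hit by the new checks. Here the pinning machinery enters: because $\vA'$ carries the artificial pinning checks $p_1,\dots,p_{\THETA}$ and the distribution of $\G'$ minus those checks is exchangeable in the variable nodes, Lemma~\ref{Lemma_0pinning} guarantees that $\mu_{\vA',\cC}$ is $\eps$-symmetric with probability $1-\eps$, and Lemma~\ref{lem:l-wise} upgrades this to $(\eps,\ell)$-symmetry for the bounded $\ell$ we need. Consequently, up to an $o_\eps(1)$ error, $\mu_{\vA',\cC}$ restricted to the relevant cavities factorizes as a product of its marginals, and by Lemma~\ref{Lemma_Spartition} each marginal is either $\delta_0$ (a frozen cavity) or uniform on $\FF_q$. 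Since the new checks attach to uniformly random cavities, the probability that a given attached clone lands on a frozen cavity is $\vec\alpha+o_\eps(1)$ independently across clones (using that $|\cC|=\Theta(n)$ via Lemma~\ref{Lemma_sums} and that only boundedly many cavities are touched).

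With the product-measure approximation in hand, the second step is a bookkeeping computation of the expected log. Condition on $\vec\alpha$ and on the Poisson data. A check $b_{i,j}'''$ has degree $i$ with all $i$ clones at cavities; it is satisfied with probability $q^{-1}$ unless all $i$ incident cavities are frozen (probability $\vec\alpha^i$), in which case it is satisfied iff its $\FF_q$-linear combination of zeros vanishes, i.e.\ always — so a $b$-check contributes a factor that is $1$ with probability $\vec\alpha^{i}$ and "kills a $1/q$" otherwise; summing over $\sigma_{n+1}\in\FF_q$ and over the independent choices, the expected contribution of the $b$-checks works out, after a standard Poissonization computation, to the term $\tfrac dk(K(\vec\alpha)-1)$ inside the bracket. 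Similarly the checks $a_{i,j}'''$ have $i-1$ clones at cavities plus one edge to $x_{n+1}$; here the free variable $x_{n+1}$ matters, and carrying out the analogous sum over $\sigma_{n+1}$ produces the generating-function expression whose net effect is the $D(1-K'(\vec\alpha)/k)$ term together with the $\tfrac dk K'(\vec\alpha)$ piece, while the overall normalization (the number of new rows versus the single new column) accounts for the $-d$. The precise matching of Poisson parameters in \eqref{eqPoissons}--\eqref{eqminus} to the coefficients of $D,K,K'$ is exactly the point where $\Erw[\GAMMA_i]$, $\Erw[\vM_i-\vM_i^-]$ and the size-biasing \eqref{eqhatvk} conspire to give the stated formula.

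The third step is to discharge the error terms. One must check that: (a) the event that $\mu_{\vA',\cC}$ fails to be $(\eps,\ell)$-symmetric, which has probability $O_\eps(\eps)$, contributes only $o_\eps(1)$ to the expectation because $\nul(\vA''')-\nul(\vA')$ is deterministically bounded in terms of the (bounded-in-expectation) number of new rows and columns; (b) multi-edges among the new checks and clones, and collisions between distinct new checks at the same cavity, occur with probability $o_\eps(1)$ by the moment assumption $\Erw[\vd^r]+\Erw[\vk^r]<\infty$ and Lemma~\ref{Lemma_sums}; (c) the approximation $|\cF|/|\cC|=\vec\alpha$ with $|\cC|=\Theta(n)$ is valid with probability $1-o(1)$, again via Lemma~\ref{Lemma_sums}. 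Finally one takes the expectation over $\vec\alpha$, yielding the $\Erw[\cdot]$ in the statement. The main obstacle I expect is step two, specifically carrying out the $\sigma_{n+1}$-sum and the Poisson expectations cleanly enough that the boundedly-many-but-unboundedly-large degrees (degrees up to $n^{1/2-\Omega(1)}$) do not spoil the symmetry estimates — one has to make sure that a single very-high-degree new check, which could touch many cavities at once, is rare enough (by the $r$-th moment bound) that it does not contribute to the leading order, and that the $(\eps,\ell)$-symmetry is applied only to a genuinely bounded number of coordinates at a time.
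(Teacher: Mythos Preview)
Your proposal is correct and follows essentially the same approach as the paper: apply Fact~\ref{Fact_nullity}, use the pinning (\Lem~\ref{Lemma_0pinning} via \Lem~\ref{Lemma_theta}) together with \Lem s~\ref{Lemma_Spartition} and~\ref{lem:l-wise} to replace $\mu_{\vA',\cC}$ on the touched cavities by an i.i.d.\ $\Be(\ALPHA)$-mixture of $\delta_0$ and uniform, and then carry out the generating-function bookkeeping. The one ingredient you underplay is the asymptotic independence of $(\GAMMA,\DELTA)$ from $\vA'$: since $\GAMMA$ is defined through the neighbourhood of $x_{n+1}$ in $\G_{n+1,\vM^+}$ while $\ALPHA$ is a function of $\vA'$, you need a separate argument (the paper's \Lem~\ref{Cor_gamma}) that, conditionally on $\Sigma'$, the law of $(\GAMMA,\DELTA)$ is within $O_\eps(\eps^{1/2})$ of $(\hat\GAMMA,\hat\DELTA)$---this is what lets you pass from the random degrees of the new checks to the size-biased law $\hat\vk$ and hence to $K'(\ALPHA)/k$.
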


\begin{lemma}\label{Lemma_A''}
We have $\Erw[\nul(\vA'')-\nul(\vA')]=d\Erw[\vec\alpha K'(\vec\alpha)]/k-d+o_\eps(1).$
\end{lemma}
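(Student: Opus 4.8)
The plan is to apply Fact~\ref{Fact_nullity} to the pair $(\vA',\vA'')$ and to exploit the near-product structure of the Boltzmann measure $\mu_{\vA'}$ supplied by the pinning checks $p_1,\dots,p_\THETA$. Since $\vA''$ arises from $\vA'$ by adjoining rows only --- the new checks $a_{i,j}''$, of which \whp\ there are $\vM_i-\vM_i^-=\GAMMA_i$ of target degree $i$ --- Fact~\ref{Fact_nullity} with $J=\emptyset$ gives $q^{\nul(\vA'')-\nul(\vA')}=\Pr_{\SIGMA\sim\mu_{\vA'}}[\SIGMA\text{ satisfies every }a_{i,j}'']$; equivalently $\nul(\vA')-\nul(\vA'')$ equals the rank of the system of linear forms the new checks induce on $\ker(\vA')$, the form of $a_{i,j}''$ being $\SIGMA\mapsto\sum_\ell\CHI\SIGMA_{x_\ell}$ over the $i$ cavities matched to it, with fresh independent $\CHI$-coefficients. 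Because the wiring is a uniformly random matching and $|\cC|=\Theta(\eps n)$ is linear, each $a_{i,j}''$ constrains an essentially uniformly random $i$-set of the variables underlying $\cC$. Thus everything reduces to understanding, for a handful of random coordinates, the joint law they carry under $\mu_{\vA'}$.

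This is where the pinning enters. The law of $\G'$ with $p_1,\dots,p_\THETA$ deleted is invariant under permutations of $x_1,\dots,x_n$, so freezing the first $\THETA$ variables is distributionally the same as freezing a uniformly random $\THETA$-set; hence Lemma~\ref{Lemma_0pinning} applies and, with probability $\ge1-\eps$ over $\THETA$ and $\vA'$, the measure $\mu_{\vA'}$ is $(\eps,\ell)$-symmetric for any prescribed $\ell$ (take the symmetry parameter in Lemma~\ref{Lemma_0pinning} small and invoke Lemma~\ref{lem:l-wise}). By Lemma~\ref{Lemma_Spartition} each marginal $\mu_{\vA',x_\ell}$ is either uniform on $\FF_q$ or $\delta_0$; call $(x_\ell,h)\in\cC$ \emph{frozen} iff the latter, so that $\vec\alpha=|\cF|/|\cC|$. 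On the complementary event $|\nul(\vA'')-\nul(\vA')|$ is at most the number of new checks, whose square has expectation $O(1)$ by $\Erw[\vd^2]<\infty$, so by Cauchy--Schwarz that event contributes only $o_\eps(1)$ to the expectation.

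On the good event, truncate: for large $D_0=D_0(\eps)$ and $K_0=K_0(\eps)$ discard the contribution of $\{\vd_{n+1}>D_0\}$ and of all new checks of target degree exceeding $K_0$; using $\Erw[\vd^r]+\Erw[\vk^r]<\infty$ these truncations change $\Erw[\nul(\vA'')-\nul(\vA')]$ by $o_\eps(1)$. Now at most $D_0$ new checks survive, touching a set $W$ of at most $D_0K_0=O(1)$ variables, and \whp\ the touched cavities sit at pairwise distinct variables. Choosing the symmetry parameter small enough (in terms of $D_0,K_0,q$, and after capping the variable degrees so that the size-biasing incurred by sampling through cavities stays bounded), $(\eps,|W|)$-symmetry forces, with probability $1-o_\eps(1)$, the projection of $\ker(\vA')$ onto $W$ to equal $\FF_q^{W\sm\cF}\times\{0\}^{W\cap\cF}$. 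Consequently a surviving check's form vanishes on $\ker(\vA')$ exactly when all of that check's cavities are frozen, and the non-vanishing forms are linearly independent (their supports are pairwise disjoint and the projection just described is onto); hence, up to an error of expectation $o_\eps(1)$,
\begin{align*}
\nul(\vA'')-\nul(\vA')&=\sum_{i\ge3}\sum_{j=1}^{\GAMMA_i}\bc{\vecone\cbc{\text{all cavities of }a_{i,j}''\text{ are frozen}}-1}.
\end{align*}

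It remains to take expectations. Given $\vec\alpha$ and the $\GAMMA_i$, the uniform wiring makes each surviving degree-$i$ check all-frozen with probability $\vec\alpha^i+o_\eps(1)$, so $\Erw[\nul(\vA'')-\nul(\vA')]=\Erw[\sum_i\GAMMA_i(\vec\alpha^i-1)]+o_\eps(1)$. Here $\vec\alpha$ agrees up to $o(1)$ with the frozen fraction of $\vA_{n,\vM}$ (removing the $O(1)$ checks separating $\vM$ from $\vM^-$ perturbs it negligibly, again via the symmetry estimates), and that quantity is independent of $\vd_{n+1}$ and of the matching producing $\GAMMA$, while $\Erw[\GAMMA_i]=(1-\eps)d\,i\Pr[\vk=i]/k+o(1)$ since a clone of $x_{n+1}$ is matched with probability $1-\eps+o(1)$ and then hits a target-degree-$i$ check with probability $i\Pr[\vk=i]/k+o(1)$. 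Using $\sum_i i\Pr[\vk=i]x^i=xK'(x)$, $\sum_i i\Pr[\vk=i]=k$ and $1-\eps=1+o_\eps(1)$ (the bracket being bounded) yields $\Erw[\nul(\vA'')-\nul(\vA')]=\tfrac dk\Erw[\vec\alpha K'(\vec\alpha)]-d+o_\eps(1)$, as claimed. The main obstacle is the identity displayed above: after the double truncation one must show that the non-vanishing induced forms are \whp\ linearly independent and in bijection with the surviving checks possessing an unfrozen cavity, with every exceptional configuration --- two cavities at one variable, two cavities in one block $S_h$ of Lemma~\ref{Lemma_Spartition}, a longer linear relation among those blocks, or a non-symmetric $\mu_{\vA'}$ --- contributing only $o_\eps(1)$ to the expectation. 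Making this quantitative and uniform in $n$ means playing the $\eps$-symmetry of $\mu_{\vA'}$ off against the finite-$r$-th-moment hypotheses on $\vd,\vk$; the possible presence of a high-degree $x_{n+1}$ or high-degree new checks is exactly why a crude union bound fails and the truncations are indispensable. (Lemma~\ref{Lemma_A'''} is proved the same way, except that $\vA'''$ also adjoins the column $x_{n+1}$, which enters through the $J=\{n+1\}$ branch of Fact~\ref{Fact_nullity}.)
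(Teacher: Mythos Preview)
Your approach is essentially the paper's: apply Fact~\ref{Fact_nullity}, use the pinning (Lemma~\ref{Lemma_0pinning}) together with Lemma~\ref{Lemma_Spartition} to replace $\mu_{\vA',\cX}$ by a product of atoms and uniforms, truncate so only boundedly many bounded-degree checks remain, and arrive at $\nul(\vA'')-\nul(\vA')=\sum_{i,j}(\vecone\{\text{all frozen}\}-1)+o_\eps(1)$. Up to here everything is fine and matches Claims~\ref{Claim_A''_1}--\ref{Claim_A''_6}.

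The gap is in the final decoupling of $\vec\alpha$ from $\GAMMA$. You assert that $\vec\alpha$ agrees up to $o(1)$ with the frozen fraction of $\vA_{n,\vM}$ because ``removing the $O(1)$ checks separating $\vM$ from $\vM^-$ perturbs it negligibly''. This is problematic on two counts. First, $\vA'$ and $\vA_{n,\vM}$ are generated with \emph{independent} random matchings in the paper's construction; they are not related by deleting a few rows, so the comparison you invoke does not apply. Second, even under a coupling in which $\vA'$ is obtained from $\vA_{n,\vM}$ by deleting $O(1)$ rows, removing a single row can unfreeze $\Theta(n)$ coordinates (the kernel grows by a one-dimensional subspace whose generator may have linear support), so the symmetry estimates do not by themselves bound the change in $\vec\alpha$. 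And under that coupling the resulting $\vA_{n,\vM}$ would no longer be independent of $\GAMMA$ anyway, since $\GAMMA$ dictates which rows were removed.

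The paper resolves this by going the other way: rather than stabilising $\vec\alpha$, it stabilises $\GAMMA$. Lemma~\ref{Cor_gamma} shows that the \emph{conditional} law of $\GAMMA$ given $\Sigma'$ (which determines $\vA'$ and hence $\vec\alpha$) is within $O_\eps(\eps^{1/2})$ in total variation of the law of $\hat\GAMMA=(\hat\GAMMA_j)_j$ with $\hat\GAMMA_j=\sum_{i\le\vd_{n+2}}\vecone\{\hat\vk_i=j\}$, which is genuinely independent of $\Sigma'$. This is what justifies replacing $\Erw[\sum_i\GAMMA_i(\vec\alpha^i-1)]$ by $\Erw[\sum_i\hat\GAMMA_i]\,\Erw[\vec\alpha^i-1]$ and hence obtaining $d\Erw[\vec\alpha K'(\vec\alpha)]/k-d$. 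Once you substitute this step for your stability claim, the argument goes through.
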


\noindent
\Prop~\ref{Prop_coupling} is an immediate consequence of \Lem s~\ref{Lemma_ComplicatedModel}--\ref{Lemma_A''}.

While proving \Lem s~\ref{Lemma_A'''} and~\ref{Lemma_A''} in full detail requires a fair bit of work because we are dealing with very general degree distributions $\vd,\vk$, it is not at all difficult to fathom where the right hand side expressions in \Lem s~\ref{Lemma_A'''}--\ref{Lemma_A''} come from.
Regarding \Lem~\ref{Lemma_A''}, we notice that  with probability $1-o_\eps(1)$ the degree of $x_{n+1}$ in $\G_{n,\vM^+}$ is $\vd_{n+1}$.
Moreover, the degrees of the neighbours of $x_{n+1}$ are distributed approximately as the size-biased version $\hat\vk$ of $\vk$ because the probability that a given clone of $x_{n+1}$ is matched to a clone of a specific check node is proportional to the degree of that check node.
Hence, the degrees of the new checks added to $\vA''$ should approximately be distributed as $\hat\vk_1,\ldots,\hat\vk_{\vd}$.
These new checks are attached to random cavities of $\vA'$, each of which is frozen with probability approximately $\ALPHA$.
Hence, for each $i\in[\vd]$ the probability of picking $\hat\vk_i$ frozen cavities should be about $\ALPHA^{\hat\vk_i}$, and in this case the new check will be satisfied by all vectors in the kernel of $\vA'$.
By contrast, if at least one adjacent cavity is unfrozen, and if we assume as per \Lem s~\ref{Lemma_Spartition} and~\ref{Lemma_0pinning} that the values that a random $\SIGMA\in\ker(\vA')$ assigns to the cavities are uniform and independent, then the probability that $\SIGMA$ satisfies the new check equals $1/q$.
Thus, by Fact~\ref{Fact_nullity} the nullity drops by one for each such check.
In summary, this heuristic calculation leads us to expect that $\Erw[\nul(\vA'')-\nul(\vA')]=\Erw[\sum_{i=1}^{\vd}(\ALPHA^{\hat\vk_i}-1)]+o_\eps(1)$.
Rewriting this expression in terms of the generating function $K$ yields the expression displayed in \Lem~\ref{Lemma_A''}.

Similar reasoning explains the expression in \Lem~\ref{Lemma_A'''}.
Indeed, recalling~\eqref{eqPoissons} and following along the lines of the previous paragraph, we expect that the addition of the checks $b_{i,j}'''$ will change the nullity by 
\begin{align}\label{eqInt1}
d\Erw[\ALPHA^{\vk}-1]/k+o_\eps(1).
\end{align}
Moreover, concerning the addition of $x_{n+1}$ and its adjacent checks $a_{i,j}'''$, there are two possible scenarios.
First, that all the cavities adjacent to some $a_{i,j}'''$ are frozen in $\A'$.
Then this check can only be satisfied by setting $x_{n+1}$ to zero.
In effect, $a_{i,j}'''$ freezes $x_{n+1}$.
Then any other check $a_{i,h}'''$ with all-frozen neighbours will be satisfied automatically.
Otherwise, if $a_{i,h}'''$ has at least one unfrozen neighbour, and if we assume as per \Lem s~\ref{Lemma_Spartition} and~\ref{Lemma_0pinning} that in $\SIGMA\in\ker(\vA')$ the unfrozen neighbours take mutually independent uniform values in $\FF_q$, the probability that  $\SIGMA\in\ker(\vA')$ satisfies $a_{i,h}'''$ equals $1/q$.
Thus, Fact~\ref{Fact_nullity} shows that the nullity drops by one for each such check.
Hence, because as in the previous paragraph the degrees of the checks adjacent to $x_{n+1}$ are approximately distributed as $\hat\vk_1,\ldots,\hat\vk_{\vd}$, by inclusion/exclusion the contribution of the `$x_{n+1}$ frozen' scenario comes to
\begin{align}\label{eqInt2}
\Erw\brk{\vd\prod_{i=1}^{\vd}(1-\ALPHA^{\hat\vk_i-1}) +\sum_{i=1}^{\vd}(\ALPHA^{\hat\vk_i-1}-1)}+o_\eps(1).
\end{align}
Second, there is the scenario that $x_{n+1}$ remains unfrozen.
Then each of the adjacent checks contains at least one unfrozen variable among $x_1,\ldots,x_n$.
Assuming that the values that $\SIGMA\in\ker(\vA')$ assigns to these variables are uniform and independent
(again by \Lem s~\ref{Lemma_Spartition} and~\ref{Lemma_0pinning}),
we deduce from Fact~\ref{Fact_nullity} that every check reduces the nullity by one, while the presence of $x_{n+1}$ adds one to the nullity.
Hence, the unfrozen case contributes
\begin{align}\label{eqInt3}
\Erw\brk{(1-\vd)\prod_{i=1}^{\vd}(1-\ALPHA^{\hat\vk_i-1})}+o_\eps(1).
\end{align}
Summing \eqref{eqInt1}, \eqref{eqInt2} and \eqref{eqInt3} and rewriting in terms of generating functions renders the term shown in \Lem~\ref{Lemma_A'''}.

We proceed to prove \Lem s~\ref{Lemma_A'''}--\ref{Lemma_A''} formally.
This requires a bit of groundwork.

\subsection{Preparations}
We establish two statements that pave the way for the proofs of \Lem s~\ref{Lemma_A'''} and~\ref{Lemma_A''}.
For a cavity $c=(x_i,h)\in\cC$ let $\mu_{\vA',c}=\mu_{\vA',x_i}$ denote the marginal of the cavity $c$ in the distribution $\mu_{\vA',\cC}$.
Similarly, for $c_1,\ldots,c_\ell\in\cC$ let $\mu_{\vA',c_1,\ldots,c_\ell}\in\cP(\FF_q^\ell)$ be the joint distribution of the underlying variables.
The following lemma shows that the joint distribution of a bounded number of random cavities (drawn with replacement) likely factorises, providing that the parameter $\Theta$ is chosen sufficiently large.

\begin{lemma}\label{Lemma_theta}
For any $\delta,\ell>0$ there is $\Theta=\Theta(\delta,\ell)>0$ such that with probability at least $1-\delta$ we have
\begin{align}\label{eqLemma_theta1}
\sum_{c_1,\ldots,c_\ell\in\cC}\dTV\bc{\mu_{\vA',c_1,\ldots,c_\ell},\bigotimes_{i=1}^\ell\mu_{\vA',c_i}}&\leq\delta|\cC|^\ell.
\end{align}
\end{lemma}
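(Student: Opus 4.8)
The plan is to derive \eqref{eqLemma_theta1} from the $(\eps,\ell)$-symmetry of the Boltzmann measure $\mu_{\vA'}$, regarded as a distribution on the $n$ variable coordinates, and then to translate that bound into the cavity-weighted sum. Since a cavity $c=(x_i,h)\in\cC$ merely carries the value of the underlying variable $x_i$, write $\gamma_i$ for the number of clones of $x_i$ left unmatched by $\vec\Gamma_{n,\vM^-}$, so that $|\cC|=\sum_{i=1}^n\gamma_i$ and
\begin{align*}
\sum_{c_1,\ldots,c_\ell\in\cC}\dTV\Bigl(\mu_{\vA',c_1,\ldots,c_\ell},\bigotimes_{j=1}^\ell\mu_{\vA',c_j}\Bigr)
&=\sum_{i_1,\ldots,i_\ell\in[n]}\gamma_{i_1}\cdots\gamma_{i_\ell}\,\dTV\Bigl(\mu_{\vA',i_1,\ldots,i_\ell},\bigotimes_{j=1}^\ell\mu_{\vA',i_j}\Bigr),
\end{align*}
where a summand indexed by a tuple with repeated entries is read as the total variation distance between the joint law of the corresponding (possibly coinciding) coordinates and the product of its marginals.

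For the symmetry step I would use that the Tanner graph $\G_{n,\vM^-}$ \emph{without} the unary checks $p_1,\ldots,p_\THETA$ is exchangeable under permutations of $x_1,\ldots,x_n$, because the target degrees $\vd_1,\ldots,\vd_n$ are i.i.d.\ and $\vec\Gamma_{n,\vM^-}$ is a uniformly random maximal matching. Hence attaching $p_1,\ldots,p_\THETA$ to the first $\THETA$ variables produces a matrix with the same law as the one obtained from $\vA_{n,\vM^-}$ by freezing $\THETA$ \emph{uniformly random} variables to zero (replacing the $\CHI$-entry of a pinning row by a $1$ has no effect on the kernel since $\CHI\in\FF_q^*$). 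Consequently, given $\delta$ and $\ell$, I would first invoke \Lem~\ref{lem:l-wise} to fix $\eps_0>0$ so small that $\eps_0$-symmetry of any measure on $\FF_q^{[n]}$ (with $n$ large) entails $(\eps_1,\ell)$-symmetry, for a value $\eps_1>0$ to be chosen below, and also so that $\eps_0<\delta/2$; then \Lem~\ref{Lemma_0pinning}, applied conditionally on $\vA_{n,\vM^-}$ with $U=[n]$ and parameter $\eps_0$, provides $\Theta=\Theta(\eps_0)$ with $\Pr[\mu_{\vA'}\text{ is }\eps_0\text{-symmetric}]>1-\eps_0$. On that event $\mu_{\vA'}$ is $(\eps_1,\ell)$-symmetric, i.e.\ $\sum_{i_1,\ldots,i_\ell\in[n]}\dTV(\mu_{\vA',i_1,\ldots,i_\ell},\bigotimes_j\mu_{\vA',i_j})<\eps_1 n^\ell$.

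It remains to pass from this bound to the cavity sum, and this is where the main difficulty lies: the multiplicities $\gamma_i\le\vd_i$ are unbounded, and $\Erw[\vd^\ell]$ may be infinite for $\ell\ge3$, so the weights cannot simply be pulled out as a constant. I would split the right-hand side of the display above into three parts. First, a routine concentration argument for sums of independent (Poisson) variables shows that whp the maximal matching $\vec\Gamma_{n,\vM^-}$ saturates the smaller (check) side, leaving $|\cC|=\eps dn(1+o(1))=\Theta(n)$ cavities; and since $\Erw[\vd^r]<\infty$ for some $r>2$, \Lem~\ref{Lemma_sums} applied to $\vd^2$ gives $\sum_i\gamma_i^2\le\sum_i\vd_i^2=O(n)$ whp, so the tuples with a repeated variable contribute at most $\binom{\ell}{2}\bigl(\sum_i\gamma_i^2\bigr)|\cC|^{\ell-2}=O(n^{\ell-1})=o(|\cC|^\ell)$, hence at most $\tfrac\delta4|\cC|^\ell$ for $n$ large. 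Second, choose a threshold $C=C(\delta,\ell)$ (depending also on the fixed constants $\eps,d$) so large that $\Erw[\vd\vecone\{\vd>C\}]$ is small enough; then \Lem~\ref{Lemma_sums} again gives $\sum_{i:\gamma_i>C}\gamma_i\le\sum_{i:\vd_i>C}\vd_i\le\tfrac{\delta}{4\ell}|\cC|$ whp, so the tuples having some coordinate $i_j$ with $\gamma_{i_j}>C$ contribute at most $\ell\bigl(\sum_{i:\gamma_i>C}\gamma_i\bigr)|\cC|^{\ell-1}\le\tfrac\delta4|\cC|^\ell$. Third, on all remaining tuples every weight is at most $C$, so their contribution is at most $C^\ell\sum_{i_1,\ldots,i_\ell\in[n]}\dTV(\mu_{\vA',i_1,\ldots,i_\ell},\bigotimes_j\mu_{\vA',i_j})<C^\ell\eps_1 n^\ell$, which is at most $\tfrac\delta4|\cC|^\ell$ once $\eps_1$ is fixed small enough relative to $C,\eps,d$ and $\ell$ (using $|\cC|\ge\eps dn/2$ whp). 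Summing the three contributions yields \eqref{eqLemma_theta1} on an event of probability at least $1-\eps_0-o(1)\ge1-\delta$ for $n$ large. The delicate points are thus the order in which the parameters $\eps_1,C,\eps_0,\Theta$ must be fixed, and checking that all the requisite concentration estimates follow from the $r$-th moment hypothesis via \Lem~\ref{Lemma_sums}.
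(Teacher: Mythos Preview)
Your argument is correct and follows essentially the same route as the paper: truncate the degrees, bound the high-degree contribution by choosing the threshold so that $\Erw[\vd\vecone\{\vd>C\}]$ is small, and on the truncated part compare the cavity-weighted sum to the uniform one (the paper phrases this as a change-of-measure inequality between the cavity-biased and uniform sampling of variable nodes, your inequality $C^\ell\eps_1 n^\ell$ being exactly the paper's $(L/\eps)^\ell$ factor). Two small remarks: your separate treatment of repeated indices is unnecessary since the $(\eps_1,\ell)$-symmetry bound already sums over all tuples in $[n]^\ell$, and your invocation of \Lem~\ref{Lemma_sums} for $\sum_i\vd_i^2$ would need $\Erw[\vd^{2r}]<\infty$ for some $r>2$, which is not assumed---but the weak law of large numbers (using only $\Erw[\vd^2]<\infty$) already gives $\sum_i\vd_i^2=O(n)$ \whp, which is all you need.
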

\begin{proof}
The construction \eqref{eqm} of $\vM,\vM^+$ and \Lem~\ref{Lemma_sums} ensure that $|\cC|\geq\eps n/2$ \whp\
Moreover, since $\Erw[\vd]=O_\eps(1)$ we find $L=L(\eps,\delta,\ell)>0$ such that
the event  $\cL=\cbc{\sum_{i=1}^n\vd_i\vecone\{\vd_i>L\}<\eps\delta^2 n/(16\ell)}$ has probability at least $1-\delta/8$.
Thus, we may condition on the event $\cE=\cL\cap\{|\cC|\geq\eps n/2\}$.

On $\cE$ let $\vy\in\{x_1,\ldots,x_n\}$ be a variable node chosen from the distribution
$\pr[\vy=x_i\mid\vA']=|\cC\cap(\{x_i\}\times[\vd_i])|/|\cC|.$
Further, let $\vy_1,\ldots,\vy_\ell$ independent copies of $\vy$.
Then to prove \eqref{eqLemma_theta1} it suffices to show that
\begin{align}\label{eqLemma_theta2}
\Erw\brk{\dTV\bc{\mu_{\vA',\vy_1,\ldots,\vy_\ell},\bigotimes_{i=1}^\ell\mu_{\vA',\vy_i}}\,\bigg|\,\cE}&\leq\delta^2/2.
\end{align}
To see this, let $\vx_1,\ldots,\vx_\ell$ be a sequence of $\ell$ independently and uniformly chosen variables from $x_1,\ldots,x_n$.
Then for $\cW\subset\{x_1,\ldots,x_n\}^\ell$ we have {on the event }$\cE$,
\begin{align}\nonumber
\pr\brk{(\vy_1,\ldots,\vy_\ell)\in\cW\mid\vA'}&
	\leq (L/\eps)^\ell \pr\brk{(\vx_1,\ldots,\vx_\ell)\in\cW\mid\vA'}+\frac{\eps\delta^2 n}{16\ell}\cdot\frac{\ell}{\eps n/2}
	\\&\leq (L/\eps)^\ell \pr\brk{(\vx_1,\ldots,\vx_\ell)\in\cW\mid\vA'}+\delta^2/8\label{eqLemma_theta3}
\end{align}
Furthermore, since the distribution of $\G'-\{p_1,\ldots,p_{\THETA}\}$ is invariant under permutations of the variable nodes, \Lem~\ref{Lemma_0pinning} shows that
for sufficiently large $\Theta$,
\begin{align}\label{eqLemma_theta4}
\Erw\brk{\dTV\bc{\mu_{\vA',\vx_1,\ldots,\vx_\ell},\bigotimes_{i=1}^\ell\mu_{\vA',\vx_i}}\,\bigg|\,\cE}&\leq\frac{\delta^3}{64}\bcfr{\eps}{L}^\ell.
\end{align}
Thus, \eqref{eqLemma_theta2} follows from \eqref{eqLemma_theta3} and \eqref{eqLemma_theta4}.
\end{proof}

To prove \Lem s~\ref{Lemma_A'''} and~\ref{Lemma_A''} we need to study the impact on the nullity of attaching one new column and a few rows.
This requires explicit knowledge of their degrees.
Let $(\hat\vk_i)_{i\geq1}$ be a sequence of copies of $\hat\vk$, mutually independent and independent of everything else.
Moreover, let $\hat\GAMMA_j=\sum_{i=1}^{\vd_{n+2}}\vecone\{\hat\vk_i=j\}$ and $\hat\GAMMA=(\hat\GAMMA_j)_{j\geq1}$.
Additionally, let $\hat\DELTA=(\hat\DELTA_j)_{j\geq3}$ be a family random variables, mutually independent and independent of everything else, with distributions $\hat\DELTA_j\sim\Po((1-\eps)\pr\brk{\vk=j}d/k)$.
Further, let $\Sigma'$ be the $\sigma$-algebra generated by $\G'$, $\A'$, $\vM_-$ and $(\vd_i)_{i\in[n]}$.
We write $\GAMMA\mid\Sigma',\DELTA\mid\Sigma'$ for the conditional versions of $\GAMMA,\DELTA$ given $\Sigma'$.

\begin{lemma}\label{Cor_gamma}
With probability $1-\exp(-\Omega_\eps(1/\eps))$ we have
$\dTV(\GAMMA\mid\Sigma',\hat\GAMMA)+\dTV(\DELTA\mid\Sigma',\hat\DELTA)=O_\eps(\eps^{1/2})$.
\end{lemma}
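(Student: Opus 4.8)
The plan is to pin down the conditional law of $(\GAMMA,\DELTA)$ given $\Sigma'$ precisely enough to compare it with $\hat\GAMMA\otimes\hat\DELTA$. The first observation is that $\G'$ and $\A'$ are obtained from the data $(\vM^-,(\vd_i)_{i\in[n]})$ through the fresh matching $\vec\Gamma_{n,\vM^-}$ and a fresh family of weights $\CHI$, all of which — once $(\vM^-,(\vd_i)_{i\in[n]})$ is fixed — are independent of $(\GAMMA,\DELTA)$. Hence $(\GAMMA,\DELTA)$ has the same conditional law given $\Sigma'$ as given $\sigma(\vM^-,(\vd_i)_{i\in[n]})$, so it suffices to bound the distance of the latter from $\hat\GAMMA\otimes\hat\DELTA$ on an event of probability $1-\exp(-\Omega_\eps(1/\eps))$.

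Second, I would fix a good event $\mathcal E$ on which the degree profile of $\G_{n,\vM^-}$, and likewise of $\G_{n+1,\vM^+}$, is close to its mean: $\sum_i i\vM_i^-<(1-\eps/2)\sum_{i\le n}\vd_i$ and $\sum_i i\,|\vM_i^--\lambda_i^{\vM}n|=o(n)$ with $\lambda_i^{\vM}=(1-\eps)\pr[\vk=i]d/k$ (so in the maximal matching every check-clone is matched and there are $(1+o(1))\eps dn$ cavities), on which $\sum_i\GAMMA_i=O_\eps(1)$, and on which $\GAMMA_i\le\vM_i$ for every $i$ — the residual degrees $i$ with $\vM_i=0<\GAMMA_i$ arise only when $\pr[\vk=i]$ is so small that their total contribution to total variation is $o(1)$ and is absorbed into the error. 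Chernoff bounds for the Poisson variables $\vM_i^{\pm}$ together with \Lem~\ref{Lemma_sums} for $\sum_{i\le n}\vd_i$ give $\pr[\mathcal E^c]=\exp(-\Omega_\eps(1/\eps))$, using that for fixed $\eps$ and $n$ large these probabilities lie below $e^{-c/\eps}$; and the $\Sigma'$-measurable part of $\mathcal E$ — which is all the estimates below rely on — has the same probability.

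Third, for a typical $(\vM^-,(\vd_i)_{i\in[n]})=(m^-,\vec d)$ I would compute $\pr[(\GAMMA,\DELTA)=(g,\delta)\mid m^-,\vec d]$ by Bayes, introducing the $\sigma$-algebra $\mathcal H$ generated by $\vM^+$, $\vd_{n+1}$ and the matching $\vec\Gamma_{n+1,\vM^+}$. Then $\GAMMA$ is $\mathcal H$-measurable, and by Poisson thinning $\DELTA_i\mid\mathcal H\sim\Bin(\vM_i^+,1/(n+1))$ independently across $i$ and independently of the matching, while $\vM_i^-=\vM_i^+-\GAMMA_i-\DELTA_i$ on $\mathcal E$. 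Summing $\mathcal H$ out, the dependence of the Bayes ratio on $(g,\delta)$ sits in three factors: $\prod_i\pr[\vM_i=m_i^-+g_i]=(1+o(1))\prod_i\pr[\vM_i=m_i^-]$, since $m_i^-=(1+o(1))\lambda_i^{\vM}n$ and the $g_i$ are bounded; $\prod_i\pr[\DELTA_i=\delta_i]$, recovered up to $\sum_i\dTV(\Bin(\vM_i^+,1/(n+1)),\Po(\lambda_i^{\DELTA}))=o(1)$ by Le~Cam; and $\pr[\GAMMA=g\mid\vM^+=m^-+g+\delta,\vd_{n+1}]$, which depends on $\vM^+$ only through the normalised profile $\vM^+/n$ and is therefore $(1+o(1))$-insensitive to the $O(1)$ shift by $g+\delta$, the contribution of unbounded $g,\delta$ to the sum being negligible because $\sum_i\GAMMA_i$ and the $\DELTA_i$ have light tails. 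Dividing by the normaliser, the conditional law of $(\GAMMA,\DELTA)$ is within $o(1)$ in total variation of a product law whose $\DELTA$-coordinate is exactly $\hat\DELTA$ and whose $\GAMMA$-coordinate is the law of $\GAMMA$ in the idealised clone model obtained by replacing $\vM^+$ with its mean and letting $\vd_{n+1}$ run free.

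Finally I would match that idealised law with $\hat\GAMMA$: in the idealised model each of the $\vd_{n+1}$ clones of $x_{n+1}$ is, to leading order, a cavity with probability $\eps$ and otherwise lands on a clone of a check of degree $j$ with probability $\pr[\hat\vk=j]$, independently up to collisions, which happen with probability $O(\vd_{n+1}^2/n)$. Coupling $\vd_{n+1}$ with $\vd_{n+2}$ at a common value $\vd$, one may couple $\GAMMA$ and $\hat\GAMMA$ so that they differ only through the $\eps$-thinning and the rare collisions, whence $\dTV(\GAMMA\mid\Sigma',\hat\GAMMA)\le\Erw[\min(1,\eps\vd+C\vd^2/n)]+o(1)$; splitting the expectation at $\vd=\eps^{-1/2}$ and at $\vd=n^{1/4}$ and using $\Erw[\vd^r]<\infty$ for some $r>2$ yields $O_\eps(\eps^{1/2})$, and combined with the $o(1)$ bound for $\DELTA$ this gives the lemma. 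The step I expect to be the main obstacle is the robustness estimate for $\pr[\GAMMA=g\mid\vM^+,\vd_{n+1}]$ and its identification with the idealised law: one must show that a uniformly random maximal matching of the clone graph looks, around the single variable $x_{n+1}$, like $\vd_{n+1}$ independent size-biased draws subjected to an $\eps$-thinning, uniformly over the relevant $O(1)$-perturbations of $\vM^+$ — this is where the combinatorial accounting concentrates and where the heavy-tailed check degrees demand the most care.
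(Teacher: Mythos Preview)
Your proposal is correct and follows essentially the same route as the paper. Both arguments (i) reduce the $\Sigma'$-conditioning to conditioning on $(\vM^-,(\vd_i)_{i\in[n]})$, (ii) run a Bayes computation in which the local limit theorem for the Poisson variables $\vM_i$ absorbs the $O(1)$ shift by $g_i$, the factor $\pr[\DELTA=\delta]$ is exactly the $\hat\DELTA$-law, and the remaining factor $\pr[\GAMMA=g\mid\vM^+,\vd_{n+1}]$ is shown to be robust to $O(1)$ perturbations of $\vM^+$, and (iii) identify the idealised law of $\GAMMA$ with $\hat\GAMMA$ by observing that each clone of $x_{n+1}$ is a cavity with probability $\approx\eps$ and otherwise hits a check of degree $j$ with probability $\pr[\hat\vk=j]$, yielding the $O_\eps(\eps^{1/2})$ error after truncating at $\vd_{n+1}\le\eps^{-1/2}$. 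The paper orders these steps slightly differently---it first derives the unconditional $\GAMMA\approx\hat\GAMMA$ and then argues independence from $(\vM^-,\vd_1,\ldots,\vd_n)$---but the substance is the same, and you have correctly flagged the robustness of $\pr[\GAMMA=g\mid\vM^+,\vd_{n+1}]$ as the place where the combinatorics of the random matching must be worked out carefully.
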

\begin{proof}
We begin by studying the unconditional distributions of $\GAMMA$ and $\DELTA$.
Let $\zeta=(\sum_{i\geq3}i\vM_i^+)/(\sum_{i=1}^{n+1}\vd_i)$.
The choice (\ref{eqm}) of the $\vM_i^+$ and \Lem~\ref{Lemma_sums} ensure that
$\pr\brk{1-2\eps\leq\zeta\leq1-\eps/2}=1-o(1)$.
Further, given $1-2\eps\leq\zeta\leq1-\eps/2$ we can think of $\G_{n,\vM^+}$ as being generated by the following experiment.
\begin{enumerate}[(i)]
\item Choose a set $\vec C\subset\bigcup_{h=1}^{n+1}\cbc{x_h}\times[\vd_h]$ of size $(1-\zeta) \sum_{i=1}^{n+1}\vd_i$ uniformly at random.
\item Create a random perfect matching $\vec\Gamma^\star$ of the complete bipartite graph with vertex classes
\begin{align*}
\bc{\bigcup_{h=1}^{n+1}\cbc{x_h}\times[\vd_h]}\setminus\vec C\quad\mbox{and}\quad
\bigcup_{i\geq3}\bigcup_{j=1}^{\vM_i^+}\cbc{a_{i,j}}\times[i].
\end{align*}
\item Obtain $\G^\star$ with variable nodes $x_1,\ldots,x_{n+1}$ and check nodes $a_{i,j}$, $i\geq3$, $j\in[\vM_i^+]$ by inserting an edge between $x_h$ and $a_{i,j}$ for any edge of $\vec\Gamma^\star$ that links $\{x_h\}\times[\vd_h]$ to $\{a_{i,j}\}\times[i]$.
\end{enumerate}
In other words, in the first step we designate the set of $\vec C$ of cavities and in the next two steps we connect the non-cavities randomly.

By way of this alternative description we can easily get a grip on the degree of $x_{n+1}$.
Indeed, given that $\vd_{n+1}\leq\eps^{-1/2}$, the probability that one of the clones $\{n+1\}\times[\vd_{n+1}]$ ends up in $\vC$ is $O_\eps(\eps^{1/2})$.
Hence, the actual degree $\vd^\star_{n+1}$ of $x_{n+1}$ in $\G^\star$ satisfies
\begin{align}\label{eqnasty-1}
\dTV\bc{\vd^\star_{n+1}\mid\{\vd_{n+1}\leq\eps^{-1/2}\},\vd}&=O_\eps(\eps^{1/2}).
\end{align}
Regarding the degrees of the checks adjacent to $x_{n+1}$,
by the principle of deferred decisions we can construct $\vec\Gamma^\star$ by matching one variable clone at a time, starting with the clones $\{x_{n+1}\}\times[\vd_{n+1}]$.
Because $\vk$ has a finite mean, given $\vd_{n+1}\leq\eps^{-1/2}$
we find a fixed number $L$ such that with probability $1-O_\eps(\eps^{-1})$ all checks adjacent to $x_{n+1}$ have degree at most $L$.
Moreover, Chebyshev's inequality shows that $\vM_i^+=(1-\eps)\pr\brk{\vk=i}dn/k+o(n)$ for all $i\leq L$ 
and $\sum_{i\geq3}i\vM_i^+=(1-\eps)dn+o(n)$ \whp\
Since the probability that $\vec\Gamma^\star$ links a given clone of $x_{n+1}$ to a specific check is proportional to its degree, we conclude that
\begin{align}\label{eqnasty0}
\dTV(\GAMMA\mid\Sigma',\hat\GAMMA)=O_\eps(\eps^{1/2}).
\end{align}
Moreover, it is immediate from the construction that the unconditional $\DELTA$ is distributed as $\hat\DELTA$.

To complete the proof we are going to argue that $\vM^-,\vec d_1,\ldots,\vec d_n$ and $\GAMMA,\DELTA$ are asymptotically independent.
Arguing along the lines of the previous paragraph, we find that for large  $L>0$ the event $\cK=\{\sum_{i\geq3}i(\DELTA_i+\GAMMA_i)\leq L\}$ occurs with probability $\pr[\cK]\geq1-\exp(-1/\eps^2)$.
Consequently, the event $\cL=\{\pr[\cK\mid\vM^-,\vd_1,\ldots,\vd_n]\geq1-\exp(-1/\eps)\}$ satisfies $\pr[\cL]\geq1-\exp(-1/\eps)$.
Moreover, since $\vM$ comprises independent Poisson variables, the event
$$\cM=\{\forall i\leq L:|\vM_i^--\Erw[\vM_i]|\leq\sqrt n\ln n\}\cap\cbc{\sum_{i=1}^n\vd_i=(1-\eps)dn+o(n)}$$
satisfies $\pr[\cM\mid\cK]\sim1$.
In summary,
\begin{align}\label{eqnasty1}
\pr\brk{\cK}&\geq\exp(-1/\eps^2),&\pr\brk{\cL}&\geq1-\exp(-1/\eps),&\pr\brk{\cM\mid\cK}&=1-o(1).
\end{align}
We are going to show that for any outcomes $(M^-,d_1,\ldots,d_n)\in\cL\cap\cM$ and $(\gamma,\Delta)\in\cK$,
\begin{align}\label{eqnasty2}
\pr\brk{\GAMMA=\gamma,\DELTA=\Delta\mid\vM^-=M^-,\forall i\in[n]:\vd_i=d_i}
	&=(1+O_\eps(\eps))\pr\brk{\GAMMA=\gamma,\DELTA=\Delta}.
\end{align}
The assertion is immediate from \eqref{eqnasty0}--\eqref{eqnasty2}.

Thus, we are left to prove~\eqref{eqnasty2}.
Since on the event $\cM$ we have $\vM_i^-=\Erw[\vM_i]+O(\sqrt n\ln n)=\Omega(n)$ for any $i\leq L$ in the support of $\vk$, the local limit theorem for the Poisson distribution yields
\begin{align}
\pr&\brk{\vec M^-=M^-,\forall i\leq n:\vec d_i=d_i\mid \GAMMA=\gamma,\DELTA=\Delta}
	=\pr\brk{\vec M=M^-+\gamma,\forall i\leq n:\vec d_i=d_i\mid \GAMMA=\gamma,\DELTA=\Delta}\nonumber\\
	&=\frac{\pr\brk{\GAMMA=\gamma,\DELTA=\Delta\mid \vec M=M^-+\gamma,\forall i\leq n:\vec d_i=d_i			}}{\pr\brk{\GAMMA=\gamma,\DELTA=\Delta}}\cdot\pr\brk{\vec M=M^-+\gamma}\cdot\prod_{i=1}^n\pr\brk{\vec d_i=d_i}\nonumber\\
	&\sim \frac{\pr\brk{\GAMMA=\gamma\mid \vec M=M^-+\gamma,\forall i\leq n:\vec d_i=d_i,\DELTA=\Delta			}}{\pr\brk{\GAMMA=\gamma}}\cdot\pr\brk{\vec M=M^-}\cdot\prod_{i=1}^n\pr\brk{\vec d_i=d_i}.\label{eqnasty3}
\end{align}
Finally, we notice that the argument from which we derived~\eqref{eqnasty-1} implies that 
\begin{align*}
\pr\brk{\GAMMA=\gamma\mid \vec M=M^-+\gamma,\forall i\leq n:\vec d_i=d_i,\DELTA=\Delta			}\sim\pr\brk{\GAMMA=\gamma}.
\end{align*}
Hence, \eqref{eqnasty2} follows from \eqref{eqnasty1} and \eqref{eqnasty3}.
\end{proof}

\subsection{Proof of \Lem~\ref{Lemma_A'''}}\label{Sec_A'''}

The proof comprises several steps, each relatively simple individually.
Let $$\cE=\cbc{\mu_{\vA',\cC}\mbox{ is $(\exp(-1/\eps^4),\lceil\exp(1/\eps^4)\rceil)$-symmetric}}.$$

\begin{claim}\label{Claim_A'''2}
For sufficiently large $\Theta=\Theta(\eps)$ we have $\pr\brk{\cE}\geq1-\exp(-1/\eps^4).$
\end{claim}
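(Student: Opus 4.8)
The plan is to derive the $(\eps,\ell)$-symmetry of $\mu_{\vA',\cC}$ from \Lem~\ref{lem:l-wise} together with the $0$-pinning lemma \Lem~\ref{Lemma_0pinning}, the only subtlety being that the latter speaks about the marginals $\mu_{\hat A,U}$ indexed by \emph{columns}, whereas here we need symmetry of the marginal $\mu_{\vA',\cC}$ indexed by \emph{cavities}, i.e.\ by clones of variable nodes. First I would fix $\eps>0$ and set $\delta=\delta(\eps',\ell')$ as provided by \Lem~\ref{lem:l-wise} for $\eps'=\exp(-1/\eps^4)$ and $\ell'=\lceil\exp(1/\eps^4)\rceil$; it then suffices to choose $\Theta$ large enough that $\mu_{\vA',\cC}$ is $\delta$-symmetric with probability at least $1-\delta'$ for a suitably small $\delta'$, and then absorb $\delta'$ into the $\exp(-1/\eps^4)$ bound by adjusting constants. (One also has to observe that $|\cC|$ exceeds the threshold $1/\delta$ of \Lem~\ref{lem:l-wise} \whp; this follows from the construction \eqref{eqm} of $\vM^-$ together with \Lem~\ref{Lemma_sums}, which give $|\cC|\geq\eps n/2$ \whp.)

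The heart of the argument is to transfer $0$-pinning from variables to cavities. Recall that in $\G_\eps$ the checks $p_1,\dots,p_\THETA$ freeze the first $\THETA$ variable nodes $x_1,\dots,x_\THETA$; since the distribution of $\G'-\{p_1,\dots,p_\THETA\}$ is invariant under permutations of $x_1,\dots,x_n$, freezing the first $\THETA$ variables is equivalent in distribution to freezing a uniformly random set of $\THETA$ variables. Hence \Lem~\ref{Lemma_0pinning}, applied with the ground set $U=\{x_1,\dots,x_n\}$ (or more precisely with $U$ the index set of columns, picking up each column with probability proportional to nothing — simply uniformly), shows that with probability $1-\eps_1$ the column-marginal $\mu_{\vA',\{x_1,\dots,x_n\}}$ is $\eps_1$-symmetric for any prescribed $\eps_1>0$, provided $\Theta\geq\Theta(\eps_1)$. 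Now I would push this forward to the cavities: each cavity $c=(x_i,h)\in\cC$ carries the value of the underlying variable $x_i$, so $\mu_{\vA',c_1,\dots,c_\ell}$ equals $\mu_{\vA',x_{i_1},\dots,x_{i_\ell}}$ where $c_j=(x_{i_j},h_j)$. Therefore
\begin{align*}
\sum_{c_1,\dots,c_\ell\in\cC}\dTV\Bigl(\mu_{\vA',c_1,\dots,c_\ell},\bigotimes_{j=1}^\ell\mu_{\vA',c_j}\Bigr)
&\leq\Bigl(\max_i|\cC\cap(\{x_i\}\times[\vd_i])|\Bigr)^\ell\sum_{x_{i_1},\dots,x_{i_\ell}}\dTV\Bigl(\mu_{\vA',x_{i_1},\dots,x_{i_\ell}},\bigotimes_{j=1}^\ell\mu_{\vA',x_{i_j}}\Bigr),
\end{align*}
so a $\delta$-symmetric column marginal becomes a $\delta$-symmetric cavity marginal after paying a factor bounded by the maximum variable degree raised to the $\ell$-th power, relative to $|\cC|^\ell$. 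Since $\vd$ has an $r$-th moment with $r>2$, a c-truncation argument (mirroring the one in the proof of \Lem~\ref{Lemma_theta}, via the event $\cL=\{\sum_i\vd_i\vecone\{\vd_i>L\}<\text{(small)}\cdot n\}$) handles the high-degree variables: their total clone count is negligible compared with $|\cC|$, so they contribute only a small error to the left-hand sum, while the low-degree variables pick up only the bounded factor $L^\ell$. Choosing $\eps_1$ small enough relative to $\delta$, $L$ and $\ell$ — and then $\Theta=\Theta(\eps_1)$ accordingly, which is still a function of $\eps$ alone — makes the cavity marginal $\delta$-symmetric \whp, and \Lem~\ref{lem:l-wise} upgrades this to the desired $(\eps',\ell')$-symmetry.

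The main obstacle I anticipate is precisely the degree-truncation bookkeeping: one must verify that the random-versus-uniform discrepancy in how cavities are distributed among variable nodes (a high-degree variable contributes many cavities but there are few such variables) does not blow up the $\ell$-fold sum, and that all the thresholds ($L$, $\eps_1$, $\Theta$) can be chosen \emph{consistently} and depending on $\eps$ only, not on $n$. This is routine given the $r>2$ moment assumption and \Lem~\ref{Lemma_sums}, and it closely parallels the estimate \eqref{eqLemma_theta3} already carried out in the proof of \Lem~\ref{Lemma_theta}; indeed, one could alternatively deduce Claim~\ref{Claim_A'''2} almost directly from \Lem~\ref{Lemma_theta} by taking $\delta$ there small enough and $\ell=\lceil\exp(1/\eps^4)\rceil$, since $\delta$-symmetry is exactly the $\ell=2$ instance and $(\eps',\ell')$-symmetry then follows from \Lem~\ref{lem:l-wise} — though stating it via the $0$-pinning lemma keeps the dependence of $\Theta$ on $\eps$ transparent.
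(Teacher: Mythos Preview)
Your proposal is correct but works far harder than necessary: the paper's proof is a single line, invoking \Lem~\ref{Lemma_theta} with $\delta=\exp(-1/\eps^4)$ and $\ell=\lceil\exp(1/\eps^4)\rceil$, which is exactly the statement of~$\cE$. You essentially re-derive \Lem~\ref{Lemma_theta} from \Lem~\ref{Lemma_0pinning} and the degree-truncation argument, and you even note this shortcut at the end; one small clarification there: \Lem~\ref{Lemma_theta} already delivers $(\delta,\ell)$-symmetry for arbitrary $\ell$, so no detour through $\ell=2$ and \Lem~\ref{lem:l-wise} is needed.
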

\begin{proof}
This is an immediate consequence of \Lem~\ref{Lemma_theta}.
\end{proof}

\noindent
Let $\cE'=\cbc{|\cC|\geq\eps dn/2\wedge \max_{i\leq n}\vd_i\leq n^{1/2}}$.

\begin{claim}\label{Claim_A'''2a}
We have $\pr\brk{\cE'}=1-o(1).$
\end{claim}
\begin{proof}
This follows from the choice of the parameters in~\eqref{eqPoissons} and 
\Lem~\ref{Lemma_sums}.
\end{proof}

\noindent
Let
	$$X=\sum_{i\geq3}\DELTA_i,\qquad Y=\sum_{i\geq3}i\DELTA_i,\qquad Y'=\sum_{i\geq3}i\GAMMA_i.$$
Then the total number of new non-zero entries upon going from $\vA'$ to $\vA'''$ is bounded by $Y+Y'$.
Let $$\cE''=\cbc{X\vee Y\vee Y'\leq1/\eps}.$$

\begin{claim}\label{Claim_A'''1}
We have $\pr\brk{\cE''}=1-O_\eps(\eps)$.
\end{claim}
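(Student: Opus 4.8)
The plan is to control the three tail events $\{X>1/\eps\}$, $\{Y>1/\eps\}$ and $\{Y'>1/\eps\}$ separately, bounding the probability of each by $O_\eps(\eps)$, and then to conclude by a union bound, since $\neg\cE''$ is exactly their union.

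The terms $X$ and $Y$ I would dispatch by a moment computation followed by Markov's inequality. As the $\DELTA_i$ are mutually independent Poisson variables with $\Erw[\DELTA_i]=(1-\eps)\pr[\vk=i]d/k$, the sum $X=\sum_{i\geq3}\DELTA_i$ is Poisson with mean $(1-\eps)d/k$, while $\Erw[Y]=\sum_{i\geq3}i\Erw[\DELTA_i]=(1-\eps)d\Erw[\vk]/k=(1-\eps)d$. Both means are $O_\eps(1)$, so Markov's inequality gives $\pr[X>1/\eps]\leq\eps\Erw[X]=O_\eps(\eps)$ and likewise $\pr[Y>1/\eps]=O_\eps(\eps)$.

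The term $Y'$ is the only one that sees the geometry of $\G_{n+1,\vM^+}$, and this is where the work lies. The first step is a deterministic observation: a check of target degree $i$ that is adjacent to $x_{n+1}$ has at least one of its $i$ clones matched by $\vec\Gamma_{n+1,\vM^+}$ to a clone of $x_{n+1}$, so $\GAMMA_i$ is at most the number $\vec N_i$ of clones of $x_{n+1}$ matched to a clone of some target-degree-$i$ check, whence $Y'=\sum_{i\geq3}i\GAMMA_i\leq\sum_{i\geq3}i\vec N_i$. Next I would introduce the good event $\cG=\{D\geq dn/2\}\cap\{\sum_{i\geq3}i\vM_i^+\leq D\}$ with $D:=\sum_{j=1}^{n+1}\vd_j$; by \Lem~\ref{Lemma_sums} applied to $(\vd_j)$ and to the target degrees of the checks, together with a Chernoff bound for the Poisson variable $\vm_\eps^+$, one gets $\pr[\neg\cG]=o(1/n)$. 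On $\cG$ the random maximal matching saturates the check-clone side, hence is a uniformly random injection of check-clones into the $D$ variable-clones, so each check-clone is matched to each variable-clone with probability $1/D$; therefore $\Erw[\vec N_i\mid\vM^+,(\vd_j)_j]=\vd_{n+1}\,i\vM_i^+/D$ on $\cG$, and with $S:=\sum_{i\geq3}i^2\vM_i^+$,
\[
\Erw\bigl[Y'\,\vecone\{\cG\}\bigr]\;\leq\;\frac2{dn}\,\Erw[\vd_{n+1}]\,\Erw[S]\;=\;\frac2{dn}\cdot d\cdot(1-\eps)\frac{d(n+1)\Erw[\vk^2]}{k}\;=\;O_\eps(1),
\]
using that $\vd_{n+1}$ is independent of $S$ and that $\Erw[\vk^2]<\infty$ (this is the one place the hypothesis $\Erw[\vk^r]<\infty$, $r>2$, enters). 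Markov's inequality then gives $\pr[Y'\vecone\{\cG\}>1/\eps]=O_\eps(\eps)$, so $\pr[Y'>1/\eps]\leq\pr[\neg\cG]+O_\eps(\eps)=O_\eps(\eps)$, and adding the three bounds yields $\pr[\neg\cE'']=O_\eps(\eps)$. I expect the $Y'$ estimate to be the main obstacle: the decisive point is to recognise that size-biasing the check degree brings in $\Erw[\vk^2]$ rather than a higher moment, so that finiteness is guaranteed, and to remove the random normalisation $D$ by passing to the event $\cG$ on which the matching saturates the check side.
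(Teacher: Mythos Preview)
Your proof is correct and follows essentially the same route as the paper: bound $\Erw[X],\Erw[Y],\Erw[Y']$ by $O_\eps(1)$ and apply Markov's inequality, the key point for $Y'$ being that size-biasing the check degree introduces $\Erw[\vk^2]$, which is finite. The only difference is cosmetic: where you pass to the auxiliary event $\cG$ to lower-bound the denominator $D$, the paper simply uses the deterministic bound $\sum_{i=1}^{n+1}\vd_i\geq n+1$ (valid because $\vd\geq1$ by assumption), which yields directly $\pr[\text{check of degree }i\text{ adjacent to }x_{n+1}\mid\vd_{n+1}]\leq i\vd_{n+1}/n$ and hence $\Erw[Y']\leq n^{-1}\Erw[\vd_{n+1}]\,\Erw\!\big[\sum_{j\leq\vm_\eps^+}\vk_j^2\big]=O_\eps(1)$ without any conditioning.
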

\begin{proof}
Since \eqref{eqPoissons} yields $\Erw[X],\Erw[Y]=O_\eps(1)$,  we obtain $\pr\brk{X>1/\eps}=O_\eps(\eps)$ and  $\pr\brk{Y>1/\eps}=O_\eps(\eps)$.
With respect to $Y'$ we observe that $\pr\brk{\vd_{n+1}>1/\eps}=O_\eps(\eps)$, because $\Erw[\vd_{n+1}]=O_\eps(1)$.
Further, 
we can bound the probability that a check of degree $i$ is adjacent to $\vd_{n+1}$ by $i\vd_{n+1}/n$, because one of the $i$ clones of the check has to be matched to one of the $\vd_{n+1}$ clones of $x_{n+1}$ and $\sum_{i=1}^n\vd_i\geq n$.
Hence,
$\Erw\brk{Y'}=\Erw\sum_{i\geq3}i\GAMMA_i\leq\Erw\sum_{i\in[\vm_\eps^+]}\vk_i^2\vd_{n+1}/n=O_\eps(1).$
Thus, the assertion follows from Markov's inequality.
\end{proof}

\noindent
Going from $\G'$ to $\G'''$ we add checks $a_{i,j}'''$, $i\geq3$, $j\in[\GAMMA_i]$ and $b_{i,j}'''$, $i\geq3$, $j\in[\vM_i^+-\vM_i^--\GAMMA_i]$.
Let 
$$\cX=\bc{\bigcup_{i\geq3}\bigcup_{j=1}^{\GAMMA_i}\partial a_{i,j}'''\setminus\{x_{n+1}\}}\cup 
		\bc{\bigcup_{i\geq3}\bigcup_{j\in[\vM_i^+-\vM_i^--\GAMMA_i]}\partial b_{i,j}'''}$$
comprise all the variable nodes adjacent to the new checks, except for $x_{n+1}$.
Further, let
$$\cE'''=\cbc{|\cX|=Y+\sum_{i\geq3}(i-1)\GAMMA_i}$$
be the event that the variables of $\G'$ where the new checks attach are all distinct.

\begin{claim}\label{Claim_A'''2b}
We have $\pr\brk{\cE'''\mid\cE'\cap\cE''}=1-o(1).$
\end{claim}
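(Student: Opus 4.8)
The plan is to condition on $\cE'\cap\cE''$ and exploit that, by construction, the edges joining the new checks $a_{i,j}'''$ and $b_{i,j}'''$ to $\G'$ come from a \emph{single} random maximal matching of a complete bipartite graph one of whose sides is the cavity set $\cC$. On $\cE''$ the other side has bounded size, while on $\cE'$ we have $|\cC|\ge\eps dn/2$; hence all clones of the new checks get matched, and the set of cavities they occupy is a uniformly random subset of $\cC$ of the appropriate (bounded) cardinality. The event $\cE'''$ then fails exactly when two of these cavities are clones of the same variable $x_h$, which is a short first-moment estimate.

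First I would clean up the bookkeeping. On $\cE''$ we have $\GAMMA_i,\DELTA_i\le1/\eps$ for all $i$, and $\GAMMA_i=\DELTA_i=0$ for all but finitely many $i$, all of which lie in $\supp{\vk}$; combining this with the high-probability event (via \Lem~\ref{Lemma_sums}) that $\vM_i=\Omega(n)$ for every $i\in\supp{\vk}$ gives $\vM_i\ge\GAMMA_i$ for all $i$, so that $\vM_i^+-\vM_i^--\GAMMA_i=\DELTA_i$ and the total number of edges from the new checks into $\cC$ equals $T:=Y+\sum_{i\ge3}(i-1)\GAMMA_i\le 2/\eps=O_\eps(1)$. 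Since the matching uses each cavity at most once, these $T$ edges land on $T$ pairwise distinct cavities, so $|\cX|\le T$, with equality iff these $T$ cavities have $T$ distinct underlying variables. If $T\le1$ this is automatic, so assume $T\ge2$.

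Next, condition additionally on $\G'$, $\cC$, $\GAMMA$ and $\DELTA$; this fixes $T$, fixes $N:=|\cC|\ge\eps dn/2$, and fixes $c_h:=|\cC\cap(\{x_h\}\times[\vd_h])|\le\vd_h\le\sqrt n$ (the last two bounds coming from $\cE'$), with $\sum_h c_h=N$. A maximal matching of the complete bipartite graph with sides $\cC$ and the set $W$ of new-check clones, $|W|=T\le N$, is precisely an injection $W\hookrightarrow\cC$, so the uniform such matching induces the uniform distribution on the image, a uniformly random $T$-subset of $\cC$; and whether two cavity-edges land on clones of a common variable depends only on this image. Hence
$$\pr\brk{\overline{\cE'''}\mid\G',\cC,\GAMMA,\DELTA}\le\sum_h\binom{c_h}{2}\binom{N-2}{T-2}/\binom{N}{T}=\frac{T(T-1)}{N(N-1)}\sum_h\binom{c_h}{2}\le\frac{\sqrt n\,T(T-1)}{2(N-1)}=O_\eps(n^{-1/2}),$$
using $\sum_h\binom{c_h}{2}\le\frac12\sqrt n\sum_h c_h=\frac12\sqrt n\,N$. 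Averaging over the conditioning, whose ingredients are all measurable with respect to events contained in $\cE'\cap\cE''$, yields $\pr\brk{\cE'''\mid\cE'\cap\cE''}=1-o(1)$.

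I do not anticipate a genuine obstacle. The only point requiring a moment's care is the reduction of the matching to ``a uniformly random $T$-subset of $\cC$'', which rests on two facts: (i) a maximal matching of a complete bipartite graph whose smaller side has size at most that of the larger side saturates the smaller side, so it is an injection into $\cC$; and (ii) the defining property of $\cE'''$ depends on the matching only through the occupied subset of $\cC$, not through the particular injection onto it. Everything after that is the routine union bound displayed above.
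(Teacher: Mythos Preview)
Your argument is correct and follows the same route as the paper's two-line proof: on $\cE'$ there are $\Omega(n)$ cavities with at most $\sqrt n$ per variable, on $\cE''$ only $O_\eps(1)$ cavities get selected by the new checks, and a union bound yields collision probability $O_\eps(n^{-1/2})$. One small quibble: \Lem~\ref{Lemma_sums} is not the right citation for $\vM_i=\Omega(n)$ --- a direct Chebyshev or Poisson tail bound does it --- and you only need this for the finitely many $i\in\supp\vk$ with $i\le1/\eps$ (which is all that $\cE''$ allows), not for every $i\in\supp\vk$.
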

\begin{proof}
Given $\cE'$ there are $\Omega(n)$ cavities in total, while the maximum number belonging to any one variable is $O(\sqrt n)$.
Further, given $\cE''$ we merely pick a bounded number $Y+Y'=O_\eps(1/\eps)$ of these cavities randomly as neighbours of the new checks.
Thus, the probability of hitting the same variable twice is $o(1)$.
\end{proof}

\begin{claim}\label{Claim_A'''3}
We have
$\Erw\brk{\abs{\nul(\vA''')-\nul(\vA')}(1-\vecone\cE\cap\cE'\cap\cE''\cap\cE''')}=o_\eps(1)$.
\end{claim}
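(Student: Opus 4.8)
The plan is to bound $\abs{\nul(\vA''')-\nul(\vA')}$ pointwise by a random variable with uniformly bounded second moment, and then to apply the Cauchy--Schwarz inequality against the small probability of the complement of $\cE\cap\cE'\cap\cE''\cap\cE'''$, which is exactly what Claims~\ref{Claim_A'''2}--\ref{Claim_A'''2b} control. Throughout we fix $\Theta=\Theta(\eps)$ large enough for Claim~\ref{Claim_A'''2} to hold.

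First I would record a deterministic estimate. By construction $\vA'''$ arises from $\vA'$ by adjoining the single new column corresponding to $x_{n+1}$ together with one new row for each of the checks $a_{i,j}'''$ and $b_{i,j}'''$, that is, $\sum_{i\ge3}(\vM_i^+-\vM_i^-)$ new rows in total. Since adjoining one row or one column changes the nullity by at most one, and since $\vM_i^+-\vM_i^-=\vM_i+\DELTA_i-(\vM_i-\GAMMA_i)\vee0\le\DELTA_i+\GAMMA_i$, we obtain
$$\abs{\nul(\vA''')-\nul(\vA')}\le 1+X+\sum_{i\ge3}\GAMMA_i,\qquad X=\sum_{i\ge3}\DELTA_i.$$
Next I would bound the second moment of the right-hand side. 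Since $X\disteq\Po((1-\eps)d/k)$ we have $\Erw[X^2]=O_\eps(1)$, in fact with a bound that does not degrade as $\eps\to0$; and since $\sum_{i\ge3}\GAMMA_i$ cannot exceed the target degree $\vd_{n+1}$ of $x_{n+1}$, we get $\Erw[(\sum_{i\ge3}\GAMMA_i)^2]\le\Erw[\vd^2]<\infty$ by the standing moment hypothesis. Hence $\Erw[(1+X+\sum_{i\ge3}\GAMMA_i)^2]=O_\eps(1)$.

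Finally I would combine this with the probability of the bad event $\cB=\overline{\cE\cap\cE'\cap\cE''\cap\cE'''}$. A union bound together with the decomposition $\pr[\neg\cE''']\le\pr[\neg\cE'''\mid\cE'\cap\cE'']+\pr[\neg\cE']+\pr[\neg\cE'']$ gives $\pr[\cB]\le\pr[\neg\cE]+2\pr[\neg\cE']+2\pr[\neg\cE'']+\pr[\neg\cE'''\mid\cE'\cap\cE'']$, which by Claims~\ref{Claim_A'''2}, \ref{Claim_A'''2a}, \ref{Claim_A'''1} and~\ref{Claim_A'''2b} equals $\exp(-1/\eps^4)+o(1)+O_\eps(\eps)+o(1)=O_\eps(\eps)+o(1)$. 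Now Cauchy--Schwarz yields
$$\Erw\brk{\abs{\nul(\vA''')-\nul(\vA')}\vecone_\cB}\le\Erw\brk{\abs{\nul(\vA''')-\nul(\vA')}^2}^{1/2}\pr[\cB]^{1/2}=O_\eps(1)\cdot\bc{O_\eps(\eps)+o(1)}^{1/2},$$
which vanishes upon sending $n\to\infty$ and then $\eps\to0$; since $\vecone_\cB=1-\vecone_{\cE\cap\cE'\cap\cE''\cap\cE'''}$, this is the assertion. No deep obstacle arises here: the only point requiring care is arranging the pointwise bound so that it involves only quantities ($X$ and $\vd_{n+1}$) whose second moments are bounded uniformly in $n$, which is precisely where the hypothesis $\Erw[\vd^r]<\infty$ for some $r>2$ enters.
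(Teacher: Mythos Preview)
Your argument is correct. The pointwise bound $\abs{\nul(\vA''')-\nul(\vA')}\le 1+X+\sum_{i\ge3}\GAMMA_i\le 1+X+\vd_{n+1}$ is exactly the one the paper uses, and your second-moment bound is sound because $X\disteq\Po((1-\eps)d/k)$ and $\Erw[\vd_{n+1}^2]<\infty$ are both controlled uniformly in $n$ and $\eps$; the Cauchy--Schwarz step then legitimately yields $o_\eps(1)$.

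The route, however, is not quite the paper's. The paper does not invoke Cauchy--Schwarz: instead it splits the bad event and uses first-moment tail estimates together with the fact that on $\cE''$ the bound $X+\vd_{n+1}+1$ is at most $O_\eps(1/\eps)$. Concretely, it first shows $\Erw[(X+\vd_{n+1}+1)(1-\vecone\cE'')]=o_\eps(1)$ by integrating the tails of $X$ and $\vd_{n+1}$ separately, and then, \emph{on} $\cE''$, bounds the contribution of $\cE''\setminus\cE$, $\cE''\setminus\cE'$, $\cE''\cap\cE'\setminus\cE'''$ by the deterministic cap $O_\eps(1/\eps)$ times the respective small probabilities from Claims~\ref{Claim_A'''2}, \ref{Claim_A'''2a}, \ref{Claim_A'''2b}. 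Your Cauchy--Schwarz approach is cleaner and avoids the case split entirely, at the price of using the second moment $\Erw[\vd^2]<\infty$ (which is available anyway from the standing hypothesis $\Erw[\vd^r]<\infty$ for $r>2$); the paper's decomposition is slightly more hands-on but makes the role of $\cE''$ as the ``bounded regime'' explicit.
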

\begin{proof}
Clearly $\abs{\nul(\vA''')-\nul(\vA')}\leq X+\vd_{n+1} +1$ because going from $\vA'$ to $\vA'''$ we add one column and at most $X+\vd_{n+1}$ new rows.
Consequently, as $\Erw[X],\Erw[\vd_{n+1}]=O_\eps(1)$ and $X,\vd_{n+1}$ are independent,
\begin{align}
\Erw\brk{\abs{\nul(\vA''')-\nul(\vA')}(1-\vecone\cE'')}&\leq
		\Erw\brk{(X+\vd_{n+1}+1)\vecone\{X>1/\eps\}}+\Erw\brk{(\vd_{n+1}+1/\eps+1)\vecone\{\vd_{n+1}>\eps^{-1}\}}=
o_\eps(1),
	\label{eqClaim_A'''3_1}
\end{align}
whence the assertion is immediate.
Furthermore, Claims~\ref{Claim_A'''2}, \ref{Claim_A'''2a} and~\ref{Claim_A'''2b} readily imply that
\begin{align}	\label{eqClaim_A'''3_2}
\Erw&\brk{\abs{\nul(\vA''')-\nul(\vA')}\vecone\cE''\setminus\cE}	
	\leq O_\eps(\eps^{-1})\exp(-1/\eps^4)=o_\eps(1);&\mbox{similarly},\\
	\label{eqClaim_A'''3_3}
\Erw&\brk{\abs{\nul(\vA''')-\nul(\vA')}\vecone\cE''\setminus\cE'},\Erw\brk{\abs{\nul(\vA''')-\nul(\vA')}\vecone\cE''\cap\cE'\setminus\cE'''}=o(1).
\end{align}
The assertion follows from \eqref{eqClaim_A'''3_1}--\eqref{eqClaim_A'''3_3}.
\end{proof}


We obtain $\G'''$ by adding checks $a_{i,j}'''$ adjacent to $x_{n+1}$ and $b_{i,j}'''$ not adjacent to $x_{n+1}$. Recall that
$\mu_{\vA',\cX}\in\cP(\FF_q^{\cX})$ denotes the joint distribution of the variables of $\G'$ where the new check attach.
Also remember that $\vec\alpha$ signifies the fraction of frozen cavities.
Depending on the value of $\ALPHA$, we consider three cases separately.
Let $\Sigma''\supset\Sigma'$ be the $\sigma$-algebra generated by  $\G'$, $\A'$, $\vM_-$, $(\vd_i)_{i\in[n]}$, $\GAMMA,\vM$ and $\DELTA$.

\begin{claim}\label{Claim_A'''4}
On the event $\{\vec\alpha>1-\exp(-1/\eps^2)\}\cap\cE\cap\cE'\cap\cE''$ we have
$\Erw\brk{(\nul(\vA''')-\nul(\vA'))\vecone\cE'''\mid\Sigma''}=o_\eps(1).$
\end{claim}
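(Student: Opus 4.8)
The plan is to argue that in this case the addition of the new checks $a_{i,j}'''$ and $b_{i,j}'''$ is, with overwhelming conditional probability, a ``trivial'' operation for the nullity: every new check is automatically satisfied because all its neighbours among $x_1,\ldots,x_n$ are frozen cavities, and so the only change to the nullity is the $+1$ contributed by the new variable $x_{n+1}$, possibly cancelled by one check $a_{i,j}'''$ that freezes $x_{n+1}$. First I would fix the conditioning on $\Sigma''$, so that the degree parameters $\GAMMA$, $\DELTA$, $\vM^-$ and $(\vd_i)_{i\in[n]}$ are determined; on $\cE''$ the total number of new checks and the total number of new non-zero entries $Y+Y'$ is bounded by $O_\eps(1/\eps)$, and on $\cE'$ there are $\Omega(\eps n)$ cavities with at most $O(\sqrt n)$ attached to any single variable. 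Thus the set $\cX$ of variables of $\G'$ where the new checks attach is obtained by sampling a bounded number of cavities at random without (essentially) collisions, using $\cE'''$.

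**The core estimate is that, conditionally, every variable in $\cX$ is frozen with probability $1-o_\eps(1)$.** Since $\vec\alpha>1-\exp(-1/\eps^2)$, each individual random cavity drawn into $\cX$ is frozen except with probability $\exp(-1/\eps^2)$; there are at most $Y+\sum_{i\geq3}(i-1)\GAMMA_i=O_\eps(1/\eps)$ of them, so by a union bound the probability that $\cX$ contains an unfrozen cavity is at most $O_\eps(\eps^{-1})\exp(-1/\eps^2)=o_\eps(1)$. On the complementary event, where all variables in $\cX$ are frozen: by Fact~\ref{Fact_nullity} applied with $A=\vA'$ and $A'=\vA'''$, the index set $I$ of frozen columns on which we condition has $\mu_{\vA',I}(\vec 0)=1$, so the sum in~\eqref{eqFact_nullity} reduces to counting extensions $\sigma\in\FF_q^{\{x_{n+1}\}}$ subject to the new checks evaluated with $\tau=\vec 0$. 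A check $b_{i,j}'''$, which does not touch $x_{n+1}$, then reads $0=0$ and imposes no constraint; a check $a_{i,j}'''$ reads $\CHI\cdot\sigma_{x_{n+1}}=0$, i.e.\ forces $\sigma_{x_{n+1}}=0$, unless $\GAMMA$ has no check attached to $x_{n+1}$ at all. Hence $\nul(\vA''')-\nul(\vA')=\log_q|\{\sigma_{x_{n+1}}: \text{constraints}\}|$ equals $1$ if no $a_{i,j}'''$ exists (i.e.\ $\sum_i\GAMMA_i=0$) and $0$ otherwise — in either case the difference is in $\{0,1\}$ and, crucially, does \emph{not} depend on the realisation of the matrix entries $\CHI$. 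To conclude that the conditional expectation is $o_\eps(1)$ I then note $\Erw[\,\sum_i\GAMMA_i=0\mid\Sigma''\,]$: by Cor~\ref{Cor_gamma}, $\GAMMA$ is close to $\hat\GAMMA=\sum_{i\leq\vd_{n+2}}\vecone\{\hat\vk_i=\cdot\}$, so $\Pr[\sum_i\GAMMA_i=0\mid\Sigma']\leq\Pr[\vd=0]+O_\eps(\eps^{1/2})=O_\eps(\eps^{1/2})$ since $\vd\geq1$; thus the ``$+1$ survives'' event has conditional probability $O_\eps(\eps^{1/2})$, and combined with the earlier $o_\eps(1)$ failure probability we get $\Erw[(\nul(\vA''')-\nul(\vA'))\vecone\cE'''\mid\Sigma'']=O_\eps(\eps^{1/2})+o_\eps(1)=o_\eps(1)$, using that the difference is bounded by $O_\eps(1/\eps)$ on $\cE''$ to absorb the $o_\eps(1)$-probability exceptional set.

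**The main obstacle is bookkeeping rather than conceptual.** The delicate point is that the conditioning $\Sigma''$ fixes $\GAMMA,\DELTA$ but the set of \emph{which} cavities the new checks attach to is still random, and one must verify that the ``all frozen'' event has conditional probability $1-o_\eps(1)$ uniformly over outcomes in $\cE\cap\cE'\cap\cE''$ with $\vec\alpha$ large; here the bound on the number of cavities per variable (from $\cE'$) is what prevents a single heavily-attached variable from being unfrozen and spoiling many checks at once, and $\cE'''$ guarantees distinctness so the union bound is clean. A secondary subtlety is that $\vec\alpha$ is itself $\Sigma'$-measurable, so the event $\{\vec\alpha>1-\exp(-1/\eps^2)\}$ can legitimately be pulled outside the conditional expectation. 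Finally, one should double-check that on the ``all frozen'' event the quantity $\nul(\vA''')-\nul(\vA')$ really is deterministic given the combinatorial data — this is exactly because Fact~\ref{Fact_nullity} turns the computation into counting solutions of a homogeneous linear system in the single unknown $\sigma_{x_{n+1}}$ over $\FF_q$, whose solution count ($1$ or $q$) depends only on whether some coefficient is non-zero, and the coefficients are non-zero $\CHI$-values by construction.
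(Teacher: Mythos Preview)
Your core argument matches the paper's: on $\cE''$ the set $\cX$ has size $O_\eps(1/\eps)$, so a union bound gives that with conditional probability $1-\exp(-\Omega_\eps(1/\eps^2))$ every cavity in $\cX$ is frozen; on that event every $b_{i,j}'''$ is trivially satisfied and every $a_{i,j}'''$ forces $\sigma_{x_{n+1}}=0$, whence $\nul(\vA''')=\nul(\vA')$. The crude bound $|\nul(\vA''')-\nul(\vA')|\leq X+Y'+1=O_\eps(1/\eps)$ on $\cE''$ then absorbs the exceptional set. This is exactly the paper's proof.

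Your extra case analysis for $\sum_i\GAMMA_i=0$, however, contains a genuine conditioning error. You invoke \Lem~\ref{Cor_gamma} to bound $\Pr\bigl[\sum_i\GAMMA_i=0\mid\Sigma'\bigr]=O_\eps(\eps^{1/2})$ and then use this to conclude $\Erw[\,\cdots\mid\Sigma'']=o_\eps(1)$. But the claim conditions on $\Sigma''$, not $\Sigma'$, and $\GAMMA$ is $\Sigma''$-measurable by definition. Hence $\vecone\{\sum_i\GAMMA_i=0\}$ is already determined given $\Sigma''$: on any $\Sigma''$-outcome lying in the event of the claim with $\sum_i\GAMMA_i=0$, your own computation shows $\nul(\vA''')-\nul(\vA')=1$ on the ``all frozen'' sub-event, so the conditional expectation equals $1+o_\eps(1)$, not $o_\eps(1)$. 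You cannot average this away at the level of $\Sigma''$. (The paper's proof simply does not mention the case $\sum_i\GAMMA_i=0$, tacitly assuming some $a_{i,j}'''$ is present.) The correct resolution for the downstream argument is that in the combining step of \Lem~\ref{Lemma_A'''} the target expression $\prod_{i\geq3}(1-\ALPHA^{i-1})^{\GAMMA_i}$ is an empty product equal to $1$ when $\GAMMA=0$, which matches the true conditional value; equivalently, the $O_\eps(\eps^{1/2})$ \emph{unconditional} probability of $\{\sum_i\GAMMA_i=0\}$ that you derived is the right quantity to use, but only after taking the outer expectation in~\eqref{eqLemma_A'''_1}, not inside the $\Sigma''$-conditional expectation.
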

\begin{proof}
Since on $\cE''$ we have $|\cX|\leq O_\eps(1/\eps)$,  with probability $1-\exp(-\Omega_\eps(1/\eps^2))$ all of the variables in $\cX$ are frozen in $\vA'$.
In this case the new checks $b_{i,j}'''$ are trivially satisfied by any vector in the kernel of $\vA'$, because the adjacent variables always take the value zero.
For the same reason in all vectors in the kernel of $\vA'''$ variable $x_{n+1}$ must take the value zero.
Conversely, any vector in the kernel of $\vA'$ extends to a vector satisfying the new checks  by setting $x_{n+1}$ to $0$.
Thus, with probability $1-\exp(-\Omega_\eps(1/\eps^2))$ we have $\nul(\vA')=\nul(\vA''')$, while on $\cE''$ the difference 
$|\nul(\vA')-\nul(\vA''')|$ is bounded by $X+Y'=O_\eps(1/\eps)$ deterministically.
The claim follows.
\end{proof}

\begin{claim}\label{Claim_A'''5}
On $\{\vec\alpha<\exp(-1/\eps^2)\}\cap\cE\cap\cE'\cap\cE''$ we have
$\Erw\brk{(\nul(\vA''')-\nul(\vA'))\vecone\cE'''\mid\Sigma''}=o_\eps(1)+1-\sum_{i\geq3}\vM_i^+-\vM_i^-.$
\end{claim}
\begin{proof}
Since on $\cE''$ we have $|\cX|\leq O_\eps(1/\eps)$, with probability $1-\exp(-\Omega_\eps(1/\eps^2))$ none of the variables in $\cX$ is frozen.
Consequently, on $\cE'$ with probability at least $1-\exp(-\Omega_\eps(1/\eps^4))$ the joint distribution $\mu_{\vA',\cX}$ is within $\exp(-\Omega_\eps(1/\eps^4))$ of the uniform distribution on $\FF_q^{\cX}$ in total variation.
Therefore, the claim follows from Fact~\ref{Fact_nullity}.
\end{proof}

\begin{claim}\label{Claim_A'''6}
On the event $\{\exp(-1/\eps^2)\leq\vec\alpha\leq1-\exp(-1/\eps^2)\}\cap\cE\cap\cE'\cap\cE''$ we have
\begin{align*}
\Erw\brk{\bc{\nul(\vA''')-\nul(\vA')}\vecone\cE'''\mid\Sigma''}&=
	o_\eps(1)+\prod_{i\geq3}(1-\ALPHA^{i-1})^{\GAMMA_i}
		-\sum_{i\geq3}(1-\vec\alpha^{i-1})\GAMMA_i
	-\sum_{i\geq3}(1-\vec\alpha^{i})(\vM_i^+-\vM_i^--\GAMMA_i).
\end{align*}
\end{claim}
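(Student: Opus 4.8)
The plan is to read $\nul(\vA''')-\nul(\vA')$ off Fact~\ref{Fact_nullity} and then, invoking the symmetry granted by $\cE$, replace the joint law of the attached cavities by a product of atoms and uniform distributions. Throughout we argue conditionally on $\Sigma''$, so that $\vec\alpha$, $\GAMMA$ and the $\vM_i^{\pm}$ are fixed numbers and the only randomness left is the maximal matching joining the clones of the new checks to the cavities $\cC$ (the field elements $\CHI$ on the new edges are irrelevant, lying in $\FF_q^*$). Recall that $\G'''$ arises from $\G'$ by adding $x_{n+1}$, the checks $a_{i,j}'''$ of degree $i$ adjacent to $x_{n+1}$ and to $i-1$ cavities, and the checks $b_{i,j}'''$ of degree $i$ adjacent to $i$ cavities; let $\cX$ be the set of cavity-variables these new checks attach to. Then Fact~\ref{Fact_nullity} applied with $J=\{x_{n+1}\}$ and $I=\cX$ gives
\[
\nul(\vA''')-\nul(\vA')=\log_q\sum_{\sigma\in\FF_q,\ \tau\in\FF_q^{\cX}}\mu_{\vA',\cX}(\tau)\prod_{h}\vecone\{h\text{ satisfied by }(\sigma,\tau)\},
\]
the product ranging over the new checks $h$.

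Next I would reduce to a product measure. On $\cE\cap\cE'\cap\cE''\cap\cE'''$ we have $|\cX|=O_\eps(1/\eps)\ll\lceil\exp(1/\eps^4)\rceil$ while $|\cC|=\Omega(\eps n)$, and up to an $O(1/n)$ sampling-without-replacement correction $\cX$ is a uniformly random set of distinct cavities; hence the $(\exp(-1/\eps^4),\lceil\exp(1/\eps^4)\rceil)$-symmetry of $\mu_{\vA',\cC}$ shows that, with probability $1-o_\eps(1)$ over the matching, $\mu_{\vA',\cX}$ is within $\exp(-\Omega_\eps(1/\eps^4))$ in total variation of $\bigotimes_{c\in\cX}\mu_{\vA',c}$, where by \Lem~\ref{Lemma_Spartition} each $\mu_{\vA',c}$ equals $\delta_0$ if $c$ is frozen and the uniform distribution on $\FF_q$ otherwise. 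The same estimate shows that the pattern of frozen cavities among those attached to the new checks is within $o(1)$ total variation of a sequence of independent $\Be(\vec\alpha)$ variables. Since on $\cE''$ there are only $O_\eps(1/\eps)$ summands and new checks and the bracketed quantity is at least $q^{-O_\eps(1/\eps)}$, substituting the product measure and this independent model perturbs $\nul(\vA''')-\nul(\vA')$ by only $o_\eps(1)$, even after passing through $\log_q$.

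The remainder is an elementary count over $\FF_q$. A $b'''$-check of degree $i$ all of whose $i$ attached cavities are frozen contributes a factor $1$ to the sum, whereas one with an unfrozen attached cavity contributes $1/q$. An $a'''$-check of degree $i$ all of whose $i-1$ attached cavities are frozen forces $\sigma=0$, whereas one with an unfrozen attached cavity contributes a factor $1/q$ for every fixed $\sigma$. Since on $\cE'''$ distinct new checks attach to disjoint sets of cavities, the sum factorises over checks once $\sigma$ is fixed, and summing over $\sigma$ (which collapses to $\sigma=0$ iff some $a'''$-check forces it, and otherwise contributes the factor $q$) yields
\begin{align*}
\nul(\vA''')-\nul(\vA')&=\vecone\{\text{no }a'''\text{-check forces }\sigma=0\}\\
&\quad-\#\{a'''\text{-checks with an unfrozen attached cavity}\}-\#\{b'''\text{-checks with an unfrozen attached cavity}\}+o_\eps(1).
\end{align*}
Taking $\Erw[\,\cdot\mid\Sigma'']$ under the independent $\Be(\vec\alpha)$ model, a degree-$i$ $a'''$-check forces $\sigma=0$ with probability $\vec\alpha^{i-1}$, so $\Pr[\text{no }a'''\text{-check forces }\sigma=0]=\prod_{i\ge3}(1-\vec\alpha^{i-1})^{\GAMMA_i}+o_\eps(1)$, while $\Erw\#\{a'''\text{-checks with an unfrozen attached cavity}\}=\sum_{i\ge3}(1-\vec\alpha^{i-1})\GAMMA_i+o_\eps(1)$ and $\Erw\#\{b'''\text{-checks with an unfrozen attached cavity}\}=\sum_{i\ge3}(1-\vec\alpha^{i})(\vM_i^+-\vM_i^--\GAMMA_i)+o_\eps(1)$. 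Combining these with signs --- and noting that restricting to $\cE'''$ costs only $o(1)$ since $|\nul(\vA''')-\nul(\vA')|=O_\eps(1/\eps)$ on $\cE''$ --- gives the asserted formula.

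The main obstacle is the second step: one must pair the $(\eps,\ell)$-symmetry available on $\cE$ with the fact that, on $\cE'''$, $\cX$ is a bounded-size near-uniform sample of distinct cavities, so as to legitimately replace $\mu_{\vA',\cX}$ by $\bigotimes_{c\in\cX}\mu_{\vA',c}$, and then verify that the resulting total-variation error $\exp(-\Omega_\eps(1/\eps^4))$, once amplified by the $q^{O_\eps(1/\eps)}$ lower bound on the bracketed quantity and pushed through $\log_q$, is still $o_\eps(1)$; this is precisely where the gap between the exponents $1/\eps^4$ and $1/\eps$ comes in. The case analysis over $\FF_q$ and the evaluation of the three conditional expectations are then routine.
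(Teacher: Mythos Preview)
Your proposal is correct and follows essentially the same route as the paper's proof: apply Fact~\ref{Fact_nullity}, use the $(\exp(-1/\eps^4),\lceil\exp(1/\eps^4)\rceil)$-symmetry on $\cE$ together with \Lem~\ref{Lemma_Spartition} to replace $\mu_{\vA',\cX}$ by an i.i.d.\ product of $\delta_0$'s and uniforms with mixing weight $\vec\alpha$, then evaluate the resulting expression over $\FF_q$. The only cosmetic difference is that the paper splits the $a'''$-contribution into the two cases $\cZ=\emptyset$ versus $\cZ\neq\emptyset$ (where $\cZ$ is the set of $a'''$-checks whose attached cavities are all frozen) and computes each separately, whereas you compress both cases into the single identity $\vecone\{\cZ=\emptyset\}-\#\{a'''\text{-checks with an unfrozen cavity}\}$; these are algebraically the same, since on $\{\cZ\neq\emptyset\}$ the paper obtains $|\cZ|-\sum_i\GAMMA_i=-\#\{a'''\text{-checks with an unfrozen cavity}\}$. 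Your explicit bookkeeping of how the $\exp(-\Omega_\eps(1/\eps^4))$ total-variation error survives the $q^{O_\eps(1/\eps)}$ amplification through $\log_q$ is a point the paper handles more implicitly.
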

\begin{proof}
Fact~\ref{Fact_nullity} yields
\begin{align}\label{eqClaim_A'''6_1}
\nul(\vA''')-\nul(\vA')&=
	\log_q\sum_{\sigma\in\FF_q^{\cX\cup\{x_{n+1}\}}}\mu_{\vA',\cX}(\sigma_{\cX})
		\prod_{i\geq3}\bc{
		\prod_{j=1}^{\GAMMA_i}\vecone\cbc{\sigma\models_{\vA'''}a_{i,j}'''}
	\prod_{j\in[\vM_i^+-\vM_i^--\GAMMA_i]}\vecone\cbc{\sigma\models_{\vA'''}b_{i,j}'''}}.
\end{align}
To evaluate the mean of this expression we need to get a grip on the distribution $\mu_{\vA',\cX}$.
Because $|\cC|=\Omega(n)$ and $|\cX|\leq O_\eps(1/\eps)$ on $\cE'\cap\cE''$, 
the number $F$ of frozen variables in the random set $\cX$ has distribution $\Bin(|\cX|,\vec\alpha)$, up to an error of $o(1)$ in total variation (as cavities are drawn without replacement).
Further, given any outcome of $F$, on $\cE$ the joint distribution $\mu_{\vA',\cX}$ is within $O_\eps(\exp(-1/\eps^4))$ of a product measure on $\FF_q^{\cX}$ with probability $1-\exp(-\Omega_\eps(1/\eps^4))$.
Specifically, the factors of this product measure corresponding to frozen variables are point measures on zero, while the others are uniform on $\FF_q$.
Hence, define a random product measure $\MU$ on $\FF_q^{\cX}$ whose every factor is, independently of all others, the atom on zero with probability $\vec\alpha$ and the uniform distribution on $\FF_q$ with probability $1-\vec\alpha$.
Then \eqref{eqClaim_A'''6_1} shows that on $\cE\cap\cE'\cap\cE''$, 
\begin{align}\label{eqClaim_A'''6_2}
\Erw&\brk{\bc{\nul(\vA''')-\nul(\vA')}\vecone\cE'''\mid\Sigma''}
		=R+o_\eps(1),\qquad\mbox{where}\\
	R&=\Erw\brk{\vecone\cE'''\cdot\log_q\sum_{\sigma\in\FF_q^{\cX\cup\{x_{n+1}\}}}\MU(\sigma_{\cX})
		\prod_{i\geq3}\bc{
		\prod_{j=1}^{\GAMMA_i}\vecone\cbc{\sigma\models_{\vA'''}a_{i,j}'''}
	\prod_{j\in[\vM_i^+-\vM_i^--\GAMMA_i]}\vecone\cbc{\sigma\models_{\vA'''}b_{i,j}'''}}\,\bigg|\,
		\Sigma''}.\nonumber
\end{align}
Further, because the marginals  $(\MU_x)_{x\in\cX}$ are mutually independent, $R$ simplifies:
\begin{align}\label{eqClaim_A'''6_3}
	R=S+T,\qquad\mbox{where}\qquad
	S&=\Erw\brk{\vecone\cE'''\cdot\log_q\sum_{\tau\in\FF_q}\prod_{i\geq3}\prod_{j=1}^{\GAMMA_i}\sum_{\sigma\in\FF_q^{\partial a_{i,j}'''}}
				\vecone\{\sigma_{x_{n+1}}=\tau,\,\sigma\models_{\vA'''}a_{i,j}'''\}\MU(\sigma_{\partial a_{i,j}'''\setminus\{x_{n+1}\}})\mid\Sigma''},\\
	T&=\sum_{i\geq3}\sum_{j\in[\vM_i^+-\vM_i^--\GAMMA_i]}
		\Erw\brk{\vecone\cE'''\cdot\log_q\sum_{\sigma\in\FF_q^{\partial b_{i,j}'''}}\MU(\sigma)\vecone\cbc{\sigma\models_{\vA'''}a_{i,j}'''}\mid\Sigma''}
	.\nonumber
\end{align}

We evaluate $S,T$ on $\cE'''$.
Let $\cZ$ be the set of checks $a_{i,j}'''$ with
$\MU_y=\delta_0$ for all $y\in\partial a_{i,j}'''\setminus\{x_{n+1}\}$.
If $a_{i,j}'''\not\in\cZ$, then
$$\sum_{\sigma\in\FF_q^{\partial a_{i,j}'''}}
				\vecone\{\sigma_{x_{n+1}}=\tau,\,\sigma\models_{\vA'''}a_{i,j}'''\}\MU(\sigma_{\partial a_{i,j}'''\setminus\{x_{n+1}\}})=1/q
\qquad\mbox{for every }\tau\in\FF_q.$$
Hence, if $\cZ=\emptyset$, then
\begin{align}\label{eqClaim_A'''6_4}
	\log_q\sum_{\tau\in\FF_q}\prod_{i\geq3}\prod_{j=1}^{\GAMMA_i}\sum_{\sigma\in\FF_q^{\partial a_{i,j}'''}}
				\vecone\{\sigma_{x_{n+1}}=\tau,\,\sigma\models_{\vA'''}a_{i,j}'''\}\MU(\sigma_{\partial a_{i,j}'''\setminus\{x_{n+1}\}})
	&={1-\sum_{i\geq3}\GAMMA_i}.
\end{align}
By contrast, if $\cZ\neq\emptyset$, then only the summand $\tau=0$ contributes.
Indeed, a check $a_{i,j}\in\cZ$ can be satisfied iff $x_{n+1}$ is set to zero.
Hence,
\begin{align}\label{eqClaim_A'''6_5}
	\log_q\sum_{\tau\in\FF_q}\prod_{i\geq3}\prod_{j=1}^{\GAMMA_i}\sum_{\sigma\in\FF_q^{\partial a_{i,j}'''}}
				\vecone\{\sigma_{x_{n+1}}=\tau,\,\sigma\models_{\vA'''}a_{i,j}'''\}\MU(\sigma_{\partial a_{i,j}'''\setminus\{x_{n+1}\}})
	&={|\cZ|-\sum_{i\geq3}\GAMMA_i}.
\end{align}
The construction of $\MU$ ensures that $a_{i,j}\in\cZ$ with probability $\ALPHA^{i-1}$ independently.
Hence, \eqref{eqClaim_A'''6_4}, \eqref{eqClaim_A'''6_5} and Claim~\ref{Claim_A'''2b} yield
\begin{align}\nonumber
S&=\bc{1-\sum_{i\geq3}\GAMMA_i}\prod_{i\geq3}\bc{1-\ALPHA^{i-1}}^{\GAMMA_i}
			+\sum_{i\geq3}\GAMMA_i(\ALPHA^i-1)+\bc{\sum_{i\geq3}\GAMMA_i}\prod_{i\geq3}\bc{1-\ALPHA^{i-1}}^{\GAMMA_i}+o_\eps(1)\\&=\prod_{i\geq3}(1-\ALPHA^{i-1})^{\GAMMA_i}+\sum_{i\geq3}\GAMMA_i(\ALPHA^{i-1}-1)+o_\eps(1).
	\label{eqClaim_A'''6_6}
\end{align}

Moving on to $T$, we notice that the term inside the logarithm is one if $\MU_x=\delta_0$ for all $x\in\partial b_{i,j}'''$.
Otherwise the expression is equal to $1/q$.
Hence, by Claim~\ref{Claim_A'''2b} and the construction of $\MU$,
\begin{align}\label{eqClaim_A'''6_7}
T&=o_\eps(1)+\sum_{i\geq3}\sum_{j\in[\vM_i^+-\vM_i^--\GAMMA_i]}\ALPHA^i-1=\sum_{i\geq3}
	\bc{\vM_i^+-\vM_i^--\GAMMA_i}(\ALPHA^i-1)+o_\eps(1).
\end{align}
Finally, the assertion follows from \eqref{eqClaim_A'''6_2}, \eqref{eqClaim_A'''6_3}, \eqref{eqClaim_A'''6_6} and \eqref{eqClaim_A'''6_7}.
\end{proof}

\begin{proof}[Proof of \Lem~\ref{Lemma_A'''}]
Combining Claims~\ref{Claim_A'''2b}--\ref{Claim_A'''6}, we see that 
\begin{align*}
\Erw\abs{\Erw\brk{\nul(\vA''')-\nul(\vA')\mid\Sigma''}-
	\bc{\prod_{i\geq3}(1-\ALPHA^{i-1})^{\GAMMA_i}
		-\sum_{i\geq3}(1-\vec\alpha^{i-1})\GAMMA_i
	-\sum_{i\geq3}(1-\vec\alpha^{i})(\vM_i^+-\vM_i^--\GAMMA_i)}\vecone\cE''}&=o_\eps(1).
\end{align*}
Since on $\cE''$ all degrees $i$ with $\vM_i^+-\vM_i^--\GAMMA_i>0$ are bounded and Chebyshev's inequality shows that 
$\vM_i\sim\Erw[\vM_i]=\Omega(n)$ for any fixed $i$ \whp,
\eqref{eqminus} yields $\vM_i^-=\vM_i-\GAMMA_i$ \whp\
Hence,
\begin{align}\label{eqLemma_A'''_1}
\Erw\abs{\Erw\brk{\nul(\vA''')-\nul(\vA')\mid\Sigma''}-
	\bc{\prod_{i\geq3}(1-\ALPHA^{i-1})^{\GAMMA_i}
		-\sum_{i\geq3}(1-\vec\alpha^{i-1})\GAMMA_i
	-\sum_{i\geq3}(1-\vec\alpha^{i})\DELTA_i}\vecone\cE''}&=o_\eps(1).
\end{align}
Further, since $\sum_{i\geq3}\GAMMA_i\leq\vd_{n+1}$ and $\Erw[\vd_{n+1}]=O_\eps(1)$, we obtain
\begin{align}
\Erw&\brk{
	\bc{\prod_{i\geq3}(1-\ALPHA^{i-1})^{\GAMMA_i}
		-\sum_{i\geq3}(1-\vec\alpha^{i-1})\GAMMA_i}\vecone\cE''}
=\Erw\brk{
	\bc{\prod_{i\geq3}(1-\ALPHA^{i-1})^{\GAMMA_i}
		-\sum_{i\geq3}(1-\vec\alpha^{i-1})\GAMMA_i}\vecone\cE''\cap\cbc{\sum_{i\geq3}\GAMMA_i\leq\eps^{-1/4}}}+o_\eps(1)\nonumber\\
&=\Erw\brk{
	\bc{\prod_{i\geq3}(1-\ALPHA^{i-1})^{\GAMMA_i}
		-\sum_{i\geq3}(1-\vec\alpha^{i-1})\GAMMA_i}\vecone\cbc{\sum_{i\geq3}\GAMMA_i\leq\eps^{-1/4}}}+o_\eps(1)
\qquad\mbox{[by Claim~\ref{Claim_A'''1}]}\nonumber\\
&=\Erw\brk{
	\bc{\prod_{i\geq3}(1-\ALPHA^{i-1})^{\hat\GAMMA_i}
		-\sum_{i\geq3}(1-\vec\alpha^{i-1})\hat\GAMMA_i}\vecone\cbc{\sum_{i\geq3}\hat\GAMMA_i\leq\eps^{-1/4}}}+o_\eps(1)
\qquad\mbox{[by \Lem~\ref{Cor_gamma}]}\nonumber\\
&=o_\eps(1)+\Erw\brk{(1-\ALPHA^{\hat\vk-1})^{\vd}-\vd-\vd\ALPHA^{\hat\vk-1}}
		=o_\eps(1)+\Erw\brk{D(1-K'(\ALPHA)/k)-d-\frac dk K'(\ALPHA)}
	\qquad\mbox{[by the def.\ of $\hat\GAMMA$]}.\label{eqAddingVar100}
\end{align}
Similarly, Claim~\ref{Claim_A'''1}, \Lem~\ref{Cor_gamma} and the construction of $\hat\DELTA$ yield
\begin{align}
\Erw\brk{\bc{\sum_{i\geq3}(1-\vec\alpha^{i})\DELTA_i}\vecone\cE''}&=
\Erw\brk{\bc{\sum_{i\geq3}(1-\vec\alpha^{i})\DELTA_i}\vecone\cbc{\sum_{i\geq3}\DELTA_i\leq\eps^{-1/3}}}+o_\eps(1)
=\Erw\brk{\sum_{i\geq3}(1-\vec\alpha^{i})\hat\DELTA_i}+o_\eps(1)\nonumber\\
&=o_\eps(1)+(1-\eps)\frac dk\sum_{i\geq3}\pr\brk{\vk=i}\Erw[1-\vec\alpha^{i}]
	=o_\eps(1)+\frac dk-\frac dk\Erw[K(\ALPHA)].\label{eqAddingVar101}
\end{align}
Finally, the assertion follows from \eqref{eqLemma_A'''_1}, \eqref{eqAddingVar100} and \eqref{eqAddingVar101}.
\end{proof}

\subsection{Proof of \Lem~\ref{Lemma_A''}}\label{Sec_A''}
The argument resembles the one from the proof of \Lem~\ref{Lemma_A'''} but the details are considerably more straightforward as we merely add checks.
As before we consider the events
\begin{align*}
\cE&=\cbc{\mu_{\vA',\cC}\mbox{ is $(\exp(-1/\eps^4),\lceil\exp(1/\eps^4)\rceil)$-symmetric}},&
\cE'&=\cbc{|\cC|\geq\eps dn/2\wedge \max_{i\leq n}\vd_i\leq n^{1/2}}.
\end{align*}
Moreover, recalling that the total number of new non-zero entries when going from $\vA'$ to $\vA''$ is bounded by $\vd_{n+1}$,
we introduce $\cE''=\cbc{\vd_{n+1}\leq1/\eps}.$
Since $\Erw[\vd_{n+1}^2]=O_\eps(1)$, we have
\begin{align}\label{eqcEA''}
\pr\brk{\cE''}&=1-O_\eps(\eps^2).
\end{align}	
Further, let 
$$\cX=\bigcup_{i\geq3}\bigcup_{j\in[\vM_i-\vM_i^-]}\partial_{\G''}a_{i,j}''$$ contain the variable nodes adjacent to the new checks
added in the construction of $\A''$ and let $\cE'''$ be the event that none of the variable nodes in $\cX$ is connected with the set of new checks by more than one edge.
Then
\begin{align}\label{eqcE''A''}
\pr\brk{\cE'''\mid\cE'\cap\cE''}&=1-o(1).
\end{align}

\begin{claim}\label{Claim_A''_1}
We have $\Erw\brk{\abs{\nul(\vA'')-\nul(\vA')}(1-\vecone\cE\cap\cE'\cap\cE''\cap\cE''')}=o_\eps(1)$.
\end{claim}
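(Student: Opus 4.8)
The plan is to combine a crude deterministic bound on $\abs{\nul(\vA'')-\nul(\vA')}$ with the observation that the complementary event $1-\vecone_{\cE\cap\cE'\cap\cE''\cap\cE'''}$ is rare. First I would record the deterministic estimate. Passing from $\G'$ to $\G''$ we adjoin $\sum_{i\geq3}(\vM_i-\vM_i^-)$ check nodes and no variable nodes, so $\vA''$ is obtained from $\vA'$ by appending that many rows and no columns; since appending one row changes the nullity by at most one, $0\leq\nul(\vA')-\nul(\vA'')\leq\sum_{i\geq3}(\vM_i-\vM_i^-)$. By~\eqref{eqminus} we have $\vM_i-\vM_i^-=\min\{\vM_i,\GAMMA_i\}\leq\GAMMA_i$, and $\sum_{i\geq3}\GAMMA_i$ is at most the degree of $x_{n+1}$ in $\G_{n+1,\vM^+}$, hence at most $\vd_{n+1}$. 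This yields the uniform bound $\abs{\nul(\vA'')-\nul(\vA')}\leq\vd_{n+1}$.

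Next I would split $1-\vecone_{\cE\cap\cE'\cap\cE''\cap\cE'''}=\vecone_{\overline{\cE''}}+\vecone_{\cE''\sm(\cE\cap\cE'\cap\cE''')}$. On $\overline{\cE''}=\{\vd_{n+1}>1/\eps\}$ the deterministic bound gives a contribution at most $\Erw\brk{\vd_{n+1}\vecone\{\vd_{n+1}>1/\eps\}}\leq\eps\,\Erw[\vd_{n+1}^2]=o_\eps(1)$, using $\Erw[\vd_{n+1}^2]=O_\eps(1)$. On $\cE''$ we have $\abs{\nul(\vA'')-\nul(\vA')}\leq1/\eps$, so the remaining contribution is at most $\eps^{-1}\pr\brk{\cE''\sm(\cE\cap\cE'\cap\cE''')}\leq\eps^{-1}\bc{\pr\brk{\overline{\cE}}+\pr\brk{\overline{\cE'}}+\pr\brk{\cE'\cap\cE''\sm\cE'''}}$.

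Finally I would dispose of the three probabilities. The events $\cE$ and $\cE'$ are defined exactly as in the proof of \Lem~\ref{Lemma_A'''}, so the unconditional estimates established there apply verbatim: Claim~\ref{Claim_A'''2} gives $\pr\brk{\overline{\cE}}\leq\exp(-1/\eps^4)$ once $\Theta=\Theta(\eps)$ is large enough, Claim~\ref{Claim_A'''2a} gives $\pr\brk{\overline{\cE'}}=o(1)$, and \eqref{eqcE''A''} gives $\pr\brk{\cE'\cap\cE''\sm\cE'''}\leq\pr\brk{\overline{\cE'''}\mid\cE'\cap\cE''}=o(1)$. Hence $\eps^{-1}\bc{\exp(-1/\eps^4)+o(1)}=o_\eps(1)$, and adding the $\overline{\cE''}$ term completes the proof. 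The argument poses no genuine obstacle; it is the exact analogue of the proof of Claim~\ref{Claim_A'''3} for the simpler operation of adding only check nodes. The one point requiring a little care is that $\vd_{n+1}$ is \emph{not} independent of $\cE$ and $\cE'$ (the construction couples $\vM^-$ to $\GAMMA$, and $\GAMMA$ sees the connections of $x_{n+1}$), so one cannot factor expectations; instead one first peels off the tail $\{\vd_{n+1}>1/\eps\}$ and then exploits the deterministic bound $\vd_{n+1}\leq1/\eps$ on the complement.
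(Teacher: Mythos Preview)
Your proof is correct and follows exactly the same approach as the paper: the deterministic bound $\abs{\nul(\vA'')-\nul(\vA')}\leq\vd_{n+1}$, the tail estimate $\Erw[\vd_{n+1}\vecone\{\vd_{n+1}>1/\eps\}]=o_\eps(1)$, and the invocation of Claims~\ref{Claim_A'''2}, \ref{Claim_A'''2a} and~\eqref{eqcE''A''} for the remaining events. Your write-up is in fact more careful than the paper's (which is rather terse here), including the explicit observation that one must peel off $\overline{\cE''}$ first because $\vd_{n+1}$ is not independent of $\cE,\cE'$.
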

\begin{proof}
We have $\abs{\nul(\vA'')-\nul(\vA')}\leq \vd_{n+1}$ as we add at most $\vd_{n+1}$ rows.
Because $\Erw[\vd_{n+1}]=O_\eps(1)$, we obtain
\begin{align}\label{eqClaim_A''_1_1}
\Erw\brk{\abs{\nul(\vA'')-\nul(\vA')}(1-\vecone\cE'')}&\leq
		\Erw\brk{\vd_{n+1}\vecone\{\vd_{n+1}>1/\eps\}}=o_\eps(1).
\end{align}
Moreover, 
Claims~\ref{Claim_A'''2} and~\ref{Claim_A'''2a} and~\eqref{eqcE''A''} show that
\begin{align}\label{eqClaim_A''_1_2}
\Erw\brk{\abs{\nul(\vA'')-\nul(\vA')}\vecone\cE''\setminus\cE},
\Erw\brk{\abs{\nul(\vA'')-\nul(\vA')}\vecone\cE''\setminus\cE'},
\Erw\brk{\abs{\nul(\vA'')-\nul(\vA')}\vecone\cE''\setminus\cE'''}=o_\eps(1).
\end{align}
The assertion follows from \eqref{eqClaim_A''_1_1} and\eqref{eqClaim_A''_1_2}.
\end{proof}


The matrix $\vA''$ results from $\vA'$ by adding checks $a_{i,j}''$, $i\geq3$, $j\in[\vM_i-\vM_i^-]$ that are connected to random cavities of $\A'$.
We recall that $\vec\alpha$ signifies the fraction of frozen cavities of $\A'$.
\aco{Let $\Sigma''_*\supset\Sigma'$ be the $\sigma$-algebra generated by $\GAMMA,\vM$ and $\DELTA$.
Moreover, let $\Sigma''\supset\Sigma''_*$ be the $\sigma$-algebra generated by $\G''$ and $\A''$.}
Once more we consider three scenarios separately, depending on the value of $\ALPHA$.

\begin{claim}\label{Claim_A''_2}
On the event $\{\ALPHA>1-\exp(-1/\eps^2)\}\cap\cE\cap\cE'\cap\cE''$ we have $\Erw\brk{(\nul(\vA'')-\nul(\vA'))\vecone\cE'''\mid\Sigma''}=o_\eps(1).$
\end{claim}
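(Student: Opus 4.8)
The plan is to follow the proof of Claim~\ref{Claim_A'''4}, noting that the present case is even simpler because going from $\vA'$ to $\vA''$ we only append rows --- the new checks $a''_{i,j}$, $i\ge3$, $j\in[\vM_i-\vM_i^-]$, each joined to a random set of cavities of $\vA'$ by a random maximal matching with fresh $\CHI$-entries, and no new column. I would first record two deterministic facts. On $\cE''$ the number of new rows is $\sum_{i\ge3}(\vM_i-\vM_i^-)\le\sum_{i\ge3}\GAMMA_i\le\vd_{n+1}\le1/\eps$, so $|\nul(\vA'')-\nul(\vA')|\le1/\eps$ on $\cE''$. More importantly, a new check all of whose adjacent cavities are frozen in $\vA'$ cannot change the nullity: freezing is monotone under the addition of rows, so such cavities stay frozen in $\vA''$, and therefore every $\sigma\in\ker(\vA')$ --- which vanishes on all frozen coordinates --- already satisfies the check, so its row lies in the row space of the current matrix. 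Hence $|\nul(\vA'')-\nul(\vA')|$ is bounded by the number of new checks that possess at least one neighbour which is \emph{not} frozen in $\vA'$.

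Now condition on $\Sigma''$. On $\{\ALPHA>1-\exp(-1/\eps^2)\}\cap\cE'$ there are $|\cC|=\Omega(n)$ cavities, a fraction exceeding $1-\exp(-1/\eps^2)$ of which are frozen, and the new checks are attached via a random maximal matching that, since $|\cC|=\Omega(n)$, matches every new-check clone to an essentially uniform cavity. A union bound over the clones of a degree-$i$ new check then shows that it has an unfrozen neighbour with conditional probability at most $i\exp(-1/\eps^2)(1+o(1))$; summing over the new checks and combining with the bound from the previous paragraph,
\begin{align*}
\Erw\brk{\abs{\nul(\vA'')-\nul(\vA')}\vecone\cE'''\mid\Sigma''}&\le\exp(-1/\eps^2)(1+o(1))\sum_{i\ge3}i(\vM_i-\vM_i^-)\le\exp(-1/\eps^2)(1+o(1))\sum_{i\ge3}i\GAMMA_i.
\end{align*}
Since $\Erw[\sum_{i\ge3}i\GAMMA_i\vecone\cE'']=O_\eps(1)$ (as in the proof of Claim~\ref{Claim_A'''1}), multiplying by $\vecone\cE''$ and averaging turns the right-hand side into $\exp(-1/\eps^2)O_\eps(1)=o_\eps(1)$, which together with Claim~\ref{Claim_A''_1} gives the assertion.

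The step I expect to require the most care is handling the \emph{unbounded} check degrees. In Claim~\ref{Claim_A'''4} the event $\cE''$ bounds the total degree $Y'=\sum_{i\ge3}i\GAMMA_i$ outright, so that ``all neighbouring cavities frozen'' has conditional probability $1-\exp(-\Omega_\eps(1/\eps^2))$; here $\cE''$ controls only $\vd_{n+1}$, so the factor $\sum_{i\ge3}i(\vM_i-\vM_i^-)\le Y'$ must instead be absorbed through its bounded expectation on $\cE''$, and one must also check that the without-replacement corrections in the cavity matching remain negligible even on the rare event that $Y'$ is atypically large --- which once more follows from $|\cC|=\Omega(n)$ on $\cE'$.
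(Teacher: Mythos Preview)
Your approach is the paper's approach: the paper's entire proof is ``This follows from the argument that we used in the proof of Claim~\ref{Claim_A'''4}.'' You have, moreover, spotted a genuine wrinkle that the paper glosses over. In \Sec~\ref{Sec_A''} the event $\cE''=\{\vd_{n+1}\le1/\eps\}$ bounds only the \emph{number} of new rows $\sum_{i\ge3}(\vM_i-\vM_i^-)\le\sum_i\GAMMA_i\le\vd_{n+1}$, not the total number $\sum_{i\ge3}i(\vM_i-\vM_i^-)\le Y'$ of new non-zero entries; the paper's sentence introducing $\cE''$ appears to conflate the two. Consequently the Claim~\ref{Claim_A'''4} argument, which relies on $|\cX|=O_\eps(1/\eps)$ to obtain the ``all cavities frozen'' probability $1-\exp(-\Omega_\eps(1/\eps^2))$, does not transfer verbatim.

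Your workaround---bounding the conditional expectation by $\exp(-1/\eps^2)(1+o(1))\sum_i i(\vM_i-\vM_i^-)\le\exp(-1/\eps^2)(1+o(1))Y'$ and then absorbing $Y'$ through $\Erw[Y'\vecone\cE'']=O_\eps(1)$---is the right idea and is exactly what the proof of \Lem~\ref{Lemma_A''} needs. Do note, though, that this final averaging step yields an $L^1$ bound
\[
\Erw\Big[\big|\Erw[(\nul(\vA'')-\nul(\vA'))\vecone\cE'''\mid\Sigma'']\big|\,\vecone\{\ALPHA>1-\exp(-1/\eps^2)\}\cap\cE\cap\cE'\cap\cE''\Big]=o_\eps(1),
\]
not the pointwise-on-the-event statement that the claim literally asserts; since $Y'$ is $\Sigma''$-measurable and unbounded on $\cE''$, a uniform $o_\eps(1)$ bound need not hold. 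Nothing is lost for the application, but if you want the claim as stated, the cleanest fix is to replace $\cE''$ by $\cE''\cap\{Y'\le1/\eps\}$ throughout \Sec~\ref{Sec_A''}: by Claim~\ref{Claim_A'''1} this costs only $O_\eps(\eps)$ in probability, and then the Claim~\ref{Claim_A'''4} argument goes through unchanged with $|\cX|\le Y'\le1/\eps$.
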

\begin{proof}
This follows from the argument that we used in the proof of Claim~\ref{Claim_A'''4}.
\end{proof}

\begin{claim}\label{Claim_A''_3}
On the event $\{\ALPHA<\exp(-1/\eps^2)\}\cap\cE\cap\cE'\cap\cE''$ we have $$\Erw\brk{(\nul(\vA'')-\nul(\vA'))\vecone\cE'''\mid\Sigma''}=
		o_\eps(1)-\sum_{i\geq3}\vM_i-\vM_i^-.$$
\end{claim}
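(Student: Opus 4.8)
The plan is to replay the two‑line argument behind Claim~\ref{Claim_A'''5}, the only structural change being that here we adjoin no new column to $\vA'$, so the $+1$ present there is absent and the difference should equal $-N+o_\eps(1)$ with $N=\sum_{i\ge3}(\vM_i-\vM_i^-)$ the number of new checks $a_{i,j}''$. The heuristic is transparent: when $\ALPHA<\exp(-1/\eps^2)$ essentially no cavity of $\vA'$ is frozen, so with overwhelming probability none of the boundedly many variables touched by the $a_{i,j}''$ is frozen; then $\eps$-symmetry forces their joint law to be essentially uniform on $\FF_q^{\cX}$, and Fact~\ref{Fact_nullity} shows that each new check removes exactly one unit of nullity.

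Concretely, I would first record that on $\cE''$ the difference $|\nul(\vA'')-\nul(\vA')|$ is deterministically at most $\vd_{n+1}\le1/\eps$, since we add $\vd_{n+1}$ rows and no column. Because $\Erw\brk{\sum_{i\ge3}i\GAMMA_i}=O_\eps(1)$ (the estimate from the proof of Claim~\ref{Claim_A'''1}), after intersecting with the event $\{\sum_{i\ge3}i\GAMMA_i\le1/\eps\}$ --- which by the previous sentence alters the conditional expectation by only $o_\eps(1)$ --- the set $\cX$ of variable nodes incident with the new checks has $|\cX|=O_\eps(1/\eps)$, and on $\cE'''$ its elements are distinct with each lying in a single new check, so the $a_{i,j}''$ partition $\cX$ into the groups $\partial_{\G''}a_{i,j}''$ of sizes $i\ge3$. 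Since $|\cC|=\Omega(n)$ on $\cE'$ and the $\cX$-variables come from cavities sampled without replacement, the probability that some element of $\cX$ is frozen is at most $|\cX|\,\ALPHA+o(1)=\exp(-\Omega_\eps(1/\eps^2))$, and on its complement $\cE$ forces $\mu_{\vA',\cX}$ to lie within $\exp(-\Omega_\eps(1/\eps^4))$ in total variation of the uniform measure on $\FF_q^{\cX}$, exactly as in Claim~\ref{Claim_A'''5}. Feeding this into Fact~\ref{Fact_nullity} with no new columns and $I=\cX$, and using that each new check is a non-degenerate homogeneous linear equation (its coefficients being values of $\CHI\in\FF_q^*$) while distinct groups behave independently under the uniform measure, the sum inside the logarithm equals $q^{-N}(1+o_\eps(1))$; it also always receives the all-zero contribution, hence is bounded below by $q^{-O_\eps(1/\eps)}$, which is what lets the total-variation error pass through $\log_q$. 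Thus on the good event $\nul(\vA'')-\nul(\vA')=-N+o_\eps(1)$, and Claim~\ref{Claim_A''_1} disposes of the bad event, which yields the stated value of $\Erw\brk{(\nul(\vA'')-\nul(\vA'))\vecone\cE'''\mid\Sigma''}$.

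As in Claim~\ref{Claim_A'''5}, the only point needing care is reconciling the two error scales: both the failure probability $\exp(-\Omega_\eps(1/\eps^2))$ of ``some variable of $\cX$ is frozen'' and the total-variation gap $\exp(-\Omega_\eps(1/\eps^4))$ have to beat the deterministic bound $O_\eps(1/\eps)$ on $|\nul(\vA'')-\nul(\vA')|$, and the gap must in addition beat $q^{O_\eps(1/\eps)}$ in order to survive division by the lower bound $q^{-O_\eps(1/\eps)}$ on the argument of $\log_q$ --- this is exactly why the symmetry parameter in the definition of $\cE$ is taken as tiny as $\exp(-1/\eps^4)$. All the remaining steps are the Poisson-concentration and sampling-without-replacement bookkeeping already developed in this subsection.
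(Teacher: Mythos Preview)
Your argument is correct and is exactly the route the paper takes: its proof is the single line ``this follows from the argument that we used in the proof of Claim~\ref{Claim_A'''5},'' and you have simply written that argument out, correctly noting that no column is added so the $+1$ disappears. You also spot a point the paper glosses over, namely that in \Sec~\ref{Sec_A''} the event $\cE''=\{\vd_{n+1}\le1/\eps\}$ bounds the number of new \emph{rows} but not $|\cX|=\sum_i i(\vM_i-\vM_i^-)$, so an auxiliary truncation on $Y'=\sum_i i\GAMMA_i$ is required before the ``no frozen cavity in $\cX$'' union bound can be applied.

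One minor quantitative slip: your justification that intersecting with $\{Y'\le1/\eps\}$ ``alters the conditional expectation by only $o_\eps(1)$'' does not quite close with that threshold. From $\Erw[Y']=O_\eps(1)$ Markov gives only $\pr[Y'>1/\eps]=O_\eps(\eps)$, and combined with the deterministic bound $|\nul(\vA'')-\nul(\vA')|\le1/\eps$ this yields an $O_\eps(1)$ (not $o_\eps(1)$) contribution. Taking instead any threshold $L$ with $1/\eps\ll L\ll\exp(1/\eps^2)$, e.g.\ $L=\eps^{-2}$, fixes this immediately while preserving both the union bound $|\cX|\,\ALPHA=o_\eps(1)$ and the applicability of the $(\exp(-1/\eps^4),\lceil\exp(1/\eps^4)\rceil)$-symmetry from $\cE$.
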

\begin{proof}
This follows from the argument that we used in the proof of Claim~\ref{Claim_A'''5}.
\end{proof}

\begin{claim}\label{Claim_A''_6}
On the event $\{\exp(-1/\eps^2)\leq\ALPHA\leq1-\exp(-1/\eps^2)\}\cap\cE\cap\cE'\cap\cE''$ we have
\begin{align*}
\Erw\brk{(\nul(\vA'')-\nul(\vA'))\vecone\cE'''\mid\Sigma''}&=
	o_\eps(1)-\sum_{i\geq3}(1-\vec\alpha^{i})(\vM_i-\vM_i^-).
\end{align*}
\end{claim}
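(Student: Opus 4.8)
The plan is to follow the proof of Claim~\ref{Claim_A'''6} almost verbatim, exploiting that here the passage from $\vA'$ to $\vA''$ only adds rows, so there is no new column $x_{n+1}$ and hence no separate ``$x_{n+1}$ frozen'' case to isolate. First I would apply Fact~\ref{Fact_nullity} with $J=\emptyset$ and $I=\cX$, which gives $\nul(\vA'')-\nul(\vA')=\log_q\sum_{\sigma\in\FF_q^{\cX}}\mu_{\vA',\cX}(\sigma)\prod_{i\ge3}\prod_{j\in[\vM_i-\vM_i^-]}\vecone\{\sigma\models_{\vA''}a_{i,j}''\}$, so that everything reduces to understanding the joint Boltzmann law $\mu_{\vA',\cX}$ of the variables where the new checks attach. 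Because on $\cE''$ the number of new checks is at most $\vd_{n+1}\le1/\eps$ while $|\nul(\vA'')-\nul(\vA')|\le\vd_{n+1}$, and because the number of new checks of degree $i$, being at most $\GAMMA_i$, is governed by the size-biased distribution $\hat\vk$ via Lemma~\ref{Cor_gamma} --- and $\hat\vk$ has finite mean since $\Erw[\vk^2]<\infty$ --- I can pick $L=L(\eps)$ so that, after discarding an event contributing $o_\eps(1)$ to $\Erw[|\nul(\vA'')-\nul(\vA')|]$, every new check has degree at most $L$; then $|\cX|\le L/\eps\le\lceil\exp(1/\eps^4)\rceil$ for $\eps$ small, so the $(\eps,\ell)$-symmetry furnished by $\cE$ covers $\cX$.

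Next I would run the product-measure approximation exactly as in Claim~\ref{Claim_A'''6}. Conditioning on $\Sigma''$, the set of variables of $\cX$ that are frozen in $\vA'$ has, up to $o(1)$ in total variation, the law of $|\cX|$ independent $\Be(\vec\alpha)$ trials --- the cavities are sampled without replacement, but $|\cX|\ll|\cC|=\Omega(n)$ --- and, given that set, $\cE$ together with Lemma~\ref{Lemma_Spartition} place $\mu_{\vA',\cX}$ within $o_\eps(1)$ of the product measure with factor $\delta_0$ at each frozen coordinate and the uniform law elsewhere. So I may replace $\mu_{\vA',\cX}$ by the random product measure $\MU$ on $\FF_q^{\cX}$ whose factors are independently $\delta_0$ with probability $\vec\alpha$ and uniform with probability $1-\vec\alpha$, at the cost of an additive $o_\eps(1)$ inside $\Erw\brk{(\nul(\vA'')-\nul(\vA'))\vecone\cE'''\mid\Sigma''}$. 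On $\cE'''$ the neighbourhoods $\partial a_{i,j}''$ are pairwise disjoint, so the sum over $\sigma$ factorizes across the new checks; for a single new check $a$ of degree $i$ the inner sum is $1$ when all $i$ of its neighbours carry $\delta_0$ under $\MU$, which happens with probability $\vec\alpha^{\,i}$, and is $1/q$ otherwise, because the matrix entries along $a$ are nonzero so any free coordinate spreads the defining equation uniformly over $\FF_q$. Hence the expected contribution of $a$ to $\log_q(\cdot)$ is $-(1-\vec\alpha^{\,i})$, and summing over the $\vM_i-\vM_i^-$ checks of degree $i$ yields $-\sum_{i\ge3}(1-\vec\alpha^{\,i})(\vM_i-\vM_i^-)+o_\eps(1)$, which is the claim.

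There is no serious obstacle here; the only points requiring attention are the uniform bookkeeping of error terms --- the degree truncation must cost only $o_\eps(1)$, which is precisely why one invokes $|\nul(\vA'')-\nul(\vA')|\le\vd_{n+1}$ together with $\Erw[\vk^2],\Erw[\vd^2]<\infty$ --- and checking that the $(\eps,\ell)$-symmetry in $\cE$, with $\ell=\lceil\exp(1/\eps^4)\rceil$, really applies to the truncated $\cX$, which it does since $L$ may be taken to grow only polynomially in $1/\eps$. For the steps shared with Claim~\ref{Claim_A'''6} --- the $\cE''$-type truncations, the $\Be(\vec\alpha)$ approximation of the frozen pattern, and the product-measure replacement --- I would simply cite ``the argument used in the proof of Claim~\ref{Claim_A'''6}'' and spell out only the short check-contribution computation above.
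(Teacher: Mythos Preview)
Your proposal is correct and follows essentially the same line as the paper's proof: apply Fact~\ref{Fact_nullity} with $J=\emptyset$, replace $\mu_{\vA',\cX}$ by the random product measure $\MU$ via the $(\eps,\ell)$-symmetry in $\cE$, factorise over the new checks on $\cE'''$, and evaluate each check's contribution as $\ALPHA^i-1$. The only difference is that you insert an explicit degree truncation $L=L(\eps)$ to guarantee $|\cX|\le\lceil\exp(1/\eps^4)\rceil$, whereas the paper invokes ``similarly as in the proof of Claim~\ref{Claim_A'''6}'' without spelling this out; your extra care here is harmless and arguably tightens a point the paper leaves implicit, since in \Sec~\ref{Sec_A''} the event $\cE''=\{\vd_{n+1}\le1/\eps\}$ bounds only the \emph{number} of new checks, not $\sum_i i\GAMMA_i$.
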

\begin{proof}
Fact~\ref{Fact_nullity} yields
\begin{align}\label{eqClaim_A''6_1}
\nul(\vA'')-\nul(\vA')&=
	\log_q\sum_{\sigma\in\FF_q^{\cX}}\mu_{\vA',\cX}(\sigma)
		\prod_{i\geq3}
		\prod_{j\in[\vM_i-\vM_i^-]}\vecone\cbc{\sigma\models_{\vA''}a_{i,j}''}.
\end{align}
Similarly as in the proof of Claim~\ref{Claim_A'''6},
we evaluate this expression by substituting a simpler random measure for $\mu_{\vA',\cX}$.
Indeed, let $\MU\in\cP(\FF_q^{\cX})$ be a random product measure whose every factor is $\delta_0$ with probability $\vec\alpha$ and the uniform distribution on $\FF_q$ with probability $1-\vec\alpha$, independently of all others.
Then on $\cE\cap\cE'\cap\cE''$ we have
\begin{align}
\Erw\brk{\bc{\nul(\vA'')-\nul(\vA')}\vecone\cE'''\mid\Sigma''}&=o_\eps(1)+
	\Erw\brk{\vecone\cE'''\cdot
	\log_q\sum_{\sigma\in\FF_q^{\cX}}\MU(\sigma)
		\prod_{i\geq3}
		\prod_{j\in[\vM_i-\vM_i^-]}\vecone\cbc{\sigma\models_{\vA''}a_{i,j}''}
		\,\big|\,
		\Sigma''}.\label{eqClaim_A''6_2}
\end{align}
Indeed, because the marginals  of $\MU$ are mutually independent, we can simplify
\begin{align}\label{eqClaim_A''6_3}
	\Erw\brk{\vecone\cE'''\cdot
	\log_q\sum_{\sigma\in\FF_q^{\cX}}\MU(\sigma)
		\prod_{i\geq3}
		\prod_{j\in[\vM_i-\vM_i^-]}\vecone\cbc{\sigma\models_{\vA''}a_{i,j}''}
		\,\big|\,
		\Sigma''}
		&=\sum_{\substack{i\geq3\\\j\in[\vM_i-\vM_i^-]}}
			\Erw\brk{\vecone\cE'''\cdot\log_q\sum_{\sigma\in\FF_q^{\partial a_{i,j}''}}\MU(\sigma)\vecone\cbc{\sigma\models_{\vA''}a_{i,j}''}\mid\Sigma''}.
\end{align}
This final expression is evaluated easily.
Indeed, if $\MU_x=\delta_0$ for all $x\in\partial a_{i,j}''$, then the term inside the logarithm is just one.
Otherwise, if $\MU_x$ is uniform on $\FF_q$ for at least one $x\in\partial a_{i,j}''$, the term equals $1/q$.
Hence, by \eqref{eqcE''A''},
\begin{align}\label{eqClaim_A''6_4}
\Erw\brk{\vecone\cE'''\cdot\log_q\sum_{\sigma\in\FF_q^{\partial a_{i,j}''}}\MU(\sigma)\vecone\cbc{\sigma\models_{\vA''}a_{i,j}''}\mid\Sigma''}
	&=\ALPHA^i-1+o_\eps(1).
\end{align}
Combining \eqref{eqClaim_A''6_2}--\eqref{eqClaim_A''6_4} completes the proof.
\end{proof}

\begin{proof}[Proof of \Lem~\ref{Lemma_A''}]
Combining Claims~\ref{Claim_A''_1}--\ref{Claim_A''_6}, we obtain
\begin{align*}
\Erw\abs{\Erw[\nul(\vA'')-\nul(\vA')\mid\Sigma'']+\bc{\sum_{i\geq3}(1-\vec\alpha^{i})(\vM_i-\vM_i^-)}\vecone\cE''}	&=o_\eps(1).
\end{align*}
Since \whp\ all degrees $i$ with $\vM_i^+-\vM_i^-$ are bounded, Chebyshev's inequality reveals that
$\vM_i-\vM_i^-=\GAMMA_i$ for all $i$ \whp\
Hence, 
\begin{align}\label{eqAddingVar011}
\Erw\abs{\Erw[\nul(\vA'')-\nul(\vA')\mid\Sigma'']+\bc{\sum_{i\geq3}(1-\vec\alpha^{i})\GAMMA_i}\vecone\cE''}	&=o_\eps(1).
\end{align}
Further, because $\sum_{i\geq3}\GAMMA_i\leq\vd_{n+1}$ and $\Erw[\vd_{n+1}]=O_\eps(1)$,
\begin{align}
\Erw\brk{\bc{\sum_{i\geq3}(1-\vec\alpha^{i})\GAMMA_i}\vecone\cE''}&=
\Erw\brk{\bc{\sum_{i\geq3}(1-\vec\alpha^{i})\GAMMA_i}\vecone\cbc{\sum_{i\geq3}\GAMMA_i\leq\eps^{-1/4}}}+o_\eps(1)\nonumber&&\mbox{[by \eqref{eqcEA''}]}\\
&=
\Erw\brk{\bc{\sum_{i\geq3}(1-\vec\alpha^{i})\hat\GAMMA_i}\vecone\cbc{\sum_{i\geq3}\hat\GAMMA_i\leq\eps^{-1/4}}}+o_\eps(1)
&&\mbox{[by \Lem~\ref{Cor_gamma}]}\nonumber\\
&=o_\eps(1)+d\Erw[1-\ALPHA^{\hat\vk}]=o_\eps(1)-  d\Erw[\vec\alpha K'(\vec\alpha)]/k+d.
\label{eqAddingVar012}
\end{align}
The assertion follows from \eqref{eqAddingVar011} and~\eqref{eqAddingVar012}.
\end{proof}

\subsection{Proof of \Lem~\ref{Lemma_valid}}\label{Sec_Lemma_valid}

The choice of the random variables in (\ref{eqPoissons}) and \Lem~\ref{Lemma_sums} ensure that the event $\cE=\{\sum_{i\geq3}i\vM_i\leq dn/k\}$ has probability $1-o(1/n)$.
Further, given $\cE$ the random variables $\nul(\vA'')$ and $\nul(\vA_{n,\vM})$ are identically distributed by the principle of deferred decisions.
Because the nullity of either matrix is bounded by $n$ deterministically, we thus obtain the first assertion.

Similarly, to prove the second assertion we may condition on the event 
		$$\cE^+=\cbc{\frac{dn}{2k}\leq\sum_{i\geq3}i\vM_i^+\leq \sum_{i=1}^n\vd_i,\forall i\geq  n/\ln^9n:\vM_i^+=0},$$
which occurs with probability $1-o(1/n)$ by \Lem~\ref{Lemma_sums}.
Further, since $\Erw[\vd^2],\Erw[\vk^2]=O(1)$, the event
	$$\cW=\cbc{\vd_{n+1}\leq\ln n
		,\ \sum_{i\geq3}i(\DELTA_i+\GAMMA_i)<\ln^4 n}$$
occurs \whp\
Additionally, let $\cU$ be the event that $x_{n+1}$ does not partake in any multi-edges of $\G_{n,\vM^+}$.
Then
\begin{align}\label{eqLemma_valid1}
\pr\brk{\cU\mid\cW\cap\cE^+}=1-o(\ln^{-5}n);
\end{align}
indeed, given $\cW\cap\cE^+$ variable node $x_{n+1}$ has target degree at most $\ln n$ and
all check degrees are bounded by $n/\ln^9n$.
Hence, the probability that $x_{n+1}$ joins the same check twice is $o(\ln^{-3}n)$.
Once more by the principle of deferred decisions, given $\cE^+\cap\cU\cap\cW$ the random variables
$\nul(\vA''')$ and $\nul(\vA_{n+1,\vM^+})$ are identically distributed;
they can therefore be coupled identically.
Thus,
\begin{align}\label{eqBigBound}
\Erw\brk{\bc{\nul(\vA''')-\nul(\vA_{n+1,\vM^+})}\vecone\cU\cap\cW\cap\cE^+}&=0.
\end{align}

Moreover, we can always couple $\vA_{n+1,\vM^+}$ and $\vA'''$ such that both differ in no more than $2(\sum_{i\geq3}i(\DELTA_i+\GAMMA_i))$ entries.
To see this, we estimate the number of edges of the Tanner graph incident with the checks $a_{i,j}$, $\vM_i^-<j\leq\vM_i^+$ or the new variable $x_{n+1}$ of $\G_{n+1,\vM^+}$.
By construction, there are at most $\sum_{i\geq3}i(\DELTA_i+\GAMMA_i)$ such edges.
Similarly, there are no more than $\sum_{i\geq3}i(\DELTA_i+\GAMMA_i)$ edges incident with the new checks $a_{i,j}'''$, $b_{i,j}'''$ added to $\vA'$ to obtain $\vA'''$.
By the principle of deferred decisions we can couple the Tanner graphs of $\vA'''$ and $\vA_{n+1,\vM^+}$ such that they coincide on all the edges that join variables $x_1,\ldots,x_n$ and checks $a_{i,j}$, $j\leq\vM_i^-$, and hence the matrices themselves so that they coincide on all the corresponding matrix entries.
Consequently, on $\cE^+$ we have
\begin{align}\label{eqQuickBound}
\abs{\nul(\vA_{n+1,\vM^+})-\nul(\vA''')}&\leq2\sum_{i\geq3}i(\DELTA_i+\GAMMA_i).
\end{align}
Combining \eqref{eqLemma_valid1} and \eqref{eqQuickBound}, we obtain
\begin{align}\label{eqBigBound2}
\Erw\brk{\abs{\nul(\vA_{n+1,\vM^+})-\nul(\vA''')}\vecone\cE^+\cap\cW\setminus\cU}=o(1).
\end{align}

To complete the proof we need to deal with the event $\cE^+\setminus\cW$, which is contained in the union of the events
\begin{align*}
\cQ_1&=\cE^+\cap\cbc{\exists i>\log n:\GAMMA_i>0},&
\cQ_2&=\cE^+\cap\cbc{\vd_{n+1}>\log n}\setminus\cQ_1,&
\cQ_3&=\cE^+\cap\cbc{\sum_{i\geq3}i\DELTA_i>\ln^3n}\setminus(\cQ_1\cup\cQ_2).
\end{align*}
To bound the contribution of $\cQ_1$, consider $\vm_\eps^+=\sum_{i\geq3}\vM_i^+\disteq\Po((1-\eps)d(n+1)/k)$.
We claim that, with $(\vk_i)_{i\geq3}$ independent of everything else,
\begin{align}\label{eqQuickBound10}
\Erw\brk{\sum_{i\geq3}i\GAMMA_i\vecone\cQ_1}&\leq O(1/n)\cdot\bc{1+\Erw\brk{\sum_{i=1}^{\vm_\eps^+}\vecone\{\vk_i\geq\log n\}\vk_i^2\vec d_{n+1}}}=O(1)\cdot \pr\brk{\vk\geq\log n}+O(1/n)=O(\log^{-2}n).
\end{align}
Indeed, the last equality sign follows from the first because $\Erw[\vk^2]=O(1)$ and the first equality sign follows because $\vm_\eps^+$ is independent of $\vd_{n+1}$ and the $\vk_i$.
Further, to obtain the first inequality we consider the $\vm_\eps^+$ checks one by one.
The degree of the $i$th check is distributed as $\vk_i$.
We discard it unless $\vk_i\geq\log n$.
But if $\vk_i\geq\log n$, then the probability that $\vk_i$ is adjacent to $x_{n+1}$ is bounded by $O(\vk_i\vd_{n+1}/\sum_{h=1}^{n+1}\vd_h)$ and $\sum_{h=1}^{n+1}\vd_h>n$.
Thus, we obtain (\ref{eqQuickBound10}).
Further, we observe that (\ref{eqQuickBound10}) yields
$\pr\brk{\cQ_1}\leq\Erw\sum_{i\geq3}i\GAMMA_i\vecone\cQ_1=O(\log^{-2}n).$
Hence,  as $\Erw\sum_{i\geq3}i\DELTA_i=O(1)$ we obtain
\begin{align}\label{eqQuickBound11}
\Erw\brk{\sum_{i\geq3}i\DELTA_i\vecone\cQ_1}&\leq\pr\brk{\cQ_1}\log n
		+\Erw\brk{\sum_{i\geq3}i\DELTA_i\vecone\cbc{\sum_{i\geq3}i\DELTA_i\geq\log n}}=o(1).
\end{align}
Combining \eqref{eqQuickBound}, \eqref{eqQuickBound10} and \eqref{eqQuickBound11}, we conclude that
\begin{align}\label{eqQuickBound12}
\Erw\brk{\abs{\nul(\vA_{n+1,\vM^+})-\nul(\vA''')}\vecone\cQ_1}&=o(1).
\end{align}

Regarding $\cQ_2$, we deduce from the bound $\Erw[\vd_{n+1}^r]=O(1)$ for an $r>2$ that
\begin{align*}
\Erw\brk{\sum_{i\geq3}i\GAMMA_i\vecone\cQ_2}&\leq O(\log n)\Erw\brk{\vd_{n+1}\vecone\{\vd_{n+1}>\log n\}}=o(1).
\end{align*}
Moreover, since the $\DELTA_i$ are independent of $\vd_{n+1}$ and $\Erw\sum_{i\geq3}i\DELTA_i=O(1)$, we obtain
$\Erw\brk{\sum_{i\geq3}i\DELTA_i\vecone\cQ_2}=o(1)$.
Hence, \eqref{eqQuickBound} yields
\begin{align}\label{eqQuickBound21}
\Erw\brk{\abs{\nul(\vA_{n+1,\vM^+})-\nul(\vA''')}\vecone\cQ_2}&=o(1).
\end{align}

Moving on to $\cQ_3$, we find
\begin{align*}
\pr\brk{\cQ_3}&\leq\Erw\brk{\sum_{i\geq3}i\DELTA_i}\ln^{-3}n=O(\Erw[\vk^2]\ln^{-3}n)=o(\log^{-2}n).
\end{align*}
Moreover, on $\cQ_3$ we have $\sum_{i\geq3}i\GAMMA_i\leq\log^2n$ because $\vd_{n+1}\leq\log n$ and
$\GAMMA_i=0$ for all $i\geq\log n$.
Consequently, since the $\DELTA_i$ are mutually independent and $\sum_{i\geq3}\Erw[i\DELTA_i]=O(1)$, \eqref{eqQuickBound} yields
\begin{align}\label{eqQuickBound30}
\Erw\brk{\abs{\nul(\vA_{n+1,\vM^+})-\nul(\vA''')}\vecone\cQ_3}
	&\leq o(1)+4\Erw\brk{\sum_{i\geq3}i\DELTA_i\vecone\cbc{\sum_{i\geq3}i\DELTA_i\geq\ln^3n}}
	=o(1).
\end{align}
Finally, the second assertion follows from \eqref{eqBigBound}, \eqref{eqBigBound2}, \eqref{eqQuickBound12}, \eqref{eqQuickBound21} and \eqref{eqQuickBound30}.

\section{The interpolation argument} 
\label{Sec_interpolation}

\subsection{Proof of \Lem~\ref{Lemma_interpol2}}
Each component of $\G_\eps(0)$ contains precisely one of the variable nodes $x_1,\ldots,x_n$.
In effect, $\vA_{\eps}(0)$ has a block diagonal structure, and the overall nullity is nothing but the sum of the nullities of the blocks.
It therefore suffices to study the contribution $\vec N_s$ of the block containing $x_s$ to $\cN_0$, i.e.,
\begin{align}\label{eqNs_0}
\vec N_s&=\bc{\log_q\sum_{\sigma\in\FF_q^{\{x_s\}\cup\partial^2x_s}}\vecone\{(s>\THETA\vee\sigma_{x_s}=0)
		\wedge\forall y\in\partial^2x_s\cap\cF_0:\sigma_y=0 \}
		\vecone\{\sigma\models a\}}-\abs{\partial^2x_s}+\abs{\partial^2x_s\cap\cF_0}.
\end{align}
Indeed, to see that $\cN_0=\sum_{s=1}^n\vec N_s$, we observe that 
$\sum_{s=1}^n\abs{\partial^2x_s}=\sum_{i\leq\vm'_\eps(0)}\vk_i'(\vk_i'-1)$ and that 
$\sum_{s=1}^n\abs{\partial^2x_s\cap\cF_0}=|\cF_0|$.
Moreover, the logarithmic term in~\eqref{eqNs_0} is just the nullity of the block containing $x_s$.
Dragging the last two terms from~\eqref{eqNs_0} into the logarithm, we obtain
\begin{align}\label{eqNs}
\vec N_s&=\log_q\sum_{\sigma\in\FF_q^{\{x_s\}\cup\partial^2x_s}}
		\vecone\{s>\THETA\vee\sigma_{x_s}=0\}
		\bc{\prod_{y\in\partial^2x_s\cap\cF_0}q\vecone\{\sigma_{y}=0\}}
		\bc{\prod_{a\in\partial x_s}\frac{\vecone\{\sigma\models a\}}{q^{|\partial a|-1}}},
\end{align}
and $\Erw[\cN_0]=\sum_{s=1}^n\Erw[\vec N_s]$.
Consequently, since $\THETA=O(1)$ it suffices to prove that
\begin{align}\label{eqLemma_interpol2_1}
\Erw[\vec N_s]=\begin{cases}dK'(\beta)/k+D(1-K'(\beta)/k)-d+o_\eps(1)&\mbox{ if }s>\THETA,\\	
O(1)&\mbox{ otherwise}.		
\end{cases}
\end{align}
In fact, the second case in \eqref{eqLemma_interpol2_1} simply follows from the bounds $|\vN_s|\leq\vd_s$ and
$\Erw[\vd_s]=O(1)$ for all $s$.

Hence, suppose that $s>\THETA$.
As $|\vN_s|\leq\vd_s$ and $\Erw[\vd_s^r]=O_\eps(1)$ for an $r>2$ we find $\xi>0$ such that
\begin{align}\label{eqLemma_interpol2_2}
\Erw[|\vN_s|\vecone\{\vd_s>\eps^{\xi-1/2}\}]&=o_\eps(1).
\end{align}
Moreover, let
$
\Xi=\sum_{i=1}^{\vm_\eps'(0)}\vk_i'\vecone\cbc{\vk_i'>\eps^{-8}}$, $
\vec M_j'=\sum_{i=1}^{\vm_\eps'(0)}\vecone\{\vk_i'=j\}.$
Because $\Erw[\vk^2]=O_\eps(1)$ we have
\begin{align}\label{eqLemma_interpol2_3}
\Erw\brk{\Xi}&\leq\frac{dn}k\Erw\brk{\vk\vecone\{\vk\geq\eps^{-8}\}}=nO_\eps(\eps^8),
\end{align}	
while $\vM_j'\sim(1-\eps)dn\pr\brk{\vk=j}/k$ for all $j\leq\eps^{-8}$ \whp\ by Chebyshev's inequality.
Hence, introducing the event
\begin{align*}
\cE_s&=\cbc{\vd_s\leq\eps^{\xi-1/2},\ \Xi\leq n\eps^6,\ 
\forall j\leq\eps^{-8}:\vM_j'\sim(1-\eps)dn\pr\brk{\vk=j}/k,\,
\sum_{i=1}^n\vd_i\sim dn,\,
\sum_{i\geq3}i\vM_i'\sim(1-\eps)dn},
\end{align*}
we obtain from \eqref{eqLemma_interpol2_2} and \eqref{eqLemma_interpol2_3} that
\begin{align}\label{eqLemma_interpol2_4}
\Erw[\vN_s]&=\Erw\brk{\vN_s\vecone\cE_s}+o_\eps(1).
\end{align}	

With $\vd_s^\star\leq\vec d_s$ the actual degree of $x_s$ in $\G_\eps(s)$,
let $\vec\kappa_1,\ldots,\vec\kappa_{\vd_s^\star}$ be the degrees of the checks adjacent to $x_s$.
We claim that given $\cE_s$ and $\vd_s$, 
\begin{align}\label{eqLemma_interpol2_5}
\dTV((\vec\kappa_1,\ldots,\vec\kappa_{\vd_s^\star}),(\hat\vk_1,\ldots,\hat\vk_{\vd_s}))&=o_\eps(\eps^{1/2}).
\end{align}
Indeed, on $\cE_s$ the probability that $x_s$ is adjacent to a check of degree greater than $\eps^{-8}$ is 
$O_\eps(\vd_s\Xi/\sum_{j\geq3}j\vM_j')=o_\eps(\eps)$.
Further, given $\cE_s$ we have $\sum_{j\geq3}j\vM_j'\geq(1-2\eps)dn$, and thus $\pr[\vd_s^\star<\vd_s\mid\cE_s]=o_\eps(\eps^{1/2})$.
Moreover, given $\vd_s^\star=\vd_s$, for each $i\in[\vd_s]$ the probability that the $i$th clone of $x_s$ gets matched
to a check of degree $j\leq\eps^{-8}$ is $$j\vM_j'/\sum_{h\geq3}h\vM_h'=j\pr\brk{\vk=j}/k+o(1)=\pr\brk{\hat\vk=j}+o(1).$$
These events are asymptotically independent for the different clones.
Thus, we obtain \eqref{eqLemma_interpol2_5}.

Finally, we can easily compute $\vN_s$ given the vector $(\vec\kappa_1,\ldots,\vec\kappa_\GAMMA)$.
There are two possible scenarios. 
\begin{description}
\item[Case 1: there is $a^*\in\partial x_s$ such that $\partial a^*\setminus x_s\subset\cF_0$]
then only the summand $\sigma_{x_s}=0$ contributes to the r.h.s.\ of~\eqref{eqNs}, because setting $x_s$ to zero is the only way to satisfy the check $a^*$.
Hence, we can rewrite $\vec N_s$ as
\begin{align*}
\vec N_s
		&=\sum_{a\in\partial x_s}\log_q\brk{\sum_{\sigma\in\FF_q^{\partial a}}
		\vecone\{\sigma\models a,\,\sigma_{x_s}=0,\,\forall y\in\partial a\cap\cF_0:\sigma_y=0\}q^{1-|\partial a|+|\cF_0\cap\partial a|}}.
\end{align*}
For $a\in\partial x_s$ the sum inside the logarithm evaluates to one if $\partial a\setminus\{x_s\}\subset\cF_0$
and to $1/q$ otherwise.
Hence,
\begin{align}\label{eqCase1}
\vec N_s&=\sum_{a\in\partial x_s}\bc{\vecone\{\partial a\setminus\{x_s\}\subset\cF_0\}-1}.
\end{align}
\item[Case 2: for all $a^*\in\partial x_s$ we  have $\partial a^*\setminus(\{x_s\}\cup\cF_0)\neq\emptyset$]
we rewrite $\vec N_s$ as
\begin{align*}
\vec N_s&=\log_q\sum_{\sigma\in\FF_q}\prod_{a\in\partial x_s}\sum_{\tau\in\FF_q^{\partial a}}\vecone\{\tau\models a,\,\tau_{x_s}=\sigma,\,\forall y\in\partial a\cap\cF_0:\tau_y=0\}q^{1-|\partial a|+|\partial a\cap\cF_0|}.
\end{align*}
Then for any $\sigma\in\FF_q$ the factor corresponding to any $a$ above evaluates to $1/q$.
Consequently,
\begin{align}\label{eqCase2}
\vec N_s&=1-\GAMMA.
\end{align}
\end{description}
Since any $a\in\partial^2x_s$ belongs to the set $\FF_q$ with probability $\beta$ independently, \eqref{eqCase1} and \eqref{eqCase2} imply that
\begin{align}
\Erw\brk{\vN_s\mid(\vec\kappa_1,\ldots,\vec\kappa_\GAMMA)}
	&=\underbrace{\GAMMA\prod_{i=1}^{\GAMMA}(1-\beta^{\vec\kappa_i-1})+\sum_{i=1}^{\GAMMA}(\beta^{\vec\kappa_i-1}-1)}_{\text{Case 1}}
		+\underbrace{(1-\GAMMA)\prod_{i=1}^{\GAMMA}(1-\beta^{\vec\kappa_i-1})}_{\text{Case 2}}
	=\prod_{i=1}^{\GAMMA}(1-\beta^{\vec\kappa_i-1})+
		\sum_{i=1}^{\GAMMA}(\beta^{\vec\kappa_i-1}-1).\label{eqLemma_interpol2_6}
\end{align}
Combining \eqref{eqLemma_interpol2_4}, \eqref{eqLemma_interpol2_5} and \eqref{eqLemma_interpol2_6} completes the proof.

\subsection{Proof of \Prop~\ref{Lemma_interpolation}}
The computation of the derivative $\frac{\partial}{\partial t}\Erw[\cN_t+\cY_t]$ is based on a coupling argument and a study of the Boltzmann distribution $\mu_{\vA_\eps(t)}$.
Specifically, we need to investigate the joint distribution of the cavities, i.e., the clones from $\bigcup_{i=1}^n\cbc{x_i}\times[\vd_i]$ that are not incident to an edge of $\vec\Gamma_\eps(t)$.
Denote this set by $\cC(t)$.
Then $\mu_{\vA_\eps(t)}$ induces a probability distribution $\mu_{\cC(t)}$ via
\begin{align*}
\mu_{\cC(t)}(\sigma)=\mu_{\vA_\eps(t),x_1,\ldots,x_n}\bc{
	\cbc{\tau\in\FF_q^{\{x_1,\ldots,x_n\}}:\forall (x_i,h)\in\cC(t):\tau_{x_i}=\sigma_{x_i,h}}
}
	\qquad(\sigma\in\FF_q^{\cC(t)}).
\end{align*}

\begin{lemma}\label{Lemma_pinint}
For any $\delta>0$ there is $\Theta=\Theta(\delta)>0$ such that $\pr\brk{\mu_{\cC(t)}\mbox{ is $\delta$-symmetric}}>1-\delta.$
\end{lemma}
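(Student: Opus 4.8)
The plan is to run the argument of \Lem~\ref{Lemma_theta} in the interpolation model: I will show that the empirical sum $\sum_{c_1,c_2\in\cC(t)}\dTV(\mu_{\cC(t),c_1,c_2},\mu_{\cC(t),c_1}\otimes\mu_{\cC(t),c_2})$ is at most $\delta|\cC(t)|^2$ with probability $\ge1-\delta$, which is precisely the assertion that $\mu_{\cC(t)}$ is $\delta$-symmetric. For a cavity $c=(x_i,h)$ write $\pi(c)=x_i$ for the underlying variable, and let $c_i$ be the number of cavities belonging to $x_i$, so that $c_i\le\vd_i$. If $\pi(c_1)=\pi(c_2)$ then $\mu_{\cC(t),c_1,c_2}$ is supported on the diagonal of $\FF_q^2$ and the corresponding total variation distance is at most $1$, whereas if $\pi(c_1)\ne\pi(c_2)$ then $\mu_{\cC(t),c_1,c_2}=\mu_{\vA_\eps(t),\pi(c_1),\pi(c_2)}$. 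Hence the sum splits into a `diagonal' part of size at most $\sum_i c_i^2$ and a part governed by the pairwise correlations of the Boltzmann distribution $\mu_{\vA_\eps(t)}$ over the coordinates $x_1,\dots,x_n$.

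Next I would condition on a few likely events. By the Poisson parameters in~\eqref{eqinterpol}, \Lem~\ref{Lemma_sums} and the observation that the matching $\vec\Gamma_\eps(t)$ absorbs $\sim(1-\eps)dn$ of the $\sim dn$ variable clones irrespective of $t$, \whp\ $|\cC(t)|\ge\eps dn/2$ (uniformly in $t$); by Markov's inequality \whp\ $\sum_i\vd_i^2\le n^{3/2}=o(n^2)$; by \Lem~\ref{Lemma_sums} \whp\ $\sum_i\vd_i\le2dn$; and, fixing a large constant $L=L(\delta)$ for which $\Erw[\vd\vecone\{\vd>L\}]$ lies below a small $\eta=\eta(\delta)$ to be chosen, \Lem~\ref{Lemma_sums} gives \whp\ $\sum_i\vd_i\vecone\{\vd_i>L\}\le\eta n$. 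On the intersection of these events the diagonal part is $\sum_i c_i^2\le\sum_i\vd_i^2=o(n^2)$, while the off-diagonal pairs with $\vd_{\pi(c_1)}>L$ or $\vd_{\pi(c_2)}>L$ contribute at most $2\bigl(\sum_{i:\vd_i>L}\vd_i\bigr)\bigl(\sum_j\vd_j\bigr)\le4\eta d n^2$; for $\eta$ small enough (depending on $\delta,\eps,d$) both quantities are below $\delta|\cC(t)|^2/4$.

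There remains the contribution of the off-diagonal pairs both of whose underlying variables have degree at most $L$: each such pair carries weight $c_{\pi(c_1)}c_{\pi(c_2)}\le L^2$, so this contribution is at most $L^2\sum_{i\ne j}\dTV(\mu_{\vA_\eps(t),x_i,x_j},\mu_{\vA_\eps(t),x_i}\otimes\mu_{\vA_\eps(t),x_j})$. This is where the pinning checks $p_1,\dots,p_{\THETA}$ enter. The law of the Tanner graph of $\vA_\eps(t)$ with the $p$-checks removed is invariant under permutations of $x_1,\dots,x_n$, so freezing $x_1,\dots,x_\THETA$ through $p_1,\dots,p_\THETA$ has the same effect as freezing a uniformly random vertex set of that size; hence \Lem~\ref{Lemma_0pinning}, applied to that matrix with $U$ the index set of $x_1,\dots,x_n$ and target symmetry parameter $\eps_0$, shows that for $\Theta=\Theta(\eps_0)$ large enough $\mu_{\vA_\eps(t),x_1,\dots,x_n}$ is $\eps_0$-symmetric with probability $>1-\eps_0$; by definition, on that event $\sum_{i,j}\dTV(\mu_{\vA_\eps(t),x_i,x_j},\mu_{\vA_\eps(t),x_i}\otimes\mu_{\vA_\eps(t),x_j})<\eps_0 n^2$. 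Choosing $\eps_0=\eps_0(\delta,L,\eps,d)$ so small that $L^2\eps_0 n^2\le\delta|\cC(t)|^2/4$ whenever $|\cC(t)|\ge\eps dn/2$ and also $\eps_0<\delta/4$, and summing the three contributions together with the probabilities of the bad events, yields $\delta$-symmetry of $\mu_{\cC(t)}$ with probability $>1-\delta$; all of $\eta,L,\eps_0,\Theta$ depend only on $\delta$ and the fixed $\eps$ and degree laws. The only point needing genuine care is the uniform-in-$t$ lower bound $|\cC(t)|=\Omega(n)$ together with the bookkeeping that prevents the handful of very high-degree variables (degrees may be as large as $n^{1/2-\Omega(1)}$) from concentrating the cavities and thereby obstructing the passage from $\eps_0$-symmetry of the variable marginals to $\delta$-symmetry of $\mu_{\cC(t)}$ — this is exactly the role of the degree-$L$ truncation, mirroring the proof of \Lem~\ref{Lemma_theta}; the rest is a direct application of \Lem s~\ref{Lemma_0pinning} and~\ref{Lemma_sums}.
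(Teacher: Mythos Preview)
Your proposal is correct and follows essentially the same approach as the paper: both arguments condition on $|\cC(t)|=\Omega(n)$ together with a degree-$L$ truncation, then transfer $\eps_0$-symmetry of $\mu_{\vA_\eps(t),x_1,\dots,x_n}$ (obtained from \Lem~\ref{Lemma_0pinning} via permutation invariance of $\G_\eps(t)-\{p_1,\dots,p_{\THETA}\}$) to $\delta$-symmetry of $\mu_{\cC(t)}$. The only cosmetic difference is that the paper phrases the transfer probabilistically via weighted random variables $\vy,\vy'$ (so that $\pr[(\vy,\vy')\in\cV]\le (L/\eps)^2\pr[(\vx,\vx')\in\cV]+\delta^2/8$), whereas you split the cavity-pair sum combinatorially into diagonal, high-degree, and low-degree off-diagonal parts; these are equivalent bookkeeping.
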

\begin{proof}
The choice~\eqref{eqinterpol} of $\vm_\eps(t),\vm_\eps'(t)$ guarantees that $|\cC(t)|\geq\eps n/2$ \whp\
Moreover, since $\Erw[\vd]=O_\eps(1)$ we find $L=L(\eps,\delta)>0$ such that
the event  $\cL=\cbc{\sum_{i=1}^n\vd_i\vecone\{\vd_i>L\}<\eps\delta^2 n/16}$ has probability $\pr[\cL]\geq1-\delta/8$.
Therefore, we may condition on $\cE=\cL\cap\{|\cC(t)|\geq\eps n/2\}$.

On $\cE$ let $\vy\in\{x_1,\ldots,x_n\}$ be a random variable node chosen from the distribution
$$\pr[\vy=x_i\mid\vA_\eps(t)]=|\cC(t)\cap(\{x_i\}\times[\vd_i])|/|\cC(t)|.$$
Further, let $\vy'$ be an independent copy of $\vy$.
Since $\pr[\cE]\geq1-\delta/8-o(1)$, it suffices to prove that
\begin{align}\label{eqpinint_1}
\Erw\brk{\TV{\mu_{\vA_\eps(t),\vy,\vy'}-\mu_{\vA_\eps(t),\vy}\tensor\mu_{\vA_\eps(t),\vy'}}\,\bigg|\,\cE}&\leq\delta^2/2.
\end{align}
To verify \eqref{eqpinint_1}, let $\vx,\vx'$ be two variables chosen uniformly and independently among $x_1,\ldots,x_n$.
Then on $\cE$,
\begin{align}\label{eqpinint_2}
\pr\brk{(\vy,\vy')\in\cV\mid\vA_\eps(t)}&\leq L^2\cdot \pr\brk{(\vx,\vx')\in\cV\mid\vA_\eps(t)}+\delta^2/8
&\mbox{for any }\cV\subset\{x_1,\ldots,x_n\}^2.
\end{align}
Further, because the distribution of $\G_\eps(t)-\{p_1,\ldots,p_{\vec\theta}\}$ is invariant under permutations of $x_1,\ldots,x_n$, \Lem~\ref{Lemma_0pinning} shows that
for large enough $\Theta$,
\begin{align}\label{eqpinint_3}
\Erw\brk{\TV{\mu_{\vA_\eps(t),\vx,\vx'}-\mu_{\vA_\eps(t),\vx}\tensor\mu_{\vA_\eps(t),\vx'}}\,\bigg|\,\cE}&\leq\delta^3/(64L^2).
\end{align}
Thus, \eqref{eqpinint_1} follows from \eqref{eqpinint_2} and \eqref{eqpinint_3}.
\end{proof}

We proceed to derive \Prop~\ref{Lemma_interpolation} from \Lem~\ref{Lemma_pinint} and a coupling argument.
Let us write $\cN(m,m')$ for the conditional random variable $\cN_t$ given $\vm_\eps(t)=m,\vm_\eps(t)'=m'$.

\begin{lemma}\label{Claim_PoissonDeriv}
We have 
\begin{align*}
\frac\partial{\partial t}\Erw[\cN_t]=
	(1-\eps)\frac{dn}k\big(&
	(\Erw[\cN(\vm_\eps(t)+1,\vm_\eps'(t))]-\Erw[\cN(\vm_\eps(t),\vm_\eps'(t))])\\&-(\Erw[\cN(\vm_\eps(t),\vm_\eps'(t)+1)]-\Erw[\cN(\vm_\eps(t),\vm_\eps'(t))])\big).
\end{align*}
\end{lemma}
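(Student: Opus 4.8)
The plan is to exploit the fact that, in the construction of $\vA_\eps(t)$, the parameter $t$ enters \emph{only} through the means of the two independent Poisson variables $\vm_\eps(t)\disteq\Po(\lambda_1(t))$ and $\vm_\eps'(t)\disteq\Po(\lambda_2(t))$, where $\lambda_1(t)=(1-\eps)tdn/k$ and $\lambda_2(t)=(1-\eps)(1-t)dn/k$. All the other ingredients -- the target degrees $(\vk_i,\vk_i',\vk_i'')_{i\ge1}$ and $(\vd_i)_{i\ge1}$, the random set $\cF_t$ (each new variable node $x_{i,j,h}$ belonging to it independently with probability $\beta$), the pinning number $\THETA$, and the matching $\vec\Gamma_\eps(t)$ -- are, conditionally on $\vm_\eps(t)=m$ and $\vm_\eps'(t)=m'$, drawn from a distribution depending on $m,m'$ but not on $t$. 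Hence $\cN_t$, given $\vm_\eps(t)=m$ and $\vm_\eps'(t)=m'$, has the $t$-independent law $\cN(m,m')$; put $g(m,m')=\Erw[\cN(m,m')]$. The proof then reduces to differentiating $\Erw[\cN_t]=\Erw[g(\vm_\eps(t),\vm_\eps'(t))]$ in $t$ via the elementary identity $\frac{\partial}{\partial\lambda}\Erw[h(\Po(\lambda))]=\Erw[h(\Po(\lambda)+1)]-\Erw[h(\Po(\lambda))]$.

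In detail: I would write
\[
\Erw[\cN_t]=\sum_{m,m'\ge0}\frac{e^{-\lambda_1(t)}\lambda_1(t)^{m}}{m!}\cdot\frac{e^{-\lambda_2(t)}\lambda_2(t)^{m'}}{m'!}\,g(m,m').
\]
Since $\nul\vA_\eps(t)\ge0$ and both $\nul\vA_\eps(t)$ and $|\cF_t|$ are at most the number of variable nodes of $\G_\eps(t)$, from $\cN_t=\nul\vA_\eps(t)+|\cF_t|-\sum_{i\le\vm_\eps'(t)}\vk_i'(\vk_i'-1)$ one gets $|\cN(m,m')|\le n+C\sum_{i\le m'}(\vk_i+\vk_i')^2$ for an absolute constant $C$, whence $|g(m,m')|\le n+O(m')$ because $\Erw[\vk^2]<\infty$. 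Thus the series above converges absolutely on $\RR_{\ge0}^2$ and may be differentiated term by term in $\lambda_1$ and $\lambda_2$. Differentiating a single Poisson weight, $\frac{\partial}{\partial\lambda}(e^{-\lambda}\lambda^m/m!)=e^{-\lambda}\lambda^{m-1}/(m-1)!-e^{-\lambda}\lambda^m/m!$ (the first term being $0$ at $m=0$), gives $\frac{\partial}{\partial\lambda}\Erw[h(\Po(\lambda))]=\Erw[h(\Po(\lambda)+1)]-\Erw[h(\Po(\lambda))]$ for any $h$ of polynomial growth, and similarly for the second variable. As $\dot\lambda_1(t)=(1-\eps)dn/k=-\dot\lambda_2(t)$, the chain rule yields
\[
\frac{\partial}{\partial t}\Erw[\cN_t]=\frac{(1-\eps)dn}{k}\Bigl(\tfrac{\partial}{\partial\lambda_1}-\tfrac{\partial}{\partial\lambda_2}\Bigr)\Erw[g(\vm_\eps(t),\vm_\eps'(t))],
\]
and substituting the Poisson derivative identity into each of the two terms produces exactly the asserted formula, on recalling $\Erw[g(\cdot,\cdot)]=\Erw[\cN(\cdot,\cdot)]$.

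I do not expect a genuine obstacle here; this is a routine Poisson-differentiation computation. The only two points needing care are (a) setting up the coupling of the $\vA_\eps(t)$ across $t$ so that $t$ affects the construction solely through $\vm_\eps(t)$ and $\vm_\eps'(t)$ -- immediate from the definitions, since every remaining random object is generated by a rule depending only on those two counts -- and (b) the polynomial growth bound on $g$ that legitimises term-by-term differentiation. Both are short; the real work of this section lies not here but in the subsequent estimates of the one-step differences $\Erw[\cN(\vm_\eps(t)+1,\vm_\eps'(t))]-\Erw[\cN(\vm_\eps(t),\vm_\eps'(t))]$ and $\Erw[\cN(\vm_\eps(t),\vm_\eps'(t)+1)]-\Erw[\cN(\vm_\eps(t),\vm_\eps'(t))]$.
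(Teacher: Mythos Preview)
Your argument is correct and is essentially the same as the paper's: both write $\Erw[\cN_t]$ as a double sum over Poisson probabilities times $g(m,m')$, differentiate the Poisson weights, and use $\dot\lambda_1=-\dot\lambda_2=(1-\eps)dn/k$ to arrive at the stated identity. You add the polynomial-growth bound on $g$ to justify term-by-term differentiation, a detail the paper leaves implicit; otherwise the proofs coincide.
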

\begin{proof}
The parameter $t$ comes in via the Poisson variables $\vm_\eps(t),\vm_\eps'(t)$ from \eqref{eqinterpol}.
Hence, the product rule gives
\begin{align}
\frac\partial{\partial t}\Erw[\cN_t]&=
	\frac\partial{\partial t}\sum_{m,m'\geq0}\pr\brk{\vm_\eps(t)=m}\pr\brk{\vm_\eps'(t)=m'}\Erw[\cN(m,m')]
	=\sum_{m,m'\geq0}\Erw[\cN(m,m')]\frac\partial{\partial t}\pr\brk{\vm_\eps(t)=m}\pr\brk{\vm_\eps'(t)=m'}\nonumber\\
	&=\sum_{m,m'\geq0}\Erw[\cN(m,m')]\bc{\frac\partial{\partial t}\pr\brk{\vm_\eps(t)=m}}\pr\brk{\vm_\eps'(t)=m'}+\Erw[\cN(m,m')]\pr\brk{\vm_\eps(t)=m}{\frac\partial{\partial t}\pr\brk{\vm_\eps'(t)=m'}}\nonumber\\
	&=\sum_{m\geq0}\Erw[\cN(m,\vm_\eps'(t))]\frac\partial{\partial t}\pr\brk{\vm_\eps(t)=m}+
		\sum_{m'\geq0}\Erw[\cN(\vm_\eps(t),m')]\frac\partial{\partial t}\pr\brk{\vm_\eps'(t)=m'}.
		\label{eqClaim_PoissonDeriv1}
\end{align}
The Poisson derivatives work out to be
\begin{align}		\label{eqClaim_PoissonDeriv2}
\frac\partial{\partial t}\pr\brk{\vm_\eps(t)=m}&=\frac\partial{\partial t}\frac{((1-\eps)tdn/k)^m}{m!\exp((1-\eps)tdn/k)}
	=\frac{(1-\eps)dn}{k}\bc{\pr\brk{\vm_\eps(t)=m-1}-\pr\brk{\vm_\eps(t)=m}},\\
\frac\partial{\partial t}\pr\brk{\vm_\eps'(t)=m'}&=\frac\partial{\partial t}\frac{((1-\eps)(1-t)dn/k)^{m'}}{m'!\exp((1-\eps)(1-t)dn/k)}
	=\frac{(1-\eps)dn}{k}\bc{\pr\brk{\vm_\eps'(t)=m}-\pr\brk{\vm_\eps'(t)=m-1}}.
	\label{eqClaim_PoissonDeriv3}
\end{align}
Combining \eqref{eqClaim_PoissonDeriv1}--\eqref{eqClaim_PoissonDeriv3} completes the proof.
\end{proof}

We will couple
$\cN(\vm_\eps(t)+1,\vm_\eps'(t))$ and $\cN(\vm_\eps(t),\vm_\eps'(t))$ as well as $\cN(\vm_\eps(t),\vm_\eps'(t)+1)$ and $\cN(\vm_\eps(t),\vm_\eps'(t))$.
With respect to the first pair of random variables, obtain $\G'(t)$ from $\G_\eps(t)$ by adding a check $a_{\vm_\eps(t)+1}$ of target degree $\vk_{\vm_\eps(t)+1}$.
The new check is maximally matched with random cavities from $\cC(t)$.
Obtain $\vA'(t)$ by adding a new row to $\vA_\eps(t)$ corresponding to $a_{\vm_\eps(t)+1}$ and adding
a matrix entry drawn independently according to $\CHI$ for each edge of $\G'(t)$ incident with $a_{\vm_\eps(t)+1}$.

\begin{lemma}\label{Claim_Lemma_interpolation_1}
We have $\Erw[\cN(\vm_\eps(t)+1,\vm_\eps'(t))]-\Erw[\cN(\vm_\eps(t),\vm_\eps'(t))]=\Erw[\nul(\vA'(t))-\nul(\vA_\eps(t))]+o(1)$.
\end{lemma}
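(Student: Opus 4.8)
The plan is to exhibit an explicit coupling under which the difference $\cN(\vm_\eps(t)+1,\vm_\eps'(t))-\cN(\vm_\eps(t),\vm_\eps'(t))$ equals $\nul(\vA'(t))-\nul(\vA_\eps(t))$ up to a negligible error, and then take expectations. The first and crucial observation is that the two correction terms $|\cF_t|$ and $\sum_{i=1}^{\vm_\eps'(t)}\vk_i'(\vk_i'-1)$ in the definition of $\cN_t$ are functions of $\vm_\eps'(t)$, the degree sequences $(\vk_i',\vk_i'')_i$ and the random set $\cF_t$ only; they do not involve the checks $a_1,\ldots,a_{\vm_\eps(t)}$ in any way. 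Hence, once we condition on $\vm_\eps'(t)=m'$ together with the $b$- and $f$-part of the Tanner graph (the variables $x_{i,j,h}$, the checks $b_{i,j},f_{i,j,h}$, the set $\cF_t$ and all corresponding matrix entries), the quantity $\cN(m+1,m')-\cN(m,m')$ equals $\nul\vA'-\nul\vA$, where $\vA,\vA'$ are the two versions of $\vA_\eps(t)$ carrying $m$ respectively $m+1$ checks $a_i$, provided these two matrices are coupled so as to share that entire $b$- and $f$-part.

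To build such a coupling I would proceed by deferred decisions. Condition on $\vm_\eps(t)=m$, $\vm_\eps'(t)=m'$ and on all of $(\vd_i)_i,(\vk_i,\vk_i',\vk_i'')_i,\THETA,\cF_t$, expose the random maximal matching $\vec\Gamma_\eps(t)$ to obtain $\G_\eps(t)$ and its cavity set $\cC(t)$, and then draw a fresh copy $\vk_{m+1}$ of $\vk$, create $\vk_{m+1}$ clones of a new check $a_{m+1}$, match them to $\min(\vk_{m+1},|\cC(t)|)$ uniformly random distinct cavities, and equip each new Tanner edge with an independent entry from $\CHI$; this is precisely the construction of $\G'(t),\vA'(t)$. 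The point is that a uniformly random maximal matching of a complete bipartite graph in which the variable side has at least as many clones as the check side may be generated by exposing the partners of the check clones one at a time in any prescribed order, each partner uniform among the variable clones not yet used. Exposing the $\vk_{m+1}$ clones of $a_{m+1}$ last therefore produces $\vec\Gamma_\eps(t)$ followed by a uniform match of those clones into $\cC(t)$. Consequently, on the event $\cG=\{|\cC(t)|\ge\vk_{m+1}\}$ the matrix $\vA'(t)$ has, conditionally on the fixed data, exactly the law of $\vA_\eps(t)$ given $\vm_\eps(t)=m+1$; and on $\cG$ the discussion above yields $\cN(m+1,m')-\cN(m,m')=\nul(\vA'(t))-\nul(\vA_\eps(t))\in\{-1,0\}$, the last membership because adjoining a single row to a matrix changes its nullity by $0$ or $-1$ while leaving the number of columns unchanged.

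It remains to discard the event $\cG^c$. Here $\pr[|\cC(t)|<\eps n/2]=\exp(-\Omega(n))$ by \Lem~\ref{Lemma_sums}, since the variable side of the matching carries $\Theta(\eps n)$ more clones than the check side in expectation, while $\pr[\vk_{m+1}>\eps n/2]=o(1/n)$ by Markov's inequality and $\Erw[\vk^r]=O(1)$ for some $r>2$; as $\vk_{m+1}$ is independent of everything defining $\cN_t$, combining these gives that the contribution of $\cG^c$ to $\Erw[\cN(m+1,m')-\cN(m,m')-(\nul(\vA'(t))-\nul(\vA_\eps(t)))]$ is $o(1)$ after using Cauchy--Schwarz together with the routine second-moment bounds $\Erw[\cN_t^2],\Erw[\nul(\vA_\eps(t))^2]=O(n^2)$ that follow from the moment assumptions on $\vd,\vk$. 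Averaging over the realizations $m=\vm_\eps(t)$ and $m'=\vm_\eps'(t)$ then yields the claim. I expect the only mildly delicate point to be the deferred-decisions justification that the described sequential exposure genuinely produces a uniformly random maximal matching (so that $\vA'(t)$ has the correct conditional law on $\cG$), together with the bookkeeping for the rare event $\cG^c$; the rest is routine.
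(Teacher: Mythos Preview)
Your approach — coupling via deferred decisions after observing that the correction terms $|\cF_t|$ and $\sum_i\vk_i'(\vk_i'-1)$ in $\cN_t$ do not involve $m$ — is exactly the paper's, and the core of the argument is correct. The paper's own proof is a single sentence invoking the same two ingredients.

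There is, however, a genuine gap in your treatment of $\cG^c$. First, \Lem~\ref{Lemma_sums} only yields $\pr[|\cC(t)|<\eps n/2]=o(1/n)$, not $\exp(-\Omega(n))$: with merely an $r$-th moment assumption ($r>2$) on $\vd,\vk$, a single degree of order $n$ already has probability $\Theta(n^{1-r})$, ruling out exponential tails. Second, the claim $\Erw[\cN_t^2]=O(n^2)$ would require a fourth moment of $\vk$ (through the term $\sum_i(\vk_i')^4$), which is not assumed. Third, even granting both, Cauchy--Schwarz yields only $\sqrt{O(n^2)\cdot o(1/n)}=o(\sqrt n)$, not $o(1)$.

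The fix is much simpler than Cauchy--Schwarz, and you have essentially already written it down: the membership $\nul(\vA'(t))-\nul(\vA_\eps(t))\in\{-1,0\}$ holds \emph{deterministically}, not merely on $\cG$, so its contribution on $\cG^c$ is at most $\pr[\cG^c]=o(1)$. For the left-hand side the same idea works: the correction terms cancel, so $\cN(m+1,m')-\cN(m,m')$ is a difference of nullities of matrices that, under a suitable coupling of the two maximal matchings, differ in one row plus at most $\vk_{m+1}$ further entries; hence the difference is bounded by $1+\vk_{m+1}$, and $\Erw[(1+\vk_{m+1})\vecone_{\cG^c}]=o(1)$ follows from the same tail estimates. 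This is the route the paper has in mind, and it requires no moment bounds beyond those already assumed.
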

\begin{proof}
Due to \eqref{eqinterpol} and \Lem~\ref{Lemma_sums},  with probability $1-o(n^{-1})$ we have $|\cC(t)|\geq\vk_{\vm_\eps(t)+1}$.
Hence, the assertion follows from the principle of deferred decisions.
\end{proof}

\Lem~\ref{Lemma_Spartition} implies that each marginal $\mu_{\cC(t),c}$, $c\in\cC(t)$ is either $\delta_0$ or the uniform distribution on $\FF_q$.
Thus, let us call $c\in\cC(t)$ {\em frozen} if $\mu_{\cC(t),c}=\delta_0$.
Let $\ALPHA\in[0,1]$ be the fraction of frozen cavities in $\cC(t)$ (with the convention that $\ALPHA=0$ if $\cC(t)=\emptyset$).

\begin{lemma}\label{Claim_Lemma_interpolation_2}
We have $\Erw\abs{\Erw[\nul(\vA'(t))-\nul(\vA_\eps(t))\mid\vA_\eps(t)]-(K(\ALPHA)-1)}=o_\eps(1)$.
\end{lemma}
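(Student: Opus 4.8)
The plan is to run the same machinery that underlies Claim~\ref{Claim_A''_6}, only in the much simpler situation where a single check is appended and no new column is created. Write $a=a_{\vm_\eps(t)+1}$ for the new check, let $\vec\kappa=\vk_{\vm_\eps(t)+1}$ be its target degree, and let $\partial a\subset\{x_1,\dots,x_n\}$ collect the variables that $a$ is joined to in $\G'(t)$. Conditioning on the construction of $\G_\eps(t),\vA_\eps(t)$ and on $\vec\kappa$, Fact~\ref{Fact_nullity} gives
\begin{align*}
\nul(\vA'(t))-\nul(\vA_\eps(t))&=\log_q\sum_{\sigma\in\FF_q^{\partial a}}\mu_{\vA_\eps(t),\partial a}(\sigma)\,\vecone\{\sigma\models_{\vA'(t)}a\},
\end{align*}
so the whole task is to understand the joint law $\mu_{\vA_\eps(t),\partial a}$ of the (at most $\vec\kappa$) cavities attached to $a$.

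First I would trim the atypical events. Since $|\nul(\vA'(t))-\nul(\vA_\eps(t))|\le\vec\kappa$ deterministically and $\Erw[\vec\kappa^r]=\Erw[\vk^r]=O(1)$ for some $r>2$, restricting to $\vec\kappa\le\eps^{-1/3}$ costs only $o_\eps(1)$ in expectation; and by \eqref{eqinterpol} together with \Lem~\ref{Lemma_sums} we have $|\cC(t)|\ge\eps n/2$ \whp\ On this event the $\vec\kappa$ cavities handed to $a$ form a uniformly random $\vec\kappa$-subset of $\cC(t)$; because $\vec\kappa=O_\eps(1)$ while $\max_i\vd_i=o(n)=o(|\cC(t)|)$ \whp, with probability $1-o(1)$ these cavities sit on pairwise distinct variables, whence $|\partial a|=\vec\kappa$ and $a$ carries exactly $\vec\kappa$ nonzero entries (all $\CHI$'s being nonzero).

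Next comes the symmetry input. Choosing $\Theta=\Theta(\eps)$ large, \Lem~\ref{Lemma_pinint} makes $\mu_{\cC(t)}$ $\delta$-symmetric with probability $1-\delta$ for any prescribed $\delta=\delta(\eps)$, and \Lem~\ref{lem:l-wise} upgrades this to $(\eps',\ell)$-symmetry for every $\ell\le\lceil\eps^{-1/3}\rceil$, with $\eps'$ as small as we please (the $\exp(-1/\eps^4)$-scale used in \Sec~\ref{Sec_A''} suffices). Averaging the $(\eps',\ell)$-symmetry bound over the uniformly random $\vec\kappa$-tuple of cavities under $a$ and applying Markov's inequality, with probability $1-o_\eps(1)$ the law $\mu_{\vA_\eps(t),\partial a}$ lies within $o_\eps(1)$ of the product $\bigotimes_{x\in\partial a}\mu_{\vA_\eps(t),x}$ in total variation. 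By \Lem~\ref{Lemma_Spartition} each factor equals $\delta_0$ when $x$ is frozen and is uniform on $\FF_q$ otherwise; and since the cavities are sampled uniformly from $\cC(t)$, a set of size $\Omega(n)$ of which a fraction $\ALPHA$ is frozen, the number of frozen ones among the $\vec\kappa$ chosen is $\Bin(\vec\kappa,\ALPHA)$ up to $o(1)$ in total variation. Hence I may replace $\mu_{\vA_\eps(t),\partial a}$ by a random product measure $\MU$ with $\vec\kappa$ factors, each independently $\delta_0$ with probability $\ALPHA$ and uniform otherwise, incurring only $o_\eps(1)$ error.

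Finally the elementary computation: if every factor of $\MU$ is $\delta_0$ then the all-zero assignment is the unique contributor and the logarithm above equals $0$, whereas if at least one factor is uniform then exactly a $1/q$-fraction of assignments satisfy $a$, so the logarithm equals $-1$. By independence of the factors, the conditional expectation given the construction of $\vA_\eps(t)$ and $\vec\kappa$ is $\ALPHA^{\vec\kappa}-1+o_\eps(1)$; averaging over $\vec\kappa\disteq\vk$ on the truncated event turns this into $K(\ALPHA)-1+o_\eps(1)$, and folding back the $o_\eps(1)$-probability exceptional events (on which the relevant quantities are $O(1)$) yields the claim. The only genuinely delicate point is the symmetry transfer in the third paragraph: passing from the average statement ``$\mu_{\cC(t)}$ is $(\eps',\ell)$-symmetric'' to the factorisation of the joint law on the \emph{particular} random tuple of cavities beneath $a$, while simultaneously ruling out two of those cavities belonging to one variable. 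This is exactly the kind of argument carried out in the preparatory \Lem~\ref{Lemma_theta} (and is in fact a touch cleaner here, since $a$ is matched directly to random cavities rather than to cavities of uniformly random variables); everything after it is bookkeeping.
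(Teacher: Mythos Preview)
Your proof is correct and follows essentially the same route as the paper's: invoke Fact~\ref{Fact_nullity}, truncate the degree $\vec\kappa$, use \Lem~\ref{Lemma_pinint} together with \Lem~\ref{lem:l-wise} to approximately factorise $\mu_{\vA_\eps(t),\partial a}$, and then read off that the log equals $0$ if all adjacent cavities are frozen and $-1$ otherwise. The only difference is that the paper splits into three cases according to whether $\ALPHA$ is close to $0$, close to $1$, or bounded away from both, whereas you (correctly) handle all three at once via the product-measure computation; your unified argument is slightly cleaner but not materially different.
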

\begin{proof}
Pick $\zeta=\zeta(\eps)>0$ small enough and $\delta=\delta(\zeta)>0$ smaller still.
Since $|\nul(\vA'(t))-\nul(\vA_\eps(t))|\leq1$ and $\Erw[\vk^2]=O_\eps(1)$ we may condition on the event that $\vk_{\vm_\eps(t)+1}\leq\eps^{-1}$.
Similarly, because $|\nul(\vA'(t))-\nul(\vA_\eps(t))|\leq1$, \Lem~\ref{Lemma_pinint} shows that we may assume that $\mu_{\cC(t)}$ is $\delta$-symmetric for a small $\delta=\delta(\eps)>0$.
Let $\cX$ be the set of cavities adjacent to $a_{\vm_\eps(t)+1}$.
We consider three cases.
\begin{description}
\item[Case 1: $\ALPHA<\eps^2$] since $\vk_{\vm_\eps(t)+1}\leq\eps^{-1}$ the probability that $a_{\vm_\eps(t)+1}$ joins a frozen cavity is $o_\eps(1)$.
Moreover, \Lem~\ref{lem:l-wise} shows that with probability at least $1-\exp(-1/\eps)$ the joint distribution $\mu_{\cC(t),\cX}$ of the cavities that $a_{\vm_\eps(t)+1}$ joins is within $\exp(-1/\eps)$ of the uniform distribution on $\FF_q^{\vk_{\vm_\eps(t)+1}}$ in total variation, provided that $\Theta$ is chosen sufficiently large.
If so, then a random vector from the kernel of $\vA_\eps(t)$ satisfies the new check $\va_{\vm_\eps(t)+1}$ with probability $1/q+o_\eps(1)$.
Consequently, with probability $1-o_\eps(1)$ we have
\begin{align}\label{eqClaim_Lemma_interpolation_2_1}
\Erw[\nul(\vA'(t))-\nul(\vA_\eps(t))\mid\vA_\eps(t)]&=-1+o_\eps(1)=K(\ALPHA)-1+o_\eps(1).
\end{align}
\item[Case 2: $\ALPHA>1-\eps^2$]
since $\vk\leq\eps^{-1}$ the probability that $a_{\vm_\eps(t)+1}$ joins an unfrozen cavity is $o_\eps(1)$.
Hence, with probability $1-o_\eps(1)$ we have
\begin{align}\label{eqClaim_Lemma_interpolation_2_2}
\Erw[\nul(\vA'(t))-\nul(\vA_\eps(t))\mid\vA_\eps(t)]&=0=K(\ALPHA)-1+o_\eps(1).
\end{align}
\item[Case 3: $\eps^2\leq\ALPHA\leq 1-\eps^2$]
the distribution of the number $X$ of frozen cavities in $\cX$ is within $o(1)$ of a binomial distribution $\Bin(\vk_{\vm_\eps(t)+1},\ALPHA)$ in total variation.
Moreover, \Lem~\ref{lem:l-wise} shows that given any outcome of $X$ the joint distribution $\mu_{\cX}$ is within $o_\eps(1)$ of a product distribution with probability $1-o_\eps(1)$, provided that $\Theta$ is chosen big enough.
If $X=\vk_{\vm_\eps(t)+1}$, i.e., if all variables adjacent to the new check are frozen, then the new check will certainly be satisfied.
Otherwise, the probability of it being satisfied equals $1/q$.
Thus, with probability $1-o_\eps(1)$ we have
\begin{align}\label{eqClaim_Lemma_interpolation_2_3}
\Erw[\nul(\vA'(t))-\nul(\vA_\eps(t))\mid\vA_\eps(t)]&=
\Erw\brk{\ALPHA^{\vk_{\vm_\eps(t)+1}-1}\mid\vA_\eps(t)}+o_\eps(1)=K(\ALPHA)-1+o_\eps(1).
\end{align}
\end{description}
Finally, the assertion follows from \eqref{eqClaim_Lemma_interpolation_2_1}--\eqref{eqClaim_Lemma_interpolation_2_3}.
\end{proof}

To couple $\cN(\vm_\eps(t),\vm_\eps'(t)+1)$ and $\cN(\vm_\eps(t),\vm_\eps'(t))$ obtain $\G''(t)$ from $\G_\eps(t)$ by adding checks 
$$b_{\vm_\eps'(t)+1,1}, \ldots,b_{\vm_\eps'(t)+1,\vk'_{\vm_\eps'(t)+1}}$$
as well as variables
$$x_{\vm_\eps'(t)+1,i,j},\qquad i\in[\vk'_{\vm_\eps'(t)+1}],\qquad  j\in[\vk'_{\vm_\eps'(t)+1}-1].$$
Check $b_{\vm_\eps'(t)+1,i}$ is adjacent to $x_{\vm_\eps'(t)+1,i,j}'$, $j\in[\vk'_{\vm_\eps'(t)+1}-1]$.
Additionally, we pick a random maximal matching that matches each new check $b_{\vm_\eps'(t)+1,i}$ to (at most) one cavity from $\cC(t)$.
Finally, for each new variable $x_{\vm_\eps'(t)+1,i,j}$ we insert a check $f_{\vm_\eps'(t)+1,i,j}$ that pegs the variable to zero with probability $\beta$ independently.
Let $\cF_t''$ be the set of pairs $(i,j)$ for which such a check was inserted.
Obtain the random matrix $\vA''(t)$ from $\vA_\eps(t)$ by adding rows and columns corresponding to the additional checks and variables, representing each new edge of the Tanner graph by an independent matrix entry distributed as $\CHI$.

\begin{lemma}\label{Claim_Lemma_interpolation_3}
We have 
$$\Erw[\cN(\vm_\eps(t),\vm_\eps'(t)+1)]-\Erw[\cN(\vm_\eps(t),\vm_\eps'(t))]=
	\Erw\brk{\nul(\vA''(t))-\nul(\vA_\eps(t))-\vk'_{\vm_\eps'(t)+1}(\vk'_{\vm_\eps'(t)+1}-1)+|\cF''|}+o(1).$$
\end{lemma}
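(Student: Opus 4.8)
The plan is to exhibit an explicit coupling of $\cN(\vm_\eps(t),\vm_\eps'(t)+1)$ and $\cN(\vm_\eps(t),\vm_\eps'(t))$ via the principle of deferred decisions, turning the left-hand side into the difference $\nul(\vA''(t))-\nul(\vA_\eps(t))$ together with the two deterministic correction terms. Recall that $\cN_t=\nul\vA_\eps(t)+|\cF_t|-\sum_{i=1}^{\vm_\eps'(t)}\vk_i'(\vk_i'-1)$, so that $\cN(\vm_\eps(t),\vm_\eps'(t)+1)$ amounts to building the Tanner graph $\G_\eps(t)$ with one extra $b$-gadget --- the cluster of checks $b_{\vm_\eps'(t)+1,i}$, their private variables $x_{\vm_\eps'(t)+1,i,j}$, and the independently inserted unary checks $f$ thereon --- and then evaluating $\nul+|\cF|-\sum\vk'(\vk'-1)$ for that larger graph. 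But $\G''(t)$ is exactly $\G_\eps(t)$ with that gadget glued on: each new $b$-check is matched to at most one cavity of $\cC(t)$, each private variable is frozen with probability $\beta$ independently, and $\cF''=\cF_t''$ collects the frozen ones.

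First I would argue that on the likely event $\cE_t=\{|\cC(t)|\ge\vk'_{\vm_\eps'(t)+1}\}\cap\{$the Tanner graph of the $\vm_\eps'(t)$-gadget configuration has all its checks matched$\}$ the matrix underlying $\cN(\vm_\eps(t),\vm_\eps'(t)+1)$ and the matrix $\vA''(t)$ are identically distributed. This is the principle of deferred decisions applied to the construction of $\G_\eps(t)$: first expose the matching of all the $a_i$-checks and of the first $\vm_\eps'(t)$ $b$-gadgets (this yields $\G_\eps(t)$ and its cavity set $\cC(t)$), and only then match the $\vk'_{\vm_\eps'(t)+1}$ clones of the last gadget to a uniformly random set of that many leftover clones, which is precisely how $\G''(t)$ is built. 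By \Lem~\ref{Lemma_sums} and the choice \eqref{eqinterpol} of $\vm_\eps(t),\vm_\eps'(t)$ we have $\pr[\cE_t^{\mathrm c}]=o(n^{-1})$ once we also ask $\vk'_{\vm_\eps'(t)+1}\le\eps n/2$. Under this coupling $\vA_\eps(t)$ and $\cF_t$ are shared, so the terms $|\cF_t|$ and $\sum_{i\le\vm_\eps'(t)}\vk_i'(\vk_i'-1)$ cancel, and on $\cE_t$ we obtain the deterministic identity
$$\cN(\vm_\eps(t),\vm_\eps'(t)+1)-\cN(\vm_\eps(t),\vm_\eps'(t))=\nul(\vA''(t))-\nul(\vA_\eps(t))+|\cF''|-\vk'_{\vm_\eps'(t)+1}(\vk'_{\vm_\eps'(t)+1}-1).$$

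Second I would dispatch the complementary event. Adding the gadget changes the nullity by at most the number of added rows plus added columns, which is $O((\vk'_{\vm_\eps'(t)+1})^2)$, and the same crude bound governs $|\cF''|$ and the last correction term, so the absolute value of the displayed quantity is always $O((\vk'_{\vm_\eps'(t)+1})^2)$. The bad event is contained in $\{|\cC(t)|<\eps n/2\}\cup\{\vk'_{\vm_\eps'(t)+1}>\eps n/2\}$. Conditionally on $\vm_\eps'(t)$, the variable $\vk'_{\vm_\eps'(t)+1}$ is an independent copy of $\vk$ independent of $\cC(t)$; hence the contribution of the first event is at most $O(1)\cdot\Erw[\vk^2]\cdot\pr[|\cC(t)|<\eps n/2]=o(n^{-1})$, while the contribution of the second is $O(1)\cdot\Erw[(\vk')^2\vecone\{\vk'>\eps n/2\}]\to0$ since $\Erw[\vk^2]<\infty$. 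Taking expectations in the identity above and adding this $o(1)$ error yields the claimed formula.

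The only genuinely delicate point --- the ``hard part'' --- is making the distributional identity on $\cE_t$ airtight: one must verify that exposing the $a_i$-checks and the first $\vm_\eps'(t)$ gadgets before the last gadget really reproduces the law of a single random maximal matching of the full bipartite graph, so that the cavities seen by the extra gadget are genuinely uniform among the leftover $x$-clones and that on $\cE_t$ all the new $b$-checks (indeed all check clones) do get matched. Everything else --- the bookkeeping of $|\cF''|$ and $\sum\vk'(\vk'-1)$, and the soft truncation for the tail of $\vk$ --- is routine.
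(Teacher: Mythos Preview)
Your proof is correct and follows the same approach as the paper, which dispatches the lemma in two sentences: the event $|\cC(t)|\ge\vk'_{\vm_\eps'(t)+1}$ has probability $1-o(n^{-1})$ by \eqref{eqinterpol} and \Lem~\ref{Lemma_sums}, and on that event the principle of deferred decisions yields the distributional identity. Your write-up simply fleshes out the bookkeeping (cancellation of the shared $|\cF_t|$ and $\sum_{i\le\vm_\eps'(t)}\vk_i'(\vk_i'-1)$ terms) and makes the tail estimate on the complement explicit via the crude $O((\vk'_{\vm_\eps'(t)+1})^2)$ bound together with independence of $\vk'_{\vm_\eps'(t)+1}$ from $\cC(t)$ --- all of which the paper leaves implicit.
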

\begin{proof}
Due to \eqref{eqinterpol} and \Lem~\ref{Lemma_sums},  with probability $1-o(n^{-1})$ we have $|\cC(t)|\geq\vk_{\vm_\eps(t)+1}'$.
Hence, the assertion follows from the construction of $\vA(t)$ and the principle of deferred decisions.
\end{proof}

\begin{lemma}\label{Claim_Lemma_interpolation_4}
Let $Q(\alpha,\beta)=\Erw\brk{\vk(\alpha \beta^{\vk-1}-1)}$ for $\alpha\in[0,1]$.
Then
$$\Erw\abs{\Erw\brk{\nul(\vA''(t))-\nul(\vA(t))-\vk'_{\vm_\eps'(t)+1}(\vk'_{\vm_\eps'(t)+1}-1)+|\cF_t''|\mid\vA(t)}
-Q(\ALPHA,\beta)}=o_\eps(1).$$
\end{lemma}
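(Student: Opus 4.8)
The plan is to mimic the proof of \Lem~\ref{Lemma_A''} (in particular Claim~\ref{Claim_A''_6}), with one extra ingredient. The gadget attached on passing from $\vA_\eps(t)$ to $\vA''(t)$ --- the $k':=\vk'_{\vm_\eps'(t)+1}$ checks $b_{\vm_\eps'(t)+1,i}$, their private variables $x_{\vm_\eps'(t)+1,i,j}$, and the unary freezing checks indexed by $\cF_t''$ --- forms a \emph{forest}: every private variable lies in exactly one check $b_{\vm_\eps'(t)+1,i}$ and in at most one freezing check, and each $b_{\vm_\eps'(t)+1,i}$ touches exactly one cavity of $\cC(t)$. Hence the private variables can be summed out explicitly in Fact~\ref{Fact_nullity}, and the nullity change is reduced to a quantity that depends on $\mu_{\vA_\eps(t)}$ only through the marginals of the cavities hit by the $b$-checks; these are atoms or uniform by \Lem~\ref{Lemma_Spartition} and behave independently by \Lem s~\ref{lem:l-wise} and~\ref{Lemma_pinint}. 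Throughout write $X=\nul(\vA''(t))-\nul(\vA_\eps(t))-k'(k'-1)+|\cF_t''|$, so the goal is $\Erw|\,\Erw[X\mid\vA_\eps(t)]-Q(\ALPHA,\beta)\,|=o_\eps(1)$.

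\textbf{Step 1: good event.} Fix $\eps$ and choose $\Theta=\Theta(\eps)$ large enough that \Lem~\ref{Lemma_pinint} applies with a parameter $\delta=\delta(\eps)$ so small that, via \Lem~\ref{lem:l-wise} and $|\cC(t)|=\Omega(n)$, $\delta$-symmetry of $\mu_{\cC(t)}$ upgrades to $(\exp(-1/\eps^4),\lceil\exp(1/\eps^4)\rceil)$-symmetry. Let $\cE$ be the event that $k'\le\eps^{-1}$, that $|\cC(t)|\ge\eps n/2$ and $\max_{i\le n}\vd_i\le n^{1/2}$ and $\mu_{\cC(t)}$ is $\delta$-symmetric, and that the cavity-slots of the $b$-checks get matched to $k'$ distinct cavities lying in distinct variable nodes. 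By \eqref{eqinterpol}, \Lem~\ref{Lemma_sums}, \Lem~\ref{Lemma_pinint} and a union bound, $\pr[\cE^c\mid k'\le\eps^{-1}]\le\delta+o(1)$. Since adding $k'(k'-1)$ columns and at most $k'(k'-1)+k'$ rows shifts the nullity by at most $O((k')^2)$, we have $|X|=O((k')^2)$ deterministically. Splitting $\cE^c$ into $\{k'>\eps^{-1}\}$ --- where $\Erw[(k')^2\vecone\{k'>\eps^{-1}\}]=o_\eps(1)$ thanks to $\Erw[\vk^r]=O(1)$ for some $r>2$ --- and its complement --- where $|X|\le O(\eps^{-2})$ and the probability is $\le\delta+o(1)$, so choosing $\delta\le\eps^{3}$ makes the contribution $o_\eps(1)$ --- gives $\Erw[|X|\vecone_{\cE^c}]=o_\eps(1)$. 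It remains to analyse $X\vecone_\cE$.

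\textbf{Step 2: contract the forest.} Apply Fact~\ref{Fact_nullity} with old columns $\cX=\{c_1,\dots,c_{k'}\}$ (the cavities hit by $b_{\vm_\eps'(t)+1,1},\dots,b_{\vm_\eps'(t)+1,k'}$) and new columns $J$ (the private variables). The freezing checks peg the frozen private variables to $0$; over the remaining free private variables the check equations decouple, and for group $i$ with $f_i$ free private variables the number of completions is $q^{f_i-1}$ for \emph{every} value $\tau_{c_i}$ if $f_i\ge1$, and it is $\vecone\{\tau_{c_i}=0\}$ if $f_i=0$ (the equation then reads, up to a unit, $\tau_{c_i}=0$). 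Summing out, using $\sum_i f_i=k'(k'-1)-|\cF_t''|$, yields on $\cE$ the exact identity
\[
X \;=\; -(k'-|P|)\;+\;\log_q\mu_{\cC(t),\,\{c_i\,:\,i\in P\}}(\mathbf 0),\qquad P:=\{i\in[k']:f_i=0\},
\]
where $i\in P$ exactly when all $k'-1$ private variables of group $i$ are frozen, an event of probability $\beta^{k'-1}$ independently over $i$. The a priori bound $|X|=O((k')^2)$ then forces $\mu_{\cC(t),\{c_i:i\in P\}}(\mathbf 0)\ge q^{-O(\eps^{-2})}$ on $\cE$, which keeps the logarithm well behaved.

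\textbf{Steps 3--4: product approximation, expectation, and the obstacle.} As in Claim~\ref{Claim_A'''6}, on $\cE$ the number of frozen cavities among $c_1,\dots,c_{k'}$ is within $o(1)$ of $\Bin(k',\ALPHA)$ in total variation (drawing is essentially without replacement from $\cC(t)$ and $k'\ll|\cC(t)|$), and given that pattern $\mu_{\cC(t),\cX}$ lies within $\exp(-\Omega(1/\eps^4))$ of the product of its marginals --- atoms on $0$ at frozen cavities, uniform elsewhere, by \Lem~\ref{Lemma_Spartition} --- outside an event of probability $\exp(-\Omega(1/\eps^4))$, since $k'\le\eps^{-1}$ is far below the symmetry order $\lceil\exp(1/\eps^4)\rceil$ guaranteed on $\cE$. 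Because the product's mass on $\mathbf 0$ is $q^{-\#\{i\in P:\,c_i\text{ unfrozen}\}}\ge q^{-1/\eps}\gg\exp(-\Omega(1/\eps^4))$, and $|X|\le O(\eps^{-2})$ absorbs the rare failure, we obtain $\Erw[\,|X+k'-\#\{i\in P:c_i\text{ frozen}\}|\,\vecone_\cE\,]=o_\eps(1)$. Finally $k'\disteq\vk$ is independent of $\vA_\eps(t)$, the indicators $\vecone\{i\in P\}$ are independent of $\vA_\eps(t)$ and of the new matching, and $\pr[c_i\text{ frozen}\mid\vA_\eps(t)]=\ALPHA$ as $c_i$ is a uniform cavity of $\cC(t)$; so by linearity $\Erw[-k'+\#\{i\in P:c_i\text{ frozen}\}\mid\vA_\eps(t)]=\Erw[\vk(\ALPHA\beta^{\vk-1}-1)]=Q(\ALPHA,\beta)$ exactly, and adding the $\cE^c$-remainder from Step 1 (using also $\Erw[k'\vecone_{\cE^c}]=o_\eps(1)$) completes the proof. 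The one genuinely delicate point is the forest contraction in Step 2 --- deciding which new checks turn into hard constraints on a single cavity versus contributing a pure power of $q$, and checking that the normalisations $k'(k'-1)$ and $|\cF_t''|$ cancel to leave precisely $-(k'-|P|)+\log_q\mu(\mathbf 0)$; the pinning/symmetry apparatus is then used verbatim as in the proof of \Lem~\ref{Lemma_A''}, with the usual reminder to take the symmetry parameter as small as $\exp(-1/\eps^4)$ so that the product-measure substitution error survives multiplication by the $\eps^{-O(1)}$ bound on the nullity change.
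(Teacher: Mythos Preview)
Your proof is correct and rests on the same pillars as the paper's --- \Lem~\ref{Lemma_pinint} for symmetry, \Lem~\ref{lem:l-wise} for the upgrade to higher-order symmetry, Fact~\ref{Fact_nullity} for the nullity increment, and \Lem~\ref{Lemma_Spartition} to identify the marginals as atoms or uniform. Where you depart from the paper is in the execution: the paper splits into three regimes for $\ALPHA$ (near $0$, near $1$, bounded away from both) and asserts the answer $Q(\ALPHA,\beta)+o_\eps(1)$ in each case with minimal detail, whereas you first perform an explicit forest contraction (Step~2), summing out the private variables to obtain the exact identity
\[
X \;=\; -(k'-|P|)\;+\;\log_q\mu_{\cC(t),\{c_i:i\in P\}}(\mathbf 0),\qquad P=\{i:\text{all }k'-1\text{ private variables of group }i\text{ frozen}\}.
\]
This buys you a unified treatment of all $\ALPHA$: since the product mass at $\mathbf 0$ is always at least $q^{-|P|}\ge q^{-1/\eps}$ on $\cE$, the logarithm is uniformly stable under the $\exp(-\Omega(1/\eps^4))$ total-variation error from the product approximation, and no case split is needed. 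The resulting computation $\Erw[-k'+\#\{i\in P:c_i\text{ frozen}\}\mid\vA_\eps(t)]=\Erw[\vk(\ALPHA\beta^{\vk-1}-1)]$ is then a one-line linearity argument. Your version is more transparent and self-contained; the paper's is terser but relies on the reader reconstructing essentially the same contraction in each regime.
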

\begin{proof}
Pick $\zeta=\zeta(\eps)>0$ small enough and $\delta=\delta(\zeta)>0$ smaller still.
As $$\nul(\vA(t))-\nul(\vA''(t))+\vk'_{\vm_\eps'(t)+1}(\vk'_{\vm_\eps'(t)+1}-1)-|\cF_t''|\leq\vk'_{\vm_\eps(t)'+1}$$
and $\Erw[\vk^2]=O_\eps(1)$ we may condition on the event that $\vk'_{\vm_\eps(t)'+1}\leq\eps^{-1}$.
Moreover, \Lem~\ref{Lemma_pinint} shows that we may assume that $\mu_{\cC(t)}$ is $\delta$-symmetric.
Let $\cX$ be the set of cavities adjacent to the new checks $b_{\vm_\eps'(t)+1,i}$.
Also let $$\cU=\cbc{x_{\vm_\eps'(t)+1,i,j}':i\in[\vk'_{\vm_\eps'+1}],j\in[\vk'_{\vm_\eps'+1}-1]}\setminus\cF''_t$$ be the set of new unfrozen variables.
We consider three cases.
\begin{description}
\item[Case 1: $\ALPHA<\eps^4$] since $\vk'_{\vm_\eps'(t)+1}\leq\eps^{-1}$ the probability that any $b_{\vm_\eps(t)'+1,i}$ joins a frozen cavity is $o_\eps(\eps)$.
Hence, \Lem\ \ref{lem:l-wise} shows that with probability $1-o_\eps(\eps)$ the joint distribution $\mu_{\cC(t),\cX}$ 
is within $o_\eps(\eps)$ of the uniform distribution on $\FF_q^{\cX}$.
Therefore, with probability $1-o_\eps(\eps)$ we have
\begin{align}\label{eqClaim_Lemma_interpolation_4_1}
\Erw[\nul(\vA''(t))-\nul(\vA_\eps(t))-\vk'_{\vm_\eps'(t)+1}(\vk'_{\vm_\eps'(t)+1}-1)+|\cF''|\mid\vA_\eps(t)]&=
	-\Erw[\vk'_{\vm_\eps'(t)+1}]+o_\eps(1)=Q(\ALPHA,\beta)+o_\eps(1).
\end{align}
\item[Case 2: $\ALPHA>1-\eps^4$]
since $\vk_{\vm_\eps'(t)+1}\leq\eps^{-1}$ the probability that any $b_{\vm_\eps'(t)+1,i}$ joins an unfrozen cavity is $o_\eps(\eps)$.
Hence, with probability $1-o_\eps(\eps)$ we have
\begin{align}\nonumber
\Erw[\nul(\vA''(t))-\nul(\vA_\eps(t))-\vk'_{\vm_\eps'(t)+1}(\vk'_{\vm_\eps'(t)+1}-1)+|\cF''|\mid\vA_\eps(t)]&=
		-\Erw[\vk'_{\vm_\eps'(t)+1}(\beta^{\vk'_{\vm_\eps'(t)+1}}-1)]+o_\eps(1)\\
		&=Q(\ALPHA,\beta)+o_\eps(1).\label{eqClaim_Lemma_interpolation_4_2}
\end{align}
\item[Case 3: $\eps^4\leq\ALPHA\leq 1-\eps^4$]
the distribution of the number $X$ of frozen cavities in $\cX$ is within $o(1)$ of a binomial distribution $\Bin(\vk_{\vm_\eps(t)+1}',\ALPHA)$ in total variation.
Moreover, \Lem~\ref{lem:l-wise} shows that given any outcome of $X$ the joint distribution $\mu_{\cX}$ is within $o_\eps(\exp(-1/\eps))$ of a product distribution with probability $1-o_\eps(\exp(-1/\eps))$, provided that $\Theta$ is chosen large enough.
Thus, with probability $1-o_\eps(\eps)$ we obtain
\begin{align}\label{eqClaim_Lemma_interpolation_4_3}
\Erw[\nul(\vA''(t))-\nul(\vA_\eps(t))-\vk'_{\vm_\eps'(t)+1}(\vk'_{\vm_\eps'(t)+1}-1)+|\cF'|\mid\vA_\eps(t)]&=
	Q(\ALPHA,\beta)+o_\eps(1).
\end{align}
\end{description}
The assertion follows from \eqref{eqClaim_Lemma_interpolation_4_1}--\eqref{eqClaim_Lemma_interpolation_4_3}.
\end{proof}

\begin{lemma}\label{Claim_Lemma_interpolation_5}
We have
$\frac{\partial}{\partial t}\Erw[\cY_t]=(1-\eps)dn\Erw[(\vk-1)(\beta^{\vk}-1)]/k$.
\end{lemma}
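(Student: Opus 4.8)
The plan is to evaluate $\Erw[\cY_t]$ in closed form as a function of $t$ and then differentiate. Recall that $\cY_t=\sum_{i=1}^{\vm_\eps(t)}(\vk_i-1)(\beta^{\vk_i}-1)$, where by \eqref{eqinterpol} the index $\vm_\eps(t)\disteq\Po((1-\eps)tdn/k)$ is independent of the i.i.d.\ copies $(\vk_i)_{i\geq1}$ of $\vk$. Since $\beta\in[0,1]$ we have $\abs{(\vk_i-1)(\beta^{\vk_i}-1)}\leq\vk_i$, so that $\Erw\abs{(\vk-1)(\beta^{\vk}-1)}\leq\Erw[\vk]=k<\infty$; together with $\Erw[\vm_\eps(t)]<\infty$ this makes the random sum absolutely summable, which is all that is needed to justify the manipulations below.

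First I would apply Wald's identity in its simplest form---for a random sum whose number of summands is independent of the (i.i.d.) summands---to obtain
\begin{align*}
\Erw[\cY_t]&=\Erw[\vm_\eps(t)]\cdot\Erw[(\vk-1)(\beta^{\vk}-1)]=(1-\eps)\frac{tdn}{k}\,\Erw[(\vk-1)(\beta^{\vk}-1)].
\end{align*}
This is a linear function of $t$, and differentiating it immediately yields $\frac{\partial}{\partial t}\Erw[\cY_t]=(1-\eps)\frac{dn}{k}\Erw[(\vk-1)(\beta^{\vk}-1)]$, which is the assertion.

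Alternatively, one can imitate the computation in \Lem~\ref{Claim_PoissonDeriv}: write $\Erw[\cY_t]=\sum_{m\geq0}\pr\brk{\vm_\eps(t)=m}\,m\,\Erw[(\vk-1)(\beta^{\vk}-1)]$, differentiate each Poisson weight via $\frac{\partial}{\partial t}\pr\brk{\vm_\eps(t)=m}=\frac{(1-\eps)dn}{k}(\pr\brk{\vm_\eps(t)=m-1}-\pr\brk{\vm_\eps(t)=m})$, and resum the telescoping series to the same effect. Either way the computation is routine; there is no substantive obstacle. The only point deserving a word of justification is the interchange of expectation with differentiation/summation, which is immediate from the absolute summability just noted (the summands are bounded in modulus by $\vk_i$, and $\vm_\eps(t)$ has finite mean).
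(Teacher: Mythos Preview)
Your proof is correct and is precisely the ``straightforward calculation'' the paper alludes to; computing $\Erw[\cY_t]$ via Wald's identity (using the independence of $\vm_\eps(t)$ from the $\vk_i$) and then differentiating the resulting linear function of $t$ is the natural route, and your alternative via the Poisson derivative formula from \Lem~\ref{Claim_PoissonDeriv} is equally valid.
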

\begin{proof}
This is a straightforward calculation.
\end{proof}

\begin{proof}[Proof of \Prop~\ref{Lemma_interpolation}]
Combining Claims \ref{Claim_PoissonDeriv}--\ref{Claim_Lemma_interpolation_5}, we obtain
\begin{align}\label{eqLemma_interpolation_1}
\frac{\partial}{\partial t}\Erw[\cN_t]+\Erw[\cY_t]&=\frac{(1-\eps)dn}{k}
	\brk{\Erw\brk{\ALPHA^{\vk}-1-\vk(\ALPHA\beta^{\vk-1}-1)+(\vk-1)(\beta^{\vk}-1)}+o_\eps(1)}.
\end{align}
Since $x^k-kxy^{k-1}+(k-1)y^k\geq0$ for all $k\geq2$, $x,y\in[0,1]$, the assertion follows from~\eqref{eqLemma_interpolation_1}.
\end{proof}

\section{Concentration}\label{Sec_conc}

\noindent
At this point we have completed the proofs of \Prop s~\ref{Cor_lower} and~\ref{Lemma_interpolation}.
Thus, we know the expected rank of the random matrix $\vA_\eps$ with about $\eps dn$ cavities.
The aim of this section is to argue that the rank of the actual matrix $\vA$ that does not have any cavities and whose Tanner graph is simple is close to the expected rank of $\vA_\eps$ \whp\
In other words, we need to show that the rank of a random matrix is sufficiently concentrated that forbidding multi-edges is as inconsequential as conditioning on the event
	$$\cD=\cbc{\sum_{i=1}^n\vd_i=\sum_{i=1}^{\vm}\vk_i}.$$
Our main tool will be the local limit theorem for sums of independent random variables, which we use in \Sec~\ref{Sec_llt} to calculate the probability of $\cD$.
There we will also study the conditional distributions of $\vm$, $\sum_{i=1}^n\vd_i$ and $\sum_{i=1}^{\vm}\vk_i$ given $\cD$.
Further, in \Sec~\ref{Sec_welldef} we prove \Prop~\ref{Lemma_welldef}.
In \Sec~\ref{Sec_Cor_lower} we then derive \Prop~\ref{Cor_lower}.
Moreover, \Sec~\ref{Sec_Lemma_nulconc} contains the proof of \Prop~\ref{Lemma_nulconc}.
Finally, \Sec~\ref{Sec_LDPC} contains the proof of \Thm~\ref{Thm_LDPC}.

\subsection{The local limit theorem}\label{Sec_llt}

We recall the local limit theorem for sums of independent random variables.

\begin{theorem}[{\cite[p.~130]{Durrett}}]\label{Thm_llt}
Suppose that $(\vX_i)_{i\geq1}$ is a sequence of i.i.d.\ variables that take values in $\ZZ$ such that the greatest common divisor of the support of $\vX_1$ is one.
Also assume that $\Erw[\vX_1^2]=\sigma^2\in(0,\infty)$.
Then
\begin{align*}
\lim_{n\to\infty}\sup_{z\in\ZZ}\abs{\sqrt n\pr\brk{\sum_{i=1}^n\vX_i=z}-\frac{\exp(-(z-n\Erw[\vX_1])^2/(2n\sigma^2))}{\sqrt{2\pi}\sigma}}&=0.
\end{align*}
\end{theorem}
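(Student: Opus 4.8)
The plan is to run the classical Fourier-analytic argument, taking care that all error terms are uniform in $z$. Write $\varphi(t)=\Erw[\eul^{\mathrm i t\vX_1}]$ for the characteristic function, put $\mu=\Erw[\vX_1]$, and let $\psi(t)=\eul^{-\mathrm i t\mu}\varphi(t)$ be the characteristic function of the centred variable $\vX_1-\mu$. Since $\vX_1$ is $\ZZ$-valued, Fourier inversion gives for every $z\in\ZZ$
\[
\Pr\Bigl[\textstyle\sum_{j=1}^n\vX_j=z\Bigr]=\frac1{2\pi}\int_{-\pi}^{\pi}\eul^{-\mathrm i tz}\varphi(t)^n\,\dd t,
\]
while the Gaussian density with mean $n\mu$ and variance $n\sigma^2$ admits the representation
\[
\frac1{\sqrt{2\pi}\,\sigma}\exp\Bigl(-\frac{(z-n\mu)^2}{2n\sigma^2}\Bigr)=\sqrt n\,g_n(z),\qquad
g_n(z)=\frac1{2\pi}\int_{\RR}\eul^{-\mathrm i tz}\,\eul^{\mathrm i tn\mu-n\sigma^2t^2/2}\,\dd t.
\]
Substituting $t=s/\sqrt n$ in both integrals and using $\varphi(s/\sqrt n)^n=\eul^{\mathrm i s\sqrt n\,\mu}\psi(s/\sqrt n)^n$, the difference to be controlled reduces, after discarding the unit-modulus factor $\eul^{-\mathrm i sz/\sqrt n}$, to
\begin{align*}
\Bigl|\sqrt n\,\Pr\bigl[\textstyle\sum_j\vX_j=z\bigr]-\sqrt n\,g_n(z)\Bigr|
&\le\frac1{2\pi}\int_{-\pi\sqrt n}^{\pi\sqrt n}\bigl|\psi(s/\sqrt n)^n-\eul^{-\sigma^2s^2/2}\bigr|\,\dd s\\
&\qquad+\frac1{2\pi}\int_{|s|>\pi\sqrt n}\eul^{-\sigma^2s^2/2}\,\dd s,
\end{align*}
and the right-hand side no longer depends on $z$ — that is the whole point of the substitution.

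The second term tends to $0$ trivially, so everything comes down to the first integral, which I would split into three ranges via an $\eps/3$ argument. On a fixed window $|s|\le A$, the second-order expansion $\psi(h)=1-\tfrac12\sigma^2h^2+o(h^2)$ as $h\to0$ (valid because $\Erw[\vX_1^2]<\infty$) yields $\psi(s/\sqrt n)^n\to\eul^{-\sigma^2s^2/2}$ uniformly on $[-A,A]$, so that part of the integral vanishes as $n\to\infty$ for each fixed $A$. The same expansion supplies constants $\delta>0$, $c>0$ with $|\psi(t)|\le\eul^{-ct^2}$ for $|t|\le\delta$, hence $|\psi(s/\sqrt n)|^n\le\eul^{-cs^2}$ for $|s|\le\delta\sqrt n$; thus the contribution of $A\le|s|\le\delta\sqrt n$ is at most $\int_{|s|>A}(\eul^{-cs^2}+\eul^{-\sigma^2s^2/2})\,\dd s$, which is small uniformly in $n$ once $A$ is large. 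Finally, on the high-frequency range $\delta\sqrt n\le|s|\le\pi\sqrt n$ one bounds $|\psi(s/\sqrt n)|^n\le\eta^n$ with $\eta=\sup_{\delta\le|t|\le\pi}|\psi(t)|$, so this contribution is $O(\sqrt n\,\eta^n)$, which vanishes provided $\eta<1$.

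The one genuinely model-dependent point — and the main obstacle — is establishing $\eta<1$, and this is exactly where the arithmetic hypothesis is used. If $|\psi(t)|=|\varphi(t)|=1$ for some $t\in(0,\pi]$, then $\eul^{\mathrm i t\vX_1}$ is almost surely constant, which forces $\vX_1$ to be supported on a single coset of a proper sublattice of $\ZZ$; the assumption that the greatest common divisor of the support of $\vX_1$ equals $1$ precludes this for every $t\in(0,\pi]$, and then continuity of $\psi$ together with compactness of $\{\delta\le|t|\le\pi\}$ yields $\eta<1$. Assembling the three estimates — first choose $A$ large to make the tail terms small uniformly in $n$, then let $n\to\infty$ to dispose of the window $|s|\le A$ and the high-frequency range — gives the claimed convergence, and since every bound above is free of $z$, it is uniform in $z$.
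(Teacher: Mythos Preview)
The paper does not prove this statement; it is cited as a classical result from Durrett's textbook, and the paper proceeds directly to applications. Your Fourier-analytic argument is exactly the standard textbook proof, so there is nothing in the paper to compare it against.

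One step does not go through as written, however. In establishing $\eta<1$ you correctly observe that $|\varphi(t)|=1$ for some $t\in(0,\pi]$ forces the support of $\vX_1$ into a single coset $a+h\ZZ$ with $h\ge2$, but you then assert that ``gcd of the support equals $1$'' rules this out. It does not: take $\vX_1$ supported on $\{1,3\}$, where $\gcd\{1,3\}=1$ yet the support lies in $1+2\ZZ$, and indeed $|\varphi(\pi)|=1$. For this example the conclusion of the theorem itself fails, since $\sum_{j\le n}\vX_j$ always has the parity of $n$. The hypothesis that makes both the theorem and your argument work is that $\vX_1$ has \emph{span}~$1$, i.e.\ the gcd of all differences $x-y$ with $x,y$ in the support equals $1$; this is Durrett's actual assumption and is strictly stronger than the paper's phrasing. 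Under that hypothesis your proof is complete. (A related slip: the paper writes $\sigma^2=\Erw[\vX_1^2]$, whereas your expansion $\psi(h)=1-\tfrac12\sigma^2h^2+o(h^2)$ requires $\sigma^2=\Var(\vX_1)$; again the intended reading is the latter.)
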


\noindent
As an application of \Thm~\ref{Thm_llt} we obtain the following estimate.
Because $\Erw[\vd^r]+\Erw[\vk^r]<\infty$ for an $r>2$, the event
\begin{align}\label{eqM}
\cM=\cbc{\max_{i\in[n]}\vd_i+\max_{i\in[\vm]}\vk_i\leq \sqrt n/\log^9 n}\qquad\mbox{satisfies}\qquad
\pr\brk{\cM}=1-o(1).
\end{align}

\begin{corollary}\label{Cor_llt}
If $\gcd(\vk)$ divides $n$, then $\pr\brk{\cD}=\Theta(n^{-1/2})$ and $\pr\brk{\cD\mid\cM}=\Theta(n^{-1/2})$.
\end{corollary}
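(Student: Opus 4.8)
The plan is to deduce both estimates from the local limit theorem, \Thm~\ref{Thm_llt}, applied to the independent sums $\vec S=\sum_{i=1}^n\vd_i$ and $\vec T=\sum_{i=1}^{\vm}\vk_i$. Since the support of $\vd$ consists of multiples of $g_1=\gcd(\vd)$, we may write $\vd=g_1\vd^*$, where $\vd^*$ has support of g.c.d.\ one and $\Erw[(\vd^*)^2]<\infty$; likewise $\vk=g_2\vk^*$ with $g_2=\gcd(\vk)$. Then $\vec S=g_1\vec S^*$ with $\vec S^*=\sum_{i=1}^n\vd_i^*$, while conditionally on $\vm=m$ we have $\vec T=g_2\vec U^*_m$ with $\vec U^*_m=\sum_{i=1}^m\vk_i^*$. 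Writing $g=\gcd(g_1,g_2)$ and $g_j=g\,a_j$, so that $\gcd(a_1,a_2)=1$, the event $\cD$ restricted to $\{\vm=m\}$ is precisely $a_1\vec S^*=a_2\vec U^*_m$, i.e.\ there is $t\in\ZZ$ with $\vec S^*=a_2t$ and $\vec U^*_m=a_1t$. Hence $\pr\brk\cD=\sum_{m\ge0}\pr\brk{\vm=m}\sum_{t\in\ZZ}\pr\brk{\vec S^*=a_2t}\,\pr\brk{\vec U^*_m=a_1t}$, the two inner factors being independent. If $\vd$ is constant then $\vd^*$ is constant and $\vec S^*$ is deterministic, and similarly for $\vk$; in the fully degenerate case $\pr\brk\cD=\pr\brk{\vm=dn/k}=\Theta(n^{-1/2})$ by Stirling's formula (using here that $\gcd(\vk)=\vk$ divides $n$). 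These degenerate cases are routine and I dispose of them separately.

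In the non-degenerate case the crucial input is that \Thm~\ref{Thm_llt} is uniform over \emph{all} integers: it yields $\pr\brk{\vec S^*=z}=\Theta(n^{-1/2})$ uniformly over integers $z$ with $|z-n\Erw[\vd^*]|\le A\sqrt n$ (for any fixed $A$) and $\pr\brk{\vec S^*=z}=O(n^{-1/2})$ for all $z\in\ZZ$, and the analogous bounds for $\vec U^*_m$ with $n$ replaced by $m$ whenever $m=\Theta(n)$. Moreover the two Gaussian peaks are aligned: the first-moment identity $\Erw[\vec S]=nd=\Erw[\vm]\Erw[\vk]=\Erw[\vec T]$ translates into $a_2^{-1}n\Erw[\vd^*]=a_1^{-1}(dn/k)\Erw[\vk^*]$, so that for $|m-dn/k|\le\sqrt n$ the windows $\{t:|a_2t-n\Erw[\vd^*]|\le A\sqrt n\}$ and $\{t:|a_1t-m\Erw[\vk^*]|\le A\sqrt n\}$ overlap in an interval containing $\Omega(\sqrt n)$ integers, once $A$ is chosen large enough (depending only on the fixed quantities $k,g,a_1,a_2$). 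Summing over these $\Omega(\sqrt n)$ values of $t$ and over the $\Omega(\sqrt n)$ values of $m$ with $|m-dn/k|\le\sqrt n$, each carrying $\pr\brk{\vm=m}=\Theta(n^{-1/2})$, gives $\pr\brk\cD=\Omega(\sqrt n\cdot n^{-1/2}\cdot n^{-1/2})=\Omega(n^{-1/2})$. For the upper bound, $\sum_{t}\pr\brk{\vec S^*=a_2t}\pr\brk{\vec U^*_m=a_1t}\le\sup_z\pr\brk{\vec S^*=z}=O(n^{-1/2})$ uniformly in $m$, whence $\pr\brk\cD=O(n^{-1/2})$.

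For the conditional estimate, since $\pr\brk\cM=1-o(1)$ by~\eqref{eqM} and $\pr\brk{\cD\mid\cM}\le\pr\brk\cD/\pr\brk\cM=O(n^{-1/2})$, it remains to show $\pr\brk{\cD\setminus\cM}=o(n^{-1/2})$; combined with the first part this yields $\pr\brk{\cD\mid\cM}=\Theta(n^{-1/2})$. Write $\cM=\cM_0\cap\cM_1$, where $\cM_0$ bounds the variable degrees and $\cM_1$ the check degrees. For $\cM_0^c$: conditioning on $(\vd_j)_{j\ne i}$ and applying \Thm~\ref{Thm_llt} to the remaining $n-1$ i.i.d.\ summands gives $\sup_v\pr\brk{\vec S=v,\ \vd_i>\sqrt n/\log^9n}\le O(n^{-1/2})\,\pr\brk{\vd>\sqrt n/\log^9n}$; a union bound over $i\in[n]$ (contributing a factor $n$) together with $\pr\brk{\vd>\sqrt n/\log^9n}=O(n^{-r/2}\log^{9r}n)$ (from $\Erw[\vd^r]=O(1)$ with $r>2$) yields $\sup_v\pr\brk{\vec S=v,\ \cM_0^c}=n^{(1-r)/2+o(1)}=o(n^{-1/2})$; since $\vec T$ is independent of $(\vec S,\cM_0)$, this bounds $\pr\brk{\cD\cap\cM_0^c}$. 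The event $\cM_1^c$ is treated symmetrically after conditioning on $\vm=m$, bounding $\pr\brk{\cD\cap\cM_1^c\cap\{\vm=m\}}\le o(n^{-1/2})\,\pr\brk{\vm=m}$ for $m$ in the bulk $|m-dn/k|\le\sqrt n\log n$ (so $m=\Theta(n)$) and absorbing the Poisson tail $\pr\brk{|\vm-dn/k|>\sqrt n\log n}=\exp(-\Omega(\log^2n))$. When $\vd$ (resp.\ $\vk$) is degenerate the event $\cM_0^c$ (resp.\ $\cM_1^c$) is empty for large $n$.

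The main obstacle is this last step: squeezing $o(n^{-1/2})$, rather than merely $o(1)$, out of the low-probability event $\cM^c$. This is exactly where the hypothesis $\Erw[\vd^r]+\Erw[\vk^r]<\infty$ for some $r>2$ (and not just $r=2$) is used, since it is what makes $n^{1/2}\pr\brk{\vd>\sqrt n/\log^9n}=o(n^{-1/2})$. A secondary point requiring care is the bookkeeping for the degenerate degree distributions and the verification that the peaks of the two local limit theorems coincide, which ultimately rests on the first-moment identity $\Erw[\sum_i\vd_i]=\Erw[\sum_{i\le\vm}\vk_i]$ and on the divisibility assumption $\gcd(\vk)\mid n$.
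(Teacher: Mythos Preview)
Your argument is correct and arguably cleaner than the paper's. The paper proceeds by a four-way case split according to whether $\Var(\vd)$ and $\Var(\vk)$ vanish, and in the non-degenerate $\vk$ cases introduces a truncation $S'+S''$ (large vs.\ bounded $\vk_i$) so that the local limit theorem can be applied to $S''$ while only a central-limit estimate is needed for $S'$; it then argues the conditional statement directly inside each case. Your gcd-factoring reduces all non-degenerate cases to a single two-sum LLT computation, and the alignment check $g_1a_2=g_2a_1$ is exactly the identity $\Erw\vec S=\Erw\vec T$, so the overlap-of-windows step is sound. For the conditional estimate you take a genuinely different route: instead of redoing the LLT under $\cM$, you prove $\pr[\cD\setminus\cM]=o(n^{-1/2})$ by peeling off one extreme summand and applying the uniform LLT upper bound to the remaining $n-1$ (or $m-1$) terms, then invoking Markov on the $r$th moment. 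This makes the role of the hypothesis $r>2$ completely explicit, which the paper's proof does not.

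Two minor points. First, your phrase ``conditioning on $(\vd_j)_{j\ne i}$'' is the wrong way round: you condition on $\vd_i$ and use independence of the other summands, then apply \Thm~\ref{Thm_llt} to $\sum_{j\ne i}\vd_j$. Second, the divisibility hypothesis $\gcd(\vk)\mid n$ is not really about the peaks coinciding (that is the moment identity); it is what guarantees, in the degenerate-$\vd$ cases, that the deterministic value $\vec S=dn$ is actually a multiple of $\gcd(\vk)$ and hence attainable by $\vec T$. In your non-degenerate case you do not need it.
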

\begin{proof} For $\pr\brk{\cD\mid\cM}$
there are several cases to consider.  
First, that $\Var(\vd)=\Var(\vk)=0$, i.e., $\vd,\vk$ are both atoms.
Since $\vm$ is a Poisson variable with mean $dn/k$ we find $\pr\brk{\cD\mid\cM}=\pr\brk{\vm=d n/k}=\Theta(n^{-1/2})$.

Second, suppose that $\Var(\vd)>0$ but $\Var(\vk)=0$.
Then \Thm~\ref{Thm_llt} 
and~\eqref{eqM} show that
\begin{align}\label{eqCor_llt1}
\pr\brk{\abs{dn-\sum_{i=1}^n\vd_i}\leq\sqrt n\wedge k\mbox{ divides }\sum_{i=1}^n\vd_i\mid\cM}&=\Omega(1).
\end{align}
Further, given $\abs{dn-\sum_{i=1}^n\vd_i}\leq\sqrt n$ and $k|\sum_{i=1}^n\vd_i$, the event $k\vm=\sum_{i=1}^n\vd_i$
has probability $\Theta(n^{-1/2}) $ by the local limit theorem for the Poisson distribution.

The case that $\Var(\vd)=0$ but $\Var(\vk)>0$ can be dealt with similarly.
Indeed, pick a large enough number $L>0$ and let $I=\cbc{i\in[\vm]:\vk_i>L}$, $\vm'=|I|$, $\vm''=\vm-|I|$, $S'=\sum_{i\in I}\vk_i$
and $S''=\sum_{i\in[\vm]\setminus I}\vk_i$.
Then $\vm',\vm''$ are stochastically independent, and so are $S',S''$.
Moreover, since $S'$ satisfies the central limit theorem we have
\begin{align}\label{eqCor_llt1a}
\pr\brk{|S'-\Erw[S'\mid\cM]|\leq\sqrt n\mid\cM}&=\Omega(1).
\end{align}
Further, \Thm~\ref{Thm_llt} applies to $S''$, which is distributed as $\sum_{i=1}^{\vm}\vk_i\vecone\{\vk_i\leq L\}$.
Hence, as $n$ is divisible by $\gcd(k)$,
\begin{align}\label{eqCor_llt1b}
\pr\brk{S'+S''=dn\mid|S'-\Erw[S'\mid\cM]|,\cM }&=\Omega(n^{-1/2}).
\end{align}
Thus, \eqref{eqCor_llt1a} and \eqref{eqCor_llt1b} show that $\pr\brk{\cD\mid\cM}=\Omega(n^{-1/2})$.
The upper bound $\pr\brk{\cD\mid\cM}=O(n^{-1/2})$ follows from the uniform upper bound from \Thm~\ref{Thm_llt}.

A similar argument applies in the final case $\Var(\vd)>0$, $\Var(\vk)>0$.
Indeed, \Thm~\ref{Thm_llt} and~\eqref{eqM} yield
\begin{align}\label{eqCor_llt3}
\pr\brk{\gcd(\vk)\mbox{ divides }\sum_{i=1}^{n}\vd_i\mbox{ and }\abs{dn-\sum_{i=1}^{n}\vd_i}\leq\sqrt n\mid\cM}&=\Omega(1).
\end{align}
Moreover, \eqref{eqCor_llt1a} remains valid regardless the variance of $\vd$.
Hence, applying \Thm~\ref{Thm_llt} to $S''$, we obtain
\begin{align}\label{eqCor_llt1c}
\pr\brk{S'+S''=\sum_{i=1}^n\vd_i\,\bigg|\,
\gcd(\vk)\mbox{ divides }\sum_{i=1}^{n}\vd_i,\,\abs{dn-\sum_{i=1}^{n}\vd_i}\leq\sqrt n,\,
|S'-\Erw[S'\mid\cM]|,\cM }&=\Omega(n^{-1/2}).
\end{align}
Combining~\eqref{eqCor_llt3} and~\eqref{eqCor_llt1c}, we see that $\pr\brk{\cD\mid\cM}=\Omega(n^{-1/2})$.
The matching upper bound $\pr\brk{\cD\mid\cM}=O(n^{-1/2})$ follows from the universal upper bound from \Thm~\ref{Thm_llt} once more. The treatment for $\pr\brk{\cD}$ is similar and simpler.
\end{proof}

\remove{
\begin{lemma}\label{Claim_clt} \jcd{Not needed?}
We have
$\lim_{C\to\infty}\limsup_{n\to\infty}\pr\brk{\abs{dn-\sum_{i=1}^n\vd_i}+\abs{\vm-dn/k}>C\sqrt n\,\bigg|\,\cD\cap\cM}=0.$
\end{lemma}
\begin{proof}
We will show that for any small enough $\eps>0$ there is $C>0$ such that
\begin{align}\label{eqClaim_clt1}
\pr\brk{\cD\mid \abs{dn-\sum_{i=1}^n\vd_i}+\abs{\vm-dn/k}>C\sqrt n}&<\eps n^{-1/2}.
\end{align}
Since \Cor~\ref{Cor_llt} and~\eqref{eqM} show that $\pr\brk{\cD}=\Omega(n^{-1/2})$, the assertion follows from (\ref{eqClaim_clt1}) and Bayes' rule.

The proof of \eqref{eqClaim_clt1} resembles that of \Cor~\ref{Cor_llt}.
We consider several cases.
If $\Var(\vd)=\Var(\vk)=0$, then $\cD$ can only occur if $\vm=dn/k$.
Thus, \eqref{eqClaim_clt1} is immediate.

Second, suppose that $\Var(\vd)=0$ but $\Var(\vk)>0$.
Then the condition in \eqref{eqClaim_clt1} comes down to $|\vm-dn/k|>C\sqrt n$.
Indeed, because $\pr\brk{|\vm-dn/k|>\sqrt n\ln n}=o(1/n)$, we may assume that 
$C\sqrt n<|\vm-dn/k|\leq\sqrt n\ln n$.
Since $\Var(\vk)>0$, the support of $\vk$ contains at least two integers $\kappa,\kappa'$.
Consider the two random variables
\begin{align*}
X&=\sum_{i=1}^\vm\vk_i\vecone\{\vk_i\in\{\kappa,\kappa'\}\},&Y&=\sum_{i=1}^{\vm}\vk_i\vecone\{\vk_i\not\in\{\kappa,\kappa'\}\}.
\end{align*}
Then $\sum_{i=1}^{\vm}\vk_i=X+Y$.
Therefore, if $\cD$ occurs and $|\vm-dn/k|>C\sqrt n$, then either $|X-\Erw[X|\vm]|>C\sqrt n/3$ or 
$|Y-\Erw[Y|\vm]|>C\sqrt n/3$.
Since $C\sqrt n<|\vm-dn/k|\leq\sqrt n\ln n$, the central limit theorem yields $\pr\brk{|Y-\Erw[Y|\vm]|>C\sqrt n/3}<\eps^2$ for large enough $C$.
Furthermore, the local limit theorem for the multinomial distribution provides a uniform number $c$, dependent only on $K$, such that
$\pr\brk{X=\ell\mid\vm}\leq c n^{-1/2}$ for all integers $\ell$ and all $\vm$.
Hence,
\begin{align}\label{eqClaim_clt2}
\pr\brk{\cD\mid |Y-\Erw[Y|\vm]|>C\sqrt n/3,\,C\sqrt n<|\vm-dn/k|\leq\sqrt n\ln n}&\leq c\eps^2/\sqrt n\leq\eps/\sqrt n,
\end{align}
provided $\eps$ is small enough.
Similarly, the local limit theorem for the multinomial distribution shows that for any specific integer $\ell$ with $|\ell-\Erw[X\mid\vm]|>C\sqrt n/3$ we have $\pr\brk{X=\ell\mid m}\leq \eps/\sqrt n$  if $|\vm-dn/k|\leq\sqrt n\ln n.$
Thus,
\begin{align}\label{eqClaim_clt3}
\pr\brk{\cD\mid |X-\Erw[X|\vm]|>C\sqrt n/3,\,C\sqrt n<|\vm-dn/k|\leq\sqrt n\ln n,\,Y}&\leq\eps/\sqrt n.
\end{align}
Combining \eqref{eqClaim_clt2} and \eqref{eqClaim_clt3}, we obtain \eqref{eqClaim_clt1}.

Finally, there is the case that $\Var(\vd)>0$. 
As in the previous case we may assume that $|\vm-dn/k|\leq\sqrt n\ln n$.
There are two sub-cases.
First, that $|dn-\sum_{i=1}^n\vd_i|\leq C\sqrt n/9$.
Then $|\vm-dn/k|\geq C\sqrt n/2$.
Hence,
\begin{align}\label{eqClaim_clt4}
\abs{\Erw\brk{\sum_{i=1}^{\vm}\vk_i\mid\vm}-\sum_{i=1}^n\vd_i}\geq C\sqrt n/3.
\end{align}
Second, there is the sub-case $|\vm-dn/k|\leq C\sqrt n/9$.
Then $|dn-\sum_{i=1}^{\vm}\vd_i|\geq C\sqrt n/2$, whence we obtain (\ref{eqClaim_clt4}) as well.
Thus, in either sub-case the event $\cD$ occurs only if $\sum_{i=1}^{\vm}\vk_i$ precisely hits an integer that deviates from its mean given $\vm$ by at least $C\sqrt n/3$.
A similar application of the local limit theorem for the multinomial distribution as in the previous case shows that
\begin{align}\label{eqClaim_clt5}
\pr\brk{\sum_{i=1}^{\vm}\vk_i=\ell\mid\vm}&\leq \eps/\sqrt n&\qquad
	\mbox{for all $\ell,\vm$ with }
	|\vm-dn/k|\leq\sqrt n\ln n,\ \abs{\ell-k\vm}\geq C\sqrt n/3.
\end{align}
Thus, (\ref{eqClaim_clt1}) follows from (\ref{eqClaim_clt4}) and (\ref{eqClaim_clt5}).
\end{proof}

\begin{corollary}\label{Cor_clt} \jcd{Not needed?}
We have $\lim_{C\to\infty}\limsup_{n\to\infty}\pr\brk{\abs{k\vm_\eps-\sum_{i=1}^{\vm_\eps}\vk_i}>C\sqrt n\mid\cD\cap\cM}=0$.
\end{corollary}
\begin{proof}
Given $\eps>0$ pick a large enough $C'=C'(\eps)>0$, a small enough $\eta=\eta(\eps,C')>0$ and a large $C=C(\eta)>0$.
Let $\cE=\cbc{\abs{dn-\sum_{i=1}^n\vd_i}+\abs{\vm-dn/k}\leq C'\sqrt n}$.
Then \Lem~\ref{Claim_clt} shows that 
\begin{equation}\label{eqCor_clt_0}
\pr\brk{\cE\,|\,\cD}\geq1-\eps/2.
\end{equation}
Further, let $\cE'=\cE\cap\{|k\vm_\eps-\sum_{i=1}^{\vm_\eps}\vk_i|>C\sqrt n\}$.
Then due to \Cor~\ref{Cor_llt}, \eqref{eqM}, (\ref{eqCor_clt_0}) and Bayes' formula, it suffices to show that
\begin{align}\label{eqCor_clt_1}
\pr\brk{\cD\mid\cE'}&\leq \eta/\sqrt n.
\end{align}
To verify this inequality, we notice that
$\pr\brk{\cD\mid\cE'}=\pr\brk{\sum_{\vm_\eps<i\leq\vm}\vk_i=\sum_{i=1}^n\vd_i-\sum_{i=1}^{\vm_\eps}\vk_i\mid\cE'}.$
Hence, providing that $C/C'$ is sufficiently large, for $\cD$ to occur we need $\sum_{\vm_\eps<i\leq\vm}\vk_i$ to hit a particular integer that deviates from the expectation $k\vm_\eps$ of the sum by at least $C\sqrt n/2$.
Thus,  \Thm~\ref{Thm_llt} yields (\ref{eqCor_clt_1}) for large enough $C=C(\eta)$.
\end{proof}
}

\subsection{Proof of \Prop~\ref{Lemma_welldef}}\label{Sec_welldef}
\Prop~\ref{Lemma_welldef} is a consequence of \Lem~\ref{Cor_llt} and the following statement.

\begin{lemma}\label{Lemma_simple}
We have $\pr\brk{\vA_0\in\cS|\cD}=\Omega(1)$.
\end{lemma}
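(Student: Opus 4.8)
The plan is to decompose the statement into two parts: a local limit theorem argument showing that the degree sequence is typically well behaved even after conditioning on $\cD$, and the classical fact that the bipartite configuration model on a well behaved degree sequence is simple with probability bounded away from $0$. Write $\vec D=(\vm,(\vd_i)_{i\le n},(\vk_i)_{i\le\vm})$ for the degree sequence; the event $\cD$ is a function of $\vec D$, and on $\cD$ the random maximal matching defining the Tanner graph of $\vA_0$ is a uniformly random \emph{perfect} matching of the clone sets $\bigcup_j\{x_j\}\times[\vd_j]$ and $\bigcup_i\{a_i\}\times[\vk_i]$. Hence on $\cD$ we have $\pr[\vA_0\in\cS\mid\vec D]=p(\vec D)$, where $p(\vec D)$ denotes the probability that a uniformly random perfect matching of these two sets induces a simple graph. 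Since $\pr[\cD]=\Theta(n^{-1/2})$ by \Cor~\ref{Cor_llt}, it suffices to show $\pr[\vA_0\in\cS,\,\cD]=\Omega(n^{-1/2})$. Fix a large constant $C$ and let $\mathcal{G}$ be the event that $\sum_j\vd_j^2\le Cn$, that $\sum_i\vk_i^2\le Cn$, that $\vm$, $\sum_j\vd_j$ and $\sum_i\vk_i$ lie within $\sqrt n\log n$ of their means (so that $m:=\sum_j\vd_j=\Theta(n)$ on $\cD\cap\mathcal{G}$), and that $\max_j\vd_j\vee\max_i\vk_i\le\sqrt n/\log^9 n$. By the weak law of large numbers applied to $\vd^2$ and $\vk^2$ (whose means are finite), Chebyshev's inequality, and \eqref{eqM}, $\pr[\mathcal{G}]\ge9/10$ once $C$ and $n$ are large.

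The first, routine, part is the bound $\pr[\cD\cap\mathcal{G}]=\Omega(n^{-1/2})$. Since the universal upper bound of \Thm~\ref{Thm_llt} is insensitive to conditioning on the values of atypically large degrees, the case analysis in the proof of \Cor~\ref{Cor_llt} also yields $\pr[\cD\mid\mathcal{G}^c]=O(n^{-1/2})$; as $\pr[\mathcal{G}^c]=o(1)$, this gives $\pr[\cD\cap\mathcal{G}^c]=o(n^{-1/2})$ and hence $\pr[\cD\cap\mathcal{G}]=\pr[\cD]-o(n^{-1/2})=\Theta(n^{-1/2})$. It therefore remains to prove that $p(\vec D)=\Omega(1)$ uniformly over all $\vec D$ for which $\cD\cap\mathcal{G}$ holds, because then $\pr[\vA_0\in\cS,\,\cD]\ge\pr[\vA_0\in\cS,\,\cD\cap\mathcal{G}]\ge\bigl(\inf_{\cD\cap\mathcal{G}}p(\vec D)\bigr)\pr[\cD\cap\mathcal{G}]=\Omega(n^{-1/2})$.

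This last bound is the heart of the matter, and I would prove it by the method of moments (Brun's sieve). Let $Z$ count the unordered pairs of matching edges that join the same variable node to the same check node, so the induced Tanner graph is simple exactly when $Z=0$. A uniformly random perfect matching of two $m$-element sets joins two prescribed clones of $x_j$ to two prescribed clones of $a_i$ with probability $1/(m(m-1))$, whence
\[
\Erw[Z\mid\vec D]=\sum_{i,j}\binom{\vd_j}2\binom{\vk_i}2\frac2{m(m-1)}=\frac{\bigl(\sum_j\vd_j(\vd_j-1)\bigr)\bigl(\sum_i\vk_i(\vk_i-1)\bigr)}{2m(m-1)}\le M
\]
for a constant $M=M(C)$ on $\cD\cap\mathcal{G}$, by the second-moment bounds and $m=\Theta(n)$. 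More generally, for each fixed $r$ the factorial moment $\Erw[(Z)_r\mid\vec D]$ is a sum over $r$-tuples of distinct parallel-edge pairs: the tuples built from $4r$ pairwise distinct clones contribute $(\Erw[Z\mid\vec D])^r(1+o(1))$, while any tuple in which two pairs share a clone (hence an edge) forces a triple edge between some $x_j$ and $a_i$ and therefore contributes a factor $O\bigl((\max_j\vd_j)(\max_i\vk_i)/m\bigr)=O(\log^{-18}n)$ --- this is exactly where the maximum-degree bound from \eqref{eqM} enters. Consequently $\Erw[(Z)_r\mid\vec D]=\lambda_{\vec D}^{\,r}+o(1)$ uniformly, with $\lambda_{\vec D}:=\Erw[Z\mid\vec D]\le M$, and Brun's sieve gives $p(\vec D)=\pr[Z=0\mid\vec D]=\mathrm{e}^{-\lambda_{\vec D}}+o(1)\ge\mathrm{e}^{-M}-o(1)=\Omega(1)$. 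The main obstacle I foresee is the combination of the bookkeeping in the first part --- making the local limit argument survive the conditioning on the rare event $\cD$, in the spirit of \Cor~\ref{Cor_llt} --- with the verification in the last part that degrees as large as $n^{1/2-\Omega(1)}$ do not spoil the Poisson limit; the latter is precisely what $\cM$ in \eqref{eqM} is there to control, which is why the authors can reasonably call the whole argument ``standard''.
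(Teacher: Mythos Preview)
Your proposal is correct and follows essentially the same route as the paper: isolate a ``good'' degree-sequence event that survives conditioning on $\cD$, then run Brun's sieve on the multi-edge count to obtain $\pr[\text{simple}\mid\vec D]=\exp(-\lambda_{\vec D})+o(1)\ge e^{-M}+o(1)$ uniformly on the good event. The one structural difference is in how the conditioning is handled: whereas you argue $\pr[\cD\mid\mathcal{G}^c]=O(n^{-1/2})$ via the uniform upper bound in the local limit theorem, the paper instead conditions on $\cD\cap\cM$ first and then proves (\Lem~\ref{Lemma_sm}) that $\frac1n\sum\vd_i^2$ and $\frac1n\sum\vk_i^2$ converge in probability on that event, using Chernoff bounds on binned degree counts to obtain $o(1/n)$ failure probabilities that survive division by $\pr[\cD\cap\cM]=\Theta(n^{-1/2})$. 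This sidesteps the extra case analysis your LLT-upper-bound route would need when $\Var(\vd)=0$ or $\Var(\vk)=0$ and the bad event lies entirely on one side of the bipartition, which is precisely the bookkeeping you flagged as the main obstacle.
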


\noindent
Thus, we are left to prove \Lem~\ref{Lemma_simple}.

\begin{lemma}\label{Lemma_sm}
On the event $\cM\cap\cD$ we have
$\frac1n\sum_{i=1}^n\vd_i^2\to \Erw[\vd^2]$, $\frac1n\sum_{i=1}^{\vm}\vk_i^2\to d\Erw[\vk^2]/k$ in probability.
\end{lemma}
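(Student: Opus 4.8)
The plan is to treat the two sums separately, since $\sum_{i=1}^n \vd_i^2$ involves a fixed number $n$ of i.i.d.\ terms while $\sum_{i=1}^{\vm}\vk_i^2$ has a random (Poisson) number of summands, and then to argue that conditioning on $\cM\cap\cD$ does not disturb the law of large numbers. First I would observe that, unconditionally, $\frac1n\sum_{i=1}^n\vd_i^2 \to \Erw[\vd^2]$ and $\frac1{\vm}\sum_{i=1}^{\vm}\vk_i^2\to \Erw[\vk^2]$ in probability by the weak law of large numbers (the first moments $\Erw[\vd^2],\Erw[\vk^2]$ are finite because $\Erw[\vd^r]+\Erw[\vk^r]<\infty$ for some $r>2$), while $\vm/n\to d/k$ in probability since $\vm\disteq\Po(dn/k)$; hence $\frac1n\sum_{i=1}^{\vm}\vk_i^2\to \frac dk\Erw[\vk^2]$ in probability as well. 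Quantitatively, for any fixed $\xi>0$ there is $c=c(\xi)>0$ with $\pr[\,|\frac1n\sum_{i=1}^n\vd_i^2-\Erw[\vd^2]|>\xi\,]\le n^{-c}$ and similarly for the $\vk$-sum; this follows from a truncation argument together with a Bernstein/Chernoff bound on the truncated part (the terms $\vd_i^2\vecone\{\vd_i\le n^{1/2}/\log^9 n\}$ are bounded by $n/\log^{18}n$ and have finite variance after the truncation, so the deviation probability is stretched-exponentially small in $n^{1-o(1)}$, certainly $o(n^{-1/2})$), and the probability that truncation changes anything is $o(n^{-1/2})$ by the $r$-th moment bound via Markov. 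Actually, rather than invoke a quantitative large-deviation bound, the cleanest route is to use \Lem~\ref{Lemma_sums} directly: applying it with $\vec\lambda=\vd^2$ is not quite legal since only $\Erw[\vd^r]<\infty$ with $r>2$ gives $\Erw[(\vd^2)^{r/2}]<\infty$ and $r/2$ may be $\le 1$, so instead I would run the truncation by hand and bound the deviation probability by $o(1/n)$ as in the proof of \Lem~\ref{Lemma_sums}.

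The key remaining point is the transfer from the unconditional statement to the conditional one. Here I would use \Cor~\ref{Cor_llt}, which gives $\pr[\cD]=\Theta(n^{-1/2})$ and $\pr[\cD\mid\cM]=\Theta(n^{-1/2})$, together with \eqref{eqM}, which gives $\pr[\cM]=1-o(1)$. For any event $\cA$ with $\pr[\cA]=o(n^{-1/2})$ we then get, by Bayes,
\[
\pr[\cA\mid \cM\cap\cD]\le \frac{\pr[\cA]}{\pr[\cM\cap\cD]}=\frac{\pr[\cA]}{\pr[\cM]\pr[\cD\mid\cM]}=\frac{o(n^{-1/2})}{\Theta(n^{-1/2})}=o(1).
\]
Applying this with $\cA=\{|\frac1n\sum_{i=1}^n\vd_i^2-\Erw[\vd^2]|>\xi\}$ and $\cA=\{|\frac1n\sum_{i=1}^{\vm}\vk_i^2-\frac dk\Erw[\vk^2]|>\xi\}$ yields the two claimed convergences in probability on $\cM\cap\cD$, once we have established that both these deviation events indeed have probability $o(n^{-1/2})$ (in fact $o(1/n)$ from the truncation argument above, which is more than enough).

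The main obstacle is purely the bookkeeping in the deviation bound for $\frac1n\sum_{i=1}^{\vm}\vk_i^2$: because the number of summands is itself Poisson, one must first reveal $\vm$, use the concentration $\pr[\,|\vm-dn/k|>\sqrt n\log n\,]=o(1/n)$, and then, on that event, apply the truncated-sum estimate to a sum of a near-deterministic number of i.i.d.\ terms, combining the fluctuation of $\vm$ (which contributes a multiplicative $(1+o(1))$ to the mean) with the fluctuation of the truncated sum. All of this is routine but needs to be written carefully so that the final error probability stays $o(n^{-1/2})$, which is exactly the threshold demanded by the Bayes step above. I do not anticipate any conceptual difficulty beyond this.
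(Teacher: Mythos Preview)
Your overall strategy is correct and coincides with the paper's: obtain an unconditional deviation bound that is $o(n^{-1/2})$ (in fact $o(1/n)$), and then transfer to the conditional statement via Bayes and $\pr[\cM\cap\cD]=\Theta(n^{-1/2})$ from \Cor~\ref{Cor_llt} and~\eqref{eqM}. The paper carries out the concentration step slightly differently: instead of a single Bernstein bound on the truncated sum $\sum_i\vd_i^2\vecone\{\vd_i\le\sqrt n/\log^9 n\}$, it partitions the degree range into a finite initial segment $[0,L]$ plus dyadic blocks $((1+\delta)^{h-1}L,(1+\delta)^h L]$, applies Chernoff to the \emph{counts} $Q_j,R_h$ in each block, and then reassembles $\sum_i\vk_i^2$ from these counts up to a multiplicative $(1\pm O(\delta))$. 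Your direct Bernstein route is shorter; the paper's blockwise route avoids having to estimate the variance $\Erw[\vd^4\vecone\{\vd\le\sqrt n/\log^9 n\}]$.

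There is one small gap in your write-up. You assert that ``the probability that truncation changes anything is $o(n^{-1/2})$ by the $r$-th moment bound via Markov''. This is not true for $r\in(2,3)$: the union bound gives $\pr[\max_i\vd_i>\sqrt n/\log^9 n]=O(n^{1-r/2}(\log n)^{9r})$, which for, say, $r=2.5$ is only $O(n^{-1/4+o(1)})$. In particular your claim that the \emph{untruncated} deviation event $\cA$ has probability $o(n^{-1/2})$ fails. The fix is immediate and costs nothing: since you are conditioning on $\cM$, and on $\cM$ the truncation is vacuous, you have $\cA\cap\cM=\cA'\cap\cM$ for the truncated event $\cA'$, whence
\[
\pr[\cA\mid\cM\cap\cD]=\frac{\pr[\cA\cap\cM\cap\cD]}{\pr[\cM\cap\cD]}\le\frac{\pr[\cA']}{\pr[\cM\cap\cD]}=\frac{o(1/n)}{\Theta(n^{-1/2})}=o(1),
\]
and the argument goes through. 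The $\vk$-sum is handled the same way after first restricting to $\{|\vm-dn/k|\le\sqrt n\log n\}$, which indeed has probability $1-o(1/n)$.
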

\begin{proof}
We will only prove the statement about the $\vk_i$; the same (actually slightly simplified) argument applies to the $\vd_i$.
By tail bounds on the Poisson distribution we may condition on $\{\vm=m\}$ for some integer $m$ with $|m-dn/k|\leq\sqrt n\ln n$.
Fix a small $\delta>0$ and a large enough $L=L(\delta)>0$.
Given $\vm=m$ the variables $Q_j=\sum_{i\in[\vm]}\vecone\{\vk_i=j\}$ have a bionomial distribution.
Therefore, the Chernoff bound yields
$\pr\brk{|Q_j-dn\pr\brk{\vk=j}/k|\leq\sqrt n\ln n\mid\vm=m}=1-o(1/n)$ for any $j\leq L$.
Hence, \eqref{eqM} and \Cor~\ref{Cor_llt} yield
\begin{align}\label{eqLemma_sm_1}
\pr\brk{\forall j\leq L:|Q_j-dn\pr[\vk=j]/k|\leq \sqrt n\ln n\mid\cD\cap\cM,\,\vm=m}=1-o(1).
\end{align}
Further, let 
\begin{align*}
R_h&=\sum_{j\geq3}\vecone\{(1+\delta)^{h-1}L< j\leq (1+\delta)^{h}L\wedge \sqrt n/\ln^9n\}Q_j,&
\bar R_h&=m\sum_{j\geq3}\vecone\{(1+\delta)^{h-1}L< j\leq (1+\delta)^{h}L\wedge \sqrt n/\ln^9n\}\pr\brk{\vk=j}
\end{align*}
and let $\cH$ be the set of all integers $h\geq1$ with $(1+\delta)^{h-1}L\leq \sqrt n/\ln^9n$.
Then the Chernoff bound implies that
\begin{align}\label{eqLemma_sm_2}
\pr\brk{\forall h\in\cH:\abs{R_h-\bar R_h}>\delta\bar R_h+\ln^2n\mid\cD\cap\cM,\vm=m}&=o(n^{-1}).
\end{align}
Finally, if $|Q_j-dn\pr\brk{\vk=j}/k|\leq\sqrt n\ln n$ for all $j\leq L$ and 
$\abs{R_h-\bar R_h}\leq\delta\bar R_h+\ln^2n$ for all $h\in\cH$, then
\begin{align*}
\frac1n\sum_{i=1}^m\vk_i^2&\leq o(1)+\frac dk\Erw\brk{\vk^2\vecone\{\vk\leq L\}}
	+\frac{d}{kn}\sum_{h\in\cH}(1+\delta)^{2h}L^2R_h\\
&=o(1)+\frac dk\Erw\brk{\vk^2\vecone\{\vk\leq L\}}
	+\frac{d}{kn}\sum_{h\in\cH}(1+\delta)^{2h+1}L^2\bar R_h\leq\frac{(1+\delta)d}k\Erw[\vk^2]+o(1),&\mbox{and analogously}\\
\frac1n\sum_{i=1}^m\vk_i^2&\geq \frac{(1-\delta)d}k\Erw[\vk^2]+o(1).
\end{align*}
Since this holds true for any fixed $\delta>0$, the assertion follows from \eqref{eqLemma_sm_1} and \eqref{eqLemma_sm_2}.
\end{proof}

\begin{lemma}\label{Lemma_doubleEdges}
Let $Y$ be the number of multi-edges of the Tanner graph and let $\ell\geq1$.
There is $\lambda>0$ such that on
	$$\cM\cap\cD\cap
\cbc{\sum_{i=1}^n\vd_i=dn+o(n),\,\sum_{i=1}^n\vd_i^2=n\Erw[\vd^2]+o(n)}\cap\cbc{
\sum_{i=1}^{\vm}\vk_i^2=dn\Erw[\vk^2]/k+o(n)}\cap\{\vm=dn/k+o(n)\}$$ we have
$
\Erw\brk{\prod_{i=1}^\ell Y-i+1\,\bigg|\,(\vd_i)_{i\in[n]},(\vk_i)_{i\in\vm}}=\lambda^\ell+o(1).
$
\end{lemma}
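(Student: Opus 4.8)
The plan is to use the method of factorial moments. Fix an outcome of the degree sequences $(\vd_i)_{i\in[n]}$, $(\vk_i)_{i\le\vm}$ lying in the stated good event; in particular $\cD$ holds, so the number $N:=\sum_{i=1}^n\vd_i=\sum_{i=1}^{\vm}\vk_i$ of variable-clones equals the number of check-clones, and the Tanner graph arises from a uniformly random \emph{perfect} matching $\vec\Gamma$ between the clone sets $\bigcup_i\{x_i\}\times[\vd_i]$ and $\bigcup_i\{a_i\}\times[\vk_i]$. Writing $e_{x,a}$ for the number of matching edges joining a clone of $x$ with a clone of $a$, we take $Y=\sum_{x,a}\binom{e_{x,a}}{2}$, so that a \emph{double edge} is an unordered pair of parallel $x$--$a$ clone-edges, specified by a $2$-subset of the clones of a variable $x$, a $2$-subset of the clones of a check $a$, and one of the two pairings between them. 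Then $(Y)_\ell=Y(Y-1)\cdots(Y-\ell+1)$ counts ordered $\ell$-tuples of distinct double edges all realised by $\vec\Gamma$, and I would compute $\Erw[(Y)_\ell\mid\text{degrees}]$ by summing over all such tuples the probability that $\vec\Gamma$ contains the prescribed clone-pairings. For fixed $j$, a uniformly random perfect matching of an $N$-set contains $j$ prescribed disjoint clone-edges with probability $(N-j)!/N!=(1+o(1))N^{-j}$, since $N=\Theta(n)$.

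The dominant contribution comes from \emph{generic} tuples, in which the $\ell$ double edges lie on $\ell$ distinct variables $x_1,\dots,x_\ell$ and $\ell$ distinct checks $a_1,\dots,a_\ell$; automatically then all $2\ell$ variable-clones and all $2\ell$ check-clones involved are distinct. The generic part of $(Y)_\ell$ equals exactly $2^\ell\bigl(\sum_{x_1,\dots,x_\ell\text{ distinct}}\prod_i\binom{\vd_{x_i}}{2}\bigr)\bigl(\sum_{a_1,\dots,a_\ell\text{ distinct}}\prod_i\binom{\vk_{a_i}}{2}\bigr)\cdot(N-2\ell)!/N!$. Since $\sum_{(x_1,\dots,x_\ell)}\prod_i\binom{\vd_{x_i}}{2}=\bigl(\sum_x\binom{\vd_x}{2}\bigr)^\ell$ exactly, and the non-distinct tuples contribute at most $O\bigl((\max_i\vd_i)^2 n\bigr)\cdot\bigl(\sum_x\binom{\vd_x}{2}\bigr)^{\ell-2}=o(1)\cdot\bigl(\sum_x\binom{\vd_x}{2}\bigr)^\ell$ by the bound $\max_i\vd_i\le\sqrt n/\log^9n$ from $\cM$ together with $\sum_x\binom{\vd_x}{2}=\Theta(n)$ (and similarly for checks), the generic part equals $(1+o(1))\bigl(2N^{-2}\sum_x\binom{\vd_x}{2}\sum_a\binom{\vk_a}{2}\bigr)^\ell$. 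The lemma's hypotheses give $N=dn+o(n)$, $\sum_x\binom{\vd_x}{2}=\tfrac12\bigl(\sum_x\vd_x^2-\sum_x\vd_x\bigr)=\tfrac n2\Erw[\vd(\vd-1)]+o(n)$, and, using $\vm=dn/k+o(n)$ and $\sum_a\vk_a^2=\tfrac{dn}{k}\Erw[\vk^2]+o(n)$, $\sum_a\binom{\vk_a}{2}=\tfrac{dn}{2k}\Erw[\vk(\vk-1)]+o(n)$. Substituting yields generic part $=\lambda^\ell+o(1)$ with $\lambda=\dfrac{\Erw[\vd(\vd-1)]\,\Erw[\vk(\vk-1)]}{2dk}$, which is positive as soon as $\pr[\vd\ge2]>0$ (and one may put $\lambda=0$ in the degenerate case $\vd\equiv1$, where $Y\equiv0$ and there is nothing to prove).

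It remains to show that the non-generic tuples contribute $o(1)$ in total. Since $\ell$ is fixed there are only finitely many types of degeneracy — two of the prescribed clone-pairings are forced to coincide, two double edges share a variable or a check, or some pair $(x,a)$ carries three or more parallel edges — so it suffices to bound each type crudely. The uniform device is to replace every $\binom{\vd_x}{j}$ by $\vd_x^j\le(\max_i\vd_i)^{j-2}\vd_x^2$ (and $\binom{\vk_a}{j}$ by $\vk_a^j\le(\max_i\vk_i)^{j-2}\vk_a^2$), and combine $\sum_x\vd_x^2=O(n)$, $\sum_a\vk_a^2=O(n)$, $\max_i\vd_i+\max_i\vk_i\le\sqrt n/\log^9n$ with the matching estimate $(N-j)!/N!=O(n^{-j})$: relative to the generic term of order $1$, each degeneracy loses either a factor $n^{-1}$ (one fewer free vertex, or one extra forced pairing) or a factor $\log^{-\Omega(1)}n$ (an extra clone placed on a vertex whose degree is bounded by $\sqrt n/\log^9n$), so each type contributes $o(1)$. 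I expect this error analysis to be the only real work: one must check that \emph{every} coincidence pattern among clones, variables and checks is dominated by the generic term, and the sole structural input that makes this go through — in the absence of any control on a fourth moment of $\vd$ — is the maximum-degree bound supplied by $\cM$. Combining the generic term with these error bounds gives $\Erw[(Y)_\ell\mid\text{degrees}]=\lambda^\ell+o(1)$, as claimed.
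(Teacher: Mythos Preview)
Your argument is correct and follows essentially the same route as the paper: compute the $\ell$th factorial moment of $Y$ by summing over configurations, identify the main contribution as $\ell$ clone-disjoint double edges giving $\lambda^\ell$ with $\lambda=\Erw[\vd(\vd-1)]\Erw[\vk(\vk-1)]/(2dk)$, and kill all degenerate configurations using only $\sum_i\vd_i^2,\sum_i\vk_i^2=O(n)$ together with the maximum-degree bound from $\cM$. The paper organises the error terms slightly differently---it partitions by the multiplicity vector $\vw=(w_1,\dots,w_\ell)$ and shows that $\sum_i w_i>2\ell$ contributes $O(\log^{-1}n)$ via $\max_i\vd_i^{\,w-2\ell}\max_i\vk_i^{\,w-2\ell}$---whereas you partition into ``generic'' versus ``non-generic'' tuples; but the underlying estimates are identical, and in particular both proofs rely on exactly the same structural input (the $\sqrt n/\log^9 n$ degree cap) in lieu of any higher moment of $\vd,\vk$. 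Your explicit interpretation $Y=\sum_{x,a}\binom{e_{x,a}}{2}$ may differ from counting pairs $(x,a)$ with $e_{x,a}\ge2$, but the two coincide up to the triple-edge events you already show are negligible, so the conclusion and the value of $\lambda$ are unaffected.
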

\begin{proof}
To estimate the $\ell$th factorial moments of $Y$ for $\ell\geq1$, we split the random variable into a sum of indicator variables.
Specifically, let $U_\ell$ be the set of all families $(a_i,y_i,w_i)_{i\in\ell}$ with $a_i\in[\vm]$, $y_i\in[\vn]$ and $2\leq w_i\leq \vk_{a_i}\wedge\vd_{x_i}$ such that the pairs $(a_i,y_i)$ are pairwise distinct.
Let $Y((a_i,x_i,w_i)_{i\in[\ell]})$ be the number of ordered $\ell$-tuples of multi-edges comprising precisely $w_i$ edges between check $a_i$ and variable $x_i$ for each $i$.
Then
\begin{align*}
\prod_{h=1}^\ell Y-h+1&=\sum_{(a_i,y_i,w_i)_{i\in[\ell]}\in U_\ell}Y((a_i,y_i,w_i)_{i\in[\ell]}).
\end{align*}
Moreover, letting $w=\sum_iw_i$, we have
\begin{align}\label{eqLemma_doubleEdges5}
\Erw[Y((a_i,y_i,w_i)_{i\in[h]})\mid(\vd_i)_{i\in[n]},(\vk_i)_{i\in\vm}]&\sim\frac{1}{(dn)^w}
		\prod_{i=1}^\ell
\bink{\vd_{y_i}}{w_i}\bink{\vk_{a_i}}{w_i}w_i!\enspace.
\end{align}
Now, for a sequence $\vw=(w_1,\ldots,w_\ell)$ let $Y_{\vw}=\sum_{(a_i,y_i,w_i)_{i\in[\ell]}\in U_\ell}Y((a_i,y_i,w_i)_{i\in[\ell]})$.
Then \eqref{eqLemma_doubleEdges5} yields
\begin{align*}\nonumber
\Erw[Y_{\vw}\mid(\vd_i)_{i\in[n]},(\vk_i)_{i\in\vm}]&\leq O(n^{-w})\prod_{i=1}^\ell\bc{\sum_{j=1}^n\vd_{j}^{w_i}}\bc{\sum_{j=1}^{\vm}\vk_{j}^{w_i}}
	\leq O(n^{-w})\max_{j\in[n]}\vd_j^{w-2\ell}\max_{j\in[\vm]}\vk_j^{w-2\ell}\bc{\sum_{j=1}^n\vd_{j}^{2}}^\ell\bc{\sum_{j=1}^{\vm}\vk_{j}^{2}}^\ell\\
&\leq O(n^{2\ell-w})\max_{j\in[n]}\vd_j^{w-2\ell}\max_{j\in[\vm]}\vk_j^{w-2\ell}=O(\ln^{2\ell-w} n).
\end{align*}
As a consequence,
\begin{align}\label{eqLemma_doubleEdges7}
\sum_{\vw:w>2\ell}\Erw[Y_{\vw}\mid(\vd_i)_{i\in[n]},(\vk_i)_{i\in\vm}]&=o(1).
\end{align}
Finally, invoking \eqref{eqLemma_doubleEdges5}, we obtain
\begin{align}\label{eqLemma_doubleEdges8}
\Erw[Y_{(2,\ldots,2)}\mid(\vd_i)_{i\in[n]},(\vk_i)_{i\in\vm}]&\sim\lambda^\ell,\qquad\mbox{where}&
\lambda&\sim
\frac{\bc{\sum_{i=1}^n\vd_i(\vd_i-1)}\bc{\sum_{i=1}^n\vk_i(\vk_i-1)}}{2(dn)^2}.
\end{align}
Combining (\ref{eqLemma_doubleEdges5}), \eqref{eqLemma_doubleEdges7} and \eqref{eqLemma_doubleEdges8} completes the proof.
\end{proof}

\begin{corollary}\label{Cor_doubleEdges}
We have $\pr\brk{\cS\mid\cD\cap\cM}=\Omega(1)$.
\end{corollary}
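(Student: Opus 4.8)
The plan is to derive the conclusion from the factorial-moment estimate of \Lem~\ref{Lemma_doubleEdges} via the classical method of moments for Poisson convergence, after first checking that the degree-sequence conditions appearing in that lemma are typical given $\cD\cap\cM$. Throughout, write $\cS$ for the event that the Tanner graph underlying $\vA_0$ is simple; since every edge of the Tanner graph contributes a single non-zero matrix entry, $\cS$ holds exactly when the number $Y$ of multi-edges equals zero. Let $\cT$ denote the event from \Lem~\ref{Lemma_doubleEdges}, namely $\cM\cap\cD$ intersected with the four events $\{\sum_{i=1}^n\vd_i=dn+o(n)\}$, $\{\sum_{i=1}^n\vd_i^2=n\Erw[\vd^2]+o(n)\}$, $\{\sum_{i=1}^{\vm}\vk_i^2=dn\Erw[\vk^2]/k+o(n)\}$ and $\{\vm=dn/k+o(n)\}$.

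First I would show $\pr\brk{\cT\mid\cD\cap\cM}=1-o(1)$. The conditional concentration of $\sum_{i=1}^n\vd_i^2$ and of $\sum_{i=1}^{\vm}\vk_i^2$ on $\cD\cap\cM$ is precisely \Lem~\ref{Lemma_sm}. For $\sum_{i=1}^n\vd_i$, \Lem~\ref{Lemma_sums} gives $\pr\brk{|\sum_{i=1}^n\vd_i-dn|>\delta n}=o(1/n)$, and for $\vm$ the Poisson Chernoff bound gives $\pr\brk{|\vm-dn/k|>\delta n}=\exp(-\Omega(n))$; since $\pr\brk{\cD\cap\cM}=\Omega(n^{-1/2})$ by \Cor~\ref{Cor_llt}, dividing by this probability turns both into $o(1)$, and a union bound finishes the step. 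Next, fix any degree sequence $(\vm,(\vd_i)_{i\in[n]},(\vk_i)_{i\in[\vm]})$ belonging to $\cT$. Conditional on it the Tanner graph is the configuration-model multigraph built from a uniformly random maximal matching, and \Lem~\ref{Lemma_doubleEdges} tells us that for every fixed $\ell\geq1$ the $\ell$-th factorial moment of $Y$ is $\lambda^\ell+o(1)$, with $\lambda>0$ the constant there. Because the Poisson law is moment-determined, the method of moments for Poisson convergence (see, e.g., \cite{Bollobas}) yields $Y\Rightarrow\Po(\lambda)$, hence $\pr\brk{Y=0\mid(\vd_i)_{i},(\vk_i)_{i}}=\eul^{-\lambda}+o(1)$; a routine subsequence argument makes this $o(1)$ uniform over all degree sequences in $\cT$, since any sequence of them violating the bound would still obey the factorial-moment asymptotics, a contradiction.

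Finally, since $\cD$, $\cM$ and $\cT$ are all measurable with respect to $(\vm,(\vd_i)_{i},(\vk_i)_{i})$, conditioning on them does not alter the conditional law of $Y$ given the degree sequence, and therefore
\begin{align*}
\pr\brk{\cS\mid\cD\cap\cM}&\geq\pr\brk{\cS\cap\cT\mid\cD\cap\cM}
	=\Erw\brk{\vecone\{\cT\}\cdot\pr\brk{Y=0\mid(\vd_i)_{i},(\vk_i)_{i}}\,\big|\,\cD\cap\cM}\\
	&=\bc{\eul^{-\lambda}+o(1)}\pr\brk{\cT\mid\cD\cap\cM}=\eul^{-\lambda}+o(1)=\Omega(1).
\end{align*}
The only points needing care are pushing the unconditional concentration bounds through the polynomially small event $\cD\cap\cM$ and keeping the Poisson approximation uniform over admissible degree sequences; both are standard, so I do not anticipate a real obstacle here.
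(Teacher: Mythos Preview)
Your proposal is correct and follows essentially the same route as the paper: verify via \Lem~\ref{Lemma_sm} (plus \Lem~\ref{Lemma_sums} and Poisson tail bounds pushed through the $\Omega(n^{-1/2})$ event $\cD\cap\cM$) that the degree-sequence hypotheses of \Lem~\ref{Lemma_doubleEdges} hold \whp, then use the factorial-moment convergence together with the method of moments/inclusion--exclusion to obtain $\pr[Y=0\mid(\vd_i)_i,(\vk_i)_i]=\eul^{-\lambda}+o(1)$. The paper's proof is just a terse version of exactly this argument.
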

\begin{proof}
\Lem s~\ref{Lemma_sm} and~\ref{Lemma_doubleEdges} show together with inclusion/exclusion that 
\whp\ on the event $\cM\cap\cD$,
$$\pr\brk{Y=0\mid(\vd_i)_{i\in[n]},(\vk_i)_{i\in\vm}}=\exp(-\lambda)=\Omega(1).$$
Since $\cS=\{Y=0\}$, the assertion follows.
\end{proof}

\begin{proof}[Proof of \Lem~\ref{Lemma_simple}]
The assertion follows immediately from \eqref{eqM}, \Cor~\ref{Cor_llt} and \Cor~\ref{Cor_doubleEdges}.
\end{proof}

\subsection{Proof of \Prop~\ref{Cor_lower}}\label{Sec_Cor_lower}

\noindent
The random matrix $\vA$ has $n$ columns and $\vm\disteq\Po(dn/k)$ rows, with the column and row degrees drawn from the distributions $\vd$ and $\vk$.
By comparison, $\vA_\eps$ has slightly fewer, namely $\vm_\eps\disteq\Po((1-\eps)dn/k)$ rows.
One might therefore think that the proof of \Lem~\ref{Cor_lower} is straightforward, as it appears that $\vA$ is obtained from $\vA_\eps$ by simply adding another random $\Po(\eps dn/k)$ rows.
Since adding $O(\eps n)$ rows cannot reduce the nullity by more than $O(\eps n)$, the bound on
 $\Erw[\nul(\vA)] -\Erw[\nul(\vA_{\eps})]$ appears to be immediate.
But there is a catch.
Namely, in constructing $\vA$  we condition on the event $\cD=\{\sum_{i=1}^{n}\vd_i=\sum_{i=1}^{\vm}\vk_i\}$.
Thus,  $\vA_\eps$ does not have the same distribution as the top $\Bin(\vm,1-\eps)$ rows of $\vA$ since the conditioning might distort the degree distribution.
We need to show that this distortion is insignificant.
To this end, recall that $\vm_\eps\disteq\Po((1-\eps)dn/k)$.

\remove{
\begin{lemma}\label{Lemma_llt}
For any $C>0$ there exists $c>0$ such that the following statement is true.
Suppose $\gcd(\vk)|n$.
On
\begin{align}\label{eqLemma_llt_1}
\cE=\cbc{\abs{\vm_\eps-(1-\eps)dn/k}\leq\sqrt{n}\ln n,\ 
\abs{\vm-dn/k}\leq C\sqrt n,\ 
\abs{dn-\sum_{i=1}^n\vd_i}\leq C\sqrt n,\
\gcd(\vk)|\sum_{i=1}^n\vd_i
}
\end{align}
for any sequence $(k_i)_{i\geq1}$ of integers in the support of $\vk$ satisfying
\begin{align}\label{eqLemma_llt_2}
\abs{k\vm_\eps-\sum_{i=1}^{\vm_\eps}k_i}&\leq C\sqrt n
\end{align}
we have
$\pr\brk{\forall i\leq\vm_\eps:\vk_i=k_i\mid\cD,\vm_\eps,\vm,(\vd_i)_{i\geq1}}
	\leq c\pr\brk{\forall i\leq\vm_\eps:\vk_i=k_i\mid\vm_\eps}.$
\end{lemma}
\begin{proof}
We are going to argue that, uniformly on the event $\cE$,
\begin{align}\label{eqLemma_llt1}
\pr\brk{\cD\mid \vm,\vm_\eps,(\vk_i)_{i\leq\vm_\eps},(\vd_i)_{i\geq1}}&=\Theta(n^{-1/2});
\end{align}
then the assertion follows from Bayes' formula.
To verify (\ref{eqLemma_llt1}), we notice that
\begin{align}\label{eqLemma_llt2}
\pr\brk{\cD\mid \vm,\vm_\eps,(\vk_i)_{i\leq\vm_\eps},(\vd_i)_{i\geq1}}&=
	\pr\brk{\sum_{\vm_\eps<i\leq\vm}\vk_i=\sum_{i=1}^n\vd_i-\sum_{i=1}^{\vm_\eps}\vk_i
	\mid \vm,\vm_\eps,(\vk_i)_{i\leq\vm_\eps},(\vd_i)_{i\geq1}}.
\end{align}
Since the $(\vk_i)_{\vm_\eps<i\leq\vm}$ are independent of the $\vd_i$ and the $\vk_i$ with $i\leq\vm_\eps$,
(\ref{eqLemma_llt1}) follows from  (\ref{eqLemma_llt_1})--(\ref{eqLemma_llt_2}) and \Thm~\ref{Thm_llt}.
\end{proof}

\noindent
Obtain the matrix $\vA_{(\eps)}$ from $\vA$ by deleting the last $\vm-\vm_\eps$ rows.

\begin{corollary}\label{Cor_whp}
Suppose that $\cA$ is an event such that $\vA_\eps\in\cA$ \whp\
Then $\vA_{(\eps)}\in\cA$ \whp
\end{corollary}
\begin{proof}
This is an immediate consequence of \Cor~\ref{Cor_clt} and \Lem s~\ref{Claim_clt} and~\ref{Lemma_llt}.
\end{proof}

\jcd{The above not needed?}
}


\begin{lemma}\label{Lemma_conc}
\Whp\ we have
\begin{align*}
\pr\brk{\abs{\nul(\vA_\eps)-\Erw[\nul(\vA_\eps)\mid\vm_\eps,(\vd_i)_{i\geq1},\,(\vk_i)_{i\geq1}]}>\sqrt n\ln n\mid\vm_\eps,(\vd_i)_{i\geq1},\,(\vk_i)_{i\geq1}}&=o(1),
\end{align*}
\end{lemma}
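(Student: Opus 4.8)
The plan is to apply the Azuma--Hoeffding bounded-differences inequality to a Doob martingale obtained by exposing the random ingredients of $\vA_\eps$ one step at a time, after conditioning on the target degrees. First I would fix an outcome of $\vm_\eps$, $(\vd_i)_{i\in[n]}$ and $(\vk_i)_{i\in[\vm_\eps]}$ and, using \Lem~\ref{Lemma_sums} together with the standard Poisson tail bound, restrict to the event of probability $1-o(1)$ over the choice of these quantities that $N:=\sum_{i\le\vm_\eps}\vk_i\le\sum_{j\in[n]}\vd_j=O(n)$; it then suffices to prove the claimed conditional estimate for any such ``typical'' degree sequence. On this event the random maximal matching $\vec\Gamma_\eps$ saturates all $N$ check-clones, so $N$ equals the number of edges of the Tanner multigraph $\G_\eps$, and I would generate $\vec\Gamma_\eps$ through a uniformly random permutation $\rho$ of the $\sum_{j\in[n]}\vd_j$ variable-clones by matching the $t$-th check-clone (in some fixed order) to the variable-clone $\rho(t)$ for $t=1,\dots,N$ and declaring the remaining variable-clones cavities.

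Next I would build the filtration that first reveals $\THETA\in[\Theta]$ and then, for $t=1,\dots,N$, reveals $\rho(t)$ together with the i.i.d.\ matrix entry $\CHI$ on the resulting edge; the attached Doob martingale starts at $\Erw[\nul(\vA_\eps)\mid\vm_\eps,(\vd_i)_i,(\vk_i)_i]$ and ends at $\nul(\vA_\eps)$, which is exactly the quantity whose deviation from its conditional mean we want to control. I would then bound each increment by a constant independent of the degree sequence. Exposing $\THETA$ changes the number of unary pinning checks $p_1,\dots,p_\THETA$, hence the nullity, by at most $\Theta=O_\eps(1)$. Exposing one matrix entry changes a single entry of $\vA_\eps$, hence its rank and so its nullity, by at most one. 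Exposing $\rho(t)$ changes the nullity by at most two: conditionally on the earlier choices a uniform extension with $\rho(t)=u$ can be coupled with a uniform extension with $\rho(t)=u'$ by the single transposition of $\rho$ that swaps the positions $t$ and $\rho^{-1}(u')$, so the two matchings differ only in the partners of at most two check-clones, which alters at most two rows of $\vA_\eps$ and hence changes its rank by at most two. Thus each of the $N$ matching steps contributes at most $3$ and the $\THETA$-step at most $\Theta$.

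Finally, on the good event the sum of squared increments is $\Theta^2+9N=O_\eps(n)$, so Azuma--Hoeffding gives a conditional deviation probability of at most $2\exp(-\Omega((\sqrt n\ln n)^2/n))=2\exp(-\Omega(\ln^2 n))=o(1)$, uniformly over typical degree sequences, which is the assertion. The step that needs the most care is the bound on the matching increment: one must verify that the swap coupling really touches only $O(1)$ rows of $\vA_\eps$, so that the Lipschitz constant stays bounded even though individual vertex degrees may be as large as $n^{1/2-\Omega(1)}$; this is what keeps the total of the squared increments at $O(n)$ instead of at a scale that would overwhelm the target deviation $\sqrt n\ln n$. The remaining pieces --- the $1-o(1)$ estimate for the degree-sequence event, the realization of $\vec\Gamma_\eps$ via a uniform permutation, and the elementary rank bounds for changing one entry or two rows of a matrix --- are routine.
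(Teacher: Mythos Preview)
Your proposal is correct and follows essentially the same argument as the paper: condition on the degree data, restrict via \Lem~\ref{Lemma_sums} to the event $\sum_{i\leq\vm_\eps}\vk_i\leq\sum_{i\leq n}\vd_i=O(n)$, reveal the random matching $\vec\Gamma_\eps$ one clone at a time in a filtration of length $O(n)$, observe that each increment of the Doob martingale for $\nul(\vA_\eps)$ is $O(1)$, and apply Azuma. The paper's proof is terser (it does not spell out the swap coupling or separate out $\THETA$ and the $\CHI$ entries), but the content is the same; your explicit verification that the swap coupling alters at most two rows---so the Lipschitz constant stays $O(1)$ regardless of individual degrees---is exactly the point the paper leaves implicit.
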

\begin{proof}
\Lem~\ref{Lemma_sums} shows that $\sum_{i=1}^n\vd_i,\sum_{i=1}^{\vm_\eps}\vk_i=O(n)$
and $\sum_{i=1}^{\vm_\eps}\vk_i\leq\sum_{i=1}^n\vd_i$  with probability $1-o(n^{-1})$.
Assuming that this is so, consider a filtration $(\fF_t)_{t\leq\sum_{i=1}^{\vm_\eps}\vk_i}$ that reveals the random matching $\vec\Gamma_\eps$ one clone at a time.
Then 
$$\abs{\Erw[\nul(\vA_\eps)\mid\fF_{t+1},\vm_\eps,(\vd_i)_{i\geq1},\,(\vk_i)_{i\geq1}-
\Erw[\nul(\vA_\eps)\mid\fF_{t},\vm_\eps,(\vd_i)_{i\geq1},\,(\vk_i)_{i\geq1}}]\leq O(1)$$
for all $t$.
Therefore, the assertion follows from Azuma's inequality.
\end{proof}

\begin{proof}[Proof of \Prop~\ref{Cor_lower}]
We will couple $\vA_{\eps}$ and $\vA$ so that they differ by $O(\eps n)$ rows with probability at least $1-\eps$. The proposition follows immediately, as altering one row can affect the nullity of a matrix by $O(1)$. 

To construct the coupling we first generate the following parameters for $\vA_{\eps}$. Parameter $\Theta=\Theta(\eps)$ is given. Generate $\bftheta\in [\Theta]$ uniformly at random. Generate $\bfm_{\eps}$ and then $\bfm_{\eps}$ check nodes. Each check node $a_i$ is associated with an integer $\bfk_i$ which is an independent copy of $\bfk$. To distinguish rows for $\vA_{\eps}$ from $\vA$, we colour these check nodes red. Add additional $\bftheta$ red check nodes $p_1,\ldots, p_{\bftheta}$.

Next, we generate $n$ variable nodes where variable node $x_i$ is associated with $\bfd_i$, which is an independent copy of $\bfd$. 

Next we generate parameters $\bfm$, and $\bfn_j$ for $\vA$, where $\bfn_j$ denotes the number of rows in $\vA$ that contains exactly $j$ non-zero entries. The following claim follows straightforwardly by the Chernoff bounds and Corollary~\ref{Cor_llt}.  

\begin{claim}\label{c:dominance} For every $\eps>0$ there is a sufficiently large $L=L(\eps)>0$ such that
\[
\pr\brk{\sum_{j\ge L} \bfn_j >\eps n} <\eps,\quad \pr\brk{\sum_{j\ge L}\sum_{i=1}^{\bfm_{\eps}} \vecone\{\bfk_i=j\} >\eps n} <\eps, \quad  \pr\brk{\bfn_j>\sum_{i=1}^{\bfm_{\eps}} \vecone\{\bfk_i=j\} \ \mbox{for all $j\le L$}} = 1-o(1).
\]
\end{claim}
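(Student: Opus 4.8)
\textbf{Proof strategy for Claim~\ref{c:dominance}.}
The plan is to reduce all three bounds to elementary Poisson tail estimates. By the Poisson splitting property, if $\bfm\disteq\Po(dn/k)$ and $(\bfk_i)_{i\ge1}$ are i.i.d.\ copies of $\bfk$ independent of $\bfm$, then for each $j$ the count $\sum_{i=1}^{\bfm}\vecone\{\bfk_i=j\}$ is exactly $\Po\bc{(dn/k)\pr\brk{\bfk=j}}$, these counts are mutually independent, and $\sum_{i=1}^{\bfm}\vecone\{\bfk_i\ge L\}\disteq\Po\bc{(dn/k)\pr\brk{\bfk\ge L}}$; the same holds with $\bfm$ replaced by $\bfm_\eps\disteq\Po((1-\eps)dn/k)$. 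First I would fix $L=L(\eps)$: since $\Erw[\bfk^r]=O(1)$ for some $r>2$, Markov gives $\pr\brk{\bfk\ge L}=O(L^{-r})\to0$, so $L$ can be chosen with $(d/k)\pr\brk{\bfk\ge L}<\eps^2/8$. For the second bound this already suffices, because it is unconditional: $\sum_{j\ge L}\sum_{i=1}^{\bfm_\eps}\vecone\{\bfk_i=j\}=\sum_{i=1}^{\bfm_\eps}\vecone\{\bfk_i\ge L\}$ is a Poisson variable of mean $<\eps^2 n/8$, and the Chernoff bound for the Poisson distribution shows that a Poisson variable of mean $<\eps^2 n/8$ exceeds $\eps n$ with probability $\exp(-\Omega_\eps(n))<\eps$.

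For the first bound I would note that $\sum_{j\ge L}\bfn_j$ equals $\sum_{i=1}^{\bfm}\vecone\{\bfk_i\ge L\}$ under the law of $\vA$, that is, conditioned on $\cD$ (and on the existence of a simple Tanner graph with the prescribed degrees). This conditioning event has probability $\Omega(n^{-1/2})$ by \Cor~\ref{Cor_llt} and \Lem~\ref{Lemma_simple}. Hence, writing $X\disteq\Po((dn/k)\pr\brk{\bfk\ge L})$ for the corresponding unconditional count, $\pr\brk{\sum_{j\ge L}\bfn_j>\eps n}\le O(n^{1/2})\,\pr\brk{X>\eps n}\le O(n^{1/2})\exp(-\Omega_\eps(n))=o(1)<\eps$, the middle inequality again being the Poisson Chernoff bound.

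The third statement I would prove in the same way, restricting attention to $j\le L$ with $\pr\brk{\bfk=j}>0$; for the remaining $j$ both sides vanish, so the displayed inequality is to be read over those $j$, which is all the coupling needs. Fix such a $j$ and put $\mu_j=(dn/k)\pr\brk{\bfk=j}=\Theta(n)$. The Poisson lower-tail Chernoff bound applied to the unconditional count of degree-$j$ checks, together with the probability $\Omega(n^{-1/2})$ of the conditioning event, gives $\pr\brk{\bfn_j<(1-\eps/4)\mu_j}=O(n^{1/2})\exp(-\Omega_\eps(n))=o(1)$; unconditionally $\sum_{i=1}^{\bfm_\eps}\vecone\{\bfk_i=j\}\disteq\Po((1-\eps)\mu_j)$, so the upper Chernoff bound gives $\pr\brk{\sum_{i=1}^{\bfm_\eps}\vecone\{\bfk_i=j\}>(1-\eps/2)\mu_j}=\exp(-\Omega_\eps(n))$. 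A union bound over the at most $L=O_\eps(1)$ relevant values of $j$ then yields $\bfn_j\ge(1-\eps/4)\mu_j>(1-\eps/2)\mu_j\ge\sum_{i=1}^{\bfm_\eps}\vecone\{\bfk_i=j\}$ simultaneously for all such $j$ with probability $1-o(1)$.

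The only point requiring care is that $\bfn_j$ lives under the conditional law of $\vA$ given $\cD$, an event of probability only $\Theta(n^{-1/2})$ (\Cor~\ref{Cor_llt}). This is why the count tails associated with $\vA$ must be bounded by a Chernoff estimate — exponentially small in $n$ — rather than by Markov, which would give only $O(1/n)$ and would be destroyed by the $O(n^{1/2})$ loss incurred when dividing by the probability of the conditioning event. All the remaining steps are routine, exactly as the claim's phrasing ("by the Chernoff bounds and \Cor~\ref{Cor_llt}") indicates.
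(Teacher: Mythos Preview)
Your proposal is correct and matches the paper's intended approach: the paper simply states that the claim ``follows straightforwardly by the Chernoff bounds and Corollary~\ref{Cor_llt},'' and your write-up supplies exactly those details---Poisson thinning to identify each degree-class count as a Poisson variable, exponential Chernoff tails to survive the $O(n^{1/2})$ loss from conditioning on $\cD\cap\cS$ (via \Prop~\ref{Lemma_welldef}), and a union bound over the $O_\eps(1)$ relevant values of $j$. Your remark that the strict inequality in the third assertion should be read as $\geq$ (or restricted to $j$ in the support of $\vk$) is also apt: the coupling in the proof of \Prop~\ref{Cor_lower} only needs $\bfn_j-\sum_{i\le\bfm_\eps}\vecone\{\bfk_i=j\}\geq0$.
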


Assume events $\{\sum_{j\ge L} \bfn_j \le \eps n\}$, $\{\sum_{j\ge L}\sum_{i=1}^{\bfm_{\eps}} \vecone\{\bfk_i=j\} \le \eps n\}$, and $\{\bfn_j>\sum_{i=1}^{\bfm_{\eps}} \vecone\{\bfk_i=j\} \ \mbox{for all $j\le L$}\}$ hold. Uncolour all check nodes $a_i$ where $\bfk_i\le L$. For each $j\le L$, generate $\bfn_j-\sum_{i=1}^{\bfm_{\eps}} \vecone\{\bfk_i=j\}$ check nodes,  colour them blue, and associate $\bfk_i=j$ to each such check node $a_i$. For each $j>L$, generate $\bfn_j$ blue check nodes and similarly each such node $a_i$ is associated with integer $j$. 

Now the Tanner graph of $\vA_{\eps}$ is generated by taking a random maximal matching from the clones of all {\em uncoloured} and {\em red} check nodes $\{a_i\}\times [\bfk_i]$ (excluding check nodes $p_1,\ldots,p_{\bftheta}$) to the set of variable clones $\cup_{j=1}^n\{x_j\}\times [\bfd_j]$, and then adding an edge between $p_i$ and $a_i$ for $1\le i\le \bftheta$. The Tanner graph of $\vA$ is generated by removing all matching edges from the clones of the {\em red} check nodes, and removing edges between $p_i$ and $a_i$ for $1\le i\le \bftheta$, and then matching all clones of the {\em blue} check nodes to the remaining clones of the variable nodes. 

By the construction, marginally, the induced random matrix of the first Tanner graph is  distributed as $\vA_{\eps}$, and that of the second Tanner graph is distributed as $\vA$. Moreover, Claim~\ref{c:dominance} and the fact that $\bftheta\le \Theta=O(1)$ ensures that with probability at least $1-\eps$, $|\nul{\vA_{\eps}}-\nul{\vA}|=O(\eps n)$. The proposition follows by taking $\eps\to 0$.
\end{proof}


\subsection{Proof of \Thm~\ref{Thm_LDPC}}\label{Sec_LDPC}

\noindent
Suppose that $D,K$ are polynomials that $\gcd(\vk)$ divides $n$ and that $m=dn/k$ is an integer divisible by $\gcd(\vd)$.

\begin{lemma}\label{Lemma_LDPC_degs} 
For any $\ell$ in the support of $\vd$ and any $\ell'$ in the support of $\vk$ we have
\begin{align*}
\pr\brk{\abs{n\pr\brk{\vd=\ell}-\sum_{i=1}^n\vecone\{\vd_i=\ell\}}\leq\sqrt n\ln n\mid\cD}&=1-o(1),&
\pr\brk{\abs{m\pr\brk{\vk=\ell}-\sum_{i=1}^m\vecone\{\vk_i=\ell'\}}\leq\sqrt n\ln n\mid\cD}&=1-o(1).
\end{align*}
\end{lemma}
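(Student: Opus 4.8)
The plan is to show that each of the two events in question fails with unconditional probability $o(n^{-1/2})$, and then to divide by $\pr\brk{\cD}=\Theta(n^{-1/2})$, which is supplied by \Cor~\ref{Cor_llt}, to conclude. The underlying mechanism is simply that empirical degree counts concentrate on the scale $\sqrt n$ with superpolynomially small deviation probability, whereas $\cD$ is only polynomially unlikely, so conditioning on $\cD$ cannot resurrect the bad event.

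First I would handle the variable-degree statement. Fix $\ell$ in the support of $\vd$, put $p_\ell=\pr\brk{\vd=\ell}$, and note that $\sum_{i=1}^n\vecone\{\vd_i=\ell\}$ is a sum of $n$ independent $\Be(p_\ell)$ variables with mean $np_\ell$. The Chernoff bound gives
\[
\pr\brk{\abs{\textstyle\sum_{i=1}^n\vecone\{\vd_i=\ell\}-np_\ell}>\sqrt n\ln n}\le 2\exp(-\Omega(\ln^2 n))=o(n^{-1}).
\]
Writing $\cB_\ell$ for the complementary event, and using $\pr\brk{\cB_\ell^c\cap\cD}\le\pr\brk{\cB_\ell^c}$ together with $\pr\brk\cD=\Theta(n^{-1/2})$, we obtain $\pr\brk{\cB_\ell^c\mid\cD}\le\pr\brk{\cB_\ell^c}/\pr\brk\cD=o(n^{-1})/\Theta(n^{-1/2})=o(n^{-1/2})$, which is the first assertion. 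The check-degree statement follows the same template: for $\ell'$ in the support of $\vk$, the quantity $\sum_{i=1}^m\vecone\{\vk_i=\ell'\}$ is a sum of the deterministic number $m=dn/k=\Theta(n)$ of independent $\Be(\pr\brk{\vk=\ell'})$ variables, so the identical Chernoff-plus-\Cor~\ref{Cor_llt} computation yields $\pr\brk{\abs{m\pr\brk{\vk=\ell'}-\sum_{i=1}^m\vecone\{\vk_i=\ell'\}}>\sqrt n\ln n\mid\cD}=o(1)$.

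There is essentially no obstacle here: both bounds are soft consequences of exponential concentration dominating the polynomially small $\pr\brk\cD$. The only point requiring a line of care arises if one instead wants to control the empirical count over the random number $\vm\disteq\Po(dn/k)$ of rows actually present in $\vA$; then I would first discard the event $\abs{\vm-dn/k}>\tfrac12\sqrt n\ln n$ (probability $o(n^{-1})$ by a Poisson tail bound), condition on $\vm$ inside that window, apply Chernoff to the resulting binomial with allowance $\tfrac12\sqrt n\ln n$, and absorb $\abs{(\vm-m)\pr\brk{\vk=\ell'}}\le\abs{\vm-m}\le\tfrac12\sqrt n\ln n$ into the remaining allowance. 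Splitting the budget $\sqrt n\ln n$ in this way is routine. Since the statement fixes $\ell$ and $\ell'$, no union bound is needed; were a uniform version wanted, one would additionally sum the failure probabilities over indices below a slowly growing cutoff and bound the tail using $\Erw[\vd^r]+\Erw[\vk^r]<\infty$.
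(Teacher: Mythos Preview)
Your proposal is correct and essentially identical to the paper's proof: both apply the Chernoff bound to the binomial count to get a deviation probability that is $o(1/n)$ (the paper writes $O(1/n^2)$, you write $\exp(-\Omega(\ln^2 n))$), then invoke \Cor~\ref{Cor_llt} for $\pr\brk\cD=\Omega(n^{-1/2})$ and conclude via Bayes' rule. Your additional paragraph about handling a random $\vm$ is not needed in this LDPC context, where $m=dn/k$ is deterministic, but does no harm.
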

\begin{proof}
Since $D,K$ are polynomials, 
$\sum_{i=1}^n\vecone\{\vd_i=\ell\}$ and $\sum_{i=1}^m\vecone\{\vk_i=\ell'\}$ are binomial variables with mean $\Omega(n)$.
Therefore, the Chernoff bound shows that the probability of a deviation of more than $\sqrt n\ln n$ is $O(1/n^2)$.
As \Cor~\ref{Cor_llt} shows that $\pr\brk{\cD}=\Omega(1/n)$, the assertion follows from Bayes' rule.
\end{proof}

\begin{proof}[Proof of \Thm~\ref{Thm_LDPC}]
Let $\cE$ be the event that for all $\ell,\ell'$ in the support of $\vd,\vk$,  respectively,
\begin{align*}
\abs{n\pr\brk{\vd=\ell}-\sum_{i=1}^n\vecone\{\vd_i=\ell\}}\leq\sqrt n\ln n,\quad
\abs{m\pr\brk{\vk=\ell'}-\sum_{i=1}^m\vecone\{\vk_i=\ell'\}}\leq\sqrt n\ln n.
\end{align*}
Then \Lem~\ref{Lemma_LDPC_degs} and \Thm~\ref{thm:rank} yield $\pr[\cE]\sim1$.
Moreover, let $\cE'$ be the event that $n\pr\brk{\vd=\ell}=\sum_{i=1}^n\vecone\{\vd_i=\ell\}$
and $m\pr\brk{\vk=\ell'}=\sum_{i=1}^m\vecone\{\vk_i=\ell'\}$ for all $\ell,\ell'$.
We are going to couple $\vA_0$ given $\cE$ with $\vA_0$ given $\cE'$ such that
\begin{align}\label{eqThm_LDPC1}
\Erw\abs{\rank(\vA_0\mid\cE)-\rank(\vA_0\mid\cE')}&=o(n).
\end{align}
Since $\rate_n(\vd,\vk)=\frac1n\Erw\brk{\nul(\A)\mid\fD}=\frac1n\Erw\brk{\nul(\A_0)\mid\cE'\cap\cS}$
and $\pr[\cS\mid\cE']=\Omega(1)$ by \Lem~\ref{Lemma_doubleEdges}, the assertion follows from~\eqref{eqThm_LDPC1} and \Thm~\ref{thm:rank}.

To construct the coupling first choose a random matrix $\vA'$ from the distribution of $\vA_0$ given $\cE$.
Let $N_\ell'=\sum_{i=1}^n\vecone\{\vd_i=\ell\}$ and $M_\ell'=\sum_{i=1}^m\vecone\{\vk_i=\ell\}$.
Further, let $\cV_\ell'$ comprise the first $n\pr\brk{\vd=\ell}\wedge N_\ell'$ indices $i$ with $\vd_i=\ell$.
Analogously, let $\cF_\ell'$ contain the first $m\pr\brk{\vk=\ell}\wedge M_\ell'$ indices $i\in[m]$ with $\vk_i=\ell$.
Let $X$ be the number of edges of the Tanner graph of $\vA'$ that run between $\bigcup_\ell\cV_\ell'$ and $\bigcup_\ell\cF_\ell'$.

Moreover, let $Y$ be a random variable distributed as follows.
Draw $\vA''$ from the distribution of $\vA_0$ given $\cE'$.
Let $\cV_\ell''$ contain the first $n\pr\brk{\vd=\ell}\wedge N_\ell'$ variables of degree $\ell$, and similarly let $\cF_\ell''$ contain the first $m\pr\brk{\vk=\ell}\wedge M_\ell'$ constraints of degree $\ell$.
Then $Y$ is the number of edges of the Tanner graph of $\vA''$ linking $\bigcup_\ell\cV_\ell''$ and $\bigcup_\ell\cF_\ell''$.

Because $\vA'\in\cE$ and $\vd,\vk$ are supported on finite sets of integers, we clearly have
\begin{align}\label{eqThm_LDPC2}
\abs{X-Y}\leq O(\sqrt n\ln n).
\end{align}
Letting $W=X\wedge Y$, we couple $\vA'$ and $\vA''$ as follows.
Obtain the Tanner graph of $\vA'''$ by creating a random matching of size $W$ between the clones of the variables $\bigcup_\ell\cV_\ell'$ and the checks $\bigcup_\ell\cF_\ell'$.
Then obtain $\vA'$, $\vA''$ from $\vA'''$ by randomly matching the remaining clones as determined by the respective degree sequences.
The matrix entries corresponding to the edges of the Tanner graph are drawn independently from $\CHI$ in such a way that those associated with the Tanner graph of $\vA'''$ agree.
By the principle of deferred decisions the resulting matrices $\vA'$, $\vA''$ are distributed as $\vA$ given $\cE$ and $\vA$ given $\fD$.
Furthermore, because \eqref{eqThm_LDPC2} shows that $\vA'$ and $\vA''$ disagree in no more than $O(\sqrt n\ln n)$ entries, we obtain (\ref{eqThm_LDPC1}).
\end{proof}


 \section{Proof of \Thm~\ref{Thm_tight}}\label{Sec_tight} 

Recall that
\begin{equation}\label{def:f}
\phi(\alpha)=1-\alpha-\frac{1}{d}D'\left(1-\frac{K'(\alpha)}{k}\right).
\end{equation}

It is sufficient to prove that if conditions (i) and (ii) are satisfied then 
(a)
$\max_{\alpha\in[0,1]}\Phi(\alpha)=\max\{\Phi(0),\Phi(\rho)\}$;
and (b) $\phi'(\rho)<0$ unless
  \begin{equation}
  \pr(\bfd=1)=0\quad \mbox{and}\quad 2(\ex\bfk-1)\pr(\bfd=2)>\ex \bfd. \label{exception}
  \end{equation} 
 
 Since $\Phi(\alpha)$ is continuous on $[0,1]$, the maximum occurs at either 0 or 1 or at a stable point.

  \smallskip
 
 \noindent {\em Case A: $\Var(\bfk)=0$.} In this case, $\bfk=k$ always and thus $K(\alpha)=\alpha^k$. 
 Then,
 \begin{align*}
 \phi(\alpha)&=1-\alpha-\frac{1}{d}D'(1-\alpha^{k-1})\\
 \phi'(\alpha)&=-1+\frac{(k-1)\alpha^{k-2}}{d}D''(1-\alpha^{k-1})\\
 \phi''(\alpha)&=\frac{k-1}{d}\alpha^{k-3}\Big((k-2)D''(1-\alpha^{k-1})-(k-1)D'''(1-\alpha^{k-1})\alpha^{k-1}\Big)\\
 &=\frac{k-1}{d}\alpha^{k-3}\Big((k-2)D''(t)-(k-1)D'''(t)(1-t)\Big) \quad \mbox{where $t=1-\alpha^{k-1}$}.
 \end{align*}
 Hence,
 \begin{align}
&& \phi(0)=0 && \phi(1)=-\frac{1}{d}D'(0)\le 0 && \label{f}\\
 && \phi'(0)=-1 && \phi'(1)=-1+\frac{k-1}{d}D''(0).&& \label{f'}
 \end{align}
Recall that $\Phi'(\alpha)=\frac{d}{k}K''(\alpha) \phi(\alpha)$. We have $K''(\alpha)> 0$ for all $\alpha\in(0,1]$.  By~\eqn{f} we have $\Phi'(1)\le 0$ and thus the supremum of $\Phi(\alpha)$ can only occur at 0 or a stable point. In all of the following sub-cases, we will prove that $\phi''(\alpha)$ has at most 1 root in $[0,1]$ (except for some trivial cases that we discuss separately). It follows immediately that $\phi$ can have at most three roots on $[0,1]$ including the trivial one at $\alpha=0$. Now we prove that this implies claims (a) and (b).
 
 If $\phi$ has only a trivial root, then so is $\Phi'(\alpha)$. Thus, $\alpha=0$ is the unique maxima of $\Phi(\alpha)$ and $\rho=0$. This verifies (a). As $\phi'(0)=-1$ we immediately have $\phi'(\rho)<0$.
 
If $\phi$ has two roots, then the larger root is $\rho$. Since $\phi'(0)<0$, in this case, $\phi$ is negative in $(0,\rho)$ and positive in $(\rho,1)$. This is only possible when $\phi(1)=0$, which requires $\pr(\bfd=1)=0$. In this case, $\rho=1$. 
Next we consider two further cases: (i) $2(k-1)\pr(\bfd=2)>d$ corresponding to $\phi'(1)>0$; (ii) $2(k-1)\pr(\bfd=2)<d$ corresponding to $\phi'(1)<0$. As $\phi$ has only two roots, case (ii) obviously cannot happen. Thus, it means that the only situation that $\phi$ has two roots would be $\pr(\bfd=1)=0$ and $2(k-1)\pr(\bfd=2)>d$, as in~\eqn{exception}. In this situation we are only required to verify (a). Note that $\phi$ is negative in $(0,1)$ as $\rho=1$. It follows then that $\Phi(\alpha)$ is a decreasing function in $(0,1)$. Hence, $\alpha=0$ is the unique maxima, as desired.

If $\phi$ has three roots, then there is a root $\rho^*$ between $0$ and $\rho$. Then $\phi$ is negative in $(0,\rho^*)$ and positive in $(\rho^*,\rho)$. As $K''(\alpha)>0$ for all $\alpha\in (0,1]$, the sign of $\phi$  implies that $\rho^*$ is a local minima and $\rho$ is a local maxima. This verifies (a).  Moreover, as $\phi$ is positive in $(\rho^*,\rho)$ and $\phi(\rho)=0$, $\phi'(\rho)<0$ follows immediately. 
 
 \medskip
 
 \noindent {\em Case A1: $\Var(\bfk)=0$ and $\Var(\bfd)=0$.} In this case $\bfd=d$. Then $D(\alpha)=\alpha^d$. If $d\ge 3$ then
 \begin{align*}
 \phi''(\alpha)&=\frac{k-1}{d}\alpha^{k-3}\Big((k-2)d(d-1)t^{d-2}-(k-1)d(d-1)(d-2)t^{d-3}(1-t)\Big)\\
 &=(k-1)(d-1)t^{d-3} \alpha^{k-3}\Big((k-2)t-(k-1)(d-2)(1-t)\Big) \quad \mbox{where $t=1-\alpha^{k-1}$}.
 \end{align*}
 Obviously, $\phi''(\alpha)$ has a unique root in $[0,1]$. 
 
 If $d=1$ then $\phi'(\alpha)=-1$ and so $\phi$ has only a trivial root at $\alpha=0$; 
 If $d=2$ then $\phi''(\alpha)>0$ in $(0,1)$ and so $\phi$ is convex and thus has only a trivial root at $\alpha=0$ by~\eqn{f}. Hence for $d\le 2$, $\rho=0$ and is the unique maxima. Claims (a) and (b) hold trivially.
 \smallskip
  
 \noindent {\em Case A2: $\Var(\bfk)=0$ and $\bfd\sim \po_{\ge r}(\lambda)$.} In this case $D(\alpha)=h_r(\lambda\alpha)/h_r(\lambda)$, where
 \begin{align}
 h_r(x)&=\sum_{j\ge r}\frac{x^j}{j!} \quad \mbox{for all nonnegative integers $x$}; \label{h1}\\
 h_r(x)&=e^x \quad \mbox{for all negative integers $x$}.\label{h2}
 \end{align}
 Then, for all integers $t$,
  \[
  D'(\alpha)=\frac{\lambda h_{r-1}(\lambda \alpha)}{h_r(\lambda )},\  D''(\alpha)=\frac{\lambda ^2h_{r-2}(\lambda \alpha)}{h_r(\lambda )}, \ D'''(\alpha)=\frac{\lambda ^3h_{r-3}(\lambda \alpha)}{h_r(\lambda )}.
  \]
   Since $\ex \bfd=d$, it requires that $\lambda$ satisfies
   \begin{equation}
   D'(1)=\frac{\lambda h_{r-1}(\lambda)}{h_r(\lambda)}=d.\label{lambda}
   \end{equation}
Thus,
 \[
 \phi''(\alpha)=\frac{(k-1)d\alpha^{k-3}}{h_r(\lambda )}\Big((k-2)h_{r-2}(\lambda  t)-(k-1)(1-t)h_{r-3}(\lambda  t)\Big).
 \]
 Solving $\phi''(\alpha)=0$ yields
 \begin{equation}
\frac{k-1}{k-2}(1-t)= \frac{h_{r-2}(\lambda  t)}{h_{r-3}(\lambda  t)}=1-\frac{h_{r-3}(\lambda  t)-h_{r-2}(\lambda  t)}{h_{r-3}(\lambda t)}. \label{eq:truncate}
 \end{equation}
 The right hand side above is obviously a constant function if $r\le 2$. If $r\ge 3$, then $h_{r-3}(\lambda t)-h_{r-2}(\lambda t)=(\lambda  t)^{r-3}/(r-3)!$, and  $h_{r-3}(\lambda t)$ is a power series of $\lambda t$ with minimum degree $r-3$. Hence, by dividing $(\lambda t)^{r-3}/(r-3)!$ from both the numerator and the denominator, we immediately get that the right hand side of~\eqn{eq:truncate} is an increasing function. However the left hand side of~\eqn{eq:truncate} is a decreasing function. Hence~\eqn{eq:truncate} has at most one solution, implying that $\phi''(\alpha)$ has at most one root. \smallskip

  \noindent {\em Case B: $\bfk\sim \po_{\ge s}(\gamma)$ where $s\ge 3$.} We must have $\gamma$ satisfy
  \[
  \frac{\gamma h_{s-1}(\gamma)}{h_s(\gamma)}=k,
  \]
 so that $\ex\bfk=k$. Now we have $K(\alpha)=h_s(\gamma\alpha)/h_s(\gamma)$, where $h_s$ is defined as in~\eqn{h1} and~\eqn{h2}.
 Thus, 
 \begin{align*}
 \phi(\alpha)&=1-\alpha-\frac{1}{d}D'\left(1-\frac{h_{s-1}(\gamma \alpha)}{h_{s-1}(\gamma)}\right)\\
 \phi'(\alpha)&=-1+\frac{\gamma h_{s-2}(\gamma\alpha)}{d h_{s-1}(\gamma)}D''\left(1-\frac{h_{s-1}(\gamma \alpha)}{h_{s-1}(\gamma)}\right)\\
 \phi''(\alpha)&=\frac{\gamma^2}{d h_{s-1}(\gamma)}\left(h_{s-3}(\gamma \alpha)D''\left(1-\frac{h_{s-1}(\gamma \alpha)}{h_{s-1}(\gamma)}\right) - \frac{h_{s-2}(\gamma \alpha)^2}{h_{s-1}(\gamma)} D'''\left(1-\frac{h_{s-1}(\gamma \alpha)}{h_{s-1}(\gamma)}\right) \right).
 \end{align*}
 Hence,
  \begin{align}
&& \phi(0)=0 && \phi(1)=-\frac{1}{d}D'(0)\le 0 && \label{f2}\\
 && \phi'(0)=-1 && \phi'(1)=-1+\frac{\gamma h_{s-2}(\gamma)}{d h_{s-1}(\gamma)}D''(0).&& \label{f'2}
 \end{align}
 
 As before, we will prove that $\phi''(\alpha)$ has at most 1 root in $[0,1]$ (except for some trivial cases that will be discussed separately), which is sufficient to ensure (a) and (b).
 
 \medskip

 \noindent {\em Case B1: $\bfk\sim \po_{\ge s}(\gamma)$ and $\Var(\bfd)=0$.} In this case $\bfd=d$. Then $D(\alpha)=\alpha^d$. If $d\ge 3$ then
 solving $\phi''(\alpha)=0$ yields
 \begin{equation}
 \frac{d-2}{h_{s-1}(\gamma)} \cdot h_{s-2}(\gamma\alpha) = \left(1-\frac{h_{s-1}(\gamma \alpha)}{h_{s-1}(\gamma)}\right) \frac{h_{s-3}(\gamma\alpha)}{h_{s-2}(\gamma\alpha)}. \label{B1}
 \end{equation}
 On the right hand side above,
 $1-h_{s-1}(\gamma \alpha)/h_{s-1}(\gamma)\ge 0$ and is a decreasing function of $\alpha$. 
 We also have
 \[
 \frac{h_{s-3}(\gamma\alpha)}{h_{s-2}(\gamma\alpha)}=\frac{h_{s-3}(\gamma\alpha)}{h_{s-3}(\gamma\alpha)-(\gamma\alpha)^{s-3}/(s-3)!}=\left(1-\frac{(\gamma\alpha)^{s-3}/(s-3)!}{h_{s-3}(\gamma\alpha)}\right)^{-1},
 \]
 which is positive and a decreasing function of $\alpha$. Hence, the left hand side of~\eqn{B1} is an increasing function whereas the right hand side is a decreasing function. Hence $\phi''(\alpha)$ has at most one root.
 
 If $d\le 2$ the same argument as in Case A1 shows that claims (a) and (b) hold.
\smallskip

 \noindent {\em Case B2: $\bfk\sim \po_{\ge s}(\gamma)$ and $\bfd\sim \po_{\ge r}(\lambda)$.} In this case $D(\alpha)=h_r(\lambda\alpha)/h_r(\lambda)$, and $\lambda$ necessarily satisfies~\eqn{lambda}.
  Then solving $\phi''(\alpha)=0$ yields
  \[
  \frac{\lambda}{h_{s-1}(\gamma)} h_{s-2}(\gamma\alpha) = \frac{h_{s-3}(\gamma\alpha)}{h_{s-2}(\gamma\alpha)} \cdot  \frac{h_{r-2}(\lambda(1-h_{s-1}(\gamma \alpha)/h_{s-1}(\gamma)))}{h_{r-3}(\lambda(1-h_{s-1}(\gamma \alpha)/h_{s-1}(\gamma)))}
  \]
 The left hand side is an increasing function whereas the right hand side is the product of two positive decreasing functions. Thus, $\phi''(\alpha)$ has at most one root.

\section{Proof of \Thm~\ref{thm:2core}}\label{Sec_thm:2core}\label{sec:2core}

 \noindent {\em Proof of Theorem~\ref{thm:2core}.\ } We describe how to extend the proof of~\cite{molloy2005cores} to $\bfG$. Using the terminology in~\cite{molloy2005cores}, variable nodes in $\bfG$ are called vertices, and each check node corresponds to a hyperedge in the following sense: if $f_a$ is a check node adjacent with variable nodes $\{v_{a_1},\ldots, v_{a_h}\}$ for some $h\ge 3$, then the set of vertices $\{v_{a_1},\ldots, v_{a_h}\}$ is called a hyperedge. Consider the parallel stripping process where all vertices of degree less than 2 are deleted in each step, together with the hyperedge (if any) incident with it. Take a random vertex $v\in[n]$.  Let $\lambda_t$ be the probability that $v$ survives after $t$ iterations of the stripping process. It is easy to see that $\lambda_t$ is monotonely non-increasing and thus $\lambda=\lim_{t\to\infty} \lambda_t$ exists. For any vertex $u\in [n]$, let $\N_j(u)$ denote the set of vertices of distance $j$ from $u$. We claim that
 \begin{claim} \label{c:neighbourhood}
With high probability, the maximum degree and the maximum size of hyperedges in $\bfG$ is at most \newline
$(n\log n)^{1/(2+r)}$, and for every $u\in[n]$ and for all fixed $r$, $|\cup_{j\le r}\N_j(u)|=O(n^{1/(2+r)}\log^2 n)$. 
 \end{claim}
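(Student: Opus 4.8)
\emph{Proof plan for Claim~\ref{c:neighbourhood}.} The bound on the maximum degree and maximum hyperedge size is a first-moment estimate. Since $\Erw[\vd^r]+\Erw[\vk^r]<\infty$ for some $r>2$, Markov's inequality yields $\pr[\vd>t]\le \Erw[\vd^r]t^{-r}$ and $\pr[\vk>t]\le \Erw[\vk^r]t^{-r}$ for every $t>0$. Taking $t=(n\log n)^{1/(2+r)}$ and a union bound over the $n$ variable nodes and the $\Theta(n)$ check nodes (using the concentration of $\vm\disteq\Po(dn/k)$ to bound the number of checks, and the fact that conditioning on $\cD$ and on simplicity of the Tanner graph inflates probabilities by at most a polynomial factor) shows that with probability $1-o(1)$ no vertex of $\bfG$ has degree above $t$ and no hyperedge has size above $t$. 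Call this event $\cE_{\mathrm{deg}}$.

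For the neighbourhood bound I would first treat a \emph{single} vertex $u$ and explore $\bfG$ from $u$ by breadth-first search, revealing one clone at a time as in the configuration model. Conditioning on the part of the graph already exposed, a previously unseen hyperedge incident to a discovered vertex is --- up to the $o(1)$ distortion caused by conditioning on $\cD$ and on simplicity --- a uniformly random check weighted by its size, hence its size is distributed as the size-biased variable $\hat\vk$ from \Sec~\ref{Sec_pre}; likewise a new vertex reached through a hyperedge has size-biased degree $\hat\vd$. Because $\Erw[\vd^2],\Erw[\vk^2]<\infty$, both $\hat\vd$ and $\hat\vk$ have finite means, so the BFS layer sizes are stochastically dominated by a Galton--Watson process with finite-mean offspring. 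Consequently, for any fixed radius $r$ one gets $\Erw\bigl[\,\bigl|\bigcup_{j\le r}\N_j(u)\bigr|\,\bigr]\le C(r)$ for a constant $C(r)$ depending only on $r$ and the laws of $\vd,\vk$.

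The crux is to upgrade this expectation bound to one that holds \emph{simultaneously} for all $u\in[n]$, since for heavy-tailed degrees the neighbourhood size does not concentrate and a naive union bound over a moment estimate is far too lossy. The plan is to isolate the only mechanism that can make a short-radius ball large, namely a single near-maximum-degree vertex. Call a vertex or hyperedge \emph{heavy} if its degree/size exceeds $\log^{c}n$ for a suitable constant $c=c(r)$. A first-moment count of short paths in the configuration model --- weighting each potential path of length $\le 2r$ by the product of the relevant truncated $r$-th moments of $\vd$ and $\vk$, as in the heavy-degree estimates used elsewhere in the paper --- should show that with probability $1-o(1)$ no two heavy objects lie within distance $2r$ of one another, and more generally that every ball of radius $r$ meets at most one vertex whose degree is comparable to $(n\log n)^{1/(2+r)}$. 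Call this event $\cE_{\mathrm{sep}}$. On $\cE_{\mathrm{deg}}\cap\cE_{\mathrm{sep}}$, BFS from any $u$ multiplies the current layer size by at most $\log^{O(1)}n$ at each step, except possibly at the one step passing through the (at most one) near-maximum vertex, where it grows by at most $(n\log n)^{1/(2+r)}$. Multiplying the at most $r+1$ factors and absorbing the fixed number of logarithmic contributions gives $\bigl|\bigcup_{j\le r}\N_j(u)\bigr|=O\bigl(n^{1/(2+r)}\log^2 n\bigr)$ uniformly in $u$.

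I expect the separation step $\cE_{\mathrm{sep}}$ to be the main obstacle: it requires a careful enumeration of the ways a short BFS can be inflated --- long chains of moderately heavy vertices as well as pairs of heavy vertices joined by a short path --- and, for each, a first-moment bound small enough to survive a union bound over all starting vertices; this is precisely where the full strength of $\Erw[\vd^r]+\Erw[\vk^r]<\infty$ with $r>2$ (rather than merely finite second moments) is needed. Once Claim~\ref{c:neighbourhood} is established, the rest of the proof of \Thm~\ref{thm:2core} follows the parallel-stripping analysis of Molloy~\cite{molloy2005cores} with only cosmetic changes to accommodate hyperedges of varying size, the local sparsity supplied by the claim being exactly what justifies the branching-process approximation of the stripping process and the passage from the fixed-$t$ survival probabilities $\lambda_t$ to their limit $\lambda$.
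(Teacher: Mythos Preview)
The first part (maximum degree and hyperedge size via Markov and a union bound) matches the paper's argument, modulo notation: in this section the moment condition is written as $\Erw[\vd^{2+r}],\Erw[\vk^{2+r}]<\infty$, so the tail bound is $\pr[\vd>t]\le Ct^{-(2+r)}$ and the union bound over $n$ vertices with $t=(n\log n)^{1/(2+r)}$ gives $O(1/\log n)=o(1)$. With your exponent $r$ in place of $2+r$ the union bound does not close.

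The neighbourhood bound is where your plan diverges from the paper, and here there is a genuine gap. Your separation event $\cE_{\mathrm{sep}}$ --- that no two objects of degree exceeding $\log^c n$ lie within distance $2r$ --- does \emph{not} hold w.h.p.\ under the stated moment hypotheses. For a degree law with $\pr[\vd=j]\asymp j^{-(3+r)}$, a size-biased variable (an endpoint of a random edge) has degree exceeding $\log^c n$ with probability of order $(\log n)^{-c(1+r)}$, so the expected number of pairs of heavy variables sharing a check is $\Theta(n)\cdot(\log n)^{-2c(1+r)}$, which diverges for every fixed $c$. The intermediate regime you flag --- moderately heavy vertices between $\log^c n$ and near-maximum --- therefore cannot be disposed of by a separation argument at a polylogarithmic threshold, and the BFS bound ``one big factor, the rest polylogarithmic'' does not follow. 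Raising the heaviness threshold to a power of $n$ would make separation provable but would no longer control the layer growth.

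The paper's route is shorter and avoids any multi-scale decomposition. Having established the maximum-degree bound, one \emph{normalises}: on that event each $\hat\vd_j/(n\log n)^{1/(2+r)}$ lies in $[0,1]$, so the sum $M=\sum_{j\le N_i(u)}\hat\vd_j$ of (approximately i.i.d.) size-biased degrees over layer $i$ satisfies, by Chernoff for bounded variables,
\[
\pr\bigl[M\ge 2\hat d\,N_i(u)+n^{1/(2+r)}\log^2 n\bigr]\le n^{-2},
\]
since the additive slack becomes $\gg\log n$ after normalisation. The same bound with $\hat\vk$ in place of $\hat\vd$ controls $N_{i+1}(u)$ in terms of $M$, yielding the recursion $N_{i+1}(u)\le 4\hat d\hat k\,N_i(u)+C\,n^{1/(2+r)}\log^2 n$ with failure probability $O(n^{-2})$. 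Iterating a bounded number of times and taking a union bound over all $u\in[n]$ gives the claim. The point is that the max-degree bound already supplies a uniform Lipschitz constant for the layer sums, so Chernoff with polylog slack survives the union over $n$ starting vertices; no structural separation of heavy objects is needed.
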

Let $H_t$ be the subgraph of $G_n$ obtained after $t$ iterations of the parallel stripping process. Consider Doob's martingale $(\ex(H_t\mid e_1,\ldots, e_j))_{0\le j\le m}$ where random hyperedges are added in order $e_1,\ldots, e_{m}$ using the configuration model. By Claim~\ref{c:neighbourhood}, swapping two clones in the configuration model would affect $H_t$ by $O(n^{1/(2+r)}\log^2 n)$, as each altered hyperedge can only affect the vertices (if surviving the first $t$-th iteration or not) within its $t$-neighbourhood.  Standard concentration arguments of Azuma's inequality~\cite[Theorem 2.19]{Regular} (with Lipschitz constant $Cn^{1/(2+r)}\log^2 n$ for some fixed $C>0$) produce that $||H_t|-\lambda_t n|=O(n^{(4+r)/(4+2r)}\log^3 n)=o(n)$. Next we deduce an expression for $\lambda_t$.  Consider a random hypertree $T$ iteratively built as follows. The root of $T$ is $v$, which is incident to $d_v$ hyperedges of size $\bfk_1,\ldots, \bfk_{d_v}$ where the $\bfk_i$s are i.i.d.\ copies of $\hat\bfk$ where
 \begin{equation}\label{hatk}
 \pr(\hat\bfk=j) = \frac{j \pr(\bfk=j)}{k}.
 \end{equation}
Then the $i$-th hyperedge is incident to other $\bfk_i-1$ vertices (other than $v$) whose degrees are i.i.d.\ copies of $\hat\bfd$, where 
 \begin{equation}\label{hatd}
 \pr(\hat\bfd=j) = \frac{j \pr(\bfd=j)}{d}.
 \end{equation}
 This builds the first neighbourhood of $v$ in $T$. Iteratively we can build the $r$-neighbourhood of $v$ in $T$ for any fixed $r$. It follows from the following claim that the $r$-neighbourhood of $v$ in $\bfG$ converges in distribution to the $r$-neighbourhood of $T$, as $n\to\infty$, for any fixed $r\ge 1$. This is because when uniformly picking a random variable clone (or check clone), the degree of the corresponding variable node (or check node) has the distribution in~\eqn{hatd} (or~\eqn{hatk}).
  
 \begin{claim}~\label{c:cycle}
 With high probability, for all fixed $r\ge 1$, $\cup_{j\le r}\N_j(v)$ induces no cycles.
 \end{claim}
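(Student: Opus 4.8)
The plan is the standard configuration-model local-tree-likeness argument, in the spirit of the corresponding step in~\cite{molloy2005cores}. Throughout I would work in the configuration model (the random clone-matching that generates $\bfG$) and only at the end transfer the conclusion to the simple graph conditioned on $\cD\cap\cS$, using $\pr[\cS\mid\cD]=\Omega(1)$ from \Lem~\ref{Lemma_simple}. Fix the radius $r$. The first step is to condition on the two events provided by Claim~\ref{c:neighbourhood}: that the maximum variable- and check-degree of $\bfG$, and the total size of the ball $B:=\cup_{j\le r+1}\N_j(v)$ (variable and check nodes together), are both polynomially bounded, with the product of these two bounds being $o(\sqrt n)$; this last point is exactly where the finite $(2+\eps)$-th moments of $\vd,\vk$ for some $\eps>0$ enter. (If the exponents furnished by Claim~\ref{c:neighbourhood} do not directly give $o(\sqrt n)$ for the $r$ in question, one first reduces to the convenient range of $r$, using $\cup_{j\le r'}\N_j(v)\subseteq\cup_{j\le r}\N_j(v)$ for $r'\le r$.) These events fail with probability $o(1)$, so it suffices to bound the probability of a cycle on them.

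Next I would expose the radius-$r$ neighbourhood of $v$ by breadth-first search, revealing the clone-matching one clone at a time via the principle of deferred decisions: whenever the search processes a clone of a node $u$ at distance $<r$ from $v$, I reveal its partner, thereby discovering the node it belongs to. A cycle inside $\cup_{j\le r}\N_j(v)$ is created precisely when a revealed partner belongs to a node that has already been discovered. On the conditioning event the number of discovered clones, and hence the number $N$ of matchings ever revealed during the search, is at most (number of discovered nodes)$\times$(maximum degree)$=o(\sqrt n)$, while the number of still-unmatched clones stays $\Omega(n)$ since $\sum_i\vd_i=\Theta(n)$ \whp\ and we consume only $o(n)$ of them. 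Hence, conditioning on the history, each revelation closes a cycle with probability at most $N/\Omega(n)=o(1/\sqrt n)$.

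A union bound over the $\le N$ revelations then gives $\pr[\cup_{j\le r}\N_j(v)\text{ contains a cycle}]=O(N^2/n)=o(1)$, which together with the $o(1)$ probability that the conditioning events fail, and the transfer to $\cD\cap\cS$, yields the claim. An equivalent route would be a direct first-moment estimate of the expected number of minimal cyclic sub-configurations of the Tanner graph lying within distance $r$ of $v$, bounded using $\sum_i\vd_i^2,\sum_i\vk_i^2=O(n)$ (finite second moments) together with the ball-size bound of Claim~\ref{c:neighbourhood}; I find the exposure argument slightly cleaner. The main obstacle is the quantitative bookkeeping: one must pin down the exponents in Claim~\ref{c:neighbourhood} and verify that (ball size)$\times$(max degree)$=o(\sqrt n)$ for the relevant range of $r$ — this is exactly where the strict inequality $\eps>0$ on the moments is needed — and one must handle carefully the interplay of the exposure process with the conditioning on the degree sequences, on the events of Claim~\ref{c:neighbourhood}, and on $\cD\cap\cS$.
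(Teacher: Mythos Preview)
Your exposure argument is a sound alternative route, but the quantitative step has a gap as written. The exponent in the bounds of Claim~\ref{c:neighbourhood} is the moment parameter (Section~\ref{sec:2core} writes the hypothesis as $\Erw[\vd^{2+r}],\Erw[\vk^{2+r}]<\infty$ for some $r>0$), \emph{not} the neighbourhood radius, so it cannot be tuned by shrinking the ball. With those bounds, $(\text{ball size})\times(\text{max degree})$ is of order $n^{2/(2+r)}$ up to logarithms, and this is $o(\sqrt n)$ only when $r>2$, i.e.\ under a $(4+\eps)$-th moment assumption --- strictly stronger than what the paper assumes. Your union bound $O(N^2/n)$ with $N=(\text{ball size})\times(\text{max degree})$ therefore does not close.

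Two easy repairs are available. First, the proof of Claim~\ref{c:neighbourhood} already bounds the \emph{sum} of degrees over each shell (the quantity $M$ there) by $O(n^{1/(2+r)}\log^2 n)$; this, rather than $(\text{ball size})\times(\text{max degree})$, is the correct bound on the number of clones ever discovered, and with it $N^2/n=o(1)$ for every $r>0$. Second --- and this is what the paper actually does --- exploit that $v$ is a \emph{uniformly random} vertex: for any $\eps>0$ there is a constant $L=L(\eps,\text{radius})$ such that with probability at least $1-O(\eps)$ the whole ball has at most $L$ nodes, each of degree at most $L$ (truncate level by level, using only that $\hat\vd,\hat\vk$ have finite mean). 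On that event a direct first-moment count of cycle-creating coincidences gives $O(L^{O(1)}/n)$, and one finishes by letting $\eps\to0$. The paper's route is simpler in that it never invokes the polynomial bounds of Claim~\ref{c:neighbourhood}; your BFS route, once patched as above, works equally well.
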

 
   If $v$ survives $t$ iterations of the stripping process then at least 2 hyperedges incident with $v$ survives after $t$ iterations of the stripping process.  Let $\rho_t$ denote the probability that $v$ is incident with at least 1 hyperedge surviving after $t$ iterations of the stripping process. Then, ignoring an $o(1)$ error accounting for the probability of the complement of the evens in Claims~\ref{c:neighbourhood} and~\ref{c:cycle}:
 \[
 \rho_0=1
 \]
and
 \begin{align*}
 \rho_{t+1}&=\sum_{j\ge 1} \frac{j\pr(\bfd=j)}{d} \sum_{S\subseteq[j-1], |S|\ge 1} \sum_{k_1,\ldots,k_{j-1}} \prod_{i=1}^{j-1} \pr(\hat\bfk_i=k_1) \prod_{i\in S} \rho_t^{k_i-1} \prod_{i\notin S} (1-\rho_t)^{k_i-1}\\  
 &=\sum_{j\ge 1} \frac{j\pr(\bfd=j)}{d} \sum_{h\ge 1} \binom{j-1}{h} \left(\sum_{k_1}\pr(\hat\bfk_i=k_1) \rho_t^{k_1-1} \right)^h \left(\sum_{k_1}\pr(\hat\bfk_i=k_1) (1-\rho_t)^{k_1-1}\right)^{j-1-h}\\
 &=\sum_{j\ge 2} \frac{j\pr(\bfd=j)}{d} \sum_{h\ge 1}\binom{j-1}{h} \left(\frac{K'(\rho_t)}{k}\right)^h\left(1-\frac{K'(\rho_t)}{k}\right)^{j-1-h}\\
 &=\sum_{j\ge 2} \frac{j\pr(\bfd=j)}{d}\left(1-\left(1-\sum_{j\ge 2} \frac{j\pr(\bfd=j)}{d}\right)^{j-1}\right)=1-\frac{D'(1-\frac{K'(\rho_t)}{k})}{d},
 \end{align*}
 noting that 
 \[
 \ex \rho^{\hat\bfk-1}=\sum_j \frac{j\pr(\bfk=j)}{k}\rho^{j-1}=\frac{K'(\rho)}{k}.
 \]
 Similarly,
  \begin{align*}
 \lambda_t &= \sum_{j\ge 2} \pr(\bfd=j) \sum_{h\ge 2}\binom{j}{h} \left(\sum_{k_1}\pr(\hat\bfk_i=k_1) \rho_t^{k_1-1} \right)^h \left(\sum_{k_1}\pr(\hat\bfk_i=k_1) (1-\rho_t)^{k_1-1}\right)^{j-h}\\
 &= \sum_{j\ge 2} \pr(\bfd=j) \sum_{h\ge 2}\binom{j}{h} \left(\ex \rho_t^{\hat\bfk-1}\right)^h \left(1-\ex \rho_t^{\hat\bfk-1}\right)^{j-h}\\
 &= \sum_{j\ge 2} \pr(\D=j) \left(1-\left(\frac{K'(\rho_t)}{k}\right)^j-j\frac{K'(\rho_t)}{k}\left(1-\frac{K'(\rho_t)}{k}\right)^{j-1}\right)\\
 &=1-D\left(1-\frac{K'(\rho_t)}{k}\right)-\frac{K'(\rho_t)}{k}D'\left(\frac{K'(\rho_t)}{k}\right).
 \end{align*}
 Let $g(x)=1-\frac{1}{d}D'(1-\frac{K'(x)}{k})$. Then $g'(x)=\frac{1}{dk}D''(1-\frac{K'(x)}{k})K''(x)$ which is non-negative over $[0,1]$. We also have $\phi(x)=g(x)-x$, where $\phi$ is given in~\eqn{def:f}.
Since $\phi(1)=-D'(0)/d\le 0$,  $\phi'(\rho)<0$ by the hypothesis, and $g(x)$ is nondecreasing in $[0,1]$, it follows that $\rho$ is an attractive fix point of $x=g(x)$. As $\rho_0=1$. It follows that $\rho_t\to\rho$ as $t\to \infty$. Consequently, for every $\hat\eps>0$ there is sufficiently large $I$ such that $|\rho_t-\rho|<\hat\eps$. Hence, after $I$ iterations of the parallel stripping process, the number of vertices remaining is $(\lambda+o(1)) n+O(\hat\eps n)$ where 
\begin{equation}
\lambda=1-D\left(1-\frac{K'(\rho)}{k}\right)-\frac{K'(\rho)}{k}D'\left(\frac{K'(\rho)}{k}\right).\label{lambda}
\end{equation}
 If $\rho=0$ then $\lambda=0$. The case $\rho=0$ of our theorem follows by letting $I\to\infty$.

Suppose $\rho>0$. It is sufficient to show that the 2-core is obtained after removing further $O(\hat\eps n)$ vertices, following the same approach as~\cite[Lemma 4]{molloy2005cores}. We briefly sketch it. Following the same argument as before, the probability that a random vertex has degree $j\ge 2$ after $I$ iterations of the stripping process is
\[
\sum_{i\ge 2} \pr(\bfd=i) \binom{i}{j} (\ex \rho_I^{\hat\bfk-1})^j (1-\ex \rho_I^{\hat\bfk-1})^{i-j}= \sum_{i\ge 2} \pr(\bfd=i) \binom{i}{j}\left(\frac{K'(\rho_I)}{k}\right)^j\left(1-\frac{K'(\rho_I)}{k}\right)^{i-j}.
\] 
Similarly, the probability of a uniformly random hyperedge in $G_n$ having size $j\ge 3$ and surviving the first $I$ iterations of the stripping process is
\[
\pr(\bfk=j) \rho_I^j.
\]
The number of vertices with degree less than 2 after $I$ iterations is bounded by $(\lambda_I-\lambda_{I+1}) n +o(n)$. Hence, by choosing $I$ sufficiently large, we can make these quantities arbitrarily close to those with $\rho_I$ replaced by $\rho$. Now standard concentration arguments apply to show that the number of degree $j\ge 2$ vertices is 
$\gamma_j n+O(\hat\eps n)$, where
\[
\gamma_j=\sum_{i=0}^{\infty} \pr(\bfd=i) \binom{i}{j}\left(\frac{K'( \rho)}{k}\right)^j\left(1-\frac{K'( \rho)}{k}\right)^{i-j},
\] 
the number of vertices of degree less than 2 is $O(\hat\eps n)$, and the proportion of remaining hyperedges of size $j$ is \[
\frac{\pr(\bfk=j) \rho^j }{\sum_{i\ge 3} \pr(\bfk=i) \rho^i}+O(\hat\eps)=\frac{\pr(\bfk=j) \rho^j}{K(\rho)}+O(\hat\eps),
\] 
as the probability that a hyperedge survives is proportional to the probability that all of the vertices it contains survive.
Note that $\hat\eps$ can be made arbitrarily small by choosing sufficiently large $I$.

 Now we remove one hyperedge incident with a vertex with degree 1 at a time. Call this process SLOWSTRIP. Let $G_t$ denote the hypergraph obtained after $t$ steps of SLOWSTRIP and let $X_t$ denote the total degree of the vertices of degree 1 in $G_t$. Then,
 \begin{align*}
 &\ex(X_{t+1}-X_t\mid G_t)\\
  &= -1 + \sum_{j\ge 3} \frac{j\pr(\bfk=j) \rho^j/K(\rho)}{\rho K'(\rho)/K(\rho)} \cdot (j-1) \cdot \frac{2\gamma_2}{\sum_{j\ge 2} j \gamma_j} +O(\hat\eps)\\
 &=-1+\frac{1}{\rho K'(\rho)}\left( \sum_{j\ge 3} j(j-1) \pr(\bfk=j) \rho^j\right) \frac{2\cdot \frac12 (K'(\rho)/k)^2 D''(1-K'(\rho)/k)}{K'(\rho)d\rho/k}+O(\hat\eps)\\
 &=-1+\frac{D''(1-K'(\rho)/k)K''(\rho)}{kd}+O(\hat\eps).
 \end{align*}
 Note that in the first equation above, $-1$ accounts for the removal of one variable clone $x$ from the set of vertices of degree less than 2. The term $j\pr(\bfk=j) \rho^j/\rho K'(\rho)$ approximates the probability that $x$ is contained in a hyperedge of size $j$, up to an $O(\hat\eps)$ error. In that case, $j-1$ variable clones that lie in the same hyperedge as $x$ will be removed. For each of these $j-1$ deleted variable clones, if it lies in a variable of degree 2, then it results in one new variable node of degree 1. The probability for that to happen is approximated by $2\gamma_2/\sum_{j\ge 2}j\gamma_j$, up to an $O(\hat\eps)$ error.
 By the assumption that $f'(\rho)<0$ we have 
 \[
 -1+\frac{D''(1-K'(\rho)/k)K''(\rho)}{kd}<0.
 \]
 Hence, $ \ex(X_{t+1}-X_t\mid G_t)<-\delta$ for some $\delta>0$, by making $\hat\eps$ sufficiently small (i.e.\ by choosing sufficiently large $I$). Then standard Azuma inequality~\cite[Lemma 29]{GaoMolloy} (with Lipschitz constant $n^{1/5}\log n$ by Claim~\ref{c:neighbourhood}) will be sufficient to show that $X_t$ decreases to 0 after $O(\hat\eps n)$ steps (See details in~\cite[Lemma 4]{molloy2005cores}). The case $\rho>0$ of the theorem follows by~\eqn{lambda} and 
 \[
 \lim_{n\to\infty} \frac{\bfm^*}{n} =  \lim_{n\to\infty} \frac{m}{n} \cdot \sum_{i\ge 3} \pr(\bfk=i) \rho^i = \frac{d}{k}K(\rho).\qed
 \]  
 
\noindent {\em Proof of Claim~\ref{c:neighbourhood}. } Since both $\ex\bfd^{2+r}=O(1)$ and $\ex\bfk^{2+r}=O(1)$, the probability that $\bfd>(n\log n)^{1/(2+r)}$ or $\bfk>(n\log n)^{1/(2+r)}$ is $O(1/n\log n)$. The bound on the maximum degree and maximum size of the hyperedges in $\bfG$ follows by taking the union bound. 

For any $u\in[n]$, let $N_i(u)=|\N_i(u)|$. We will prove that with high probability for every $u$ and for every fixed $i$, $N_i(u)=O(n^{1/(2+r)}\log^2 n)$, which then completes the proof for Claim~\ref{c:neighbourhood}. We prove by induction. Let $d_1,\ldots, d_{N_i(u)}$ denote the degrees of the vertices in $\N_i(u)$.  Then the number of hyperedges incident with these vertices is bounded by $M:=\sum_{j=1}^{N_i(u)} d_j$. By the construction of $\bfG$, each $M$ is stochastically dominated by $\sum_{j=1}^{N_i(u)} (1+o(1)) \hat\bfd_j$ where $\hat\bfd_j$ are i.i.d.\ copies of $\hat\bfd$ whose distribution is given in~\eqn{hatd}. The $o(1)$ error is caused by the exposure of $\cup_{j\le i}\N_j$ which contains $o(n)$ vertices by induction. Since $\ex \bfd^{2+r}=O(1)$, we have $\hat d :=\ex \hat\bfd =O(1)$. Note that $\ex M=\hat d N_i(u)$. Applying the Chernoff bound to the sum of independent $[0,1]$-valued random variables we have
\[
\pr\left(M\ge 2\hat d N_i(u)+ n^{1/(2+r)}\log^2 n \right) = \pr\left(\sum_{j=1}^{N_i(u)} \frac{\hat \bfd_i}{(n\log n)^{1/(2+r)} } \ge \frac{2\hat d N_i(u)}{(n\log n)^{1/(2+r)}} + (\log n)^{(3+r)/(2+r)}\right) <n^{-2}.
\]
Similarly, $N_{i+1}(u)$ is bounded by $\sum_{j=1}^M k_i$, where $k_i$ are the sizes of the hyperedges incident with the vertices in $\N_i$. Similarly, $\sum_{j=1}^M k_i$ is stochastically dominated by $(1+o(1))\sum_{j=1}^M \hat\bfk_j $ where $\hat\bfk_j$ are i.i.d.\ copies of $\hat\bfk$ whose distribution is defined in~\eqn{hatk}. Let $\hat k=\ex\hat\bfk$. Applying the Chernoff bound again we obtain that with probability at least $1-n^{-2}$, $N_{i+1}(u)< 2\hat k M +n^{1/(2+r)}\log^2 n < 4\hat d\hat k N_i(u) +(1+2\hat k)n^{1/(2+r)}\log^2 n$. Apply this recursion inductively and the union bound on the failure probability, we obtain $N_i(u)=O(n^{1/(2+r)}\log^2 n)$, as desired. \qed\smallskip

\noindent {\em Proof of Claim~\ref{c:cycle}. } Fix $\eps>0$. Choose $L=L(\eps,r)$  sufficiently large so that the probability that $d_v>L$ is smaller than $\eps$ (note that $v$ is a uniformly random vertex). Given $d_v\le L$. Let $k_1,\ldots, k_{d_v}$ be the sizes of the hyperedges incident to $v$. Similarly to the proof of Claim~\ref{c:neighbourhood},  $k_j$s are approximated by i.i.d.\ copies of $\hat\bfk$ defined in~\eqn{hatk}, up to an $1+o(1)$ multiplicative error. We can assume $L$ is sufficiently large so that with probability at least $1-\eps$, $\sum_{i=1}^{d_v} k_i\le L$. Inductively, we can make $L$ sufficiently large so that $|\N_i(v)|\le L$ for all $i\le r$.    Let $\E_i$ denote the set of hyperedges incident with vertices in $\N_i(v)$, but not incident with any in $\N_{i-1}$. Cycles in $\N_i(v)$ can appear in two ways: (a) two vertices in $\N_i(v)$ are incident with the same hyperedge in $\E_i$; (b) two hyperedges in $\E_{i-1}$ are incident with the same vertex in $\N_i(v)$. We will prove that with high probability, none of the two cases occurs for any fixed $i$. For (a), let $(d_j)_{j\in \N_i(v)}$ denote the degrees of the vertices in $\N_i(v)$. The expected number of occurrences of pairs of vertices in (a) is 
\begin{equation}\label{pairs}
\ex\left(\sum_{j, k\in \N_i(v)} \binom{d_j}{2} \binom{d_k}{2} \sum_{h\in [\bfm]} \binom{k_h}{2} O(n^{-2}) \right)= O(n^{-1}) \ex\left(\sum_{j,k\in \N_i(v)} d_j^2 d_k^2\right).
\end{equation}
Note that $|\N_j(v)|\le L$ for each $j\le r$. This immediately implies that $d_j\le L$ for all $j\in \N_i(v)$. Hence, the above probability is $O(n^{-1})$.  The probability that $|\N_i(v)|\le L$ fails is at most $r\eps$ by our choice of $L$. Hence, the probability that (a) fails is at most $r\eps+o(1)$.
The treatment of (b) is analogous. Our claim now follows by letting $\eps\to 0$. \qed

\subsection*{Acknowledgment}
We thank David Saad for a helpful conversation and Guilhem Semerjian for bringing~\cite{Lelarge} to our attention.

\begin{appendix}

\section{Proof of \Lem~\ref{Lemma_sums}}\label{Sec_sums}

\noindent
Since $\Erw[\vec\lambda^r]<\infty$, the event $\cM=\cbc{\max_{i\in[s]}\vec\lambda_i\leq n/\ln^9n}$ has probability
\begin{align}\label{eqLemma_sums0}
\pr\brk{\cM}=1-o(1/n).
\end{align}
Moreover, fixing a small enough $\eta=\eta(\delta)>0$ and a large enough $L=L(\eta)>0$ and setting $Q_j=\sum_{i\in[s]}\vecone\{\lambda_i=j\}$, we obtain from the Chernoff bound that 
$\pr\brk{\forall j\leq L:|Q_j-s\pr\brk{\vec\lambda=j}|>\sqrt n\ln n}=o(1/n)$.
Hence, by Bayes' rule,
\begin{align}\label{eqLemma_sums1}
\pr\brk{\exists j\leq L:|Q_j-s\pr\brk{\vec\lambda=j}|>\sqrt n\ln n\mid\cM}&=o(1/n).
\end{align}
In addition, let $\cH=\cbc{h\in\NN:(1+\eta)^{h-1}L\leq n/\ln^9n}$ and for $h\in\cH$ let
\begin{align*}
R_h&=\sum_{j\geq1}Q_j\vecone\{L(1+\eta)^{h-1}<j\leq L(1+\eta)^{h}\wedge n/\ln^9n\},&
\bar R_h&=s\sum_{j\geq1}\pr\brk{\vec\lambda=j}\vecone\{L(1+\eta)^{h-1}<j\leq L(1+\eta)^{h}\wedge n/\ln^9n\}.
\end{align*}
Then the Chernoff bound and Bayes' rule yield
\begin{align}\label{eqLemma_sums2}
\pr\brk{\exists h\in\cH:\abs{R_h-\bar R_h}>\eta\bar R_h+\ln^2n\mid\cM}&=o(1/n).
\end{align}
Finally, given $\cM$ and $|Q_j-s\pr\brk{\vec\lambda=j}|\le\sqrt n\ln n$ for all $j\leq L$ and 
$\abs{R_h-\bar R_h}\leq\eta\bar R_h+\ln^2n$ for all $h\in\cH$, we obtain
\begin{align*}
\frac1s\sum_{i=1}^s\vec\lambda_i&\leq \sum_{j=1}jQ_j/s+\sum_{h\in\cH}(1+\eta)^hLR_h/s=o(1)+\Erw\brk{\vec\lambda\vecone\{\vec\lambda\leq L\}}
	+\sum_{h\in\cH}(1+\eta)^{h+1}(\bar R_h+(\ln^2n))/s\leq \Erw\brk{\vec\lambda\vecone}+\delta/2+o(1).
\end{align*}
Similarly,  $\frac1s\sum_{i=1}^s\vec\lambda_i\geq \Erw\brk{\vec\lambda\vecone}-\delta/2+o(1)$.
Thus, the assertion follows from \eqref{eqLemma_sums0}--\eqref{eqLemma_sums2}

\section{Proof of \Lem~\ref{Lemma_0pinning}}\label{Apx_0pinning}

\noindent
For $t=1,\ldots,\THETA$ let $S_0(t),\ldots,S_{\ell(t)}(t)$ be the decomposition of $[n]$ obtained by applying \Lem~\ref{Lemma_Spartition}
to the matrix $A[\vec i_1,\ldots,\vec i_t]$.
Further, let $R_h(t)=S_h(t)\cap U$ for all $t$.
We proceed in two steps.
First we establish a deterministic statement:
\begin{align}\label{eqLemma_0pinning1}
\mbox{if $\sum_{h\geq1}|R_h(t)|^2<\eps|U|^2$, then $\mu_{\hat A,U}\mbox{ is $\eps$-symmetric}$.}
\end{align}
Indeed, using (i) and (iii) from \Lem~\ref{Lemma_Spartition}, we obtain
\begin{align*}
\sum_{u,v\in U}\dTV(\mu_{\hat A,u,v},\mu_{\hat A,u}\tensor \mu_{\hat A,v})&=
	\sum_{h=1}^{\ell(t)}\sum_{u,v\in R_h(t)}\dTV(\mu_{\hat A,u,v},\mu_{\hat A,u}\tensor \mu_{\hat A,v})
	\leq\sum_{h=1}^{\ell(t)}|R_h(t)|^2, 
\end{align*}
whence \eqref{eqLemma_0pinning1} follows.

Second, we claim that for large enough $\Theta=\Theta(\eps)$,
\begin{align}\label{eqLemma_0pinning2}
\Erw\sum_{h\geq1}|R_h(\THETA)|^2<\eps^3|U|^2.
\end{align}
To see this, consider the random variables $\Delta_t=|R_0(t)|-|R_0(t-1)|$ for $t\in[\Theta]$.
Since $S_0(t-1)\subset S_0(t)$, these random variables are non-negative.
Moreover, we claim that
\begin{align}\label{eqLemma_0pinning3}
\sum_{t\in[\Theta]}\Delta_t&\leq|U|,&\Erw[\Delta_t\mid \vec i_1,\ldots,\vec i_{t-1}]&\geq\sum_{h\geq1}|R_h(t-1)|^2/|U|.
\end{align}
The first inequality is self-evident.
To obtain the second, we deduce from \Lem~\ref{Lemma_Spartition} (ii) that freezing any variable in class $R_h(t-1)$ freezes the entire class.
In other words, if $\vec i_t\in R_h(t-1)$, then $R_h(t-1)\subset R_0(t)$.
Since $\vec i_t$ is chosen uniformly at random, we obtain the second inequality.
Combining the two inequalities from \eqref{eqLemma_0pinning3}, we find
\begin{align*}
\Erw\brk{\sum_{h\geq1}|R_h(\THETA)|^2}&=
	\frac1{\Theta}\sum_{t=1}^{\Theta}\Erw\brk{\sum_{h\geq1}|R_h(t)|^2}
	\leq\frac{|U|^2}{\Theta}+\frac1{\Theta}\sum_{t=1}^{\Theta}\Erw\brk{\sum_{h\geq2}|R_h(t-1)|^2}\leq
		\frac{|U|^2}{\Theta}+\frac{|U|}{\Theta}\Erw\brk{\sum_{t=1}^{\Theta}\Delta_t}\leq\frac{2|U|^2}{\Theta},
\end{align*}
whence \eqref{eqLemma_0pinning2} follows.
Finally, the assertion follows from~\eqref{eqLemma_0pinning1} and \eqref{eqLemma_0pinning2}.

\end{appendix}

\end{document}